\newtheorem{lemma}{Lemma}[section]
\newtheorem{theorem}[lemma]{Theorem}
\newtheorem{proposition}[lemma]{Proposition}
\newtheorem{assumption}[lemma]{Assumption}
\newtheorem{remark}[lemma]{Remark}
\newcommand{\eps}{{\varepsilon}}
\newcommand{\NN}{{\mathbb{N}}}
\newcommand{\RR}{{\mathbb{R}}}
\newcommand{\EE}{{\mathbb{E}}}
\newcommand{\PP}{{\mathbb{P}}}
\newcommand{\cO}{{\mathcal{O}}}
\newcommand{\clb}{{\mathcal{B}}}
\newcommand{\clp}{{\mathcal{P}}}
\newcommand{\clc}{{\mathcal{C}}}
\newcommand{\cld}{{\mathcal{D}}}
\newcommand{\cle}{{\mathcal{E}}}
\newcommand{\clf}{{\mathcal{F}}}
\newcommand{\cls}{{\mathcal{S}}}
\newcommand{\one}{{\boldsymbol{1}}}
\newcommand{\xbd}{{\boldsymbol{x}}}
\newcommand{\ybd}{{\boldsymbol{y}}}
\newcommand{\Om}{\Omega}
\newcommand{\om}{\omega}
\newcommand{\bfX}{\mathbf{X}}
\newcommand{\bfY}{\mathbf{Y}}
\newcommand{\bfZ}{\mathbf{Z}}
\newcommand{\bfU}{\mathbf{U}}
\newcommand{\bfz}{\mathbf{z}}
\newcommand{\bfx}{\mathbf{x}}
\newcommand{\bfy}{\mathbf{y}}
\newcommand{\cll}{\mathcal{L}}
\newcommand{\clg}{\mathcal{G}}
\newcommand{\clm}{\mathcal{M}}
\newcommand{\MM}{\mathbb{M}}
\newcommand{\mn}{\varsigma}
\newcommand{\vt}{\vartheta}
\newcommand{\be}{\begin{equation}}
\newcommand{\ee}{\end{equation}}
\newcommand{\beq}{\begin{align*}}
\newcommand{\eeq}{\end{align*}}
\newcommand{\di}{\color{black}}
\numberwithin{equation}{section}
\begin{document}

\title{Flocking under Fast and Large Jumps: Stability, Chaos, and Traveling Waves}
\author{Sayan Banerjee, Amarjit Budhiraja and Dilshad Imon}
\date{}
\maketitle

\begin{abstract}
\noindent 
We study a pure-jump $n$-particle system with attractive mean field interactions under which each particle jumps forward by a random amount, independently sampled from a given distribution $\theta$, at exponentially distributed times with rate given by a function $w$ of its signed distance from the system center of mass. The function $w$ is taken to be non-increasing which leads to a `flocking' behavior: the particles below the center of mass jump forward at a higher rate than those above it. This model was introduced in  \cite{Balzs2011ModelingFA} and some of its properties were studied for the case when $w$ is bounded. In the current work we are interested in the setting where $w$ is unbounded, and this feature, together with the mild integrability we impose on the jump sizes, results in a stochastic dynamical system for interacting particles with fast and large jumps for which little is available in the literature. We identify natural conditions under which the system is well-posed and study the large $n$ limit (the so-called `fluid limit') of the empirical measure process associated with the system. By establishing well-posedness of the associated McKean-Vlasov equation we characterize the fluid limit of the particle system and prove a propagation of chaos result. Next, for the centered $n$-particle system, by constructing suitable Lyapunov functions, we establish existence and uniqueness of stationary distributions and study their tail properties. In the special case where $w$ is an exponential function and $\theta$ is an exponential distribution, by establishing that all stationary solutions of the McKean-Vlasov equation must be the unique fixed point of the equation, we prove a propagation of chaos result at $t=\infty$ and establish convergence of the particle system, starting from stationarity, in the large $n$ limit, to a traveling wave solution of the McKean-Vlasov equation. The proof of this result may be of interest for other interacting particle systems where convexity properties or functional inequalities generally used for establishing such a result are not available. Our work answers several open problems posed in \cite{Balzs2011ModelingFA}.
\noindent\newline

\noindent \textbf{AMS 2010 subject classifications:} 60K35, 65C35, 82C22, 46N30.\newline

\noindent \textbf{Keywords:} Interacting particle systems, attractive interaction, mean field, fluid limits, topological interactions, flocking, long-time behavior, traveling waves, propagation of chaos, interchange of limits, asymptotic speed.
\end{abstract}

\section{Introduction}
Consider $n$ particles moving forward on the real line, at independent exponentially distributed times with rates given by a function $w$ of its signed distance from the center of mass of the system, and with jump sizes that are iid distributed as $\theta$. The jump rate function $w: \RR \to (0,\infty)$ is assumed to be non-increasing which produces an attractive `flocking' behavior in the system under which the particles below the center of mass jump at a higher rate than those above it. Such models of attractive interacting particle systems arise in many contexts and some of these are discussed later in the introduction. This specific model was introduced in \cite{Balzs2011ModelingFA}, as a model in mathematical finance and for different types of biological flocking phenomena, who studied some basic properties of the system and formulated several interesting open problems. One of the main results of \cite{Balzs2011ModelingFA} shows that when $w$ is bounded, differentiable, with bounded derivative, under some natural mild conditions on the jump sizes, the empirical measure process associated with the particle system converges in probability, as $n\to \infty$, in a suitable path space, to the unique solution of a nonlinear evolution equation that is the natural McKean-Vlasov equation associated with the interacting particle system. For the proof of this `fluid limit' convergence result, as the authors note in their work, all three key steps -- tightness, identifying the limit, uniqueness of the limit-- crucially rely on the boundedness of $w$ and the authors formulate the interesting open problem of treating the unbounded case, e.g. when $w(x) = e^{-x}$, $x\in \RR$. 

One of the first objectives of the current work is to answer this question, with a particular focus on the case $w(x)=e^{-x}$. We remark that when $w$ is unbounded, the situation is very different as particles lagging far behind the mean may jump at  very high rates resulting in a very fast increase of the mean process, which in turn may rapidly increase particle jump rates. Understanding and analyzing this complex feedback phenomenon is one of the key technical challenges of this model. Indeed, even the well-posedness (e.g. non-explosiveness) of the model is not immediate without additional conditions. In Assumption \ref{Z} we formulate a natural condition that suitably balances the jump sizes and jump rates and which ensures the well-posedness of the Markov process describing the evolution of the particle system. Our first goal is to study the large $n$ limit of the empirical measure process (i.e. the fluid limit). A key to proving such a limit result is establishing suitable tightness properties of the (empirical) mean process $\{m_n(t), t \ge 0\}$ for the particle system which requires a detailed probabilistic understanding of different types of jumps of the particles relative to their center of mass. The second main ingredient here is the uniqueness of solutions of the associated McKean-Vlasov equation (see \eqref{eq:mfeq1}) in a suitable class that is needed in order to characterize the weak limit points, once tightness is established. The standard arguments rely on the boundedness and Lipschitz property of $w$ (see e.g.  \cite[Theorem 6.14]{Balzs2011ModelingFA}) whereas for the basic example that we are interested in, namely $w(x) =e^{-x}$, neither of these properties are satisfied. Our proof of uniqueness relies on establishing two important facts: (a) To any solution of the McKean-Vlasov equation, with a given initial condition,  one can associate a unique in law `nonlinear Markov process' ({\di see  Section \ref{sec:modres}, immediately above Theorem \ref{mv}, for a precise definition}); (b) Any two nonlinear Markov processes starting from the same initial condition, under a suitable coupling, are identical. This approach is inspired by \cite{Oelschlager1984AMA}, however,  the proof details that require carefully handling the poor available control on $w$ are substantially more demanding. Using this uniqueness result and suitable tightness estimates we obtain convergence of the empirical measure process to the deterministic limit given by the solution of the McKean-Vlasov equation. By standard results this also immediately yields a propagation of chaos result for the $n$-particle system. (See Theorem \ref{thm.mainFL} for a precise statement of these results).

A second topic of interest in this work is the long-time behavior of the particle system.
Note that as the jumps only move the particles forward, in order to study long-time behavior, it is natural to consider the particle system centered by its center of mass. The paper \cite{Balzs2011ModelingFA} (under assumptions on $w$ noted previously) studied the stability properties of the (centered) particle system when $n=2$ and identified the explicit form of the stationary distribution when the jumps are deterministic or their distribution has a density (see Section 4 therein). Behavior for $n\ge 3$ was left as an open problem. In the current work we establish under suitable conditions-- that cover the setting of  \cite{Balzs2011ModelingFA} with a general $n$ but also include the more demanding case where $w$ is unbounded (and not globally Lipschitz) -- existence and uniqueness of stationary distributions $\pi_n$ of the $n$-dimensional Markov process describing the evolution of the (centered) particle system. Proofs are based on constructing suitable Lyapunov functions that provide quantitative estimates capturing the  attractive behavior suggested by the description of the particle system. These Lyapunov functions also give useful uniform in $n$ tail estimates for the stationary distributions that are needed to understand the large $n$ behavior of these stationary distributions.

Our final goal in this work is to rigorously relate traveling wave solutions of the McKean-Vlasov equation \eqref{eq:mfeq1} with the large $n$ and large time behavior of the underlying particle system. By a traveling wave solution of \eqref{eq:mfeq1} we mean a flow of deterministic measures $\{\mu(t)\}_{t\ge 0}$ where $\mu(t,dx) = \rho(x-ct) dx$ for a suitable function $\rho$ and a constant $c$. The function $\rho$ describes the shape of the traveling wave whereas the constant $c$ gives its speed. The paper \cite{Balzs2011ModelingFA} considers the case where the jump distribution is exponential and 
shows that, for any non constant $w$, there is a unique continuously differentiable traveling wave solution of the McKean-Vlasov equation and in the specific case $w(x) = e^{-\beta x}$, where $\beta >0$, identifies the exact form of the traveling wave function $\rho$ in terms of a Gumbel density function and gives a formula for $c$. However \cite{Balzs2011ModelingFA} leaves open the issue of a rigorous connection between the finite particle system and the traveling wave solution. Indeed their finite time convergence analysis for the fluid limit, which requires $w$ to be bounded, does not cover the case when $w(x) = e^{-\beta x}$. Moreover, to connect the long-time behavior of the fluid limit with the large $n$ behavior of the stationary particle system, one needs  to establish a certain `interchange of limits' property that says that the limits $n \to \infty$ and $t \to \infty$ commute to give the same limiting object. In the current work, we show that, under conditions, the sequence of probability laws of the $n$-particle empirical measures under stationary distribution $\pi_n$ is tight and any weak limit point is a stationary solution of the centered McKean-Vlasov equation. In addition, if $\theta$ is an exponential distribution with rate $\gamma$ and $w(x) = e^{-\beta x}$ for some $0< \beta \le \gamma <\infty$, then the centered version of the McKean-Vlasov equation (see \eqref{FL}) has a unique fixed point $\nu^*$ and furthermore, any random stationary solution of this equation must in fact be non-random and {\di it must be equal to} $\nu^*$ at all times. This fixed point can be identified with the traveling wave solution of \cite{Balzs2011ModelingFA}. 
Together these results say that the empirical measures under stationary distributions $\pi_n$ must converge to the unique fixed point $\nu^*$ in probability making a precise connection with the traveling wave solution of \cite{Balzs2011ModelingFA}. One immediate consequence of this result is the propagation of chaos at $t=\infty$ which says that, for any $k\ge 1$, the first $k$-dimensional marginal of $\pi_n$ converges to $(\nu^*)^{\otimes k}$ as $n \to \infty$. As another consequence of this result we show that the asymptotic speed of the $n$-particle flock, as measured by $\lim_{t\to \infty} m_n(t)/t$, converges to the speed of the traveling wave identified in \cite{Balzs2011ModelingFA}, as $n\to \infty$. This gives a second rigorous connection between the finite particle system and the associated traveling wave solution. A crucial ingredient in proving these results is establishing the fact that any stationary solution of the centered McKean-Vlasov equation must equal the unique fixed point of the equation. Establishing such a property for more general $w$ and $\theta$ remains a challenging open problem.

We remark that in general the analysis of long-time behavior and steady states of 
McKean-Vlasov equations is challenging as even with strong stability properties of the underlying particle system this nonlinear evolution equation may have multiple fixed points and a complex local stability structure.  
Even in cases where there is a unique fixed point for this evolution equation, rigorously relating this fixed point  to the stationary distribution of the $n$-particle system is in general not straightforward. 
One basic approach for this is by establishing \emph{uniform-in-time} propagation of chaos (POC), which says that, for any $k\ge 1$,  a suitable distance between the law of the $k$-marginal of the $n$-dimensional particle system at time instant $t$ and the $k$-fold product measure constructed from the time $t$ solution of the    McKean-Vlasov equation converges to $0$, as $n\to \infty$, uniformly in $t$. However uniform-in-time POC has only been established in a handful of models, mostly diffusion processes, which satisfy strong convexity assumptions, or in the presence of functional inequalities like Poincar\'e and Log-Sobolev inequalities \cite{jourdain2008,jourdain2013propagation,BuFan,durmus2020elementary,lacker2023sharp}.
In contrast, the approach taken in the current paper bypasses such uniform-in-time POC estimates and it would be interesting to see if the approach can be implemented for other particle systems lacking strong convexity properties or where functional inequalities are not available.  On the other hand, a limitation of the approach taken here is that it does not readily yield  any convergence rates.

This work concerns a class of interacting stochastic particle systems in which the source of the interaction is the relative positions  of the particles in the system. The phrase {\em particle systems with topological interaction} has been coined by \cite{CDGP} to describe these types of models which arise in many application areas including ecology, evolutionary biology, engineering, and mathematical finance. One of the most well studied such system is the Atlas model (a special case of `rank-based' diffusions), that describes a system of competing Brownian particles, which arises from problems in stochastic portfolio theory \cite{ferkar}. Mathematically, the $n$-dimensional system is modeled by a $\RR^n$-valued diffusion for which each coordinate performs an independent Brownian motion on the real line except for the lowest ranked particle which receives, in addition,  a drift of $\gamma >0$. This positive drift creates an attractive interaction which leads to a very rich long-time theory for this system, both for the case when $n<\infty$ and $n=\infty$ (cf. \cite{palpit, sar, sartsa, DJO, demtsa, banerjee2022domains, IPS, jourdain2008,shkolnikov2012large,jourdain2013propagation, andreis2019ergodicity} and references therein for  a small sampling of results on this topic and related themes). 
From problems motivated by evolutionary biology, several similar models of interacting particle systems have been considered, cf. \cite{bebrnope, bebrnope2, berzha, bruder, durrem, atar2020hydrodynamics}. For example, \cite{durrem} studies a collection of $n$-particles on the real line in which each of the particles gives birth at rate $1$ to a new 
 particle whose position $y$, relative to its 
 parent’s position $x$ is given through a probability distribution $\rho(x-y)dy$ where 
$\rho$ is a probability density on $\RR$. Whenever a new particle is born, the   current leftmost particle is removed. Among other results, authors study traveling wave solutions  and identify the asymptotic speed (as $t$ and $n$ become large) of the particle system. Analogous attractive interacting particle systems  arise from load-balancing problems in queuing systems in which the incoming jobs are routed to shorter queues in some manner to keep the overall queue-lengths appropriately balanced (cf. \cite{mitzenmacher2001power,lu2011join,der2022scalable,banerjee2023load}). There is also an extensive literature on lattice-based attractive interacting particle systems, e.g. the voter model, contact process, exclusion and zero-range processes (cf. \cite{kipnis2013scaling}). A Gaussian pursuit model where attractive behavior results from particles following their immediate leader is analyzed in \cite{banerjee2016brownian}. Several other models have been studied in the physics literature \cite{ben2007toy,hongler2013local} where particles undergo local interactions (eg. follow the immediate leader)  as well as global interactions (eg. follow the global leader and barycenter-modulated dynamics) to produce emergent attractive behavior. 

A closely related particle system, motivated by  applications in 
distributed parallel simulation, has been studied in \cite{greenberg1996asynchronous,stolyar2023particle,stolyar2023large}.
In this model, each particle in the $n$-particle system jumps forward by a random amount, independently sampled from a common distribution $\theta$, with rate given by a non-increasing function $\eta_n: [0,1] \to \RR_+$ of its \emph{quantile} in the empirical distribution of the system. The compact domain of $\eta_n$ allows for minimal regularity assumptions and \cite{stolyar2023particle} obtains a fluid limit  for the empirical measure process  under the assumption that $\eta_n$ converges uniformly to a strictly decreasing continuous function $\eta$ on $[0,1]$ and additional assumptions on the jump distribution $\theta$. The bounded rate of jumps and special quantile-based form of the associated McKean-Vlasov equation make it amenable to long-time analysis and \cite{greenberg1996asynchronous,stolyar2023large} prove a $L^1$-contraction property in time of the distance between solutions started from different initial conditions. This strong form of asymptotic stability gives convergence to a unique steady state of the centered McKean-Vlasov equation, given by a traveling wave. Furthermore, exploiting this asymptotic stability property of the McKean-Vlasov equation, it is shown in \cite{stolyar2023particle} that the empirical distribution for the stationary  $n$-particle system converges to this traveling wave, under suitable assumptions, as $n \to \infty$. 
 We note that there are some key differences between this model and the one considered in the current work. First, in our model, the unboundedness of the jump-rates is a key technical challenge and the behavior of $w(-x)$ for large $x$ (which grows exponentially in our basic example of interest) plays a crucial role in the system evolution. Second, the form of the dynamics  in \cite{greenberg1996asynchronous,stolyar2023particle,stolyar2023large} yields that the mean of the solution of the McKean-Vlasov equation evolves linearly in time  which enables an analysis in `moving frames'  facilitating study of long-time behavior. In our model, the evolution of the mean function in the McKean-Vlasov equation is not linear and does not take a simple explicit form in general and therefore these methods cannot be used. Finally, the asymptotic stability property is key to the analysis in these papers which readily gives that a stationary solution must be the same as the unique fixed point. In our case we are unable to obtain this type of a strong contractivity property. Indeed proving that all stationary solutions must equal the unique fixed point is one of the main challenges in extending our results to more general $w$ and $\theta$.

 \subsection{Organization of the Main Results}

The following are the main results of this work.

(i) \textbf{Fluid limit and POC: } In Theorem \ref{thm.mainFL}, we establish the fluid limit for the empirical measure process under suitable conditions on $w$ and jump distributions. We also give a pointwise propagation of chaos result. The proof requires several new ideas, noted below the theorem. 
These include, in particular, suitably controlling the empirical mean process $m_n(\cdot)$ uniformly in $n$ and establishing uniqueness of solutions to the McKean-Vlasov equation. 

(ii) \textbf{Ergodicity of the finite particle system: }By constructing suitable Lyapunov functions, we deduce in Theorem \ref{stat.ex} the existence of a stationary distribution $\pi_n$ for the centered $n$-particle system (viewed from the centre of mass $m_n(\cdot)$) for each $n \in \NN$, along with uniform-in-$n$ bounds on the stationary expectation of certain functions which provide uniform estimates on the tail behavior of $\pi_n$. The stationary distribution is shown to be unique under mild natural `spread-out' assumptions on the jump distribution. It is also shown that $m_n(t)/t$ converges as $t \to \infty$ for fixed $n$ and the limit is identified in terms of $w$ and $\pi_n$. A useful uniform integrability property under $\pi_n$ is obtained in Theorem \ref{stat.pr}.

(iii) \textbf{POC at time $t=\infty$ and traveling waves: } Theorems \ref{stat.ex} and \ref{stat.pr} establish the integrability required for $\pi_n$ for the particle system started from law $\pi_n$ to satisfy the hypotheses of Theorem \ref{thm.mainFL} (a) and (b) which give tightness of the empirical measure process and characterize the subsequential weak limit points of this process. Taking a subsequential fluid limit then produces a solution to the McKean-Vlasov equation started from a (possibly random) measure which is obtained as the (subsequential) limit of the empirical measures under $\pi_n$. After centering these solutions $\mu(t)$ by the limiting center of mass $m(t)$ at time $t$, we obtain a flow of probability measures $\{\nu(t)\}$ which is necessarily stationary, namely  the law of $\nu(t)$ is the same for all $t$. Hence, if one can prove in addition that the fluid limit equation has a fixed point $\nu^*$ such that any stationary solution $\nu(\cdot)$ is, in fact, equal to $\nu^*$ (i.e.  $\nu(t)= \nu^*$ for all $t$), it follows that the empirical measures under $\pi_n$ converge weakly to $\nu^*$ as $n \to \infty$ and POC is established at time $t=\infty$ (see Theorem \ref{thm.stat.sol}). The last condition is explicitly verified in Theorem \ref{thm.ss.exp} for the case when the jump distribution is Exponential with rate $\gamma$ and $w(x) = e^{-\beta x}$ for some $0 < \beta \le \gamma < \infty$. $\nu^*$ in this case corresponds to a Gumbel-type distribution \eqref{eq:838} that arose as a traveling wave solution in \cite{Balzs2011ModelingFA}. These techniques also allow us to identify the asymptotic velocity  $\lim_{n \to \infty} \lim_{t \to \infty} \frac{m_n(t)}{t}$. These results are stated in Theorems \ref{thm.stat.sol} and \ref{thm.ss.exp}.

\subsection{Notation}\label{sec:notat}
We denote a $k$-dimensional  vector $(x_1, \ldots x_k) \in \RR^k$ as $\bfx$. 
The set $\{1, \ldots , n\}$ will be denoted as $[n]$.
For a Polish space $\cls$,
$C_b(\cls)$ (resp. $\MM_b(\cls))$ denotes the class of all real bounded continuous (resp. measurable) functions on $\cls$.  Borel $\sigma$-field on a Polish space $\cls$ will be denoted as $\clb(\cls)$. Space of probability measures on $(\cls, \clb(\cls))$ will be denoted as $\clp(\cls)$ and equipped with the topology of weak convergence.
For a random variable $X$ with values in a Polish space $\cls$, 
$\mathcal{L}(X)$ denotes the probability law of $X$, and given a sub $\sigma$-field $\clg$, $\mathcal{L}(X \mid \clg)$ will denote the conditional law of $X$ given $\clg$ (which is a $\clg$-measurable $\clp(\cls)$ valued random variable). For probability measures $\mu, \nu \in \clp(\RR)$ we use the notation $\mu \le_d \nu$ to denote that $\mu$ is stochastically dominated by $\nu$.
$\text{Lip}_1$ denotes the class of all Lipschitz functions from $\RR$ to $\RR$ with Lipschitz constant bounded by $1$. 
$ \text{Id}$ denotes the identity function on $\RR$, i.e. $\text{Id}(x)=x$, for every $x \in \RR$. 
For the sake of convenience, we will denote the square of the positive part of $x \in \RR$, $[(x)^+]^2$ as $(x)^{+^2}$. 
Given measurable $\psi: \cls_1 \times \cls_2 \to \RR$,  a $\cls_1$ random variable $X$, and  $\mu \in \clp(\cls_2)$, we will write $\int \psi(X, y) \mu({\di dy})$ (when the integral makes sense) as $\EE_Z(\psi(X, Z))$ where $Z$ is a $\cls_2$ valued random variable independent of $X$ with $\cll(Z)= \mu$.
For a Polish space $\cls$, we denote by $\mathcal{D}([0,\infty):\cls)$ the space of functions $f: [0,\infty) \rightarrow \cls$ which are right-continuous and have finite left-limits (RCLL) endowed with the usual Skorokhod topology.
The space of continuous functions $f: [0,\infty) \rightarrow \cls$ equipped with the topology of local uniform convergence will be denoted as $\mathcal{C}([0,\infty):\cls)$. For a given $T>0$, the spaces $\mathcal{D}([0,T]:\cls)$ and $\mathcal{C}([0,T]:\cls)$ are defined similarly.
A sequence $\{X_n\}$ of random variables with values in $\mathcal{D}([0,\infty):\cls)$ is said to be $\clc$-tight if the corresponding sequence of probability laws is relative compact and any limit point of the latter sequence is supported on $\mathcal{C}([0,\infty):\cls)$.
  $\mathcal{P}_1(\RR)$ will denote the subset of $\mathcal{P}(\RR)$ consisting of all $\mu$ for which $\int_\RR |x| \mu(dx) < \infty $. 
This space is equipped with the \textit{Wasserstein} $1$ metric, denoted by $\mathcal{W}_1$, and  defined as follows. For measures, $\mu$, $\nu \in \mathcal{P}_1(\RR)$, 
$$\mathcal{W}_1(\mu, \nu)\doteq \inf_{\pi\in \Pi(\mu,\nu)} \int_{\RR^2} |x-y|\,\pi(dx,\, dy), $$ where $\Pi(\mu,\nu)$ denotes the set of all probability measures on $\RR^2$ with first marginal as $\mu$ and second marginal as $\nu$ (i.e. the set of all couplings of $\mu$ and $\nu$). Equivalently, 
$$\mathcal{W}_1(\mu, \nu)= \sup_{\substack{ f\in\text{Lip}_1}} \int_\RR f(x) (\mu-\nu)(dx).$$

For $\cls$ valued random variables $X_n, X$, we denote the convergence in distribution of $X_n$ to $X$ by $X_n \Rightarrow X$. 

For $\theta_0 \in \clp(\RR_+)$,  a $\RR_+$ valued stochastic process $\{V(t), t\ge 0\}$, with $V(0)=0$ will be called a $\mathcal{J}(\theta_0)$-jump process if it is a Markov process with generator
$$\cll^V f(v) = \int f(v+y) \theta_0(dy) - f(v), \; v \in \RR_+, f \in \MM_b(\RR_+).$$

We use $\lceil \cdot \rceil: \RR \to \RR$ to denote the smallest integer greater than a given real number. The $n$-dimensional vector $(1, 1, \ldots 1)$ will be denoted as $\one$.
The summation $\sum_{1\le j \le n: j\neq k}$ will be written as $\sum_{j: j\neq k}$.
Bin$(n,p)$ denotes a Binomial random variable with $n$ trials and the probability of success $p$. $Exp(\lambda)$ denotes an Exponential random variable with mean $\lambda^{-1}$. 
For a measurable function, $f: \RR \to \RR$ and a measure $\mu$ on $\RR$, we denote $\langle f, \mu \rangle \doteq \int_\RR f\,d\mu$ when the latter is well defined. When $f = \mbox{Id}$ we write the left side above as $\langle x, \mu \rangle$.
\section{Main Results}
\label{sec:modres}
As noted in the introduction, our primary focus in this work is when $w$ is unbounded. In this case, one needs additional conditions on the jump rates and jump distribution to  ensure that the stochastic process associated with the $n$-particle system is well defined.
 
 Throughout $Z$ will denote a random variable distributed as $\theta$ and we will denote $\EE Z = \mn$ and $\EE Z^2 = \vt$ with $\mn, \vt \in (0, \infty)$.

The infinitesimal generator of the Markov process that we are interested in studying can be given as follows. For a bounded measurable map $f: \RR^n \to \RR$,
\begin{equation}\label{eq:gene}
\cll_nf(\bfx) \doteq  \sum_{j=1}^n \EE_Z[f(\bfx + Z e_j) - f(\bfx)] w(x_j - \bar\bfx),\; \bar\bfx \doteq \frac{1}{n}\sum_{i=1}^n x_i,\; \bfx = (x_1, \ldots , x_n) \in \RR^n.
\end{equation}
{\di We will impose the following basic assumption that suitably balances high jump rates with big jump sizes.
The final part of the assumption will be used to control the `overshoot' (in a single jump) of a particle lagging behind the mean in terms of its distance from the mean, the jump function $w$, and moments of $Z$.

\begin{assumption}\label{Z}
The jump rates and jump sizes satisfy: $\EE Z^3 < \infty$ and
    \begin{equation}
            \sup_{a \in \RR_+} w(-a) \mathbb{E} \left[(Z-a)^+ \vee (Z-a)^{+^2} \right]   \doteq c_{w}   < \infty.
        \end{equation}
\end{assumption}
Note that this latter assumption is satisfied if $w$ is bounded (and if the first part of the assumption holds). It also holds if the tails of the distribution of $Z$ are decaying sufficiently fast in comparison to the growth of $w$ at $-\infty$, e.g. when for $0<\beta \le \gamma$, $w(x)= e^{-\beta x}$, $x \in \RR_+$, and $\theta$ is $Exp(\gamma)$.}

The following result gives the existence of a Markov process with the above generator.
\begin{proposition}\label{prop:exis}
Suppose that Assumption \ref{Z} holds.
Then, for any $\bfx \in \RR^n$, there is a unique in law Markov process 
$\{\bfX^n(t), t \ge 0\}$ with values in $\RR^n$ such that $\bfX^n(0) = \bfx$ and such that, for every bounded and measurable $f: \RR^n \to \RR$,
$$M^n_f(t) \doteq f(\bfX^n(t)) - f(\bfx) - \int_0^t \cll_nf(\bfX^n(s)) ds
$$
is a local martingale with respect to the filtration $\clf^n_t \doteq \sigma \{\bfX^n(s): s\le t\}$.
\end{proposition}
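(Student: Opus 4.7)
The plan is to construct $\bfX^n$ pathwise via its embedded jump chain, use a Lyapunov estimate to rule out explosion, and then read off the martingale and uniqueness statements from the construction. Set $\tau_0 = 0$, $\bfX^n(0) = \bfx$; inductively, given $\bfX^n(\tau_k)$ with centered coordinates $y_j^{(k)} \doteq X^n_j(\tau_k) - \bar{\bfX}^n(\tau_k)$ and total rate $\Lambda_k \doteq \sum_{j=1}^n w(y_j^{(k)}) \in (0,\infty)$, draw an $\mathrm{Exp}(\Lambda_k)$ waiting time $T_{k+1}$, an index $J_{k+1}$ chosen with probability $w(y_j^{(k)})/\Lambda_k$, and a jump size $Z_{k+1}\sim\theta$, all mutually independent given the past. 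Set $\tau_{k+1} = \tau_k + T_{k+1}$, $\bfX^n(\tau_{k+1}) = \bfX^n(\tau_k) + Z_{k+1}\, e_{J_{k+1}}$, and hold $\bfX^n$ constant in between. This defines $\bfX^n$ unambiguously on $[0,\tau_\infty)$ with $\tau_\infty \doteq \sup_k \tau_k$.

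The main task is to show $\tau_\infty = \infty$ a.s. For this I would use the Lyapunov function $V(\bfx) \doteq \sum_{j=1}^n (x_j - \bar{\bfx})^2$. Writing $y_j = x_j - \bar{\bfx}$, a jump of particle $j$ by $z$ shifts $y_j$ to $y_j + z(n-1)/n$ and $y_i$ to $y_i - z/n$ for $i\neq j$; using $\sum_i y_i = 0$, a direct computation shows the post-jump increment of $V$ equals $2 y_j z + \tfrac{n-1}{n} z^2$, so
\[
\mathcal{L}_n V(\bfx) = 2\mn\sum_{j=1}^n y_j w(y_j) + \frac{\vt(n-1)}{n}\sum_{j=1}^n w(y_j) = 2\mn\sum_{j=1}^n w(y_j)\bigl(y_j + K\bigr),
\]
where $K \doteq \vt(n-1)/(2\mn n) > 0$. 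Indices with $y_j < -K$ contribute non-positively and are dropped; for $y_j \ge -K$, monotonicity of $w$ gives $w(y_j)\le w(-K)$, hence
\[
\mathcal{L}_n V(\bfx)\le 2\mn\, w(-K)\sum_{j=1}^n (y_j + K)^+ \le 2\mn\, w(-K)\,\Bigl(\sqrt{n\, V(\bfx)} + nK\Bigr),
\]
the last step by Cauchy--Schwarz on $\sum_j y_j^+ \le \sqrt{n\sum_j (y_j^+)^2}\le \sqrt{nV(\bfx)}$. Thus $\mathcal{L}_n V \le c_1(1+V)$ for a finite constant $c_1 = c_1(n,\theta,w)$.

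Non-explosion then follows from a standard localization. With $\sigma_M = \inf\{t: V(\bfX^n(t))\ge M\}$, optional stopping applied to $V(\bfX^n(\cdot)) - \int_0^\cdot \mathcal{L}_n V(\bfX^n(s))\,ds$ at $\sigma_M\wedge\tau_k$, combined with Gronwall, yields $\EE V(\bfX^n(t\wedge \sigma_M\wedge\tau_k))\le (V(\bfx)+c_1 t)\,e^{c_1 t}$ uniformly in $k,M$. Sending $k, M\to\infty$ and applying Fatou gives $\EE V(\bfX^n(t\wedge\tau_\infty)) < \infty$ for every $t$. On $\{\tau_\infty < \infty\}$, however, $\sum_k T_{k+1}<\infty$ forces $\Lambda_k\to\infty$, and since $w$ is non-increasing and finite-valued this in turn forces $\min_j y_j^{(k)}\to -\infty$, whence $V(\bfX^n(\tau_k))\to\infty$; the uniform integrability bound rules this out, so $\PP(\tau_\infty\le t) = 0$ for every $t$. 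The local martingale property of $M^n_f$ is then a routine consequence of the piecewise-constant, Poisson-jump structure, and uniqueness in law is immediate: the finite-dimensional distributions of $\bfX^n$ are determined by the embedded chain transitions and the exponential inter-arrival times, both of which depend only on $\mathcal{L}_n$ and $\bfx$.

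The main obstacle is taming the positive contribution $\vt(n-1)/n \cdot \sum_j w(y_j)$ to $\mathcal{L}_n V$: this is governed by the total jump rate, which is genuinely unbounded in the state because $w$ is. The crucial structural cancellation is $\sum_j y_j w(y_j)\le 0$: writing $\sum_j y_j w(y_j) = \sum_j y_j(w(y_j)-w(0))$, each summand is non-positive because $w$ is non-increasing and $\sum_j y_j = 0$, and this negative drift --- repackaged through the shift $y_j + K$ --- is what delivers the linear-in-$\sqrt{V}$ bound. Assumption \ref{Z} enters here only through $\mn, \vt < \infty$ (which follow from $\EE Z^3 < \infty$); its finer overshoot component is reserved for the fluid-limit analysis later in the paper.
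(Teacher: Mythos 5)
Your proof is correct, and it takes a genuinely different route from the paper's. The paper truncates the jump rates whenever the running maximum exceeds a threshold $R$, decomposes $A^{n,R}(t) = \max_i X^{n,R}_i(t)$ as drift plus martingale, and invokes the overshoot bound $c_w < \infty$ of Assumption~\ref{Z} to show $\EE A^{n,R}(T)$ is bounded uniformly in $R$, so the truncated processes stabilize as $R\to\infty$. You instead build the jump chain directly and certify non-explosion via the Lyapunov function $V(\bfx) = \sum_j (x_j - \bar{\bfx})^2$: the identity $\cll_n V = 2\mn\sum_j w(y_j)(y_j + K)$ with $K = (n-1)\vt/(2\mn n)$, together with the structural cancellation from $\sum_j y_j = 0$ and the monotonicity of $w$, yields $\cll_n V \le c_1(1+V)$, and the Gronwall--Fatou scheme then rules out the blow-up of $V$ that explosion would force (since explosion needs $\Lambda_k \to \infty$, hence $\min_j y_j^{(k)} \to -\infty$, hence $V\to\infty$). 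This is cleaner for the purpose at hand and works under a strictly weaker hypothesis --- only $\mn, \vt < \infty$ are used, not the overshoot component of Assumption~\ref{Z}; the latter is genuinely needed elsewhere in the paper (e.g.\ for the $n$-uniform mean bound), but not for fixed-$n$ well-posedness. Your choice of $V$ is also well aligned with the rest of the paper: up to a factor of $n$ it is exactly the Lyapunov function used in Section~6 to establish positive recurrence of the centered process $\bfY^n$. Two small things worth spelling out in a complete write-up: the Dynkin identity should be justified via the double localization at $\sigma_M\wedge\tau_k$ (the process is then bounded and of bounded jump rate up to the stopping time, so the stopped $M^n_V$ is a genuine martingale), and the uniqueness-in-law claim should note explicitly that for a non-explosive pure-jump process the finite-dimensional distributions are determined recursively by the embedded-chain transition kernel and the exponential holding rates, both of which are read off from $\cll_n$ and $\bfx$.
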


 Proof is given in Section \ref{sec:exis}.

{\di \begin{remark}
 The basic idea in the proof is to argue that under Assumption \ref{Z} one cannot have `explosion', that is, infinitely many jumps on any compact time interval so that the process can be constructed recursively from one jump to the next. When Assumption \ref{Z} fails, such explosion can in fact occur. We illustrate this with a simple example.

Consider the case $n=2$, let $w(x)=e^{-\beta x}$ for some $\beta>0$, and let the jump distribution $\theta$ be supported on $\{1\}\cup\{a_m:m\ge 1\}$, where
\[
a_m \doteq 2^{2^m}, \qquad \PP(Z=1)=\frac12, \qquad \PP(Z=a_m)=c_0 m^{-2}, \quad m\ge 1,
\]
with $c_0>0$ chosen so that $\sum_{m\ge 1} c_0m^{-2}=\frac12$. Set $d_m \doteq a_m/2$ and suppose that initially $|X_2(0)-X_1(0)|=d_{m_0}$ for some large $m_0$.

Let $D(t)\doteq |X_2(t)-X_1(t)|$ be the gap process. When $D(t)=d$, the lagging particle jumps at rate $e^{\beta d/2}$ while the leading particle jumps at rate $e^{-\beta d/2}$. Starting from a level $d_m$, consider the first $N_m \doteq m^3$ jumps. With very high probability, none of these jumps is by the leading particle, since the probability of a leading-particle jump at any jump epoch is exponentially small in $d_m$. Conditional on this event, the jump sizes are i.i.d.\ with law $\theta$.

Now note that if the lagging particle jumps by an amount $z$, then the new gap is $|D-z|$. Since $a_m=2d_m$, a jump of size $a_m$ does not force the gap to grow; rather, it reflects the gap across $d_m$:
\[
d_m-r \mapsto d_m+r, \qquad d_m+r \mapsto d_m-r.
\]
On the other hand, jumps of size at most $a_{m-1}$ can change the gap by at most $a_{m-1}$ each. Thus, before the first jump of size at least $a_{m+1}$, the gap remains trapped in the interval
\[
[d_m-N_ma_{m-1},\, d_m+N_ma_{m-1}].
\]
Since $N_ma_{m-1}=m^3 2^{2^{m-1}}=o(d_m)$, for all large $m$ the gap stays between, say, $\frac34 d_m$ and $\frac54 d_m$ throughout this stage.

Also, with probability tending to $1$ as $m\to\infty$, among these first $N_m$ lagging-particle jumps there is at least one jump of size at least $a_{m+1}$, because
\[
\PP(Z\ge a_{m+1})=\sum_{k\ge m+1} c_0k^{-2}\asymp m^{-1},
\]
and hence $N_m\PP(Z\ge a_{m+1})\asymp m^2\to\infty$, implying that the probability of no such jump in the $N_m$ steps is $(1 - \PP(Z\ge a_{m+1}))^{N_m} \le \exp\{-N_m\PP(Z\ge a_{m+1})\} \to 0$. If such a jump occurs while $D\le \frac54 d_m$, then the new gap satisfies
\[
D' = z-D \ge a_{m+1}-\frac54 d_m > d_{m+1},
\]
for all large $m$, since $a_{m+1}=a_m^2=4d_m^2$ and $d_{m+1}=a_{m+1}/2=2d_m^2$.

Thus, by an application of the Borel-Cantelli Lemma, almost surely, the gap reaches successively the levels $d_m,d_{m+1},d_{m+2},\dots$. On this event, at level $d_m$ the total jump rate is at least $e^{3\beta d_m/8}$, so the waiting time to move from level $d_m$ to level $d_{m+1}$ is at most of order $N_m e^{-3\beta d_m/8}$. Since $\sum_m N_m e^{-3\beta d_m/8}<\infty$, these transition times are summable. It follows that, with positive probability, infinitely many jumps occur in finite time. This shows that when Assumption \ref{Z} fails, explosion may occur.
\end{remark}
}

 For the Markov process $\bfX^n$ as in Proposition \ref{prop:exis}, define
 processes $\{m_n(t), t\ge 0\}$ and $\{\mu_n(t), t\ge 0\}$ with sample paths in $\cld([0,\infty): \RR)$ and $\cld([0,\infty): \clp_1(\RR))$, respectively, as
\begin{equation}
m_n(t) \doteq \frac{1}{n}\sum_{i=1}^n X^n_i(t), \;\; \mu_n(t) \doteq \frac{1}{n}\sum_{i=1}^n \delta_{X^n_i(t)}, \; t \ge 0, \label{eq:133}
\end{equation}
 where $\bfX^n(t) = (X^n_1(t), \ldots , X^n_n(t))'$.
 From the Markov property in Proposition \ref{prop:exis}, it is easy to verify that, for $f\in \text{Lip}_1 $ and $t\geq 0$, with
 $$L\langle f, \mu_n(t) \rangle \doteq  \langle g_f(\cdot)w(\cdot-m_n(t)), \, \mu_n(t)\rangle,$$
 where \begin{equation}\label{eq:gf} g_f(x) \doteq \EE_Z[f(x+Z)] - f(x), x \in \RR,\end{equation}
  $$A_{t,f}(\mu_n(\cdot))\doteq \langle f, \mu_n(t) \rangle - \langle f, \mu_n(0) \rangle - \int_0^t L \langle f, \mu_n(s) \rangle\, ds,$$ is a $\clf^n_t$ - local martingale starting from $0$.

  Our first result will establish the convergence of $\mu_n$, in probability, in $\cld([0,\infty): \clp_1(\RR))$ to a suitable deterministic limit. From the above (local) martingale property one expects that the limit, denoted as $\mu$, should satisfy the equation 
  \begin{equation}\label{eq:mfeq0}
  A_{t,f}(\mu(\cdot))=0, \mbox{ for all } t\ge 0 \mbox{ and } f \in \text{Lip}_1,
  \end{equation}
  where 
  \begin{equation}\label{mfeq}
    A_{t,f}(\mu(\cdot)) \doteq \langle f,\mu(t) \rangle - \langle f,\mu(0) \rangle - \int_0^t \left\langle g_f(\cdot)w(\cdot-m(s) ), \, \mu(s)\right\rangle\, ds,
\end{equation}
and $m(s)\doteq \langle x, \mu (s) \rangle$.
Equation \eqref{eq:mfeq0} above is the natural McKean-Vlasov equation associated with the interacting particle system and
in \cite{Balzs2011ModelingFA},  it is referred to as the mean field equation. We will use these two terms interchangeably.  It is shown in \cite{Balzs2011ModelingFA}, under the conditions of that paper (in particular the condition that $w$ is bounded with bounded derivative), that this equation has a unique solution. Note that, when $w$ is not bounded, one needs additional conditions on $\mu$ to make sense of the right side of the last display.
Specifically, this expression is meaningful if $\mu \in \clc([0,\infty): \clp_1(\RR))$ satisfies $\int_0^t\int_{\RR} w(x-m(s)) \mu(s,dx) dt <\infty$ for every $t \ge 0$.
Thus, throughout, we will refer to the following as the McKean-Vlasov or the mean field equation for our system
\begin{equation}\label{eq:mfeq1}
 \int_0^t \langle w(\cdot-m(s)), \mu(s)\rangle ds <\infty, \;  A_{t,f}(\mu(\cdot))=0, \mbox{ for all } t\ge 0 \mbox{ and } f \in \text{Lip}_1.
  \end{equation}
Our next result shows the unique solvability of this equation under suitable conditions on $w$ (that allow for unbounded $w$). This result will be used to  characterize the limit in probability of $\mu_n$ by the unique solution of \eqref{eq:mfeq1}. 
We now introduce the key assumptions that will be needed for this convergence result.
Throughout we will assume, without loss of generality, that $\bfX^n$ for all $n$ are given on a common probability space $(\Om, \clf, \PP)$.

The first  condition imposes a suitable smoothness requirement on $w$.
\begin{assumption}\label{ass.w}
    For each $x \in \RR$, $w$ is Lipschitz on $[x, \infty)$ with Lipschitz constant, $L(x) \in (0,\infty)$.
\end{assumption}
Note that this assumption is satisfied, e.g. when for some $\beta>0$, $w(x) = e^{-\beta x}$, $x \in \RR_+$.
The next set of conditions are on the initial data.
The first one below imposes a certain uniform integrability condition on the initial data. In what follows, for $\gamma \in \clp(\RR)$, $A \in \RR$, and suitable $f: \RR \to \RR$, we will occasionally write  
$\int_{(-\infty, A)} f(x) \gamma(dx)$ as $\int_{-\infty}^A f(x) \gamma(dx)$. Similarly,  $\int_{\RR} f(x) \gamma(dx)$ will occasionally be written as $\int_{-\infty}^{\infty} f(x) \gamma(dx)$, or simply as $\int f(x) \gamma(dx)$.
\begin{assumption}\label{ass.t1}
   For any $\eta>0$, 
        \begin{equation}
            \lim_{B\to \infty} \limsup_{n \to \infty} \mathbb{P}\left(\int |x| \mathbf{1}(|x|\geq B) \mu_n(0,dx) \geq \eta \right) =0 .
        \end{equation}
\end{assumption}
The next condition ensures that the initial configuration has a suitably small left tail. 
\begin{assumption}\label{ass.char}
    For any $\epsilon, c >0$, 
    \begin{equation}
        \lim_{A\to \infty} \limsup_{n\to \infty} \mathbb{P} \left[ \int_{-\infty}^{-A} w(x-c)\mu_n(0,dx)\geq \epsilon \right]=0.
    \end{equation}
\end{assumption}
The above two conditions are clearly satisfied if for some compact $K \subset  \RR$, $X^n_i(0) \in K$, for all $i\in [n]$ and $n \in \NN$, a.s. However, we will need to allow for more general types of initial conditions in our study of the long-time behavior of the particle system, where such a uniform compact containment does not hold but Assumptions \ref{ass.t1}  and \ref{ass.char} are satisfied.

\begin{remark}\label{rem:conds}
    We remark that Assumption \ref{ass.char} holds if
    the following two properties hold. 
    \begin{enumerate}[(a)]
        \item\label{remc1} For any $c>0$, $\limsup_{x\to -\infty} \frac{w(x-c)}{w(x)} < \infty$.
        \item\label{remc2} $$ \limsup_{n\to \infty} \mathbb{E} \left[ \int |x|(1+w(x)) \mu_n(0,dx)
    \right] < \infty.$$  
    \end{enumerate}
 To see this, fix $c>0$ and let $c_0 \doteq \sup_{x<0} \frac{w(x-c)}{w(x)}$. Note that $c_0<\infty$. 
 Then using  (\ref{remc2}), and the non-increasing property of $w$, we have
 $$\limsup_{n\to \infty} \mathbb{E} \left[\int |x| w(x-c)\mu_n(0,dx) \right] \leq \limsup_{n\to \infty} \mathbb{E} \left[ \int |x| (c_0w(x)+w(-c)) \mu_n(0,dx) \right]  < \infty.$$ Hence, for any $\epsilon>0$, 
    \begin{align*}
        \lim_{A\to \infty} \limsup_{n\to \infty} \mathbb{P} \left[\int_{-\infty}^{-A} w(x-c) \mu_n(0,dx) \geq \epsilon \right] &\leq \lim_{A\to \infty} \limsup_{n\to \infty} \epsilon^{-1}\mathbb{E} \left[ \int_{-\infty}^{-A} w(x-c) \mu_n(0,dx) \right]\\
        &\leq \lim_{A\to \infty} \limsup_{n\to \infty} (A\epsilon)^{-1}\mathbb{E} \left[ \int |x| w(x-c) \mu_n(0,dx) \right] =0.
    \end{align*}
    \end{remark}

The final condition on the initial data gives suitable convergence of the initial empirical measure.
For a probability measure $\gamma \in \clp_1(\RR)$, consider the following two properties:
\begin{equation}\label{cM1}
\begin{aligned} \mbox{ For every } a\in \RR, \mbox{ there exists } \delta>0 & \mbox{ such that, with } L(\cdot) \mbox{ as in Assumption \ref{ass.w}, }\\
        &\int_\RR e^{\delta L(x-a)} \gamma(dx) < \infty.
        \end{aligned}
\end{equation}
\begin{equation}\label{cM2}
\mbox{ For some } \beta \in (0,1], \, \int_\RR |x|^{1+\beta} \gamma(dx)<\infty.
\end{equation}
\begin{assumption}\label{ass.init}
    As $n \to \infty$,  $\mu_n(0) \to \gamma$ in probability, in $\mathcal{P}_1(\RR)$, where $\gamma \in \mathcal{P}_1(\RR)$ satisfies
    \eqref{cM1} and \eqref{cM2}.
     \end{assumption}
\subsection{Fluid limit and propagation of chaos} \label{sec:flpoc}
We now present our first main result concerning the convergence on compact time intervals of the empirical measure process encoding particle locations, as $n \to \infty$.  Proof of the result is given in Section \ref{sec:pf2.8}. Define
\begin{align*}
\clm &\doteq \{\mu \in  \mathcal{C}([0,\infty):\mathcal{P}_1(\RR)): \mu(0) \mbox{ satisfies conditions \eqref{cM1} and \eqref{cM2}} \}.
\end{align*}
\begin{theorem}\label{thm.mainFL}
We have the following.
\begin{enumerate}[(a)]
\item Suppose that Assumptions \ref{Z}, \ref{ass.t1}, and \ref{ass.char} hold. Then 
$\{\mu_n\}_{n\in \NN}$ is $\mathcal{C}$-tight in $\mathcal{D}([0,\infty):\mathcal{P}_1(\mathbb{R}))$.
\item Suppose in addition that Assumption \ref{ass.w} holds. Then the subsequential distributional limit points  $\mu$ of $\mu_n$, in  
    $\mathcal{D}([0,\infty):\mathcal{P}_1(\mathbb{R}))$, satisfy the mean field equation \eqref{eq:mfeq1}.
\item Suppose that in addition Assumption \ref{ass.init} holds. Then there is a unique $\mu \in \clm$ that solves the McKean-Vlasov equation \eqref{eq:mfeq1} and satisfies
    $\mu(0) = \gamma$.
     Furthermore, $\mu_n$ converges in probability, in $\mathcal{D}([0,\infty):\mathcal{P}_1(\mathbb{R}))$, to  $\mu$. 
\item (Propagation of chaos.) Suppose that Assumptions \ref{Z} and \ref{ass.w} hold and $X^n_1(0), \dots, X^n_n(0)$ are iid with distribution $\gamma$, where $\gamma$ satisfies \eqref{cM1} and \eqref{cM2}. Then $\mu_n$ converges in probability, in $\mathcal{D}([0,\infty):\mathcal{P}_1(\mathbb{R}))$, to  $\mu$ as given in part 
(c). Moreover, for any $t\ge 0$, the law of $\bfX^n(t)$ is $\mu(t)$-chaotic, i.e.    for any $k \in \NN$, a bounded continuous $f: \RR^k \rightarrow \RR$, and $t \ge 0$,
$$
\EE\left[f\left(X^n_1(t),\dots,X^n_k(t)\right)\right] \to \int_{\RR^k}f(x_1,\dots,x_k)\mu(t,dx_1)\dots\mu(t,dx_k) \ \text{ as } n \to \infty.
$$
\end{enumerate}
\end{theorem}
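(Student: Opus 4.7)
The overall plan is to follow the classical fluid-limit roadmap — tightness of $\{\mu_n\}$, identification of subsequential limits as solutions of \eqref{eq:mfeq1}, and uniqueness of the limit — but with substantial technical care to accommodate the unbounded rate $w$. For part (a), I would verify $\mathcal{C}$-tightness in $\mathcal{D}([0,\infty):\mathcal{P}_1(\RR))$ via Jakubowski's criterion. Since the topology on $\mathcal{P}_1(\RR)$ is characterized by testing against $\text{Lip}_1$ and uniform integrability of $\text{Id}$, I first establish $\mathcal{C}$-tightness of $\{\langle f,\mu_n(\cdot)\rangle\}$ for each $f\in\text{Lip}_1$ via Aldous–Rebolledo applied to the decomposition $\langle f,\mu_n(t)\rangle = \langle f,\mu_n(0)\rangle + \int_0^t L\langle f,\mu_n(s)\rangle\,ds + M^n_f(t)$, using the quadratic variation bound for $M^n_f$. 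The $\mathcal{W}_1$-strengthening requires uniform control of $\sup_{t\le T}\langle |x|,\mu_n(t)\rangle$, which in turn requires uniform control of the mean process $m_n(\cdot)$. The critical ingredient is Assumption \ref{Z}: for any particle lagging at signed distance $-a$, the expected per-jump overshoot weighted by its rate is bounded by $c_w$, limiting how fast far-behind particles can push $m_n$. Combined with Assumption \ref{ass.t1} to handle the initial tails and Assumption \ref{ass.char} to control the contribution of the left tail to the jump intensity, this yields the required uniform integrability.

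For part (b), given a subsequential weak limit $\mu$ of $\mu_{n_k}$, I would pass to the limit in $A_{t,f}(\mu_n(\cdot)) = M^n_f(t)$ for $f\in\text{Lip}_1$. The martingale term vanishes in $L^2$ by the quadratic variation estimate from (a). The nontrivial step is the drift: since $w$ is unbounded, I would truncate it at a level $A$, use Assumption \ref{ass.w} (local Lipschitz on $[x,\infty)$) to pass to the limit in the truncated integral by continuity, then bound the truncation error using Assumption \ref{ass.char} propagated from time $0$ to time $t$ via a Gronwall-type argument on $\int_{-\infty}^{-A} w(x-c)\mu_n(s,dx)$. This also ensures the integrability requirement $\int_0^t\langle w(\cdot - m(s)),\mu(s)\rangle\,ds<\infty$ in \eqref{eq:mfeq1} for the limit.

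Part (c) is the main obstacle. Following \cite{Oelschlager1984AMA}, my plan is: (i) associate to any solution $\mu\in\clm$ of \eqref{eq:mfeq1} a nonlinear Markov process $\{Y(t)\}$ on $\mathcal{D}([0,\infty):\RR)$ constructed via a Poisson random measure $N(ds\,du\,dz)$ on $\RR_+\times\RR_+\times\RR_+$ with intensity $ds\otimes du\otimes \theta(dz)$, with dynamics
\begin{equation*}
Y(t) = Y(0) + \int_0^t\int_{\RR_+}\int_{\RR_+} z\,\mathbf{1}\{u\le w(Y(s^-)-m(s))\}\,N(ds\,du\,dz),
\end{equation*}
verifying via Itô's formula that $\mathcal{L}(Y(t))=\mu(t)$ solves the same linear Fokker-Planck-type equation as $\mu(t)$ with coefficient frozen along $\mu$; (ii) given two solutions $\mu_1,\mu_2$ with $\mu_1(0)=\mu_2(0)=\gamma$, construct $Y_1,Y_2$ on a common probability space driven by the \emph{same} PRM starting from $Y_1(0)=Y_2(0)$. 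The basic coupling estimate takes the form
\begin{equation*}
\EE|Y_1(t)-Y_2(t)| \le C\int_0^t \EE\bigl[|Y_1(s)-Y_2(s)|\,L(Y_1(s)\wedge Y_2(s) - m_1(s)\vee m_2(s))\bigr]\,ds + C\int_0^t |m_1(s)-m_2(s)|\,ds,
\end{equation*}
after which $|m_1(s)-m_2(s)|\le \EE|Y_1(s)-Y_2(s)|$ closes the loop provided the random local Lipschitz constant $L(\cdot)$ has uniform exponential moments. The hard part is controlling this term: $L$ may grow (exponentially fast for $w(x)=e^{-\beta x}$), so one needs exponential moment propagation under $\mu_i(t)$, which is precisely the role of the hypothesis \eqref{cM1} on $\gamma$. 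A companion moment bound using \eqref{cM2} is needed to handle the growth of $(z-a)^+$ terms in jumps. Once an exponential moment bound of the form $\sup_{s\le T}\EE e^{\delta L(Y_i(s)-a)}<\infty$ is in place, Gronwall's inequality yields $Y_1\equiv Y_2$ in law.

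Given parts (a)–(c), the proof that $\mu_n\to\mu$ in probability follows from the Prohorov-plus-unique-limit argument. For part (d), if the initial positions are i.i.d.\ with distribution $\gamma$ satisfying \eqref{cM1}–\eqref{cM2}, then Assumptions \ref{ass.t1}, \ref{ass.char} (via Remark \ref{rem:conds}), and \ref{ass.init} follow by a law of large numbers and direct moment estimates, so (c) applies. Finally, $\mu(t)$-chaoticity of $\bfX^n(t)$ follows from the standard equivalence (Sznitman) between convergence in probability of exchangeable empirical measures to a deterministic limit and propagation of chaos of the marginals. I expect the exponential moment propagation in step (c) — carried out without globally Lipschitz $w$ — to be the most delicate technical ingredient of the whole proof.
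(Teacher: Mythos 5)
Your overall architecture (tightness via a mean bound, characterization of limits, uniqueness via a coupled nonlinear Markov process, then Sznitman's chaoticity equivalence) matches the paper, and you correctly identify Assumption \ref{Z} and condition \eqref{cM1} as the load-bearing hypotheses. But two steps in part (c), as written, do not close.

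First, in step (i) you construct $Y$ via a Poisson-random-measure SDE and invoke It\^o's formula to conclude that $\cll(Y(t))$ solves the \emph{linear} (Fokker--Planck) equation with coefficient $w(\cdot - m(s))$ frozen along $\mu$. That only shows $\cll(Y(\cdot))$ is \emph{a} solution of the linear equation, not that it \emph{equals} $\mu(\cdot)$. You need uniqueness of the linear equation, and since $w$ is unbounded this is not automatic. The paper devotes essentially the entire proof of Theorem \ref{mv} to exactly this point: it introduces a class $\clc_\mu$ of test functions, constructs $f^T(t,x)=\EE[h(Y^{t,x}(T))]$, proves $f^T\in\clc_\mu$ via a coupling estimate using Assumption \ref{ass.w}, and uses the Kolmogorov backward equation $\frac{\partial f^T}{\partial t}+\cll_\mu f^T=0$ to conclude via a duality identity that any solution of the linear equation agrees with $\cll(Y)$. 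Your plan simply omits this.

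Second, your Gr\"onwall step in (ii) does not close as stated. The inequality you write has the term $\EE\bigl[|Y_1(s)-Y_2(s)|\,L(\cdot)\bigr]$ with a \emph{random, unbounded} local Lipschitz factor inside the expectation; you cannot pull it out to get a deterministic Gr\"onwall constant, and exponential moments of $L$ alone do not rescue this. The paper splits on $\{\xi>-K\}$ vs.\ $\{\xi\le -K\}$ (using $Y_i(s)\ge Y_i(0)=\xi$ and monotonicity of $L$), obtaining a Gr\"onwall constant $C_K\sim L(-K-m_T)$ on the good set and an error term controlled by H\"older + \eqref{cM1}--\eqref{cM2} on the bad set. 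The crucial and delicate point you do not mention: $C_K\to\infty$ as $K\to\infty$, so $e^{C_K t}\cdot\text{err}(K)\to 0$ only for $t\le t_0$ with $t_0$ depending on $\delta$ from \eqref{cM1}; one then bootstraps over successive short intervals using the fact that $\int e^{\delta L(x-a)}\mu_i(t,dx)\le\int e^{\delta L(x-a)}\mu_i(0,dx)$ (monotonicity of the particle paths). More broadly, the forward-only monotonicity of $t\mapsto X^n_i(t)$ — which trivializes the oscillation estimate (Lemma \ref{lem.tght.b}), makes $\mu_n(t)\ge_d\mu_n(0)$ so Assumption \ref{ass.char} automatically propagates (no Gr\"onwall needed in part (b)), and bounds $L(Y_i(t)-a)$ by its time-zero value — is a structural feature you appear to have missed; without it, several of your steps become much harder than necessary or simply unproved.
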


\noindent As noted in the introduction, for the case when $w$ is a bounded function that is differentiable and has a bounded derivative,  parts (a)-(c) of the above theorem were shown in \cite{Balzs2011ModelingFA}. The setting considered above is substantially more involved due to the possible confluence of fast and large jumps under our conditions and requires several new ideas. We highlight below the key steps in the proof of Theorem \ref{thm.mainFL}.
\begin{enumerate}[(i)]
\item \textit{A priori bound on the mean process $m_n$: }A key estimate required in the proof of Theorem \ref{thm.mainFL} is a probability bound on the empirical mean process $\{m_n(t), \, t > 0\}$, defined in \eqref{eq:133}, which is obtained in Theorem \ref{thm.mbd} below. This is a crucial technical step. The difficulty in estimating the empirical mean process is in controlling  the effect of the particles far behind the mean. When $w$ is unbounded, these particles can have very high jump rates. Each such jump increases the mean which, in turn, increases the jump rate of all the particles. Moreover, as the jump distribution could have heavy tails, some of these jumps could be very large. This `feedback mechanism' could potentially result in the mean process (and hence the particle locations) increasing very fast. As we will see, controlling the growth rate of $m_n$ \emph{uniformly in $n$} on compact time intervals is at the heart of the proof of Theorem \ref{thm.mainFL}. For bounded $w$, treated in \cite{Balzs2011ModelingFA}, this control becomes significantly simpler via stochastically upper bounding the particle system by one comprising \emph{independent} particles moving forward at rate $\sup_{x \in \mathbb{R}}w(x)$.
\item \textit{Proving $\mathcal{C}$-tightness of $\mu_n(\cdot)$ in the Skorohod space $\mathcal{D}([0,\infty):\mathcal{P}_1(\RR))$: } This requires proving pointwise tightness (Lemma \ref{lem.tght}) and an oscillation estimate (Lemma \ref{lem.tght.b}) for the measure-valued processes $\{\mu_n\}_{n \in \NN}$. These are established by  exploiting monotonicity properties of the process combined with martingale arguments on a `good set' where one has good a-priori control on the empirical mean. The probability of the `bad set' is shown to be small using the mean bound obtained in Theorem \ref{thm.mbd}.  
\item \textit{Characterizing subsequential limits of $\{\mu_n\}_{n \in \NN}$: } These are shown to solve the McKean-Vlasov equation \eqref{eq:mfeq1}, again by using monotonicity, martingale techniques and the a priori mean bound.
\item \textit{Uniqueness of solutions to \eqref{eq:mfeq1} in $\mathcal{M}$: } Unlike the class of $w$ treated in \cite{Balzs2011ModelingFA}, our conditions allow for cases when $w$ is not globally Lipschitz (eg. $w(x)=e^{-x}, \, x \in \RR$). Hence, we cannot directly apply standard Gr\"onwall type arguments to deduce uniqueness. We instead use Assumption \ref{ass.init} which says that the left tails of the (limiting) initial measure $\mu(0)$ are light enough to compensate for the growing Lipschitz constant $L(x)$ of $w$ on $[x,\infty)$ as $x \rightarrow -\infty$. To show uniqueness, we work with certain `coupled nonlinear Markov processes', whose marginal laws are given by the solutions of the two candidate solutions of the McKean-Vlasov equations and then argue that these processes are in fact identical.
The last step is accomplished by controlling the $L^1$ distance between the two solutions by appealing to Assumption \ref{ass.init}, monotonicity and the mean bound, in conjunction with Gr\"onwall's inequality.
This idea is inspired by  \cite{Oelschlager1984AMA} (see Theorem \ref{mv}), however, the proof details are substantially more involved due to the poor available control on $w$. 
\end{enumerate}

\begin{remark}
    Condition \eqref{cM1} clearly holds for any Lipschitz $w$. On the other hand, in the case where $w(x) = e^{-\beta x}$ for some $\beta>0$, $w$ is far from Lipschitz and this assumption roughly requires $\nu((-\infty, -x]) \le C e^{-\delta e^{\beta x}},\, x \ge 0,$ for some constants $C, \delta>0$. Although this seems like a strong requirement, it is natural for the problem. Specifically, the `steady state' traveling wave solution to the McKean-Vlasov equation \eqref{eq:mfeq1} in this case when $Z$ has an exponential distribution, described in Theorem \ref{thm.ss.exp} below, exhibits double exponentially decaying left tails which says that \eqref{cM1} holds for this initial distribution and also for those  that are `not too far' from this traveling wave solution.
\end{remark}

For any $\mu \in \mathcal{C}([0,\infty):\mathcal{P}_1(\RR))$, the function $m(t) \doteq \int x \mu(t, dx)$, $t\ge 0$, is a continuous function from $\RR_+$ to $\RR$. This, in particular, says that for any $T\in (0, \infty)$, and a random variable $Y(0)$ distributed as $\mu(0)$, the function $w(Y(0)-m(t))$ is a.s. a bounded function on $[0,T]$, {\di since
$\sup_{t \in [0,T]} w(Y(0)-m(t)) \le w(Y(0)- \sup_{t \in [0,T]} m(t))<\infty$, a.s.} And since $w$ is a non-increasing function, this allows us to construct a pure jump process $\{Y(t), t\ge 0\}$ with sample paths in $\cld([0, \infty):\RR)$, recursively from one jump to next, for which $\cll(Y(0)) = \mu(0)$ and the jump at any instant $t$, distributed as $\theta$, occurs at rate $w(Y(t)-m(t))$.
Furthermore, this process is unique in law characterized by the properties that $\cll(Y(0))= \mu(0)$ and, for every $f \in \MM_b(\RR)$,
\be \label{eq:338}
f(Y(t)) - f(Y(0)) - \int_0^t \cll_{\mu} f(s,Y(s)) ds,\; t\ge 0,
\ee
is a local martingale  with respect to the filtration $\clf_t^Y \doteq \sigma \{Y(s): s\le t\}$, where
$$\cll_{\mu} f(s,y) = g_f(y) w(y- m(s)), \; s \ge 0.$$

Now suppose $\mu \in \mathcal{C}([0,\infty):\mathcal{P}_1(\RR))$ satisfies  the McKean-Vlasov equation \eqref{eq:mfeq1}. In the course of proving Theorem \ref{thm.mainFL}, we will show that, under assumptions, the process $\{Y(t)\}$ constructed as above using such a $\mu$ in fact satisfies $\mu(t)= \cll(Y(t))$ for every $t \ge 0$. 
In particular, the jump rate of $Y$ at time $t$ depends on $\cll(Y(t))$. Following standard terminology (cf. \cite{sznit}), we will call this process the \emph{nonlinear Markov process} associated with $\mu$. The next theorem records this observation. Proof is given in Section \ref{s.FL}.

\begin{theorem}\label{mv}
  Suppose $w$ satisfies Assumption \ref{ass.w} and $\mu \in \mathcal{C}([0,\infty):\mathcal{P}_1(\RR))$ is a solution to \eqref{eq:mfeq1} satisfying $\int_{-\infty}^{\infty}L(x-a)\mu(0,dx) < \infty$ for any $a \in \RR$. Then the process $Y$ constructed as above from $\mu$ with generator $\cll_{\mu}$ is the nonlinear Markov process associated with $\mu$, that is, $\cll(Y(t)) = \mu(t)$ for every $t \ge 0$.
\end{theorem}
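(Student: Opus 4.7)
The plan is to show that $\nu(t)\doteq\cll(Y(t))$ satisfies the same integral equation as $\mu$, now with $m(\cdot)$ viewed as a \emph{given} deterministic continuous function, and then to extract $\nu\equiv\mu$ from uniqueness of this (now-linear) forward equation.

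First I would verify that $Y$ is non-explosive on $[0,T]$ for every $T>0$ and that $\nu\in\mathcal{C}([0,\infty):\mathcal{P}_1(\RR))$. The monotonicity $Y(s)\ge Y(0)$ (jumps only move forward) combined with $w$ non-increasing gives $w(Y(s)-m(s))\le w(Y(0)-m^{*}_T)$ for $s\in[0,T]$, where $m^{*}_T\doteq\sup_{s\le T} m(s)<\infty$ by continuity of $m$. Assumption \ref{ass.w} together with the hypothesis $\int L(x-a)\mu(0,dx)<\infty$ for all $a$ then controls the expected aggregate jump rate $\int w(y-m^{*}_T)\mu(0,dy)$ on $[0,T]$; finiteness of $\mn$ yields $\nu(t)\in\mathcal{P}_1(\RR)$ with continuity in $t$.

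Next I would extend the local martingale identity \eqref{eq:338} from $\MM_b(\RR)$ to $f\in\text{Lip}_1$ by the truncation $f_N\doteq(f\wedge N)\vee(-N)$, exploiting $|g_f|\le\mn$ and the rate domination above to pass to the limit in the compensator. Taking expectations yields, for every $t\ge 0$ and $f\in\text{Lip}_1$,
\begin{equation*}
\langle f,\nu(t)\rangle=\langle f,\mu(0)\rangle+\int_0^t\langle g_f(\cdot)w(\cdot-m(s)),\nu(s)\rangle\,ds,
\end{equation*}
which is exactly the relation \eqref{eq:mfeq1} satisfied by $\mu$. Thus $\mu$ and $\nu$ are two probability-measure solutions of the same linear, non-autonomous forward equation with the same initial datum $\mu(0)$.

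The remaining step is uniqueness of this linear equation. The natural route is duality against the backward transition kernel $P_{s,t}f(y)\doteq\EE[f(Y(t))\mid Y(s)=y]$ of the time-inhomogeneous Markov process $Y$: for any bounded Lipschitz $f$ and any probability-measure solution $\eta$ in $\mathcal{C}([0,T]:\mathcal{P}_1(\RR))$, the backward Kolmogorov equation for $P_{s,t}f$ combined with the forward equation for $\eta$ yields $\tfrac{d}{ds}\langle P_{s,t}f,\eta(s)\rangle=0$ on $[0,t]$, so $\langle f,\eta(t)\rangle=\langle P_{0,t}f,\mu(0)\rangle$ depends on $\eta$ only through $\mu(0)$; applied to $\eta=\mu$ and $\eta=\nu$ this forces $\mu(t)=\nu(t)$ on a separating class of $f$. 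An alternative that sidesteps the backward semigroup is a truncation-Gr\"onwall bound on $\mathcal{W}_1(\mu(t),\nu(t))$: split $\langle g_fw(\cdot-m(s)),\mu(s)-\nu(s)\rangle$ at a level $-A$, bound the main piece via the (finite) Lipschitz constant of $g_fw(\cdot-m(s))$ on $[-A,\infty)$, absorb the $(-\infty,-A)$ tail using the monotonicity-inherited left-tail control on $\mu(s),\nu(s)$ together with $\int L(x-a)\mu(0,dx)<\infty$, apply Gr\"onwall, and let $A\to\infty$. The main obstacle is precisely this last step: the unboundedness of $w$ at $-\infty$ prevents a direct Lipschitz bound on the integrand and complicates the regularity of $P_{s,t}f$ needed for the duality, and the fix relies crucially on (i) the monotonicity $Y(s)\ge Y(0)$ which via stochastic domination transfers the left-tail regularity of $\mu(0)$ to all times, and (ii) the hypothesis $\int L(x-a)\mu(0,dx)<\infty$ together with Assumption \ref{ass.w}, which exactly matches the local Lipschitz blowup of $w$ at $-\infty$.
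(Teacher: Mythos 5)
Your proposal is correct and takes essentially the same route as the paper: reduce $\mu$ and $\nu(t)\doteq\cll(Y(t))$ to a common linear, non-autonomous forward equation with $m(\cdot)$ frozen, then prove uniqueness of that linear equation by duality against the backward kernel $P_{s,t}f(y)=\EE[f(Y^{s,y}(t))]$. The details the paper supplies beyond your sketch, and which are where the real work lies, are: (a) a careful statement of the admissible class $\clc_\mu$ of time–space test functions (controlled by $w(x-m(t))$ so that the compensator integrals converge under \eqref{un2}), together with a spatial cutoff $\psi_M$ at level $-M$ and a limit $M\to\infty$ to pass from the weak form on Lipschitz test functions to the space–time identity \eqref{steq}; and (b) the verification that $f^T(t,x)\doteq\EE[h(Y^{t,x}(T))]\in\clc_\mu$, done by a synchronous/asynchronous coupling of $Y^{t,x_1}$, $Y^{t,x_2}$ plus Gr\"onwall, which uses only Assumption \ref{ass.w} and continuity of $m$ (the Lipschitz constant on $[-M,\infty)$ is $L(-M-m(T))$, finite for each $M$). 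One small misattribution in your sketch: the hypothesis $\int L(x-a)\mu(0,dx)<\infty$ is not what rescues the regularity of $P_{s,t}f$; its role is upstream, to show that the local martingale in \eqref{eq:338} is a genuine martingale (equivalently, that $\upsilon^Y$ satisfies \eqref{un2}) by controlling $\int w(x-m(s))\,\upsilon^Y(s,dx)$ uniformly on $[0,T]$ via monotonicity and the bound $w(x-m(t))\le w(x-m(0))+|m(t)-m(0)|L(x-m(t))$. Your alternative Gr\"onwall-on-$\mathcal{W}_1$ route is in the spirit of what the paper does later for Theorem \ref{thm.un} (the nonlinear uniqueness), but for the present linear step the duality route you lead with is what the paper implements.
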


\subsection{Long-time behavior}

Our next results concern the long-time behavior of a suitably centered version of the $n$-particle system described by $\bfX^n$. With $\bfX^n$ as in Proposition \ref{prop:exis}, let $Y^n_i(t) \doteq X^n_i(t) - m_n(t)$, $i = 1, \ldots, n$, $t \ge 0$. We let $\bfY^n(t) \doteq (Y^n_1(t), \ldots , Y^n_n(t))'$.
It is easy to verify using Proposition \ref{prop:exis} that, under Assumption \ref{Z},
$\bfY^n$ is a $\cls_0 \doteq \{\bfy \in \RR^n: \bfy\cdot \one=0\}$ valued Markov process with infinitesimal generator 
\begin{equation}\label{eq:geny}
\cll^{\bfY}_n f(\bfy) \doteq  \sum_{j=1}^n \EE_Z[f(\bfy + Z e_j - n^{-1} Z\one) - f(\bfy)] w(y_j), \; \bfy = (y_1, \ldots , y_n)' \in \cls_0,
\end{equation}
where $f\in \MM_b(\cls_0)$.

The following theorem gives the existence of a stationary distribution $\pi_n$ for $\bfY^n$ for each $n \ge 2$, which is unique under suitable irreducibility assumptions. It also establishes certain integrability properties for $\{\pi_n, n \ge 2\}$ and obtains an `asymptotic velocity' for the mean process for any fixed $n \ge 2$. These results are new even for the setting where $w$ is a bounded globally Lipschitz function. Proof of the result is given in Section \ref{subsec6.1}. 

We recall that a distribution $F$ is called \emph{spread-out} if it has an absolutely continuous component: there exists $f:\RR \rightarrow [0,\infty)$, with $\int_{\RR}f(x)dx \in (0,1]$, such that
$$
F(A) \ge \int_A f(x)dx, \ A \in \mathcal{B}(\RR).
$$
\begin{theorem}\label{stat.ex}
Suppose that Assumptions \ref{Z} and \ref{ass.w} hold and that $w$ is not a constant function.
    Then, for each $n\geq 2$, there exists a stationary distribution $\pi_n$, for the Markov process $\bfY^n = (Y^n_1, Y^n_2, ..., Y^n_n)'$. Furthermore, for any $t\geq 0$,
    \begin{equation}\label{st.thm1}
        \sup_{n\geq 2} \mathbb{E}_{\pi_n} \left[ \frac{1}{n} \sum_{i=1}^n |Y_i^n(t)|(1+w(Y_i^n(t)))\right] < \infty,
    \end{equation}
    where $\mathbb{E}_{\pi_n}$ denotes the expectation under the probability measure under which $\cll(\bfY^n(0)) = \pi_n$. Moreover, if $\theta$ is spread-out, then the stationary distribution is unique. In this case, for any $n \ge 2$, the mean process $\{m_n(\cdot)\}$ of the particle system $\bfX^n$, with $\bfX^n(0) = \bfx$, satisfies
    \begin{equation}\label{velm}
       \frac{m_n(t)}{t} \to \mn\EE_{\pi_n}\left[\frac{1}{n}\sum_{i=1}^n w(Y^n_i(0))\right] = \mn\EE_{\pi_n}\left[w(Y^n_1(0))\right] 
    \end{equation}
   in probability as $t \to \infty$, for $\pi_n$-a.e. $\bfx$.
\end{theorem}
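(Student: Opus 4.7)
The plan is a Foster--Lyapunov approach. The technical core is constructing a Lyapunov function $V\colon \cls_0\to [0,\infty)$ of the form $V(\bfy)=\frac{1}{n}\sum_{i=1}^n V_0(y_i)$, where the one-dimensional potential $V_0\colon\RR\to[0,\infty)$ is chosen so that (a) $V_0(y)\ge c\,|y|(1+w(y))$ for some constant $c>0$ independent of $n$, so that the quantity in \eqref{st.thm1} is dominated by $V$, and (b) the drift $\cll^{\bfY}_n V$ admits a bound from which uniform-in-$n$ integrability $\sup_n \EE_{\pi_n}[V]<\infty$ can be read off. A natural first term in $V_0$ is the quadratic $y^2$: a direct computation using \eqref{eq:geny} yields
\begin{equation*}
\cll^{\bfY}_n\!\left(\tfrac{1}{n}\sum_i y_i^2\right)(\bfy) = \frac{2\mn}{n}\sum_{j=1}^n y_j\,(w(y_j)-w(0)) + \frac{\vt(n-1)}{n^2}\sum_{j=1}^n w(y_j),
\end{equation*}
where the first sum (obtained using $\sum_j y_j=0$ to subtract the constant $w(0)$) is non-positive since $w$ is non-increasing, and strictly negative outside compacts of $\cls_0$ since $w$ is non-constant. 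To additionally control the weighted term $|y|w(y)$ in the left tail, $V_0$ is augmented by a piece such as $\phi(y)=\int_y^0(-x)w(x)\,dx\cdot \mathbf{1}(y<0)$; the critical quantitative input for estimating the expected post-jump change of $\phi$ when a particle jumps from a deep negative position is the overshoot bound $w(-a)\EE[(Z-a)^+\vee (Z-a)^{+^2}]\le c_w$ from Assumption \ref{Z}.

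Given such a $V$, existence of a stationary distribution $\pi_n$ follows from a Krylov--Bogoliubov argument: the drift bound and Dynkin's formula (with $V$ truncated appropriately to justify integrability) imply that the time-averaged laws $\frac{1}{T}\int_0^T\cll(\bfY^n(s))\,ds$ are tight in $\clp(\cls_0)$ via compactness of sublevel sets of $V$, and any weak limit is invariant. Integrating the drift inequality $\cll^{\bfY}_n V\le -\alpha V+\beta$ against $\pi_n$ (again with a truncation to justify the exchange) yields $\EE_{\pi_n}[V]\le \beta/\alpha$ uniformly in $n$, and combining with the pointwise lower bound $V\ge c\cdot\frac{1}{n}\sum_i |y_i|(1+w(y_i))$ delivers \eqref{st.thm1}.

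For uniqueness under the spread-out hypothesis on $\theta$, the plan is to establish a local minorization on the transition kernel of $\bfY^n$: for each compact $K\subset\cls_0$ there exist $t_0>0$, $\varepsilon>0$, and a probability measure $\nu$ on $\cls_0$ with $P^n_{t_0}(\bfy,\cdot)\ge\varepsilon\nu$ for all $\bfy\in K$. The absolutely continuous component of $\theta$ ensures that, conditional on a prescribed jump pattern on $[0,t_0]$ (with positive probability uniform over $K$ thanks to compactness and the lower semicontinuity of the total jump rate $\sum_j w(y_j)$ on $K$), the law of $\bfY^n(t_0)$ has a density on a non-trivial open subset of $\cls_0$, giving the minorization. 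Combined with the Lyapunov drift, Harris' theorem then yields uniqueness of $\pi_n$ together with the $\pi_n$-a.e.\ ergodic convergence needed below.

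For the asymptotic velocity, the standard martingale decomposition of $m_n$ reads
\begin{equation*}
m_n(t) = m_n(0) + \frac{\mn}{n}\int_0^t \sum_{j=1}^n w(Y^n_j(s))\,ds + M_n(t),
\end{equation*}
where $M_n$ is a local martingale with predictable quadratic variation $\langle M_n\rangle_t=\frac{\vt}{n^2}\int_0^t\sum_j w(Y^n_j(s))\,ds$. Under $\pi_n$, ergodicity (from uniqueness) together with the integrability $\EE_{\pi_n}[w(Y^n_1)]<\infty$ supplied by \eqref{st.thm1} yields $\frac{1}{t}\int_0^t\frac{1}{n}\sum_j w(Y^n_j(s))\,ds\to \EE_{\pi_n}[w(Y^n_1(0))]$ in probability, while $M_n(t)/t\to 0$ almost surely by an $L^2$/SLLN-for-martingales argument using the quadratic variation bound. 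Exchangeability of the coordinates under $\pi_n$ (inherited from the symmetric generator) identifies the limit with $\mn\,\EE_{\pi_n}[w(Y^n_1(0))]$. The principal obstacle throughout is the first step: constructing $V_0$ so that the potentially unbounded rate $w(y_j)$ for $y_j\ll 0$ is balanced against the $\vt$-sized second-moment cost of a single jump and against the attractive drift, uniformly in $n$, so that the drift inequality both yields stationarity and controls the $w$-weighted moment in \eqref{st.thm1}.
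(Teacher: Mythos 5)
Your overall architecture (Lyapunov drift for existence and moment control, Krylov--Bogoliubov/weak Feller for existence of $\pi_n$, ergodic theorem plus martingale decomposition for the velocity) is the right one and matches the paper's. However, there is a genuine gap in the central step. You claim a drift inequality of the form $\cll^{\bfY}_n V\le -\alpha V+\beta$ and that $V_0$ must be built so that $V_0(y)\ge c\,|y|(1+w(y))$. Neither is what the argument needs, and the first is almost certainly false for the quadratic piece: a direct computation (which you correctly wrote down) shows the negative part of $\cll^{\bfY}_n\bigl(\tfrac1n\sum_i y_i^2\bigr)$ is of order $\tfrac1n\sum_i |y_i|(1+w(y_i))$, i.e.\ \emph{linear} (in $|y_i|$) with a $w$-weight, not quadratic. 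So $\cll V\le -\alpha V+\beta$ cannot hold for $V=\tfrac1n\sum y_i^2$, and your proposed augmentation by $\phi(y)=\int_y^0(-x)w(x)\,dx$ is left unverified and would need a delicate case analysis on the positive side (where $\phi\equiv 0$ and the drift remains linear) to salvage that form. The paper's actual observation is simpler and cleaner: you do \emph{not} need $V$ itself to dominate $|y|(1+w(y))$, nor a $-\alpha V$ drift. The pure quadratic $V(\bfy)=\tfrac1n\sum_i y_i^2$ already satisfies (after the elementary manipulation exploiting $w$ non-increasing and non-constant, and $\sum_i y_i=0$)
\begin{equation*}
\cll^{\bfY}_n V(\bfy)\;\le\; C_1 \;-\;\delta \mn \cdot \frac{1}{n}\sum_{i=1}^n |y_i|\;-\;2\delta\mn\cdot\frac{1}{n}\sum_{i=1}^n y_i^- w(y_i),
\end{equation*}
which is of the form $\cll V\le \beta-\alpha U$ with $U$ \emph{different from} $V$ and, crucially, $U$ is exactly the target integrand (using $y^+w(y)\le |y|w(0)$). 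Integrating this against $\pi_n$ (with the truncation $V\wedge K$ and a localization to justify the exchange) gives $\EE_{\pi_n}[U]\le C_1/\alpha$ uniformly in $n$, which is \eqref{st.thm1}. Your proposal conflates the roles of the Lyapunov function and the drift rate; the quantity $|y|(1+w(y))$ should appear as the \emph{drift rate} $U$, not as a lower bound on $V$.

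On uniqueness, your route (petite-set minorization plus Harris) is plausible but genuinely different from the paper's, which instead constructs an explicit coupling: a synchronous coupling of jump rates together with a Lindvall-style coupling of the $m$ successive jump \emph{sizes} (using the spread-out property of $\theta$) to force coalescence with positive probability, and then concludes via a short singular-measures contradiction (Lemma~\ref{lem:unique}). If you pursue Harris, be careful that the transition kernel must be shown to have a density on the $(n-1)$-dimensional hyperplane $\cls_0$, which is not immediate; the paper's argument avoids this entirely. Also note that for existence you additionally need a Feller-type regularity (the paper proves weak Feller in Lemma~\ref{lem:wF}); Krylov--Bogoliubov requires it. Your velocity argument is essentially the paper's.
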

\begin{remark}
{\di (i) The estimate in \eqref{st.thm1} gives a uniform in $n$ integrability property under the stationary distribution $\pi_n$. In particular, this relates the tail behavior of $\pi_n$ to the growth rate of $w$. This will play a key role in the proof of the law of large numbers in \eqref{velm} (see \eqref{mn2}).

It will also be used to show that Assumptions \ref{ass.t1} and \ref{ass.char} hold when the particles are started with stationary (re-centered) configuration given by distribution $\pi_n$. This, in turn, will help establish the `fluid limit' Theorem \ref{thm.mainFL}, for the re-centered particle system, which plays a pivotal role in the proof of Theorem \ref{thm.stat.sol}.
}

\noindent (ii) One basic setting in which Theorems \ref{thm.mainFL}, \ref{mv} and  \ref{stat.ex} are applicable is when the following hold: (i) $w$ is a non-constant bounded Lipschitz function which is non-increasing; (ii) $\int_{0}^{\infty} z^3 \theta(dz) <\infty$; (iii) $\sup_n  \mathbb{E}[\int |x|^{1+\delta} \mu_n(0,dx)] < \infty$ for some $\delta \in (0,1)$; (iv) $\mu_n(0) \to \nu$, in the weak topology, in probability for some $\nu \in \clp(\RR)$. In this case it is easily seen that Assumptions  \ref{Z}, \ref{ass.w}, \ref{ass.t1}, \ref{ass.char}, \ref{ass.init} are all satisfied and therefore the conclusions of Theorems \ref{thm.mainFL}, \ref{mv} and \ref{stat.ex} hold. In fact, Theorem \ref{thm.mainFL} was previously proved under similar but slightly different assumptions in \cite{Balzs2011ModelingFA}. However, our primary interest is in a setting where $w$ is unbounded and these assumptions will also hold in interesting cases where $w$ is unbounded, cf.  Theorem \ref{thm.ss.exp}.
\end{remark}
In studying the asymptotic behavior of $\pi_n$ as $n \to \infty$, we will need a uniform integrability property under $\pi_n$ in addition to \eqref{st.thm1} for which we introduce the following two assumptions.
\begin{assumption}\label{Z2}
    The jump distribution $\theta$ and the rate function $w$ satisfy
    $$\lim_{A\to \infty} \sup_{x\geq 0} w(A-x)\mathbb{E} [(Z-x)^{+^2}] =0.$$
\end{assumption}
 The above assumption gives a uniform control on overshoots of the particle jumps beyond the mean, which helps in establishing the uniform integrability property in Theorem \ref{stat.pr}.

\begin{assumption}\label{ass.w.LT}
The function $w$ satisfies:
     $\lim_{x\to \infty} w(x) =0$  and  $\lim_{x\to -\infty} w(x) =\infty$.   
\end{assumption}
One basic setting where the last two assumptions hold is when for $0<\beta \le \gamma$, $w(x)= e^{-\beta x}$, $x \in \RR_+$, and $\theta$ is $Exp(\gamma)$.

Proof of the following theorem is in Section \ref{subsec6.2}.
\begin{theorem}\label{stat.pr}
    Suppose Assumptions \ref{Z}, \ref{ass.w}, \ref{Z2} and \ref{ass.w.LT} hold. Then, for each $n\geq 2$, the stationary distribution, $\pi_n$, obtained in Theorem \ref{stat.ex} satisfies for any $\eta>0$,
    \begin{equation}\label{st.thm2}
        \lim_{A\to \infty} \limsup_{n\to \infty} \mathbb{P}_{\pi_n} \left[ \frac{1}{n}\sum_{i=1}^n |Y_i^n(0)| \mathbf{1}[|Y_i^n(0)|\geq A] \geq \eta \right]=0,
    \end{equation}
    where $\mathbb{P}_{\pi_n}$ denotes the probability measure under which $\cll(\bfY^n(0)) = \pi_n$.
\end{theorem}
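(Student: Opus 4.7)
The plan is to decompose the quantity to be bounded as the sum of left-tail and right-tail contributions:
$$\frac{1}{n}\sum_{i=1}^n |Y_i^n(0)|\mathbf{1}[|Y_i^n(0)|\ge A] = \frac{1}{n}\sum_i(Y_i^n(0))^-\mathbf{1}[Y_i^n(0)\le -A] + \frac{1}{n}\sum_i(Y_i^n(0))^+\mathbf{1}[Y_i^n(0)\ge A],$$
and to control each separately. For the left tail, the monotonicity of $w$ gives $w(Y_i^n)\ge w(-A)$ on $\{Y_i^n\le -A\}$, hence $|Y_i^n|\mathbf{1}[Y_i^n\le -A] \le (1+w(-A))^{-1}|Y_i^n|(1+w(Y_i^n))$. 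Averaging, taking expectation, and using \eqref{st.thm1} produces an $n$-independent bound of order $(1+w(-A))^{-1}$, which by Assumption \ref{ass.w.LT} tends to $0$ as $A\to\infty$. Markov's inequality then handles the probability.

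For the right tail I would apply the stationarity identity $\EE_{\pi_n}[\cll_n^{\bfY}F_A]=0$ to the Lyapunov function $F_A(\bfy):=\frac{1}{n}\sum_i((y_i-A)^+)^2$ (justified on unbounded functions by a standard truncation argument combined with \eqref{st.thm1}), writing $\cll_n^{\bfY}F_A = J_A - D_A$ where $J_A,D_A\ge 0$ are the jump-up and drift-down contributions. For $J_A$, I split the sum over the jumping particle $j$ according as $y_j\ge A$ or $y_j<A$: the former contributes at most $w(A)[2\mn(y_j-A)+\vt]$ by monotonicity of $w$, while for the latter, setting $x=A-y_j>0$, the increment satisfies $\EE_Z[(y_j+Z(n-1)/n-A)^{+^2}]\le \EE[(Z-x)^{+^2}]$, and Assumption \ref{Z2} provides precisely the uniform bound $w(A-x)\EE[(Z-x)^{+^2}] \le \epsilon_A := \sup_{x'\ge 0}w(A-x')\EE[(Z-x')^{+^2}]\to 0$. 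Writing $h_A(y):=(y-A)^+$, this yields
$$\EE_{\pi_n}[J_A]\ \le\ \epsilon_A + 2\mn w(A)\,\EE_{\pi_n}[\langle h_A,\mu_n^Y\rangle] + w(A)\vt.$$
For $D_A$, the elementary inequality $(y-Z/n-A)^{+^2}-(y-A)^{+^2}\le -2(Z/n)(y-A)^+ + (Z/n)^2$, together with summing over the $i\ne j$ indices and rearranging, gives
$$\EE_{\pi_n}[D_A]\ \ge\ 2\mn\,\EE_{\pi_n}[\langle w,\mu_n^Y\rangle\langle h_A,\mu_n^Y\rangle] - O(n^{-1}).$$

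Combining these two estimates with the balance $\EE_{\pi_n}[J_A]=\EE_{\pi_n}[D_A]$ and a uniform-in-$n$ a.s.\ lower bound $\langle w,\mu_n^Y\rangle \ge c_0>0$ allows me to conclude. Such a lower bound is immediate via Jensen's inequality in the convex case (e.g.\ $w(x)=e^{-\beta x}$) because $\mu_n^Y$ has mean zero, and in general follows from \eqref{st.thm1}, exchangeability, and the constraint $\sum_i Y_i^n=0$: these force $\PP_{\pi_n}(Y_1^n\le M)\ge 1-C/M$ with $C$ uniform in $n$, so that $\EE_{\pi_n}[\langle w,\mu_n^Y\rangle]\ge w(2C)/2$, and the a.s.\ lower bound on $\langle w,\mu_n^Y\rangle$ can then be extracted via a concentration / layer-cake argument exploiting monotonicity of $w$. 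Given $c_0$, absorbing the $O(w(A))\,\EE_{\pi_n}[\langle h_A,\mu_n^Y\rangle]$ term into the left side once $w(A)<\mn c_0/2$ yields
$$\EE_{\pi_n}[\langle h_A,\mu_n^Y\rangle]\ \le\ \frac{\epsilon_A+\vt w(A)}{\mn c_0} + O(n^{-1}),$$
which tends to $0$ uniformly in $n$. Since pointwise $(Y_i^n)^+\mathbf{1}[Y_i^n\ge 2A] \le 2(Y_i^n-A)^+$, the right-tail $L^1$ bound follows, and Markov's inequality delivers the probability statement, completing the proof.

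The principal obstacle will be establishing the uniform-in-$n$ a.s.\ (or suitable conditional) lower bound on $\langle w,\mu_n^Y\rangle$ in the non-convex case, since $\langle w,\mu_n^Y\rangle$ and $\langle h_A,\mu_n^Y\rangle$ are expected to be negatively correlated under $\pi_n$ (large positive tail means most particles have small $w$-value), so the passage from $\EE_{\pi_n}[\langle w\rangle\langle h_A\rangle]$ to $\EE_{\pi_n}[\langle h_A\rangle]$ cannot be done by covariance inequalities alone and must rely on the centering constraint $\sum_i Y_i^n=0$ combined with the first-moment tightness from \eqref{st.thm1}.
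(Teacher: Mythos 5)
Your choice of Lyapunov function $F_A(\bfy)=\frac{1}{n}\sum_i((y_i-A)^+)^2$ coincides with the paper's $V_A$, and your estimates on the jump-up and drift-down contributions reproduce \eqref{st3}--\eqref{st4}; the left-tail reduction via \eqref{st.thm1} and $w(-A)\to\infty$ also matches the paper. The gap is exactly where you flag it: the absorption step needs a uniform-in-$n$ \emph{almost sure} lower bound $\langle w,\mu_n^Y\rangle\ge c_0>0$, and this fails in general under Assumptions \ref{Z}--\ref{ass.w.LT}. Assumption \ref{Z} places no constraint on $w(-a)$ once $a$ exceeds the support of $Z$, so $w$ may tend to $+\infty$ arbitrarily slowly; a configuration with one very negative particle and $n-1$ mildly positive ones then has $\alpha(\bfy)=\frac{1}{n}\sum_i w(y_i)$ arbitrarily small while remaining in $\cls_0$, and there is no reason $\pi_n$ should exclude such configurations. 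The expectation bound $\EE_{\pi_n}[\alpha]\ge c$ that you derive from marginal tightness is correct but does not upgrade to an a.s.\ bound --- there is no concentration mechanism forcing $\alpha$ uniformly away from $0$ for each fixed $n$, and the ``layer-cake'' extraction you gesture at is not available. (Jensen does give the a.s.\ bound when $w$ is convex, so your argument would go through for $w(x)=e^{-\beta x}$, but the theorem is stated under the general assumptions.)

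The paper's resolution avoids any a.s.\ bound. After \eqref{st.b} they split $\alpha-w(A)=(\alpha-w(A))^+-(\alpha-w(A))^-$, bound $(\alpha-w(A))^-\le w(A)$ trivially together with $\langle h_A,\mu_n^Y\rangle\le\frac{1}{n}\sum_i|Y^n_i|$ and \eqref{st.thm1}, and arrive at the cross-moment bound \eqref{st.c} on $(\alpha-w(A))^+\langle h_A,\mu_n^Y\rangle$. They then decompose the target probability on the event $\{\alpha<\delta\}$ and its complement: on the complement, Markov's inequality applies because $(\alpha-w(A))^+\langle h_A,\mu_n^Y\rangle\ge(\delta-w(A))\langle h_A,\mu_n^Y\rangle$ once $w(A)<\delta$; and $\PP_{\pi_n}[\alpha<\delta]\to0$ uniformly in $n$ as $\delta\to0$ follows from a purely deterministic consequence of the centering $\sum_i y_i=0$: if $\alpha(\bfy)<\delta$ then at least half the $y_i$ exceed $B(\delta)\doteq\sup\{x:w(x)\ge 2\delta\}$, forcing $\frac{1}{n}\sum_i|y_i|\ge B(\delta)$, which has small $\pi_n$-probability by \eqref{st.thm1} since $B(\delta)\to\infty$ by Assumption \ref{ass.w.LT}. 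This is the rigorous version of the ``suitable conditional lower bound'' you alluded to as an alternative; the raw a.s.\ lower bound is neither available nor needed.
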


\noindent {\di The uniform integrability property in the above theorem will be used in the proof of Theorem \ref{thm.stat.sol} in order to verify Assumption \ref{ass.t1} with $\PP$ replaced by
$\PP_{\pi_n}$, the law of the stationary re-centered particle system.}

The proofs of Theorems \ref{stat.ex} and \ref{stat.pr} involve constructing suitable Lyapunov functions.

Theorem \ref{thm.mainFL} gave convergence of the empirical measure process to the solution of the McKean-Vlasov equations under suitable conditions on the initial distributions of the particles. By verifying that these conditions are satisfied when the initial condition is given by the stationary distributions $\pi_n$, our next two results give the convergence of the empirical measure under $\pi_n$ as $n\to \infty$ and characterizes the limit as the fixed point of the centered version of the McKean-Vlasov equation \eqref{eq:mfeq1}. 
The latter is the equation satisfied by the measures $\{\nu(t), t \ge 0\}$ that are defined by the relation
$$\int f(x) \nu(t) (dx) \doteq \int f(x-m(t)) \mu(t,dx), \;  f \in \MM_b(\RR), \  t \ge 0,$$
where $\{\mu(t), t \ge 0\}$ solves \eqref{eq:mfeq1}.
It is easy to verify that this equation is given as
\begin{equation}\label{FL}
\begin{aligned}
        \langle f, \nu(t) \rangle &= \langle f, \nu(0) \rangle + \int_0^t \langle g_f\cdot w, \nu(s) \rangle ds - \int_0^t \mn\langle w, \nu(s) \rangle \langle f', \nu(s) \rangle ds, \; t \ge 0, f \in \mbox{Lip}_1,\\
        &\int_0^t \langle w, \nu(s)\rangle ds < \infty, \mbox{ for all } t \ge 0.
        \end{aligned}
    \end{equation}
    where $g_f$ is as in \eqref{eq:gf}.
    
We will call a stochastic process $\{\nu(t), t \ge 0\}$ with sample paths in $\clc([0, \infty): \clp_1(\RR))$ a {\em stationary solution} of \eqref{FL} if it solves \eqref{FL} a.s., and $\cll(\nu(t+\cdot)) = \cll(\nu(\cdot))$ for all $t\ge 0$ as probability measures on $\clc([0, \infty): \clp_1(\RR))$.
We will call a $\nu^* \in \clp_1(\RR)$ a {\em fixed point} of \eqref{FL}, if the constant path $\nu(t) \doteq \nu^*$ for all $t\ge 0$ is a solution of \eqref{FL}. Note that a fixed point is obviously a stationary solution.
Define the map $\Theta^n: \RR^n \to \clp_1(\RR)$ as $\Theta^n(\bfy) \doteq \frac{1}{n} \sum_{i=1}^n \delta_{y_i}$, for $\bfy = (y_1, \ldots , y_n)' \in \RR^n$.
Let $\Pi_n \doteq \pi_n \circ (\Theta^n)^{-1}$, where $\pi_n$ is the stationary distribution obtained in Theorem \ref{stat.ex}.

We now introduce one additional condition on $w$.
\begin{assumption}\label{eq:wtails}
Suppose that, for any $c \in (0,\infty)$, $\limsup_{x\to -\infty} \frac{w(x-c)}{w(x)} < \infty$. 
\end{assumption}
\begin{theorem}\label{thm.stat.sol}
    Let Assumptions \ref{Z}, \ref{ass.w}, \ref{Z2},  \ref{ass.w.LT}, and \ref{eq:wtails} hold. Suppose that $\nu^* \in \clp_1(\RR)$ is a fixed point  for \eqref{FL}.
 Further suppose that whenever $\{\nu(t), t\ge 0\}$ is a stationary solution of \eqref{FL} that satisfies $\EE \langle w, \nu(0)\rangle <\infty$, then we  must have that $\nu(0) =\nu^*$ a.s.  Then, as $n\to \infty$, $\Pi_n \to \delta_{\nu^*}$ in probability, in $\clp(\clp_1(\RR))$. Furthermore, we have propagation of chaos at $t=\infty$, namely, for $k \ge 1$, denoting the first $k$-marginal distribution of $\pi_n$ by $\pi_n^{(k)}$, we have $\pi_n^{(k)} \to (\nu^*)^{\otimes k}$ as $n\to \infty$.

Finally, if $\{m_n(\cdot)\}$ denotes the mean process of the particle system $\bfX^n$ with $\bfX^n(0) \sim \pi_n$, then
 \begin{equation}\label{meanlt}
\lim_{n \to \infty}\lim_{t \to \infty} \frac{m_n(t)}{t} = \mn\lim_{n \to \infty}\EE_{\pi_n}\left[\frac{1}{n}\sum_{i=1}^n w(Y^n_i(0))\right] = \mn\int_{-\infty}^{\infty}w(y) \nu^*(dy),
 \end{equation}
 where the inner limit is taken in probability for the first equality.
\end{theorem}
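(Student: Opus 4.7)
The plan is to run the particle system from the centered stationary distribution, extract a fluid limit along a subsequence, recognize the centered limit as a stationary solution of \eqref{FL} satisfying $\EE\langle w,\nu(0)\rangle<\infty$, and invoke the hypothesis on such solutions to identify it with $\nu^*$. The propagation of chaos and the velocity formula then follow by standard arguments once this identification is in place.

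First, initialize $\bfX^n(0)=\bfY^n(0)\sim\pi_n$, so that $m_n(0)=0$ and the centered empirical-measure process $\nu_n(t)\doteq \Theta^n(\bfY^n(t))$ is strictly stationary with one-dimensional marginal $\Pi_n$; note $\mu_n(0)=\nu_n(0)\sim\Pi_n$. To apply Theorem~\ref{thm.mainFL}(a)(b), I would verify Assumptions~\ref{ass.t1} and \ref{ass.char}: the former is exactly Theorem~\ref{stat.pr}, and the latter follows from Remark~\ref{rem:conds} since condition~(a) there is Assumption~\ref{eq:wtails} and condition~(b) is \eqref{st.thm1} at $t=0$. This gives $\clc$-tightness of $\{\mu_n\}$ in $\cld([0,\infty):\clp_1(\RR))$ and characterizes every subsequential weak limit $\mu$ as a continuous $\clp_1(\RR)$-valued process satisfying \eqref{eq:mfeq1} a.s. Because $m_n(t)=\langle x,\mu_n(t)\rangle$ is a continuous functional on $\clp_1(\RR)$ and subsequential limits have continuous paths, the centering map $\mu \mapsto (t\mapsto \mu(t)\circ(x\mapsto x-m(t))^{-1})$ is continuous on these paths, so along the same subsequence $\nu_n\Rightarrow \nu$, and substituting into \eqref{eq:mfeq1} shows $\nu$ satisfies \eqref{FL} a.s.

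Stationarity of the process $\nu$ is inherited from that of each $\nu_n$ via weak convergence of processes. To invoke the hypothesis I would check $\EE\langle w,\nu(0)\rangle<\infty$ using the uniform-in-$n$ estimate
\begin{equation*}
\EE_{\pi_n}\langle w,\nu_n(0)\rangle \;=\; \EE_{\pi_n}[w(Y^n_1(0))] \;\le\; w(-1) + \EE_{\pi_n}\bigl[|Y^n_1(0)|\,w(Y^n_1(0))\bigr],
\end{equation*}
which is bounded thanks to \eqref{st.thm1}, combined with Fatou's lemma applied to the (weakly) lower semicontinuous functional $\nu\mapsto \langle w,\nu\rangle$. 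The hypothesis then forces $\nu(0)=\nu^*$ a.s., hence $\Pi^*=\delta_{\nu^*}$; since every subsequential limit is identified with the same point mass, $\Pi_n\to \delta_{\nu^*}$ in $\clp(\clp_1(\RR))$, which is precisely the asserted convergence in probability in $\clp_1(\RR)$.

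For propagation of chaos at $t=\infty$, symmetry of the generator \eqref{eq:geny} lets one take $\pi_n$ exchangeable (automatic under the spread-out uniqueness), so Sznitman's equivalence between empirical-measure convergence to a deterministic limit and $k$-marginal convergence to the product measure yields $\pi_n^{(k)}\to (\nu^*)^{\otimes k}$. For the velocity formula, Theorem~\ref{stat.ex} supplies the inner limit $m_n(t)/t\to \mn\EE_{\pi_n}[w(Y^n_1(0))]$ in probability as $t\to\infty$; the $n\to\infty$ passage is obtained by truncating $w$ at height $K$, using weak convergence of $\pi_n^{(1)}\to \nu^*$ on the bounded part and controlling $\EE_{\pi_n}[w(Y^n_1(0))\mathbf{1}\{w(Y^n_1(0))>K\}]$ through the inclusion $\{w(Y^n_1)>K\}\subset\{Y^n_1<w^{-1}(K)\}$ together with Markov's inequality and \eqref{st.thm1}. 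The hardest step is the middle one: ensuring the random limiting process $\nu$ is simultaneously a \emph{stationary} solution of \eqref{FL} and enjoys $\EE\langle w,\nu(0)\rangle<\infty$, which requires pushing the continuous translation by the possibly random mean $m(\cdot)$ through the weak limit while preserving both stationarity and integrability against the unbounded $w$, all starting from a random initial distribution sampled from $\Pi_n$ rather than a deterministic $\gamma$ of the type treated in Theorem~\ref{thm.mainFL}(c).
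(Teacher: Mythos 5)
Your proposal is correct and follows essentially the same route as the paper's proof: verify Assumptions \ref{ass.t1} and \ref{ass.char} under $\PP_{\pi_n}$ via Theorems \ref{stat.pr}, \ref{stat.ex} and Remark \ref{rem:conds}, invoke Theorem \ref{thm.mainFL}(a)(b) to extract a $\clc$-tight subsequential limit of $\mu_n$ solving \eqref{eq:mfeq1}, center to obtain a stationary solution $\nu$ of \eqref{FL} with $\EE\langle w,\nu(0)\rangle<\infty$ by Fatou and \eqref{st.thm1}, and then apply the hypothesis to identify $\nu(t)\equiv\nu^*$ a.s. The remaining conclusions (POC at $t=\infty$ via exchangeability and the Sznitman equivalence, and the asymptotic velocity via \eqref{velm} plus truncation and uniform integrability from \eqref{st.thm1}) are handled just as in the paper, with your explicit truncation argument filling in what the paper leaves to the reader.
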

The proof of the above theorem is in Section \ref{subsec6.3}.
The second requirement in the statement of the theorem above is in general hard to verify, but may be analyzed on a case-by-case basis. We do this for one important setting in the result below. This setting corresponds to the case 
 where the distribution for the jump length, $Z$, is Exponential with rate $\gamma>0$ and  the jump rate function, $w$, is exponential as well, i.e.   $w(x)=e^{-\beta x}$, $x \in \RR$, where 
 $\beta \in (0,\gamma]$.
%
\newcommand\bnut{\Bar{\nu}_t}
\begin{theorem}\label{thm.ss.exp}
Suppose that, for some $0<\beta\le \gamma <\infty$, $w(x)=e^{-\beta x}$, $x \in \RR$. Also suppose that $\theta$ is the law of an Exponential random variable with rate $\gamma$. The following hold.
\begin{enumerate}[(a)]
 \item    Let $\{\bar\nu(t), t \ge 0\}$  be a stationary solution of \eqref{FL} that satisfies $\EE\langle w, \bar \nu(0)\rangle <\infty$. Then, almost surely, $\Bar{\nu}(0) = \nu^*$, where $\nu^* \in \clp_1(\RR)$ is a fixed point for \eqref{FL}, given as
    \begin{equation}
    \nu^*(dx)=\frac{\gamma}{\Gamma(1+\gamma\beta^{-1})}\exp\left[ -\left(\gamma (x-\beta^{-1}\Psi(\gamma\beta^{-1}))\right) - e^{-\beta\left(x-\beta^{-1}\Psi(\gamma\beta^{-1})\right)}\right]\;dx,
    \label{eq:838}
    \end{equation}
    where $\Psi(a)\doteq \frac{\Gamma'(a)}{\Gamma(a)}$, $a> 0$. 
\item As $n\to \infty$, $\Pi_n \to \delta_{\nu^*}$ in probability, in $\clp(\clp_1(\RR))$.
\item There is a unique  $\mu \in \clm$ that solves  the McKean-Vlasov equation \eqref{eq:mfeq1} and satisfies
    $\mu(0) = \nu^*$. This $\mu$ is given as 
    $$\mu(t,dx)=\frac{\gamma}{\Gamma(1+\gamma\beta^{-1})}\exp\left[ -\gamma\left(x-\bar m(t)\right) - e^{-\beta\left(x-\bar m(t)\right)}\right]\;dx,$$
    and $$\bar m(t) = \beta^{-1} e^{-\Psi(\gamma\beta^{-1})}t + \beta^{-1}\Psi(\gamma\beta^{-1}).$$
    In particular, $m(t) = \langle x, \mu(t)\rangle = \beta^{-1} e^{-\Psi(\gamma\beta^{-1})}t$. 
\item When the initial distribution of the particle system is $\pi_n$, we have that  $\mu_n$ converges in probability to $\mu$ in $\mathcal{D}([0,\infty):\mathcal{P}_1(\mathbb{R}))$. Furthermore, the limit in \eqref{meanlt} equals $\beta^{-1} e^{- \Psi(\gamma \beta^{-1})}$.   
\end{enumerate}
\end{theorem}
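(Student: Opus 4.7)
The plan is to establish part (a), which contains all the technical difficulty, and then deduce (b), (c), (d) by combining (a) with Theorems \ref{thm.mainFL}, \ref{stat.ex}, \ref{stat.pr} and \ref{thm.stat.sol} together with direct computation.

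For existence of the fixed point in (a), I would verify directly that the density in \eqref{eq:838} defines a fixed point of \eqref{FL}. Using the substitution $s=e^{-\beta(x-a^*)}$ with $a^*=\beta^{-1}\Psi(\gamma/\beta)$ one computes
\[
L^*(u) \doteq \int e^{-ux}\nu^*(dx) = e^{-ua^*}\,\frac{\Gamma((u+\gamma)/\beta)}{\Gamma(\gamma/\beta)},\qquad u>-\gamma,
\]
and checks, via $\Gamma(s+1)=s\Gamma(s)$, the identity $L^*(u+\beta) = (1+u/\gamma)\,L^*(\beta)\,L^*(u)$ with $L^*(\beta)=(\gamma/\beta)e^{-\Psi(\gamma/\beta)}$. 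Since, for $f(x)=e^{-ux}$ one has $g_f(x)=-u(\gamma+u)^{-1}e^{-ux}$, $f'(x)=-ue^{-ux}$, and $\mn=1/\gamma$, this functional identity is precisely what the fixed-point condition \eqref{FL} reduces to when tested against exponential $f$; extension to $\text{Lip}_1$ test functions is by density. The mean $\langle x,\nu^*\rangle=0$ follows from $\Psi=\Gamma'/\Gamma$.

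For uniqueness of stationary solutions in (a), let $\{\bar\nu(t)\}$ be a stationary solution with $\EE\langle w,\bar\nu(0)\rangle<\infty$, and set $L(t,u)\doteq\int e^{-ux}\bar\nu(t,dx)$. A preliminary step is to deduce that $L(t,u)$ is a.s. finite for each $u>-\gamma$, using stationarity and the integrability hypothesis. Since $\bar\nu$ solves \eqref{FL}, $L$ satisfies the pathwise ODE
\[
\partial_t L(t,u) \;=\; -\frac{u}{\gamma+u}L(t,u+\beta) \;+\; \frac{u}{\gamma}L(t,\beta)L(t,u).
\]
Stationarity forces $\EE[L(t,u)]$, $\EE[L(t,u)^2]$ and all mixed moments $\EE[L(t,u)L(t,v)]$ to be constant in $t$; taking expectations of the ODE yields the averaged identity $\EE[L(u+\beta)]=(1+u/\gamma)\EE[L(\beta)L(u)]$, and differentiating products gives a symmetric analogue
\[
\frac{u}{\gamma+u}\EE[L(u+\beta)L(v)] + \frac{v}{\gamma+v}\EE[L(u)L(v+\beta)] \;=\; \frac{u+v}{\gamma}\EE[L(\beta)L(u)L(v)].
\]
Compared with the fixed-point equation, the Jensen bound $\EE[L(\beta)^2]\ge(\EE[L(\beta)])^2$ (with equality iff $L(t,\beta)$ is a.s. deterministic) together with the identity $\EE[L(2\beta)]=(1+\beta/\gamma)\EE[L(\beta)^2]$ implies $\EE[L(2\beta)]\ge L^*(2\beta) + (1+\beta/\gamma)\mathrm{Var}(L(\beta))$ once mean conservation is used. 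The plan is to iterate this observation at $u=k\beta$ and show that a non-zero $\mathrm{Var}(L(\beta))$ propagates unbounded growth in $\EE[L(k\beta)]$, contradicting the uniform integrability inherited from Theorem \ref{stat.pr}. This forces $L(t,\beta)$ to be a.s. equal to $L^*(\beta)$; the ODE then becomes linear in each $L(t,u)$, and constancy in $t$ combined with the recursion $u\mapsto u+\beta$ inductively yields $L(t,u)=L^*(u)$ a.s. for all $u>-\gamma$, whence $\bar\nu(0)=\nu^*$ a.s. by Laplace transform uniqueness. The main obstacle is closing this induction rigorously without any contraction/functional-inequality, by leveraging exclusively the closed recursion in the Laplace variable made possible by the exponential forms of $w$ and $\theta$.

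Part (b) is then immediate from Theorem \ref{thm.stat.sol}: Assumption \ref{eq:wtails} holds since $w(x-c)/w(x)=e^{\beta c}$, and Assumptions \ref{Z}, \ref{ass.w}, \ref{Z2}, \ref{ass.w.LT} are routinely checked for $w(x)=e^{-\beta x}$ and $\theta=\mathrm{Exp}(\gamma)$ with $0<\beta\le\gamma$. For (c), observe that the proposed $\mu(t,dx)$ is the translate of $\nu^*$ by $\bar m(t)-a^*$; its centering is identically $\nu^*$, which is a fixed point of \eqref{FL}, and $\bar m'(t)=\beta^{-1}e^{-\Psi(\gamma/\beta)}=\mn L^*(\beta)$ matches the mean dynamics of \eqref{eq:mfeq1}. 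Uniqueness in $\clm$ follows from Theorem \ref{thm.mainFL}(c) after verifying \eqref{cM1}--\eqref{cM2} for $\nu^*$: the Lipschitz constant of $w$ on $[a,\infty)$ is $\beta e^{-\beta a}$, and $\int\exp(\delta\beta e^{-\beta(x-a)})\nu^*(dx)$ converges for sufficiently small $\delta$ thanks to the double-exponential left tail of $\nu^*$, while \eqref{cM2} is trivial since $\nu^*$ has polynomial moments of all orders. Finally, for (d), part (b) together with the tail bounds \eqref{st.thm1}--\eqref{st.thm2} supplies the hypotheses of Theorem \ref{thm.mainFL}(c) with initial distribution $\nu^*$ (Assumption \ref{ass.char} follows via Remark \ref{rem:conds} using $\limsup_{x\to-\infty}w(x-c)/w(x)=e^{\beta c}<\infty$), yielding $\mu_n\to\mu$ in probability in $\mathcal{D}([0,\infty):\mathcal{P}_1(\RR))$; the asymptotic velocity $\beta^{-1}e^{-\Psi(\gamma/\beta)}$ is then $\mn L^*(\beta)=\mn\int w\,d\nu^*$, as given by Theorem \ref{thm.stat.sol}.
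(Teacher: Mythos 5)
Your proposal for part (a) takes a genuinely different route from the paper (which uses an explicit solution to the linearized equation with the nonlinearity frozen at $\theta(t)=\langle w,\bar\nu(t)\rangle$, followed by a coalescing-coupling argument), and your Laplace-transform computations for $L^*$ and the functional identity $L^*(u+\beta)=(1+u/\gamma)L^*(\beta)L^*(u)$ are correct. However, the uniqueness half of your part (a) argument has several genuine gaps that I do not see how to close.

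First, you cannot assume the Laplace transforms $L(t,u)=\int e^{-ux}\bar\nu(t,dx)$ are a.s.\ finite for $u>-\gamma$. The only integrability given is $\EE\langle w,\bar\nu(0)\rangle<\infty$, i.e.\ $\EE L(0,\beta)<\infty$; there is no a priori control at $u<0$ (right tails) nor on $L(t,k\beta)$ for $k\geq 2$. Absent such control, the pathwise ODE for $L(t,u)$ is not even well posed, and extending equation \eqref{FL} from $\text{Lip}_1$ test functions to exponentials $e^{-ux}$ requires exactly this unavailable integrability.

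Second, and more fundamentally, you propose to contradict ``the uniform integrability inherited from Theorem \ref{stat.pr}.'' That theorem concerns the stationary laws $\pi_n$ of the \emph{finite} particle system and supplies a uniform-in-$n$ tail estimate on first moments. Part (a) of the present theorem, by contrast, is a statement purely about stationary solutions of the limiting equation \eqref{FL}, proven \emph{before} any connection to $\pi_n$ is drawn (it is precisely what feeds into Theorem \ref{thm.stat.sol} to yield part (b)). There is no mechanism by which $\bar\nu$ ``inherits'' bounds on $\EE[L(k\beta)]$; indeed, your hypothesis does not bound these quantities.

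Third, even granting everything, the iteration-to-contradiction is not worked out. The fixed-point moments $L^*(k\beta)=L^*(\beta)^k\prod_{j=0}^{k-1}(1+j\beta/\gamma)$ already grow super-exponentially in $k$, so ``unbounded growth in $\EE[L(k\beta)]$'' is not automatically a contradiction; you would need a quantitative comparison showing strictly faster growth, together with an a priori cap on $\EE[L(k\beta)]$, and neither is available. The sign of the correlations in the mixed-moment recursion also needs justification. The paper's coupling approach sidesteps all of this by comparing whole trajectories rather than moment sequences: once one shows $\sup_x|\bar F(t,x)-F^*(t,x)|\to 0$ a.s., stationarity of $\bar F(t,\cdot)$ in $t$ combined with the explicit form of $F^*(t,\cdot)=\Psi(\cdot+\alpha(t))$ immediately forces $\bar F(0,\cdot)$ to be a random shift of $\Psi$, and the zero-mean constraint pins down the shift deterministically. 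That is the essential idea your proposal is missing. Your treatment of parts (b)--(d) given (a) is fine and matches the paper.
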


For $\beta = \gamma =1$, \eqref{eq:838} is the well-known Gumbel distribution. As noted in \cite[Section 5.2]{Balzs2011ModelingFA}, the above setting has interesting connections with extreme value theory.

Proof of this result is in Section \ref{s.Exp}. The main idea is to consider a solution $\nu^*(\cdot)$ of an equation related to \eqref{FL}, in which the `non-linearity' $\langle w, \nu(\cdot)\rangle$ is replaced by a given input $\langle w, \bar{\nu}(\cdot)\rangle$ obtained from the stationary solution.
 By coupling arguments, it is then shown that the sup-norm distance between the stationary {\di cumulative distribution function (cdf)} $\bar{F}(t)$ of $\bar{\nu}(t)$ and the cdf $F^*(t)$ of $\nu^*(t)$ approaches zero, a.s., as $t \to \infty$. The result is then shown by exploiting this fact, the special form of $\nu^*$, and the stationarity of $\bar{\nu}(\cdot)$.

\begin{remark}\label{POCcomp}
    Note that $\mu(t, dx)$ above is a traveling wave solution, namely it is of the form $\rho(x-ct) dx$. Such traveling wave solutions to \eqref{eq:mfeq1} were obtained in certain cases (including the one studied in Theorem \ref{thm.ss.exp}) by \cite{Balzs2011ModelingFA} {\di(see \cite[Theorem 3.1 and Corollary 3.2]{Balzs2011ModelingFA})}. The corresponding measures $\nu^*(dx) = \rho(x) dx$ can be verified to be fixed points of \eqref{FL}. In view of this, a natural approach to prove part (b) (and consequently also part (d)) of the above theorem
    would be the following: (i) Establish `uniform-in-time propagation of chaos': e.g., show that, with $\nu_n(t, dx) = \mu_n(t, dx-m_n(t))$, $\sup_{t \ge 0}\EE\mathcal{W}_1(\nu_n(t),\nu(t)) \le C \EE\mathcal{W}_1(\nu_n(0),\nu(0)) + \eps_n$ for some constant $C>0,$ where $\nu$ is the fluid limit of $\nu_n$ and $\eps_n \to 0$ as $n \to \infty$;  (ii) Show that $\mathcal{W}_1(\nu(t), \nu^*) \to 0$ as $t \to \infty$. Estimates similar to (i) have  been obtained for several interacting particle systems (mostly diffusion processes) satisfying certain strong stability properties \cite{BuFan,durmus2020elementary,lacker2023sharp}. Results similar to (ii) have been obtained in \cite{greenberg1996asynchronous,stolyar2023particle,stolyar2023large} in the context of a particle system with quantile based interactions. However, the stability properties of our model are not sufficiently strong  to provide uniform estimates as in (i) or global stability of McKean-Vlasov equation as in (ii).  Moreover, unlike in \cite{greenberg1996asynchronous,stolyar2023particle,stolyar2023large} where the mean function moves with constant velocity in the fluid limit, in our case the  evolution of $m(\cdot)$ is nonlinear and has a complex form through the rate function $w$ (cf. \eqref{eq:mfeq0} with $f=\mbox{Id}$). This makes   the tools developed in these papers hard to implement for the current setting. 

    In view of this  we  develop different techniques for proving the above results. Instead of attempting the uniform estimate in (i), we construct Lyapunov functions to directly quantify integrability properties of $\pi_n$ which allows us to  use our fluid limit result (Theorem \ref{thm.mainFL}) to show that subsequential limits of empirical measures of the stationary particle systems correspond to stationary solutions of \eqref{FL}. A separate argument is then used to characterize stationary solutions as fixed points of \eqref{FL} and thereby prove convergence of the whole sequence of empirical measures under $\pi_n$ to the fixed point. 
\end{remark}

{\di Figure \ref{fig:partsim} plots the joint motion of a system of $n=20$ particles for different choices of $w$ and $\theta$. The dotted black line represents the center of mass. The first two figures respectively have $w(x) = 1 + x^-, \theta =Exp(1)$ and $w(x) = e^{-x}, \theta =Exp(1)$. The clustering effect is much quicker in the second figure after which the cluster moves at a slower speed than in the first figure.

The last one has $w(x) = 1 + \sqrt{x^-}$ and $\theta = Pareto(2)$. The effect of frequent large jumps due to heavy-tailed $\theta$ is manifested via particles switching orders and fluctuating more, resulting in less synchronized flocking.}
\begin{figure}[htbp]
    \centering
    \begin{subfigure}[b]{0.85\textwidth}
        \centering
        \includegraphics[width=\textwidth]{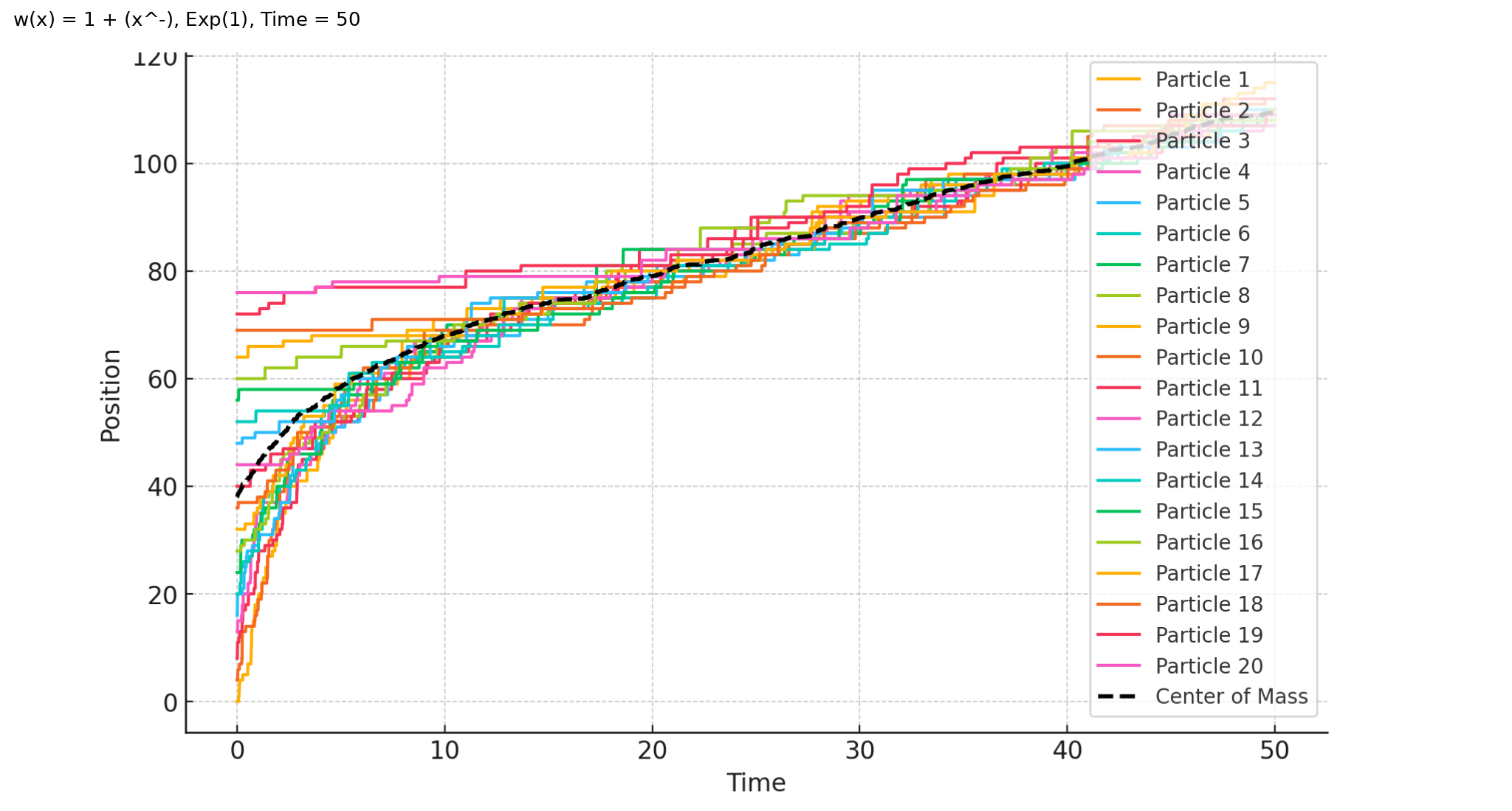}
    \end{subfigure}
    \\
    \begin{subfigure}[b]{0.85\textwidth}
        \centering
        \includegraphics[width=\textwidth]{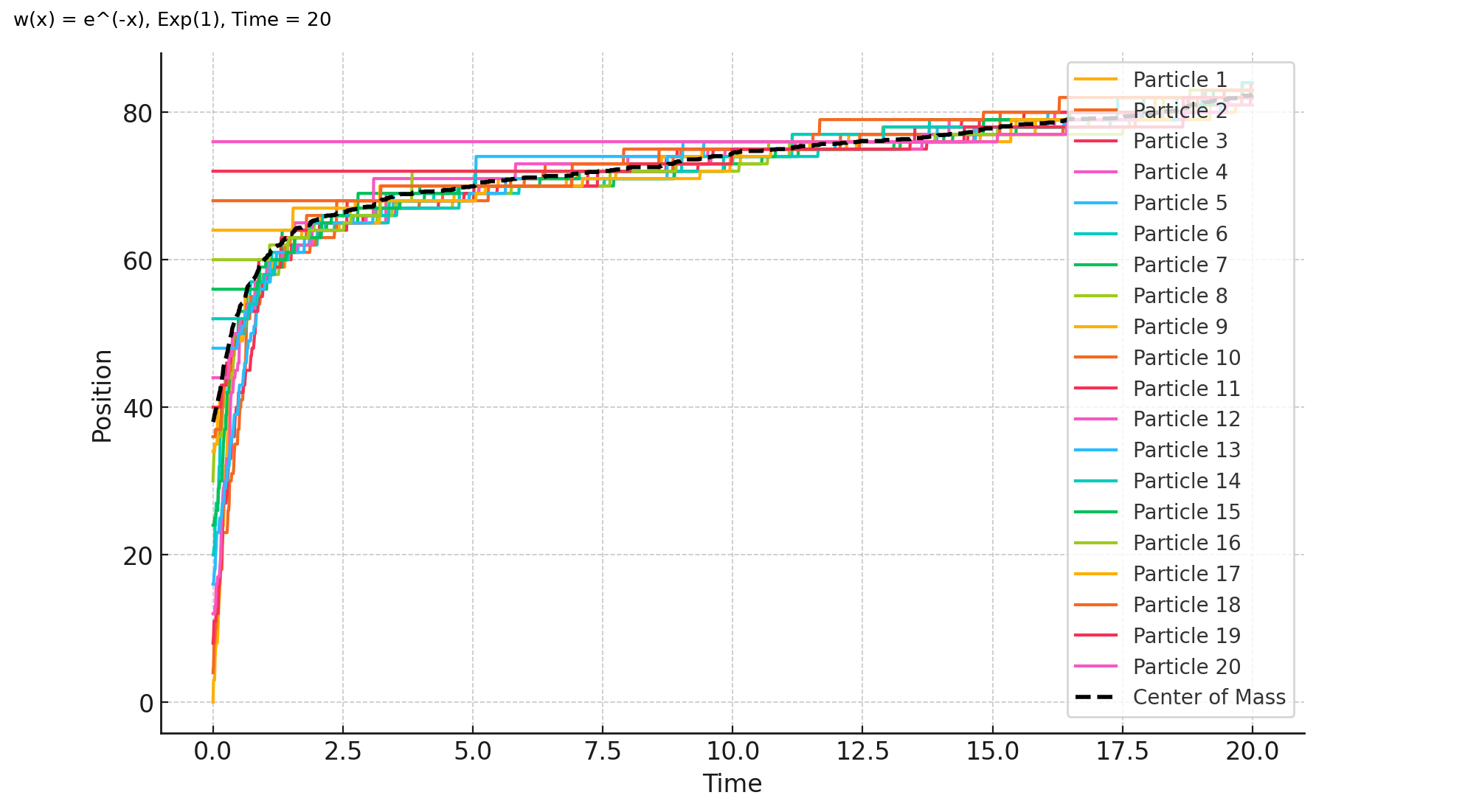}
    \end{subfigure}
    \\
    \begin{subfigure}[b]{0.85\textwidth}
        \centering
        \includegraphics[width=\textwidth]{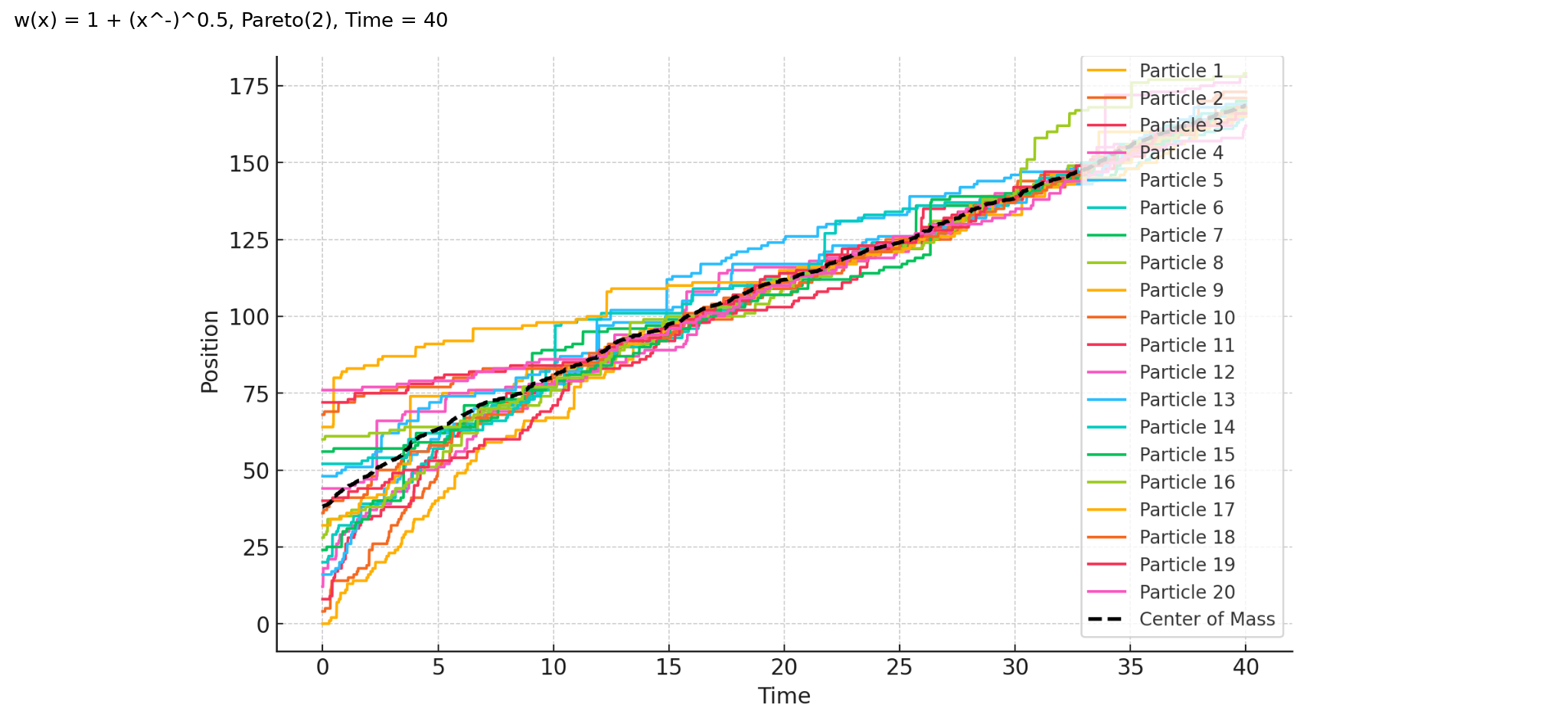}
    \end{subfigure}
    \caption{Visualization of particle trajectories for different choices of $w$ and $\theta$.}
    \label{fig:partsim}
\end{figure}


\section{Proof of Proposition \ref{prop:exis}.}
\label{sec:exis}
For $R\in (0,\infty)$, consider the modification of the $n$-particle dynamics in which the jump rates $w(y_i - \bar\ybd)$ are replaced by 
$$w^{n,R}_i(\ybd) \doteq w(y_i- \bar\ybd) \one_{\{\ybd_{\max}  \le R\}} + \one_{\{\ybd_{\max}  > R\}}, \; i \in [n],\; \ybd = (y_1, \ldots , y_n)' \in \RR^n,$$
where $\ybd_{\max} \doteq \max\{y_1, \ldots y_n\}$.
From the monotonicity property of $w$ it follows that the jump-Markov process $\bfX^{n,R}$, starting from any initial condition $\xbd \in \RR^n$, associated with the jump rates  $\{w_i^{n,R}, i \in [n]\}$ is well defined. The generator $\cll_n^{R}$ of this Markov process is given by \eqref{eq:gene} with $w(x_j - \bar \bfx)$ there replaced by $w_i^{n,R}(\xbd)$. 
Furthermore, using a standard construction (based on a common collection of Poisson random measures), we can ensure that the Markov processes $\{\bfX^{n,R}, R \in \NN\}$ are defined on a common probability space and satisfy the  property that for $R_0 < R_1 \le R_2$,  \begin{equation}\label{eq:consis}
\mbox{ on the set }
\{A^{n,R_1}(T) \le R_0\},\,
\bfX^{n,R_1}(t) = \bfX^{n,R_2}(t) \mbox{ for all } t\in [0,T],
\end{equation}
where $A^{n,R}(t) \doteq \max\{X^{n,R}_i(t), 1 \le i \le n\}$, $t\in [0,T]$, $R\in \NN$.
From the form of the generator $\cll_n^{R}$ it follows that, 
$$\clm^{n,R}(t) \doteq A^{n,R}(t) - \xbd_{\max} - \sum_{i=1}^n \int_0^t \EE_Z([Z-A^{n,R}(s) + X^{n,R}_i(s)]^+) w_i^{n,R}(\bfX^{n,R}(s)) ds, \; t \ge 0, $$ 
is a local martingale with predictable quadratic variation given as
$$\langle \clm^{n,R} \rangle_t = \sum_{i=1}^n \int_0^t \EE_Z([Z-A^{n,R}(s) + X^{n,R}_i(s)]^+)^2
w_i^{n,R}(\bfX^{n,R}(s)) ds.$$
Using Assumption \ref{Z} we see that
$$
\EE\langle \clm^{n,R} \rangle_T \le \sum_{i=1}^n\EE \int_0^T \EE_Z([Z-A^{n,R}(s) + X^{n,R}_i(s)]^+)^2
(w(X^{n,R}_i(s)-A^{n,R}(s))+1) ds\le
nT ( c_w + \vt).
$$
This shows that $\clm^{n,R}$ is in fact a martingale and therefore, using Assumption \ref{Z} again,
\begin{align}\label{eq:maxbd}
\EE A^{n,R}(T) &\le \xbd_{\max} + \sum_{i=1}^n \int_0^T \EE_Z([Z-A^{n,R}(s) + X^{n,R}_i(s)]^+)
(w(X^{n,R}_i(s)-A^{n,R}(s))+1) ds\notag\\
&\le
\xbd_{\max} + nT ( c_w + \mn) \doteq c_{T,n}.
\end{align}
Since $c_{T,n}$ does not depend on $R$, it follows that $\PP(\cup_{R\ge 1} \{A^{n,R}(T) < R/2\}) = 1$. The result now follows upon combining this with the observation in \eqref{eq:consis}, on noting that
$\bfX^n(t) \doteq \lim_{R\to \infty} \bfX^{n,R}(t)$, $t\ge 0$, defines a Markov process with generator $\cll_n$.
\hfill \qed

\section{A Moment Estimate on Overshoots}\label{s.lR}
The following lemma provides bounds for the first and second moments of the overshoot (over a certain level) of a renewal process.   This result is used in multiple instances in the subsequent proofs. A  similar argument for establishing a related estimate was used in \cite{budhiraja2024simple}.
\begin{lemma}\label{R}
Let $\{Z_i, i \in \NN\}$ be an iid sequence of $\RR_+$ valued random variables with $\mathbb{E}Z_1^{m+1} < \infty$ for some $m\ge 1$. Let $S_k \doteq \sum_{j=1}^k Z_j$, $k \in \NN$ and let $\{\xi(t), t \ge 0\}$ be the corresponding renewal process, namely $\xi(t) \doteq \max\{k \in \NN: S_k \le t\}$. For $l \in \RR_+$, let $\cO_l \doteq S_{\xi(l)+1}-l$ be the 
overshoot of level $l$.
Then,  $\sup_{l\in \RR_+} \mathbb{E}(\cO_l{\di ^m}) < \infty$.
\end{lemma}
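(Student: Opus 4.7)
The plan is to reduce the statement to a classical renewal equation for $h(l)\doteq \EE O_l^m$ and then invoke the key renewal theorem. Writing $F$ for the distribution of $Z_1$, introduce the forcing function
$$\phi(u) \doteq \EE\bigl[((Z_1-u)^+)^m\bigr], \qquad u \ge 0.$$
Conditioning on $Z_1$, on $\{Z_1>l\}$ one has $O_l = Z_1-l$, contributing $\phi(l)$, while on $\{Z_1=s\le l\}$ the shifted sequence $(Z_{j+1})_{j\ge 1}$ generates an independent copy of the renewal process whose overshoot of level $l-s$ is distributed as $O_{l-s}$, contributing $\int_{[0,l]} h(l-s)\,dF(s)$. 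This yields the renewal equation
$$h(l) = \phi(l) + \int_{[0,l]} h(l-s)\,dF(s), \qquad l\ge 0,$$
whose unique locally bounded nonnegative solution is $h = \phi \ast U$, where $U \doteq \sum_{k\ge 0} F^{\ast k}$ is the renewal measure of $\{S_k\}$.

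Next, I would verify that $\phi$ is directly Riemann integrable. The hypothesis $\EE Z_1^{m+1}<\infty$ yields $\phi(0) = \EE Z_1^m < \infty$, and $u\mapsto \phi(u)$ is nonincreasing and nonnegative because $((z-u)^+)^m$ is nonincreasing in $u$ for each fixed $z\ge 0$. Tonelli gives
$$\int_0^\infty \phi(u)\,du = \EE\int_0^{Z_1} (Z_1-u)^m\,du = \frac{\EE Z_1^{m+1}}{m+1} < \infty,$$
and any nonnegative, nonincreasing, integrable function on $\RR_+$ is directly Riemann integrable. For local boundedness of $h$, the crude bound $h(l) \le \phi(0)\,U([0,l])$ together with $U([0,l]) < \infty$ for each finite $l$ suffices.

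Applying the key renewal theorem (with the obvious discrete analogue in the lattice case) one obtains $h(l) \to (\EE Z_1)^{-1}\int_0^\infty \phi(u)\,du$ as $l\to\infty$, and combined with local boundedness this delivers $\sup_{l\in \RR_+} h(l) < \infty$. The only step that needs a little care is the verification of direct Riemann integrability and the lattice/non-lattice dichotomy for the renewal theorem; both are routine given the monotonicity and tail bound on $\phi$. One could alternatively bypass the key renewal theorem by invoking a higher-order Lorden-type inequality, which furnishes the uniform bound $\sup_l \EE O_l^m \le C_m\, \EE Z_1^{m+1}/\EE Z_1$ directly without any structural assumption on $F$, at the cost of importing a slightly deeper renewal estimate.
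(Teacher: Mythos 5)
Your proof is correct, but it takes a genuinely different route from the paper's. You set up the standard renewal equation $h(l)=\phi(l)+\int_{[0,l]}h(l-s)\,dF(s)$ with $\phi(u)=\EE[((Z_1-u)^+)^m]$, verify that $\phi$ is directly Riemann integrable (nonnegative, nonincreasing, integrable with $\int_0^\infty\phi=\EE Z_1^{m+1}/(m+1)<\infty$), check local boundedness of $h$ via $h(l)\le\phi(0)\,U([0,l])$, and invoke the key renewal theorem to get convergence and hence a uniform bound. The paper instead argues entirely by hand: it sets $\tau(x)=\xi(x)+1$, uses the inclusion $\{\tau(l)=n\}\subset\{\tau(l)>n-1\}\cap\{\tau([l-Z_n]^+)\le n-1\}$ together with independence of $Z_n$ from $\clf_{n-1}$ to reach $\EE O_l^m\le\EE Z_1^m+\int_0^\infty x^m\,\EE\tau(x)\,\theta_1(dx)$, then establishes the linear growth bound $\EE\tau(x)\le C_1+C_2x$ via a truncation/negative-binomial argument, landing on an explicit constant $\big(\tfrac{2K}{m_1}+1\big)\EE Z_1^m+\tfrac{8K}{m_1^2}\EE Z_1^{m+1}$. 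Your approach is conceptually shorter and, via the key renewal theorem, actually yields the limit $h(l)\to(\EE Z_1)^{-1}\EE Z_1^{m+1}/(m+1)$ (stronger than mere boundedness), at the cost of importing the renewal theorem and handling the lattice/non-lattice dichotomy. The paper's approach is self-contained, avoids the arithmetic/non-arithmetic case split entirely, and produces an explicit quantitative constant, which is in the same spirit as the higher-order Lorden inequality you mention as an alternative. Your last sentence is, in effect, a description of what the paper actually does.
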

\begin{proof}
By replacing $Z_i$ with $Z_i \one_{\{Z_i>0\}}$ we can assume without loss of generality that $Z_i>0$ for all $i$. Let for $x \in \RR_+$, $\tau(x) \doteq \xi(x)+1$. Then $\tau(x)$ is a $\clf_n \doteq \sigma\{Z_i: 1 \le i \le n\}$ stopping time for each $x\in \RR_+$. Note that, for $l\in \RR_+$, ${\di \cO}_l \le Z_{\tau(l)}$. Also note that for $n>1$
$$\{\tau(l) =n\} \subset \{[l-Z_n]^+ < S_{n-1} \le l\}
= \{\tau(l) > n-1\} \cap \{\tau([l-Z_n]^+) \le n-1\}.$$
For each $x \in \RR_+$, $\{\tau(l) > n-1\} \cap \{\tau([l-x]^+) \le n-1\}$
is $\clf_{n-1}$ measurable and is therefore independent of $Z_n$.
Let ${\di\EE}(Z_1^m) \doteq a$ and denote the distribution of $Z_1$ by $\theta_1$.
Then,
\begin{multline*}
\mathbb{E}(\cO_l^m) \le \mathbb{E}(Z_{\tau(l)}^m) \le a + \sum_{n=2}^{\infty} E(Z_n^m 
\one_{\{\tau(l) = n\}})
\le a+ \sum_{n=2}^{\infty} \mathbb{E}\left(Z_n^m \one_{\{\tau([l-Z_n]^+) \le n-1 < \tau(l)\}}\right)\\
= a+ \sum_{n=2}^{\infty} \int_0^{\infty} x^m 
\mathbb{P}(\tau([l-x]^+) \le n-1 < \tau(l)) \theta_1(dx) \le
a+ \int_0^{\infty} x^m \mathbb{E}(\tau(l) - \tau([l-x]^+)) \theta_1(dx).
\end{multline*}
Using the stopping time property of $\tau(\cdot)$ we see that for $l,x\in \RR_+$, $\tau(l) - \tau([l-x]^+)$ is stochastically dominated by $\tau(x)$. Combining this with the above display, we have that
\begin{equation}\label{eq:1115}
\mathbb{E}(\cO_l^m) \le a+ \int_0^{\infty} x^m \mathbb{E}\tau(x) \theta_1(dx).
\end{equation}
Let $m_1 = \mathbb{E}(Z_1)$ and choose $K \in (0,\infty)$ such that 
$\mathbb{E}(Z_1\one_{\{Z_1 >K\}}) \le m_1/4$. Then
$$\mathbb{E}(Z_1\one_{\{Z_1 \le K\}}) = m_1 - \mathbb{E}(Z_1\one_{\{Z_1 >K\}}) \ge 3m_1/4.$$
Also,
$$\mathbb{E}(Z_1\one_{\{Z_1 \le K\}}) \le \frac{m_1}{4} + K \mathbb{P}(Z_1 \ge m_1/4)
$$
and therefore
$$
\mathbb{P}(Z_1 \ge m_1/4) \ge \frac{1}{K} \mathbb{E}(Z_1\one_{\{Z_1 \le K\}}) - \frac{m_1}{4K}
\ge \frac{3m_1}{4K} - \frac{m_1}{4K} = \frac{m_1}{2K}.
$$
Define for $n \in \NN$ and $x \in \RR_+$
$$
C(n) = \sum_{j=1}^n \one_{\{Z_j \ge m_1/4\}}, \;
\zeta(x) \doteq \min\{n\ge 1: C(n) = \lceil 4x/m_1 \rceil\}.
$$
Note that, for $x \in \RR_+$,
$$\sum_{j=1}^{\zeta(x)} Z_j \ge m_1C(\zeta(x))/4 \ge 4m_1x/(4m_1) = x$$
and therefore $\tau(x) \le \zeta(x)+1$.
Thus, noting that $\zeta(x)$ is a negative Binomial random variable with chance of success bounded below by $m_1/(2K)$,
$$\mathbb{E}\tau(x) \le \mathbb{E}\zeta(x)+1
\le (1+ 4x/m_1) 2K/m_1 + 1 = 2K/m_1 + 8Kx/m_1^2 +1.$$
Using this estimate in \eqref{eq:1115} we have
$$
\mathbb{E}(\cO_l^m) \le a+ \int_0^{\infty} x^m [2K/m_1 + 8Kx/m_1^2 +1] \theta_1(dx)
\le \left(\frac{2K}{m_1} + {\di 2}\right) \mathbb{E}Z_1^m + \frac{8K}{m_1^2} \mathbb{E}Z_1^{m+1}.
$$
The result follows.
\end{proof}

\section{Proof of Theorems \ref{thm.mainFL} and \ref{mv}}\label{s.FL}
The proof of Theorem \ref{thm.mainFL} is broken up into the following steps.
\subsection{Mean bound}
As noted below Theorem \ref{thm.mainFL}, establishing an a priori control on the empirical mean process is a key ingredient in the proof of Theorem \ref{thm.mainFL}. The current subsection is dedicated to this.

For estimating $m_n(\cdot)$ we will use the following weaker form of 
Assumption \ref{ass.t1}.
\begin{assumption}\label{mun0}
    The initial configuration of the $n$-particle system satisfies
        \begin{equation}
        \lim_{B\to \infty} \limsup_{n\to \infty} \mathbb{P} \left[\frac{1}{n} \sum_{i=1}^n |X_i(0)|\geq B \right] =0. 
        \end{equation}
\end{assumption} 
To see the claim that Assumption \ref{mun0}  follows from 
 Assumption \ref{ass.t1}, note that 
 for any $B>2$,
\begin{multline*}
     \mathbb{P} \left[\frac{1}{n} \sum_{i=1}^n |X^n_i(0)|\geq B \right] \leq \mathbb{P} \left[ \frac{1}{n}\sum_{i=1}^n |X^n_i(0)|.\mathbf{1}(|X^n_i(0)|\geq B/2) \geq 1\right]\\ + \mathbb{P} \left[ \frac{1}{n}\sum_{i=1}^n |X^n_i(0)|.\mathbf{1}(|X^n_i(0)|< B/2) \geq B- 1\right]
     =\mathbb{P} \left[ \frac{1}{n}\sum_{i=1}^n |X^n_i(0)|.\mathbf{1}(|X^n_i(0)|\geq B/2) \geq 1\right].
\end{multline*} 
The claim is now immediate.


%
In what follows, frequently we will suppress $n$ in our notation and write $X^n_i$ as $X_i$.
\begin{theorem}\label{thm.mbd}
     Suppose that Assumptions \ref{Z} and \ref{mun0} hold. Then, for every $T \in (0,\infty)$, 
     \begin{equation}\label{mbd}
        \lim_{R\to \infty}\limsup_{n\to \infty} \mathbb{P}(m_n(T) \geq R) =0.
    \end{equation}
\end{theorem}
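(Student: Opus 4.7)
The plan is to reduce the statement to a uniform moment estimate via a stopping-time argument, then bound the mean's compensator by decomposing the total jump rate into a benign ``above-mean'' part and a ``below-mean'' part that is controlled via Assumption \ref{Z} together with a Lyapunov estimate on the dispersion $D_n \doteq \sum_i (m_n-X_i)^+$.

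Fix $\eps>0$. By Assumption \ref{mun0} pick $B=B(\eps)$ so that $\limsup_n \PP(\Omega_{n,B}^c)<\eps$, where $\Omega_{n,B}\doteq\{n^{-1}\sum_i |X_i^n(0)|\le B\}$; on this event $|m_n(0)|\le B$ and $D_n(0)\le 2nB$. Introduce the stopping time $\tau_R\doteq\inf\{t\ge 0: m_n(t)\ge R\}$. It will suffice to establish $\sup_n \mathbb{E}[m_n(T\wedge\tau_R)\,\one_{\Omega_{n,B}}]\le C(T,B)$ with $C(T,B)$ independent of $R$, since Markov's inequality then yields $\limsup_n \PP(\tau_R\le T,\Omega_{n,B})\le C(T,B)/R \to 0$.

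The key identity is the optional stopping of the (localized) compensator: with $Y_i\doteq X_i-m_n$ and $\Lambda_n(s)\doteq\sum_i w(Y_i(s))$,
\begin{equation*}
\mathbb{E}[m_n(T\wedge\tau_R)]=\mathbb{E}[m_n(0)]+\frac{\mn}{n}\,\mathbb{E}\!\int_0^{T\wedge\tau_R}\!\Lambda_n(s)\,ds.
\end{equation*}
Split $\Lambda_n=\Lambda_n^++\Lambda_n^-$ according to the sign of $Y_i$. Monotonicity gives the deterministic bound $\Lambda_n^+\le n w(0)$, contributing at most $w(0)\mn T$ to the mean. For $\Lambda_n^-$, I will decompose each jump of a below-mean particle at distance $a>0$ into a \emph{catch-up} part $Z\wedge a$ and an \emph{overshoot} part $(Z-a)^+$. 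The overshoot flux rate is controlled directly by Assumption \ref{Z}:
\begin{equation*}
\tfrac{1}{n}\sum_{i:Y_i<0} w(Y_i)\,\mathbb{E}[(Z-|Y_i|)^+]\le c_w,
\end{equation*}
yielding an overshoot contribution to $m_n$ of at most $c_w T$.

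The remaining ``catch-up'' contribution $\tfrac{1}{n}\sum_{i:Y_i<0}w(Y_i)\mathbb{E}[Z\wedge|Y_i|]$ is where the main obstacle lies. To control it I will use a Lyapunov computation for $D_n$: a case-by-case analysis of $\Delta D_n$ under each jump type shows (i) undershoot jumps contribute $\Delta D_n\le 0$, (ii) overshoot and above-mean jumps contribute $\Delta D_n\le Z(1-1/n)$ per jump, whose expected rates are bounded (by Assumption \ref{Z} and by $n w(0)\mn$ respectively). This yields $\mathbb{E}[D_n(t)/n]\le 2B+t(c_w+w(0)\mn)$ on $\Omega_{n,B}$. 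Combining this with the Chebyshev-type bound $|\{i:|Y_i|\ge a\}|\le D_n/a$ and the quantitative upper bound on $w(-a)$ implicit in Assumption \ref{Z} (using $\mathbb{E}[(Z-a)^+]\ge a\,\PP(Z>2a)$ and $\mathbb{E}Z^3<\infty$, so that $w(-a)$ grows at most polynomially in $a$), the below-mean rate can be split into a ``shallow'' part with $|Y_i|\le A$ bounded by $nw(-A)$, and a ``deep'' part controlled via the Lyapunov estimate. Optimizing $A$ then closes the estimate uniformly in $R$ and $n$, and the conclusion follows. The subtle step is the bootstrap/iteration required to pass from the Lyapunov bound on $D_n/n$ to the bound on $\mathbb{E}\!\int_0^{T\wedge\tau_R}\Lambda_n^-\,ds$, since the high jump rates of deep-tail particles create a feedback loop with the mean---this is exactly what the ``balance'' between overshoot and catch-up in Assumption \ref{Z} is designed to break.
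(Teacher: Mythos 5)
Your understanding of the main structures is good---the overshoot-versus-catch-up decomposition, the role of $D_n$ (which is $n\hat m_n$ in the paper's notation), and the compensator identity $\mathbb{E}[m_n(T\wedge\tau_R)] = \mathbb{E}[m_n(0)] + \tfrac{\mn}{n}\mathbb{E}\int_0^{T\wedge\tau_R}\Lambda_n(s)\,ds$---but the proof as sketched fails at two places.

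The claim that ``$w(-a)$ grows at most polynomially in $a$'' is false, and it contradicts the paper's central example $w(x)=e^{-\beta x}$, for which $w(-a)=e^{\beta a}$. Your derivation from $\mathbb{E}[(Z-a)^+]\ge a\,\mathbb{P}(Z>2a)$ and $\mathbb{E} Z^3<\infty$ runs in the wrong direction: the moment condition gives an \emph{upper} bound $\mathbb{P}(Z>2a)=O(a^{-3})$, whereas turning $w(-a)\,a\,\mathbb{P}(Z>2a)\le c_w$ into a bound on $w(-a)$ requires a \emph{lower} bound on $\mathbb{P}(Z>2a)$. With thin-tailed $\theta$ (e.g.\ exponential), no such lower bound is available and $w(-a)$ may indeed grow exponentially; this is precisely the regime Assumption~\ref{Z} is designed to accommodate.

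More fundamentally, the Lyapunov bound $\mathbb{E}[D_n(t)/n]\lesssim B + t$ together with the counting bound $\#\{i:|Y_i|\ge A\}\le D_n/A$ cannot control the deep catch-up rate $\tfrac1n\sum_{i:Y_i<-A}w(Y_i)\,\mathbb{E}[Z\wedge|Y_i|]$: the weights $w(Y_i)$ on the deep particles are unbounded (a single particle at depth $|Y_i|\sim nB$ carries weight $w(-nB)$, astronomically large for exponential $w$), and multiplying the count by the worst-case weight gives a vacuous bound. In fact $\tfrac1n\sum_i w(Y_i)$ is not uniformly bounded in $(n,t)$ in expectation under Assumption~\ref{mun0}, and the paper accordingly does \emph{not} prove the uniform moment estimate $\sup_n\mathbb{E}[m_n(T\wedge\tau_R)\mathbf 1_{\Omega_{n,B}}]<\infty$ that your Markov reduction requires. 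What the paper proves is a \emph{probability} bound, and its proof has a genuinely different shape: Lemma~\ref{1} establishes the high-probability control on $\sup_{t\le T}\hat m_n(t)$ (essentially your $D_n/n$ step, done via Doob and the quadratic-variation estimate \eqref{b}), but then the proof of Theorem~\ref{thm.mbd} introduces stopping times $\sigma_0<\sigma_1<\dots$ at which $m_n$ has increased by a fixed amount $2\lambda+(1+\mn)\beta(T,R)$. On each interval $[\sigma_{i-1},\sigma_i]$, all jump rates are bounded by $w(-2\lambda-(1+\mn)\beta(T,R))$, so particles can be stochastically dominated by free $\mathcal{J}(\theta)$-jump processes plus a single initial overshoot, controlled via Lemma~\ref{R}; this shows $\sigma_i-\sigma_{i-1}\ge t_0(T,R)$ with probability tending to one as $n\to\infty$. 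A separate overshoot estimate controls the mean's jump at each $\sigma_i$. Summing over $\lceil T/t_0\rceil$ intervals gives \eqref{mbd}. Your proposal would need to be restructured along these lines rather than seeking a uniform moment bound.
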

%
A key quantity in the proof of the above theorem is 
$$
\hat{m}_n(t) \doteq n^{-1} \sum_{k=1}^n (X_k(t)-m_n(t))^+, \ t\ge 0.
$$
This quantity gives the average overshoot of the particles from the mean value and the main idea in the proof is to first give an estimate on $\sup_{0\le t \le T} \hat m_n(t)$ and then leverage this estimate to control the growth of $m_n(T)$ in probability. Particles which are not too far behind the mean jump with `bounded' rates and this leads to better control on the increase in $\hat m_n$ due to these particles. As for particles far behind the mean, although they can jump with very high rates (thereby making the direct estimation of the mean process $m_n$ difficult), such a jump increases $\hat m_n$ only if the corresponding particle jumps over the mean in this single jump. This is where Assumption \ref{Z} plays a crucial role by roughly keeping the product of the size and rate of such `influential' jumps bounded. These facts enable control on the process $\hat m_n$. This control is then transferred to the  mean process $m_n$ through a detailed analysis of the contributions of the particles above and below the mean to the growth of $m_n$. A schematic outline of the mean control technique is given in Figure \ref{fig:mean-process-control}.


\begin{figure}[h]
\centering
\begin{tikzpicture}[
    node distance=1.25cm,
    box/.style={draw, rounded corners, align=center, text width=3.8cm, minimum height=1.0cm},
    arr/.style={->, thick, >=stealth}
]
\node[box] (far) {Particles far below the mean\\large rate $w(X_k-m_n)$};
\node[box, right=of far] (over) {Only jumps crossing the mean affect\\$(X_k-m_n)^+$};
\node[box, right=of over] (ass) {Overshoot control\\Assumption~\ref{Z}};
\node[box, below=of over] (hatm) {Bound on\\$\displaystyle \hat{m}_n(t)=n^{-1}\sum_k (X_k(t)-m_n(t))^+$};
\node[box, below=of ass] (mean) {Stopping time argument\\controls $m_n(T)$};

\draw[arr] (far) -- (over);
\draw[arr] (over) -- (ass);
\draw[arr] (ass) -- (hatm);
\draw[arr] (hatm) -- (mean);
\end{tikzpicture}
\caption{
Conceptual structure of the a priori mean process estimate. 
Particles far below the empirical mean may jump at very high rates, but such 
jumps contribute to $\hat m_n(t)$
only when they cross the mean. 
The corresponding overshoot is controlled through Assumption~\ref{Z}. 
This gives a bound on the  process
\(
\hat{m}_n(t) 
\)
which is then transferred, by a stopping time argument, to a high-probability 
bound on the empirical mean process $m_n(T)$.
}
\label{fig:mean-process-control}
\end{figure}

Recall the constant $c_w$ introduced in Assumption \ref{Z}.

\begin{lemma}\label{1}
Suppose Assumption \ref{Z} holds. For any $T>0$, there exist positive constants $c_1,c_2$, such that whenever $\bfX^n(0) = \bfx^n$ a.s., for some (non-random) $\bfx^n \in \RR^n$, then, for all $R>0$,
\begin{equation*}
 \sup_n \mathbb{P}\left[ \sup_{t \in [0,T]} \hat{m}_n(t) \geq \hat{m}_n(0) + (c_w+\mn w(0))T + R\right]  
        \leq \frac{4(c_1 + c_2 \hat m_n(0))}{R^2}. 
\end{equation*}
\end{lemma}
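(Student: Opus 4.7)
The plan is to set $F(\bfx)=n^{-1}\sum_{k=1}^n(x_k-\bar\bfx)^+$, so that $\hat{m}_n(t)=F(\bfX^n(t))$, and use Dynkin's formula to decompose $F(\bfX^n(t))=\hat{m}_n(0)+A(t)+M(t)$, where $A(t)=\int_0^t\cll_n F(\bfX^n(s))\,ds$ and $M$ is a local martingale. The first task is to establish the pointwise drift bound $\cll_n F(\bfx)\le c_w+\mn w(0)$ by splitting the sum over the jumping-particle index $j$ into two groups. For $x_j\ge\bar\bfx$ the increment $F(\bfx+ze_j)-F(\bfx)$ lies in $[0,z/n]$ and the rate $w(x_j-\bar\bfx)\le w(0)$, contributing at most $\mn w(0)$ after summing; for $x_j<\bar\bfx$ at distance $a_j=\bar\bfx-x_j$, the positive part of the increment is at most $(z-a_j)^+/n$ while the ``others'' contribution is non-positive, and Assumption~\ref{Z} yields $w(-a_j)\EE[(Z-a_j)^+]\le c_w$ per below-mean particle, which sums to $c_w$. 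Hence $A(t)\le K't$ with $K'\doteq c_w+\mn w(0)$, and
\[
\sup_{t\le T}\bigl[\hat{m}_n(t)-\hat{m}_n(0)-K't\bigr]\le\sup_{t\le T}M(t),
\]
so by Doob's $L^2$ maximal inequality (after a standard localization making $M$ a true square-integrable martingale) it suffices to prove $\EE\langle M\rangle_T\le c_1+c_2\hat{m}_n(0)$.

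The remaining and main step is this $L^2$ bound on the quadratic variation. I would apply Dynkin to $F^2$: writing each jump increment of $F^2$ as $(\Delta F)(2F+\Delta F)$ and using the refined estimate $(F(\bfx+Ze_j)-F(\bfx))^+\le(Z-a_j)^+/n$ for below-mean particles, together with the squared version $w(-a_j)\EE[(Z-a_j)^{+^2}]\le c_w$ of Assumption~\ref{Z}, one obtains the pointwise bound $\cll_n F^2(\bfx)\le 2F(\bfx)K'+(c_w+w(0)\vt)$. A Gronwall-type argument combined with the elementary $\EE F(\bfX^n(s))\le\hat{m}_n(0)+K's$ (from the drift bound) then gives
\[
\EE F(\bfX^n(T))^2\le\bigl(\hat{m}_n(0)+K'T\bigr)^2+(c_w+w(0)\vt)T,
\]
whose difference with $\hat{m}_n(0)^2$ is already linear in $\hat{m}_n(0)$. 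Combined with the It\^o identity
\[
\EE\langle M\rangle_T=\EE F(\bfX^n(T))^2-\hat{m}_n(0)^2-2\EE\int_0^T F(\bfX^n(s))\cll_n F(\bfX^n(s))\,ds,
\]
and a control of the remaining integral via the decomposition $\cll_n F=\cll_n F_+-\cll_n F_-$ into non-negative upward and mean-shift parts (the former bounded by $K'$ as above, the latter having its $F$-weighted integral controlled by the downward variation $D(T)=\sum_s(\Delta F(s))^-$, whose expectation is in turn bounded by $\hat{m}_n(0)+K'T$ via non-negativity of $F$ and the already-established bound on $\sum_s(\Delta F(s))^+$), one extracts $\EE\langle M\rangle_T\le c_1+c_2\hat{m}_n(0)$ with $c_1,c_2$ depending only on $T$, $w(0)$, $c_w$, $\mn$, and $\vt$.

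The main obstacle, distinguishing this lemma from the bounded-$w$ case treated in \cite{Balzs2011ModelingFA}, is the quadratic-variation bound: particles arbitrarily far below the mean jump at arbitrarily high rates $w(-a)$, so naive termwise estimates on the predictable quadratic variation of $M$ blow up. The resolution hinges on the two observations above: (i) for a below-mean particle the positive increment of $F$ is $(Z-a_j)^+/n$ rather than $Z/n$, so Assumption~\ref{Z} in its squared form tames the contribution of such particles; and (ii) the downward increments of $F$ are paid for by the positive-deviation budget $nF$ itself, and the total downward variation of $F$ has expectation linear in $\hat{m}_n(0)$. Together these yield constants $c_1,c_2$ independent of $n$ and of the full initial vector $\bfx^n$, as required for the uniform-in-$n$ supremum in the statement.
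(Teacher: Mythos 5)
Your drift‐bound step ($\cll_n F(\bfx)\le c_w+\mn w(0)$ via splitting over above/below-mean particles, then Doob) is the same as the paper's. The interesting part is how you attempt the quadratic‐variation bound, and this is where there is a genuine gap.

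Your pointwise bound $\cll_n F^2(\bfx)\le 2F(\bfx)K'+(c_w+w(0)\vt)$ is in fact correct, and is a nice observation: writing $\Delta_j(F^2)=(F+\Delta_jF)^2-F^2$ and using $\Delta_jF\le P_j$ (where $P_j$ is the ``own'' increment, nonnegative) together with $F(\bfx+Ze_j)\ge 0$ gives $\Delta_j(F^2)\le P_j(2F+P_j)$, after which Assumption~\ref{Z} controls the below-mean sum. But this does \emph{not} give a bound on $\EE\langle M\rangle_T$. Since $\cll_nF^2=2F\cll_nF+\Gamma(F)$ with $\Gamma(F)\doteq\sum_j\EE_Z[(\Delta_jF)^2]w_j=\frac{d}{dt}\langle M\rangle$, the It\^o identity you write requires bounding $-2\EE\int_0^T F\,\cll_nF\,ds$ from above, equivalently $2\EE\int_0^T F\,\cll_nF_-\,ds$. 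Your plan is to control this ``$F$-weighted integral'' by the expectation of the downward variation $D(T)$; but $\EE D(T)\le\hat m_n(0)+K'T$ only controls the \emph{unweighted} integral $\EE\int\cll_nF_-$, and $F$ can be large precisely when $\cll_nF_-$ is large. Concretely, take $x_1=-a$ and $x_k=a/(n-1)$ for $k\ge 2$; then $F=a/n$ and $\cll_nF_-\approx (n-1)\mn\, w(-a)/n^2$, so $F\cll_nF_-\approx a(n-1)\mn\,w(-a)/n^3\to\infty$, exponentially fast when $w(x)=e^{-\beta x}$.

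In fact the difficulty cannot be sidestepped: bounding $\Gamma(F)$ termwise with $(\Delta_jF)^2\le\max(P_j^2,N_j^2)\le P_j^2+N_j^2$ and $|N_j|\le Z/n$ always produces the term $\frac{\vt}{n^2}\sum_j w(x_j-\bar\bfx)$, so any route through the quadratic variation must bound $\frac{1}{n^2}\EE\int_0^T\sum_j w(X_j(s)-m_n(s))\,ds$ uniformly in $n$. This is exactly the paper's key estimate (their displays \eqref{b}--\eqref{QV}), and it is obtained by a different idea than yours: rather than discarding the nonpositive mean-shift contribution to $L\hat m_n$, the paper \emph{extracts} a strictly negative amount $-p/n^2$ from it for every index pair $(j,k)$ with $X_j\ge m_n+A/n$. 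Combined with $\hat m_n\ge 0$ and the drift bound this yields $\frac{1}{n^2}\EE\int\sum_{j,k}w_k\one(B_{j,s})\le C_T+p^{-1}\hat m_n(0)$, and the balance constraint $\sum(x_k-\bar\bfx)=0$ (if someone is below the mean by $A$, someone is above it by $A/n$) transfers this to $\frac{1}{n^2}\EE\int\sum_k w_k$. Your write-up does not contain this step or any equivalent; until you have such a uniform-in-$n$ control on the time-integrated total jump rate, the constants $c_1,c_2$ you need do not follow from what you have established.
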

\begin{proof}
From the local martingale property in Proposition \ref{prop:exis} it follows that, with
\begin{multline}\label{L}
            L\hat{m}_n(t) \doteq \frac{1}{n} \sum_{k=1}^n \mathbb{E}_Z\left[\left(X_k(t)+Z-m_n(t)-\frac{Z}{n}\right)^+ -  (X_k(t)-m_n(t))^+\right] w(X_k(t)-m_n(t)) \\ + \frac{1}{n}\sum_{k=1}^n \sum_{j:j \neq k}^n \mathbb{E}_Z\left[ \left(X_j(t)-m_n(t)-\frac{Z}{n}\right)^+ - (X_j(t)-m_n(t))^+\right] w(X_k(t)-m_n(t)),
\end{multline}
 \begin{equation}\label{eq:533}
 \hat{M}_n(t) \doteq \hat{m}_n(t)-\hat{m}_n(0)-\int_0^t L\hat{m}_n(s)\; ds
 \end{equation}
 is a $\clf^n_t$-local martingale.
The (predictable) quadratic variation of $\hat{M}_n(t)$ can be bounded as
\begin{align}\label{eq:qvbd}
        \langle \hat{M}_n\rangle(t) &\leq \frac{2}{n^2}\sum_{k=1}^n
        \int_0^t   [A_k^n(s) + B_k^n(s)]w(X_k(s)-m_n(s)) \; ds, 
\end{align}
where
\begin{align*}
A_k^n(s) &\doteq 
        \mathbb{E}_Z \left[  \left(X_k(s)+Z-m_n(s)-\frac{Z}{n} \right)^+-(X_k(s)-m_n(s))^+  \right]^2 , \\ 
B_k^n(s) &\doteq         \mathbb{E}_Z \left[\sum_{j:j\neq k} \left\{ \left(X_j(s)-m_n(s)-\frac{Z}{n}\right)^+ - (X_j(s)-m_n(s))^+ \right\}\right]^2.
    \end{align*}
Recalling 
$\vt = \EE Z^2$ and using the Lipschitz property of the function $x \mapsto x^+$, the quadratic variation of $\hat{M}_n$ can now be further bounded as
\begin{align}
        \langle \hat{M}_n\rangle(t) &\leq 
        \frac{4}{n^2} \vt  \sum_{k=1}^n \int_0^t w(X_k(s)-m_n(s)) \; ds. \label{m2}
\end{align}
For $R \in (0,\infty)$, let (suppressing $n$ in the notation) $\tau_R \doteq \inf\{t\ge 0: m_n(t) \ge R\}$. Note that
$$\EE \sum_{k=1}^n \int_0^{T \wedge \tau_R} w(X_k(s)- m_n(s)) ds <\infty,$$
and hence from \eqref{m2} it follows that $\hat M_n(\cdot \wedge \tau_R)$ is  a martingale.


 Now we give a bound on $L\hat m_n(t)$.
Note that, for any $j$, 
\begin{equation}\label{eq:108}
    \left[ \left(X_j(t)-m_n(t)-n^{-1}Z\right)^+ - (X_j(t)-m_n(t))^+\right]  \leq 0.
\end{equation}

Also, if $X_k(t) \geq m_n(t)$, recalling $\mn \doteq \EE Z$,
$$\mathbb{E}_Z\left[ \left(X_k(t)+Z-m_n(t)-n^{-1}Z \right)^+ -(X_k(t)-m_n(t))^+\right] w(X_k(t)-m_n(t)) \leq \mn w(0),$$
and if $X_k(t) < m_n(t)$,
\begin{align*}
            &\mathbb{E}_Z \left[ \left(X_k(t)+Z-m_n(t)-n^{-1}Z\right)^+ -(X_k(t)-m_n(t))^+ \right] w(X_k(t)-m_n(t)) \\ 
            &\leq \mathbb{E}_Z[(Z-(m_n(t)-X_k(t)))^+]w(-(m_n(t)-X_k(t))) \leq c_w.
\end{align*}
 Combining the last two statements we get that, for all $t \geq 0$,
\begin{equation}\label{eq:109}
\frac{1}{n} \sum_{k=1}^n \mathbb{E}_Z\left[\left(X_k(t)+Z-m_n(t)-n^{-1}Z\right)^+ -  (X_k(t)-m_n(t))^+\right] w(X_k(t)-m_n(t)) \le (c_w+ \mn w(0)), 
\end{equation}
and combining this with \eqref{eq:108} and \eqref{L}, we get 
\begin{equation}\label{m1}
    L\hat{m}_n(t) \leq (c_w+ \mn w(0)), \quad \mbox{ for all } t \geq 0.
\end{equation}
%
The next step in the proof is to estimate $\EE\langle \hat M_n\rangle(T)$ and then  use it to estimate $\sup_{0\le t \le T} \hat m_n(t)$.

Using the martingale property of $\hat M_n(\cdot \wedge \tau_R)$ noted above along with \eqref{eq:533} and \eqref{eq:109} we now see that
\begin{multline}
\EE(\hat m_n(T \wedge \tau_R)) = \hat m_n(0) + \EE \int_0^{T\wedge \tau_R} L \hat m_n(s) ds
\le \hat m_n(0) + (c_w + \mn w(0))T\\
+  \mathbb{E} \left[\frac{1}{n}\int_0^{T\wedge \tau_R}\sum_{k=1}^n \sum_{j: j\neq k} \mathbb{E}_Z \left\{\left( X_j(s)-m_n(s)- n^{-1}Z\right)^+-(X_j(s)-m_n(s))^+ \right\} w(X_k(s)-m_n(s))\; ds \right].
\label{eq:948}
\end{multline}
As noted previously, the last term in the above display can be bounded above by $0$, however we seek a more useful bound.
For that, fix $A>0$ such that $\mathbb{P}(Z<A)>0$. Since the term inside the inner expectation in the last display is non-positive, we have that, for all $j,k$
\begin{align*}
&\mathbb{E} \left[\frac{1}{n}\int_0^{T\wedge \tau_R} \mathbb{E}_Z \left\{\left( X_j(s)-m_n(s)- n^{-1}Z\right)^+-(X_j(s)-m_n(s))^+ \right\} w(X_k(s)-m_n(s))\; ds \right] \\
&\le  \mathbb{E}\left[\frac{1}{n} \int_0^{T\wedge \tau_R}  \mathbb{E}_Z \left\{ \left( X_j(s)-m_n(s)- n^{-1}Z\right)^+-(X_j(s)-m_n(s))^+ \right\} \mathbf{1}(B_{j,s}) w(X_k(s)-m_n(s))\; ds \right],
\end{align*}
where $B_{j,s} \doteq \left\{X_j(s)\geq m_n(s)+ \frac{A}{n} \right\}$.
Next, on the set $B_{j,s}$,
\begin{equation*}
    \mathbb{E}_Z\left[\left(X_j(s)-m_n(s)-n^{-1}Z\right)^+ - (X_j(s)-m_n(s))^+ \right] \leq \mathbb{E}_Z\left[ -n^{-1}Z \mathbf{1}\{Z<A\}\right].
\end{equation*}
Thus, letting $p\doteq \mathbb{E}_Z\left[ Z \cdot \mathbf{1}\{Z<A\}\right]$,
on the set, $B_{j,s}$, 
$$\mathbb{E}_Z\left[\left(X_j(s)-m_n(s)-n^{-1}Z\right)^+ -(X_j(s)-m_n(s))^+ \right] \leq - \frac{p}{n}.$$
Therefore, the  term in the last line in \eqref{eq:948} is bounded above by 
\begin{multline*}
- \frac{p}{n^2}\mathbb{E}\left[ \int_0^{T\wedge \tau_R} \sum_{k=1}^n \sum_{j:j\neq k} w(X_k(s)-m_n(s)) \mathbf{1}(B_{j,s}) ds \right]\\
=
- \frac{p}{n^2} \mathbb{E} \left[ \int_0^{T\wedge \tau_R} \left[
\sum_{k=1}^n \sum_{j=1}^n w(X_k(s)-m_n(s))\mathbf{1}(B_{j,s}) - 
 \sum_{k=1}^n w(X_k(s)-m_n(s))\mathbf{1}(B_{k,s})\right] \; ds\right] \\
 \le - \frac{p}{n^2} \mathbb{E} \left[ \int_0^{T\wedge \tau_R} \left[\sum_{k=1}^n \sum_{j=1}^n w(X_k(s)-m_n(s)) \mathbf{1}(B_{j,s})\right] \; ds\right] + 
 n^{-1}p w\left(n^{-1}A\right)T.
\end{multline*}
Since $\hat{m}_n(T\wedge \tau_R)\geq 0$, from \eqref{eq:948} we then have that 
%
\begin{equation*}
    \frac{1}{n^2}\mathbb{E}\left[ \int_0^{T\wedge \tau_R} \sum_{k=1}^n \sum_{j=1}^n w(X_k(s)-m_n(s))\mathbf{1}(B_{j,s}) \right] \leq \frac{1}{p} \left[(c_w + \mn w(0))T + n^{-1}p w\left(n^{-1}A\right)T + \hat{m}_n(0)\right].
\end{equation*}
Using the fact that $\tau_R \to \infty$ as $R\to \infty$ and monotone convergence theorem, we have,  for every $n\in \NN$,
\begin{equation}\label{b}
     \frac{1}{n^2}\mathbb{E} \left[ \int_0^T \sum_{j=1}^n \sum_{k=1}^n w(X_k(s)-m_n(s)) \mathbf{1}(B_{j,s})\; ds\right] \leq C_T + \frac{1}{p}\hat{m}_n(0), 
\end{equation}
where $C_T \doteq p^{-1} ((c_w + \mn w(0))T +   p w(0)T)$.
Using this estimate we now have,
\begin{align}
        &\frac{1}{n^2}\mathbb{E}\left[\int_0^T \sum_{k=1}^nw(X_k(s)-m_n(s))\; ds \right] \\
        &\leq \frac{1}{n^2} \mathbb{E} \left[\int_0^T \sum_{k=1}^n w(X_k(s)-m_n(s)) \mathbf{1}[X_j(s)>m_n(s)-A, \; \mbox{ for all } j \in [n]]\; ds \right] \\
        & \quad + \frac{1}{n^2} \mathbb{E} \left[\int_0^T \sum_{k=1}^n w(X_k(s)-m_n(s))\mathbf{1}[X_j(s)\leq m_n(s)-A, \; \text{for some }j \in [n]]\; ds \right] \\
        &\leq n^{-1}w(-A)T
        + \frac{1}{n^2}\mathbb{E}\left[ \int_0^T \sum_{k=1}^n w(X_k(s)-m_n(s)) \mathbf{1}\left[X_j(s)\geq m_n(s)+ n^{-1}A, \text{ for some } j \in [n] \right]\; ds \right] \\
        &\leq n^{-1}w(-A)T
        + \frac{1}{n^2}\mathbb{E}\left[ \int_0^T \sum_{j,k=1}^n w(X_k(s)-m_n(s))   \mathbf{1}(B_{j,s})\; ds \right] \le n^{-1}w(-A)T + C_T + \frac{1}{p}\hat{m}_n(0).
    \end{align}\label{QV}
    In the second inequality above, we used the fact that $\sum_{j}(X_j(s) - m_n(s))=0$ and hence, $X_j(s)\leq m_n(s)-A$ for some $j$ implies $X_j(s)\geq m_n(s)+ n^{-1}A$ for some (different) $j$.
Using this in \eqref{m2} we have
$$\mathbb{E} [\langle \hat{M}_n\rangle (T)] \le c_1 + c_2 \hat m_n(0),$$
where $c_1 = 4\vartheta(w(-A)T+ C_T)$ and $c_2 = 4\vt p^{-1}$.
Applying Doob's maximal inequality and the estimate in \eqref{m1}, we now get, for $R \in (0,\infty)$,
\begin{multline*}
 \sup_n \mathbb{P}\left[ \sup_{t \in [0,T]} \hat{m}_n(t) \geq \hat{m}_n(0) + (c_w+\mn w(0))T + R\right]  \leq \sup_n \mathbb{P} \left[ \sup_{t \in [0,T]} \hat{M}_n(t)  \geq R \right] \\
        \leq \frac{4}{R^2} \sup_n \mathbb{E} [\langle \hat{M}_n\rangle (T)] 
        \leq \frac{4(c_1 + c_2 \hat m_n(0))}{R^2}. 
\end{multline*}
\end{proof}
\begin{proof}[{\bf Proof of Theorem \ref{thm.mbd}}]
Assume for now that $\bfX^n(0) = \bfx^n$ a.s., for some non-random $\bfx^n \in \RR^n$. We leverage the  estimate on $ \hat m_n(t)$ in Lemma \ref{1} to bound $m_n(T)$ in probability.

Denote, for fixed $R>0$,
$$\beta(n,T,R) \doteq \hat{m}_n(0) + (c_w+\mn w(0))T + R$$ 
 and consider the event, 
$$\cle_R^n \doteq \{ \hat{m}_n(0) \leq R,\; \hat{m}_n(t)< \beta(n,T,R) \; \mbox{ for all } t\in [0,T]\}.$$
Also, define $$\beta(T,R) \doteq (c_w+\mn w(0))T + 2R.$$ 
Note that,
\begin{equation}\label{eq:540}
\mbox{ on } \cle_R^n, \ \hat m_n(t) < \beta(T,R), \mbox{ for all } t \in [0,T].
\end{equation}
For $l \in \RR_+$, define $\cO_l$ as in Lemma \ref{R} where $Z_i$ are iid distributed as $\theta$. From this lemma and Assumption \ref{Z} we have that
$\lambda \doteq \sup_{l \in \RR_+} \EE \cO_l <\infty$.

%

Finally, define a sequence of stopping times (once again suppressing dependence on $n$ in the notation), $\{\sigma_k \}_{k\geq 0}$, as follows. Let $\sigma_0 =0 $. For $i \geq 1$, define
$$\sigma_i \doteq \inf \left\{ t \geq \sigma_{i-1} : m_n(t) \geq m_n(\sigma_{i-1}) + 2\lambda + (1+\mn)\beta(T,R)\right\} \wedge T.$$
Let $S_i\doteq \left\lbrace k: X_k(\sigma_{i-1})<m_n(\sigma_{i-1})\right\rbrace$ and
 define $$\underline{m}_n^i(t) \doteq \frac{1}{n}\sum_{S_i} X_k(t+\sigma_{i-1}) , \;\;\; \overline{m}_n^i(t) \doteq \frac{1}{n}\sum_{S_i^c} X_k(t+\sigma_{i-1}), \; t\geq 0.$$
 Recall the filtration $\clf_t^n$ from Proposition \ref{prop:exis} and note that $\sigma_i$ are $\clf_t^n$-stopping times.
 Also observe that,  for $\sigma_{i-1} \le t < \sigma_{i}$, the jump rate of the particle, $X_k(.)$, at time $t$, is bounded above by $w\left(X_k(t)-m_n(\sigma_{i-1})\right.$ $\left.- 2\lambda -(1+\mn)\beta(T, R)\right)$. 

 Let $\{Z_j, j \in \NN\}$ be iid random variables distributed as $\theta$ and let  $\{V^0_k(\cdot)\}$ be an iid collection of $\mathcal{J}(\theta)$-jump processes (cf. Section \ref{sec:notat}), and suppose that these collections and the underlying particle system are mutually independent. 
 Let $V_k(t) \doteq V^0_k(w(-2\lambda- (1+\mn)\beta(T,R))t))$. Then, {\di recalling that 
 $\cO_{x}$  denotes the overshoot} of level $x$ defined as in Lemma \ref{R} associated with the sequence $\{Z_i\}$,
\begin{align*}
&\cll\left(X_k\left((\sigma_{i-1}+t) \wedge \sigma_{i} \right) \one\{X_k(\sigma_{i-1})<m_n(\sigma_{i-1})\}\mid \mathcal{F}_{\sigma_{i-1}}\right)\\
&\quad\leq_d \cll\left((\cO_{k,i} + m_n(\sigma_{i-1}) + V_k(t))\one\{X_k(\sigma_{i-1})<m_n(\sigma_{i-1})\} \mid \mathcal{F}_{\sigma_{i-1}}\right), \;\; t \geq 0,\end{align*} 
where {\di$\cO_{k,i} = \cO_{m_n(\sigma_{i-1})-X_k(\sigma_{i-1})}$}. This follows upon bounding the process $X_k(\sigma_{i-1} + \cdot)$ by $m_n(\sigma_{i-1})$ until the first time $X_k$ jumps over the latter threshold. After this jump, the overshoot of $X_k$ of level $m_n(\sigma_{i-1})$ is given by $\cO_{k,i}$ and the jump rate of $X_k$ is subsequently bounded by $w(-2\lambda- (1+\mn)\beta(T,R))$ on the event $\{X_k(\sigma_{i-1})<m_n(\sigma_{i-1})\}$ until time $\sigma_i$.

Similarly, we can also observe that 
\begin{align*}
&\cll\left(X_k\left((\sigma_{i-1}+t) \wedge \sigma_{i} \right) \one\{X_k(\sigma_{i-1})\ge m_n(\sigma_{i-1})\}\mid \mathcal{F}_{\sigma_{i-1}}\right)\\
&\quad \leq_d \cll\left((X_k(\sigma_{i-1})  + V_k(t))\one\{X_k(\sigma_{i-1})\ge m_n(\sigma_{i-1})\}
\mid \mathcal{F}_{\sigma_{i-1}}\right), \;\; t \geq 0.
\end{align*}


%
In what follows, probabilities conditioned w.r.t. $\clf_{\sigma_i}$ will be denoted as $\PP_i$.
With this notation, using the first stochastic domination relation above,
\begin{multline}\label{eq:713}
            \mathbb{P}_{i-1}\left[ \underline{m}_n^i(t  \wedge (\sigma_{i}-\sigma_{i-1})) \geq n^{-1} \sum_{S_i}\left( \cO_{k,i}+m_n(\sigma_{i-1})\right) + \frac{\mn}{2}\beta(T, R) \right] \\ 
            \leq \mathbb{P}\left[ n^{-1}\sum_{k=1}^n V_k(t) \geq \frac{\mn\beta(T, R)}{2}\right].
\end{multline}
Now, denoting the cardinality of a finite set $A$ by $\#A$,  define, for $t\ge 0$, 
$$\underline{\alpha}_n(t) \doteq \frac{1}{n}\#\{k \in [n]: X_k(t)<m_n(t)\}, \;  \overline{\alpha}_n(t)\doteq 1- \underline{\alpha}_n(t).$$
Then, denoting $$t_0(T, R) \doteq \frac{\beta(T, R)}{4w(-2\lambda- (1+\mn)\beta(T, R))},$$
we have from \eqref{eq:713}, a.s.,
\begin{multline}\label{2}
    \limsup_{n\to \infty}\mathbb{P}_{i-1} \left[ \underline{m}_n^i(t_0(T, R)\wedge (\sigma_{i}-\sigma_{i-1}) ) \geq 2\lambda + \underline{\alpha}_n(\sigma_{i-1})    m_n(\sigma_{i-1}) + \frac{\mn}{2}\beta(T, R)\right] \\
    \leq \limsup_{n\to \infty}\mathbb{P}\left[n^{-1}\sum_{k=1}^n V^0_k(4^{-1}\beta(T, R)) \geq \frac{\mn}{2}\beta(T, R)\right] + 
    \limsup_{n\to \infty}\mathbb{P}_{i-1}\left[\frac{1}{n}\sum_{S_i} \cO_{k,i} >2\lambda\right] =0.
\end{multline}
For the last equality we have used law of large numbers for iid random variables
and Lemma \ref{R} together with our moment assumptions on $Z$ in Assumption \ref{Z} and the estimate
\begin{equation*}
    \mathbb{P}_i \left[ n^{-1} \sum_{S_i} \cO_{k,i} > 2\lambda \right] \leq \frac{\sup_{l\geq0} \mathbb{E} \cO_l^2}{4n\lambda^2},
\end{equation*}
where $\cO_l$ is as in Lemma \ref{R}. 

A similar estimate,  as in \eqref{eq:713}- \eqref{2}, using the second stochastic domination relation above, shows that, 
\begin{equation}\label{3}
    \limsup_{n\to \infty}\mathbb{P}_{i-1} \left[ \overline{m}_n^i(t_0(T, R)\wedge (\sigma_{i}-\sigma_{i-1})) ) \geq \overline{m}_n^i(0) + \frac{\mn}{2}\beta(T, R)\right] = 0\; \mbox{ a.s.}
\end{equation}
Note that $\overline{m}_n^i(0) = \overline{\alpha}_n(\sigma_{i-1})m_n(\sigma_{i-1}) + \hat{m}_n(\sigma_{i-1})$. 
Thus, recalling \eqref{eq:540},
$$\mathbb{P}\left[ \overline{m}_n^i(0) \geq \overline{\alpha}_n(\sigma_{i-1}) m_n(\sigma_{i-1}) + \beta(T, R), \, \cle_R^n \right] = 0.$$
Combining with \eqref{3}, a.s.,
\begin{equation}\label{4}
   \limsup_{n\to \infty} \mathbb{P}_{i-1} \left[ \overline{m}_n^i \left(t_0(T, R)\wedge (\sigma_{i}-\sigma_{i-1})\right) \geq \overline{\alpha}_n(\sigma_{i-1})m_n(\sigma_{i-1}) + (1+ 2^{-1}\mn)\beta(T, R), \, \cle_R^n \right] = 0.
\end{equation}
 Finally, using \eqref{2} and \eqref{4}, we obtain
$$\limsup_{n\to \infty}\mathbb{P}_{i-1}\left[ m_n\left( (\sigma_{i-1} + t_0(T, R))\wedge \sigma_{i} \right) \geq 2\lambda + m_n(\sigma_{i-1})+(1+\mn)\beta(T, R), \, \cle_R^n \right] = 0.$$
{\di From the definition of $\sigma_i$, on the set where $\sigma_{i} \leq \sigma_{i-1} +t_0(T, R)$, we must have that
$$m_n\left( (\sigma_{i-1} + t_0(T, R))\wedge \sigma_{i} \right) \geq 2\lambda + m_n(\sigma_{i-1})+(1+\mn)\beta(T, R).$$
Thus
$$
\mathbb{P}_{i-1}\left[\sigma_{i} \leq \sigma_{i-1} +t_0(T, R), \, \cle_R^n \right]
\le \mathbb{P}_{i-1}\left[ m_n\left( (\sigma_{i-1} + t_0(T, R))\wedge \sigma_{i} \right) \geq 2\lambda + m_n(\sigma_{i-1})+(1+\mn)\beta(T, R), \, \cle_R^n \right].
$$
Combining the above displays
\begin{multline} \label{5}
\limsup_{n\to \infty}\mathbb{P} \left[ \sigma_{i} \leq \sigma_{i-1} +t_0(T, R), \, \cle_R^n \right]
 \le \mathbb{E} \limsup_{n\to \infty} \mathbb{P}_{i-1}\left[\sigma_{i} \leq \sigma_{i-1} +t_0(T, R), \, \cle_R^n \right]\\
\le \mathbb{E} \limsup_{n\to \infty} \mathbb{P}_{i-1}\left[m_n\left( (\sigma_{i-1} + t_0(T, R))\wedge \sigma_{i} \right) \geq 2\lambda + m_n(\sigma_{i-1})+(1+\mn)\beta(T, R), \, \cle_R^n\right] =0,
\end{multline}
where the first inequality follows from Fatou's lemma.
}

%

The above estimate gives a control on the number of $\sigma_i$ that are less than $T$. The increase in the mean on each of the intervals $[\sigma_{i-1}, \sigma_i)$ is upper bounded  by $2\lambda + (1+\mn) \beta(T,R)$. We now estimate the `jump' of the mean at each instant $\sigma_i$.

For $i\geq 1$,  define $$J_i^n \doteq m_n(\sigma_i)-m_n(\sigma_i-).$$
Consider the point process $\{\xi_n^i(t), t\ge 0\}$ with arrival epochs comprising the collective jumps of all the $\{ X_j(\cdot):1\leq j\leq n\}$, arranged in chronological order, starting with the first jump after $\sigma_{i-1}$. More precisely, conditionally on $\mathcal{F}_{\sigma_{i-1}}$, the successive jump locations of the process $[\sum_{i=1}^n X_j(\cdot +\sigma_{i-1})-\sum_{j=1}^n X_j(\sigma_{i-1})]$ (ignoring the times when the jumps happened) form the arrival epochs of $\xi_n^i(\cdot)$. Thus, $\{\xi_n^i(t), t\ge 0\}$ is a renewal process for which the interarrival times have probability law  $\theta$. 
In particular, conditionally on $\mathcal{F}_{\sigma_{i-1}}$, $J_i^n$ has the same law as $n^{-1} \cO_{n(2\lambda +(1+\mn)\beta(T,R))}$, {\di where $\cO_l$ is as in Lemma \ref{R}}. Hence, using Lemma \ref{R}, we obtain 
\begin{equation}\label{6}
    \sup_{i\geq 1} \mathbb{E}J_i^n \leq \frac{1}{n}\sup_{l\geq 0} \mathbb{E}\cO_l < \infty.
\end{equation}
We can now give an estimate on $m_n(T).$ Using \eqref{5} and \eqref{6}, and letting $K\doteq \lceil T/t_0(T, R) \rceil$,
\begin{align*}
        &\mathbb{P} \left[ m_n(T) > 1 + \left( \frac{4Tw\left(-2\lambda-(1+\mn)\beta(T, R)\right)}{\beta(T, R)} +1\right)((1+\mn)\beta(T, R)+2\lambda) + m_n(0), \; \cle_R^n\right] \\
        &\leq \mathbb{P} \left[  \sigma_{i} - \sigma_{i-1} \leq t_0(T, R) \; \text{, for some} \; 1\leq i \leq K, \; \cle_R^n \right] \\ & \quad + \mathbb{P} \left[ J_i^n > K^{-1} \text{, for some } 1\leq i\leq K, \; \cle_R^n \right] \\
        &\leq \sum_{i=1}^{K} \mathbb{P} [\sigma_{i} - \sigma_{i-1} \leq t_0(T, R), \; \cle_R^n]  + \sum_{i=1}^{K}     \left[ J_i^n > K^{-1}, \; \cle_R^n \right] \\
        &\leq \sum_{i=1}^{K} \mathbb{P} [\sigma_{i} - \sigma_{i-1} \leq t_0(T, R), \; \cle_R^n]  + \frac{K^2}{n} \cdot \sup_{l\geq 0}\mathbb{E}\cO_l,
      \end{align*}
      where in the last line we have used \eqref{6}. Combining the above estimate with \eqref{5} 
      and letting $$\Gamma(T,R) \doteq 1 + \left(\frac{4Tw \left(-2\lambda- (1+\mn)\beta(T, R)\right)}{\beta(T, R)} +1\right)((1+\mn)\beta(T, R)+2\lambda),$$
      we now have that
      \begin{equation}\label{eq:550}
      \limsup_{n\to \infty}\mathbb{P} \left[ m_n(T) >\Gamma(T,R) + m_n(0), \; \cle_R^n\right]=0.
      \end{equation}

 Letting
 $$\hat{\cle}_R^n \doteq \{ \hat{m}_n(t) < \beta(n,T,R), \, \forall t\in [0,T]\},$$ 
 we get
\begin{align*}
    &\limsup_{n\to \infty} \mathbb{P}[m_n(T) \geq \Gamma(T,R) + m_n(0)]\one\{ \hat{m}_n(0) \leq R \} \\
    & \le {\di  \limsup_{n\to \infty} \mathbb{P}[m_n(T) \geq \Gamma(T,R) + m_n(0), \hat{\cle}_R^n]\one\{ \hat{m}_n(0) \leq R \}
    + \limsup_{n\to \infty} \mathbb{P}[(\hat{\cle}_R^n)^c ]\one\{ \hat{m}_n(0) \leq R \}}\\
    &{\di =} \limsup_{n\to \infty} \mathbb{P}[m_n(T)\geq \Gamma(T,R) +m_n(0), \cle_R^n ] + \limsup_{n\to \infty} \mathbb{P}[(\hat{\cle}_R^n)^c ]\one\{ \hat{m}_n(0) \leq R \} \leq \frac{4(c_1 + c_2R)}{R^2},
    \end{align*}
    {\di where the equality on the last line uses that $\hat{\cle}_R^n \cap \{ \hat{m}_n(0) \leq R \}= {\cle}_R^n$} and the last inequality uses \eqref{eq:550} and Lemma \ref{1}.
Until now we had taken the initial data $\bfX^n(0) = \bfx^n$ a.s. for some non-random $\bfx^n \in \RR^n$.
Now consider the general case where the initial data instead of being non-random satisfies Assumption \ref{mun0}. In that case we have from the above estimate, and an application of Fatou's lemma, with $M(T,R)=\Gamma(T,R) + R$,
\begin{align*}
    \limsup_{n\to \infty} \mathbb{P}[m_n(T) \geq M(T,R) ] 
    \le \frac{4(c_1 + c_2R)}{R^2} + \limsup_{n\to \infty} \PP[m_n(0)\geq R]
    + \limsup_{n\to \infty} \PP[\hat m_n(0)\geq R].
\end{align*}
The result now follows on noting that, from Assumption \ref{mun0}, we have
$$\lim_{R\to \infty} \limsup_{n\to \infty} \mathbb{P}[m_n(0)\geq R] + \lim_{R\to \infty} \limsup_{n\to \infty} \mathbb{P}[\hat{m}_n(0)\geq R]=0.$$
\end{proof}
%
\subsection{Tightness}
\label{sec:tight}
In this section we will apply Theorem \ref{thm.mbd} to establish the tightness of $\{\mu_n, n \in \NN\}$ in $\cld([0,\infty): \clp_1(\RR))$.
The first lemma below gives pointwise tightness.

        %

\begin{lemma}\label{lem.tght}
Suppose Assumptions \ref{Z} and \ref{ass.t1} hold. Then, for any $t \geq 0$, $\eta >0$, $$\lim_{R\to \infty} \limsup_{n\to \infty} \mathbb{P}\left( \int |x| \mathbf{1}(|x|\geq R) \mu_n(t,dx)\geq \eta \right)=0.$$
Consequently, for any $t\geq 0$, $\{\mu_n(t)\}_{n\geq 1}$ is tight in $\mathcal{P}_1(\mathbb{R})$.
\end{lemma}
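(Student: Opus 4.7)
The plan is to decompose $|x|\mathbf{1}(|x|\ge R) = x\mathbf{1}(x \ge R) + (-x)\mathbf{1}(x \le -R)$ and control the left and right tails of $\mu_n(t)$ separately; the tightness of $\{\mu_n(t)\}$ in $\mathcal{P}_1(\RR)$ then follows from the standard Wasserstein-$1$ tightness characterization, using the tail estimate together with Assumption \ref{mun0} (which is implied by Assumption \ref{ass.t1}).

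The left tail is handled by pure monotonicity. Since all jumps are forward, $X^n_i(t) \ge X^n_i(0)$ almost surely, so $\{X^n_i(t) \le -R\} \subseteq \{X^n_i(0) \le -R\}$ and on this event $-X^n_i(t) \le -X^n_i(0)$. Averaging over $i$ yields
\begin{equation*}
\int(-x)\mathbf{1}(x \le -R)\mu_n(t, dx) \le \int(-x)\mathbf{1}(x \le -R)\mu_n(0, dx) \le \int|x|\mathbf{1}(|x|\ge R)\mu_n(0, dx),
\end{equation*}
and Assumption \ref{ass.t1} gives the required control in probability, uniformly in $n$.

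The right tail requires more work and is where Theorem \ref{thm.mbd} enters. Fix $\eta_0>0$ and use Theorem \ref{thm.mbd} to choose $M$ with $\limsup_n \PP(m_n(t) > M) < \eta_0$. On $\{m_n(t) \le M\}$, monotonicity of $m_n$ (forward jumps) gives $m_n(s) \le M$ for all $s \in [0,t]$. The goal is to construct on this event a stochastic upper bound for each $X^n_i(\cdot)$ that is essentially independent across $i$. Once $X^n_i(s) \ge M$, the jump rate is at most $w(0)$ because $w$ is non-increasing and $X^n_i(s)-m_n(s)\ge 0$; before the first crossing of $M$, a coupling through Poisson random measures dominates the overshoot $O_i$ by the overshoot of an iid-$\theta$ renewal process, whose second moment is bounded uniformly in the initial level by Lemma \ref{R} using $\EE Z^3 < \infty$. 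After the crossing, the increments are dominated by an independent compound-Poisson process $W_i$ with rate $w(0)t$ and jump distribution $\theta$, which has finite second moment. These bounds combine to give, on $\{m_n(t) \le M\}$,
\begin{equation*}
X^n_i(t) \le (X^n_i(0))^+ + M + O_i + W_i =: U_i,
\end{equation*}
where, conditionally on $\{X^n_j(0)\}_{j=1}^n$, the pairs $(O_i,W_i)$ can be taken independent across $i$ with $\EE[O_i^2 \mid X^n_i(0)]$ and $\EE[W_i^2]$ uniformly bounded.

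It then suffices to show $n^{-1}\sum_i U_i \mathbf{1}(U_i \ge R)$ is small in probability as $R \to \infty$. For $R > 4M$, split $\mathbf{1}(U_i \ge R) \le \mathbf{1}((X^n_i(0))^+ \ge R/2) + \mathbf{1}(O_i + W_i \ge R/4)$. The contribution from the first indicator decomposes into $n^{-1}\sum_i (X^n_i(0))^+ \mathbf{1}((X^n_i(0))^+ \ge R/2) \le \int|x|\mathbf{1}(|x|\ge R/2)\mu_n(0,dx)$, handled by Assumption \ref{ass.t1}, plus $n^{-1}\sum_i (M + O_i + W_i)\mathbf{1}((X^n_i(0))^+ \ge R/2)$ whose conditional expectation given the initial data is at most $C_M \cdot n^{-1}\#\{i : (X^n_i(0))^+ \ge R/2\} \le 2 C_M R^{-1} \int|x|\mu_n(0,dx)$, which is small in probability by Assumption \ref{mun0} and Markov. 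The contribution from the second indicator has conditional expectation of order $R^{-1}$ by Cauchy-Schwarz together with the uniform second-moment bound (after further splitting off the $(X^n_i(0))^+$ factor, which produces an $R^{-2}$ from a tail probability). A final Markov step and taking $M\to\infty$ completes the right-tail bound. The main technical obstacle is this stochastic domination: the particles interact through $m_n$ and $w$ is unbounded below, so a naive independent-particle comparison fails; the interplay of Theorem \ref{thm.mbd} (to bound $m_n$ over $[0,t]$ via monotonicity) with Lemma \ref{R} (to control the pre-crossing overshoot in $L^2$) is what reduces the problem to one that yields to Markov and Cauchy-Schwarz.
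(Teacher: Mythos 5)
Your proof is correct and takes essentially the same approach as the paper's: both handle the left tail by monotonicity and Assumption \ref{ass.t1}, and for the right tail both restrict to the high-probability event where $m_n$ stays below a fixed level (Theorem \ref{thm.mbd}), split each particle's path at the first crossing of a threshold, control the crossing position by the iid-$\theta$ renewal overshoot (Lemma \ref{R}), and dominate the post-crossing increments by a rate-bounded $\mathcal{J}(\theta)$-jump process. The only cosmetic difference is the crossing threshold: you use $M$ (the level bounding $m_n$), yielding post-crossing rate $\le w(0)$ and an extra additive $M$ to absorb via $R>4M$, whereas the paper crosses at $0$, yielding rate $\le w(-A)$ with no extra additive constant.
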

\begin{proof}
Note that Assumption \ref{ass.t1} ensures that $\{\mu_n(0)\}_{n\geq 1}$ is tight in $\mathcal{P}_1(\mathbb{R})$ (cf. \cite[Lemma D.17]{feng2006large}).
Consider now $t>0$.
For any $\eta >0$,
\begin{multline}
     \mathbb{P}\left( \int |x| \mathbf{1}(|x|\geq R) \mu_n(t,dx)\geq \eta \right) = \mathbb{P}\left(\frac{1}{n}\sum_{i=1}^n |X_i^n(t)| \mathbf{1}\{|X_i^n(t)|\geq R \} \geq \eta \right) \\
        \leq \mathbb{P}\left(\frac{1}{n}\sum_{i=1}^n (X_i^n(t))^- \mathbf{1}\{X_i^n(t)\leq -R \} \geq \eta/2 \right) + \mathbb{P}\left(\frac{1}{n}\sum_{i=1}^n (X_i^n(t))^+ \mathbf{1}\{X_i^n(t)\geq R \} \geq \eta/2 \right)\\
        = T_1^n(R)  + T_2^n(R).\label{eq:1130}
  \end{multline} 
Since $t \mapsto X_i^n(t)$ is increasing, we have using  Assumption \ref{ass.t1}, 
\begin{align}\label{t.i}
    \lim_{R\to \infty} \limsup_{n\to \infty} T_1^n(R)\leq \lim_{R\to \infty} \limsup_{n\to \infty} \mathbb{P} \left(\frac{1}{n} \sum_{i=1}^n (X_i^n(0))^- \mathbf{1}\{X_i^n(0)\leq -R \} \geq \eta/2 \right) =0.
\end{align}
To estimate $T_2^n(R)$,
for each $i \in [n]$ and $A>0$, define a jump  process $\{\bar{X}_i^{n,A}(s), s\ge 0\}$, pathwise, as follows. 
 Define $$\tau_i^n \doteq \inf \{s\geq  0: X_i^n(s)\geq 0 \}.$$ 
 Then for $0\le s \le \tau_i^n$,
$\bar{X}_i^{n,A} (s)\doteq X_i^n(s)$, and for $s>\tau_i^n$, 
$$\bar{X}_i^{n,A}(s)=X_i^n(\tau_i^n)+ V_i( w(-A) (s-\tau_i^n)),$$  
where, $\{V_i, i \in [n]\}$ are iid $\mathcal{J}(\theta)$-jump processes which are independent of the collection $\{X^n_i(0),  i\in [n]\}$. 

Note that $\{X_i^n, i \in [n]\}$ and $\{\bar{X}_i^{n,A}, i \in [n]\}$ can be naturally constructed on the same probability space such that, on the event $\{ m_n(t)\leq A\}$,
$X_i^n(s)\leq \bar{X}_i^{n,A}(s)$, for all  $s\leq t$.
Also, on the event $\{m_n(t)\le A\}$, using the above construction, we get 
$$(X_i^n(t))^+ \leq (\bar{X}_i^{n,A}(t))^+ \leq (X_i^n(\tau_i^n)) + V_i( w(-A) (t-\tau_i^n))\one\{t\ge \tau^n_i\}, \quad  \; 1\leq i\leq n.$$
Thus, using the inequality $(x+y)\one\{x+y \ge A\} \le 2x\one\{x \ge A/2\} 
+2y\one\{y \ge A/2\}$,
\begin{align}\label{t.ii}
     T_2^n(R)&\leq \mathbb{P}(m_n(t)\geq A) + \mathbb{P}\left( \frac{1}{n}\sum_{i=1}^n (\bar{X}_i^{n,A}(t))^+ \mathbf{1}\{\bar{X}_i^{n,A}(t)\geq R \} \geq \frac{\eta}{2} \right) \\
    &\leq \mathbb{P}(m_n(t)\geq A) + \mathbb{P}\left( \frac{2}{n}\sum_{i=1}^n X_i^n(\tau_i^n) \mathbf{1} \left\{X_i^n(\tau_i^n)\geq \frac{R}{2} \right\} \geq \frac{\eta}{4} \right)\\
    &\quad + \mathbb{P}\left( \frac{2}{n}\sum_{i=1}^n V_i(w(-A)(t-\tau^n_i)^+) \mathbf{1}
    \left\{V_i( w(-A) (t-\tau_i^n)^+)\geq \frac{R}{2} \right\} \geq \frac{\eta}{4} \right).
\end{align}
Next, 
\begin{align*}
    &\mathbb{P}\left( \frac{2}{n}\sum_{i=1}^n X_i^n(\tau_i^n) \mathbf{1}\left\{X_i^n(\tau_i^n)\geq \frac{R}{2} \right\} \geq \frac{\eta}{4} \right) \\
    &\leq \mathbb{P}\left( \frac{1}{n}\sum_{i=1}^n X_i^n(0) \mathbf{1}\left\{X_i^n(0)\geq \frac{R}{4} \right\} \geq \frac{\eta}{16} \right) + \mathbb{P}\left( \frac{1}{n}\sum_{i=1}^n \cO_{X_i^n(0)^-} \mathbf{1}\left\{\cO_{X_i^n(0)^-}\geq \frac{R}{4} \right\} \geq \frac{\eta}{16} \right).
\end{align*}
Here, $\cO_x$ is the overshoot at level $x$ by the renewal process $\xi$ as defined in Lemma \ref{R}. \\
Thus, by Assumption \ref{ass.t1} and Lemma \ref{R}, we get
\begin{equation}\label{t.iii}
    \lim_{R\to \infty} \limsup_{n\to \infty} \mathbb{P} \left( \frac{2}{n}\sum_{i=1}^n X_i^n(\tau_i^n) \mathbf{1} \left\{X_i^n(\tau_i^n)\geq \frac{R}{2} \right\}\geq \frac{\eta}{4}\right) =0.  
\end{equation}
Moreover, since $$\mathbb{E}[V_i(w(-A)t)]^2 \leq \mathbb{E}Z^2 \cdot[w(-A)^2t^2 + w(-A)t],$$
 we have
\begin{equation}\label{t.iv}
    \limsup_{R\to \infty}\limsup_{n\to \infty} \mathbb{P} \left( \frac{2}{n}\sum_{i=1}^n 
    V_i(w(-A)t) \mathbf{1}\left\{ V_i(w(-A)t)\geq \frac{R}{2}\right\} \geq \frac{\eta}{2} \right) =0.   
\end{equation}
Using \eqref{t.iii} and \eqref{t.iv} in \eqref{t.ii}, for any $A>0$,
$$\lim_{R\to \infty} \limsup_{n\to \infty} T_2^n(R)\leq \limsup_{n\to \infty} \mathbb{P}(m_n(t)\geq A).$$
Hence, using Theorem \ref{thm.mbd}, and sending $A\to \infty$,
$$\lim_{R\to \infty} \limsup_{n\to \infty} T_2^n(R)=0.$$
This, along with \eqref{t.i} and \eqref{eq:1130}, completes the proof of the lemma.
\end{proof}

For the next lemma we will use the following slightly weaker assumption than Assumption \ref{ass.char}.
%
%
\begin{assumption}\label{ass.t2}
    For any $c \in \mathbb{R}$, 
        \begin{equation}
            \lim_{B \to \infty} \limsup_{n\to \infty} \mathbb{P}\left(\int w(x-c)\mu_n(0,dx) \geq B \right)=0
        \end{equation}
\end{assumption}
 The claim that  Assumption \ref{ass.char} is in fact stronger than Assumption \ref{ass.t2} can be seen as follows.
Suppose Assumption \ref{ass.char} holds. Then for any $\eta > 0$, we can choose $A>0$ such that $$\limsup_{n\to \infty} \mathbb{P} \left[ \int_{-\infty}^{-A} w(x-c)\mu_n(0,dx) \geq 1\right] < \eta.$$
Let $B_0 \doteq w(-A-c)+1$ Then, for any $B\geq B_0$, 
\begin{align*}
    \limsup_{n\to \infty} \mathbb{P} \left[ \int_{-\infty}^{\infty} w(x-c)\mu_n(0,dx) \geq B\right]
    &\leq \limsup_{n\to \infty} \mathbb{P} \left[ \int_{-\infty}^{-A} w(x-c)\mu_n(0,dx) + w(-A-c) \geq B\right] \\
    &\leq \limsup_{n\to \infty} \mathbb{P} \left[ \int_{-\infty}^{-A} w(x-c)\mu_n(0,dx) \geq 1\right] 
    < \eta.
\end{align*}
The claim is now immediate.

The next lemma gives the key estimate on oscillations that is needed for establishing tightness in the path space.
\begin{lemma}\label{lem.tght.b}
    Suppose Assumptions  \ref{Z}, \ref{mun0} and \ref{ass.t2} hold. Then, for any $T, \epsilon \in (0,\infty)$ 
    $$\lim_{\delta \to 0} \limsup_{n\to \infty} \mathbb{P} \left( \sup_{\substack{t,s \in [0,T]: \\ 0\leq t-s \leq \delta}} \mathcal{W}_1(\mu_n(s),\mu_n(t))> \epsilon\right) =0.$$ 
\end{lemma}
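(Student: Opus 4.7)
My first step would be to exploit the monotonicity that the excerpt emphasizes throughout: each trajectory $X_i^n(\cdot)$ is non-decreasing, so for $s\le t$ the ``identity'' coupling $\frac{1}{n}\sum_{i=1}^n \delta_{(X_i^n(s),X_i^n(t))}$ is admissible between $\mu_n(s)$ and $\mu_n(t)$, yielding
\begin{equation*}
\mathcal{W}_1(\mu_n(s),\mu_n(t))\;\le\;\frac{1}{n}\sum_{i=1}^n\bigl(X_i^n(t)-X_i^n(s)\bigr)\;=\;m_n(t)-m_n(s),\qquad s\le t.
\end{equation*}
This eliminates the Wasserstein-level complication before any martingale analysis and reduces the lemma to the oscillation estimate $\lim_{\delta\to 0}\limsup_n \PP(\sup_{0\le s\le t\le T,\,t-s\le\delta}(m_n(t)-m_n(s))>\epsilon)=0$ for the scalar non-decreasing process $m_n$.

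Next I would localize to a good event on which the dynamics of $m_n$ are tractable. For $R, C_0>0$ set
\begin{equation*}
G_n(R,C_0)\;\doteq\;\{m_n(T)\le R\}\cap \mathcal{E}_{R,C_0},\qquad \mathcal{E}_{R,C_0}\doteq\Bigl\{\int w(x-R)\,\mu_n(0,dx)\le C_0\Bigr\}.
\end{equation*}
Theorem \ref{thm.mbd} (applicable since Assumption \ref{mun0} is assumed) together with Assumption \ref{ass.t2} gives $\lim_{R\to\infty}\lim_{C_0\to\infty}\limsup_n \PP(G_n(R,C_0)^c)=0$. The crucial observation is that on $G_n(R,C_0)$, the monotonicity $X_j^n(u)\ge X_j^n(0)$, the bound $m_n(u)\le m_n(T)\le R$ and the fact that $w$ is non-increasing combine to give the \emph{pathwise} estimate
\begin{equation*}
\frac{1}{n}\sum_{j=1}^n w(X_j^n(u)-m_n(u))\;\le\;\frac{1}{n}\sum_{j=1}^n w(X_j^n(0)-R)\;\le\; C_0\qquad\text{for all }u\le T,
\end{equation*}
i.e.\ the average jump rate is controlled by the deterministic constant $C_0$ on the good event.

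I would then apply the martingale decomposition obtained by taking $f(\bfx)=\bar\bfx$ in the generator $\cll_n$. A standard localization using the stopping time $\tau_R\doteq\inf\{t:m_n(t)>R\}$ together with the $\clf_0^n$-measurable multiplier $\mathbf{1}_{\mathcal{E}_{R,C_0}}$ produces a square-integrable martingale $\tilde M_n$ which on $G_n(R,C_0)$ (for $t\le T$) coincides with $m_n(t)-m_n(0)-A_n(t)$, where $A_n(t)\doteq\int_0^t \mn n^{-1}\sum_j w(X_j^n(u)-m_n(u))\,du$. The rate bound of the previous paragraph gives $A_n(t)-A_n(s)\le \mn C_0(t-s)$ on $G_n(R,C_0)$ and the pathwise bound $\langle \tilde M_n\rangle(T)\le \vt C_0 T/n$; Doob's $L^2$-inequality then yields $\EE \sup_{t\le T}\tilde M_n(t)^2\le 4\vt C_0 T/n\to 0$, so that $\sup_{t\le T}|M_n(t)|\mathbf{1}_{G_n(R,C_0)}\to 0$ in probability.

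Combining, on $G_n(R,C_0)$, for $s\le t\le T$ with $t-s\le\delta$,
\begin{equation*}
m_n(t)-m_n(s)\;=\;[M_n(t)-M_n(s)]+[A_n(t)-A_n(s)]\;\le\;2\sup_{u\le T}|M_n(u)|+\mn C_0\delta,
\end{equation*}
from which the desired oscillation bound follows by letting $n\to\infty$ first, then $\delta\to 0$, and finally $R,C_0\to\infty$. The main technical point I anticipate is the localization: turning Proposition \ref{prop:exis}'s local-martingale identity for the unbounded test function $\bar\bfx$ into a statement amenable to Doob's inequality requires jointly stopping at $\tau_R$ and restricting to $\mathcal{E}_{R,C_0}$ so as to obtain a \emph{pathwise} bound on the predictable quadratic variation, and this interplay is precisely where Assumption \ref{ass.t2} enters in an essential way (by producing $C_0$), while the monotonicity of the particle trajectories propagates the initial-time integrability forward to all times $u\le T$.
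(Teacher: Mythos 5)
Your proof is correct, and its mathematical content is essentially the same as the paper's: both reduce to the scalar oscillation of $m_n$ using the identity coupling on non-decreasing trajectories, both exploit the monotonicity of $w$ and the $X^n_i$ to propagate the $\clf_0^n$-integrability of the rate forward in time, and both conclude via a drift-plus-martingale decomposition and Doob after using Theorem~\ref{thm.mbd} and Assumption~\ref{ass.t2} to make the complement of the good event negligible.

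The executions differ in one place worth noting. The paper does not decompose $m_n$ itself; it constructs an auxiliary process $X^{n,A}_i(t)=X^n_i(0)+V_i\bigl(w(X^n_i(0)-A)t\bigr)$ whose jump rates are frozen to $\clf_0^n$-measurable constants, then builds a thinning-type coupling $\tilde{\bfX}^n$ so that the increments of the actual particle system are dominated by those of $\bfX^{n,A}$ until $\tilde m_n$ hits $A$; the oscillation analysis is then carried out on $m^A_n$, which, conditionally on $\clf_0^n$, is a compound Poisson process with linear compensator. That sidesteps entirely the question of applying Proposition~\ref{prop:exis} (stated for bounded $f$) to the unbounded test function $\bar{\bfx}$ on the original dynamics. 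You instead decompose $m_n$ directly, stop at $\tau_R$, and restrict to the $\clf_0^n$-measurable event $\mathcal{E}_{R,C_0}$, so the quadratic variation is bounded pathwise by $\vt C_0 T/n$. This is more direct and avoids the coupling construction, but if written in full it does require a short truncation argument (e.g.\ $\bar{\bfx}\wedge K$, $K\to\infty$, or an appeal to the bounded total jump rate $nC_0$ after stopping) to upgrade the local-martingale identity for the unbounded function to a true $L^2$-martingale before Doob can be invoked; you correctly flag this as the main technical point. Both routes are valid; the paper's buys a cleaner martingale structure at the cost of the coupling, yours buys directness at the cost of a bit more care with localization.
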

\begin{proof}
    \noindent Observe that for any $f \in \text{Lip}_1$ and $0<s<t \le T$,
\begin{multline*}
  |\langle f,\mu_n(t)\rangle -\langle f, \mu_n(s)\rangle| = \left | \frac{1}{n}\sum_{i=1}^n (f(X^n_i(t)) - f(X^n_i(s)) \right| \le \frac{1}{n}\sum_{i=1}^n |X^n_i(t)) - X^n_i(s)|\\ = 
        \frac{1}{n}\sum_{i=1}^n (X^n_i(t)) - X^n_i(s)) = \langle x,\mu_n(t)\rangle-\langle x,\mu_n(s)\rangle,
   \end{multline*}
   where the first equality on the second line is from the monotonicity of $t\mapsto X^n_i(t)$.
 Hence, it suffices to show that, for any $T, \epsilon \in (0,\infty)$,
 \begin{equation}
\lim_{\delta \to 0} \limsup_{n\to \infty} \mathbb{P} \left( \sup_{\substack{t,s \in [0,T]: \\ 0\leq t-s \leq \delta}} (m_n(t)- m_n(s)) > \epsilon\right) =0. \label{eq:406}
 \end{equation}
For $i \in [n]$ and $A>0$, define a jump process $X_i^{n,A}$ as follows.
\begin{align*}
X_i^{n,A}(t) = X^n_i(0) + V_i(w(X^n_i(0) -A)t), \ t \ge 0,
\end{align*}
where $\{V_i, i\in [n]\}$,  are iid $\mathcal{J}(\theta)$-jump processes that are independent of the collection $\{X^n_i(0),  i\in [n]\}$.
We now give a distributionally equivalent construction of $\bfX^n$ which is coupled with the dynamics of 
$\bfX^{n,A} = (X_i^{n,A}, i \in [n])$ as follows. Define
$$\tilde X^n_i(t), i \in [n], \ \tilde m_n(t) \doteq \frac{1}{n} \sum_{i=1}^n \tilde X^n_i(t),\, \text{ for } t \le \tilde \tau^n_A \doteq \inf\{s\ge 0: \tilde m_n(s) \ge A\}$$
recursively as $\tilde X^n_i(0) = X^n_i(0)$ and for $0< t< \tilde \tau^n_A$, conditioned on $\clf^{n,A}_{t-}$,
$$\tilde X^n_i(t) - \tilde X^n_i(t-)  =
\begin{cases}
X_i^{n,A}(t) - X_i^{n,A}(t-) &\mbox{ with probability } \frac{w(\tilde X_i^n(t-)-\tilde m_n(t-))}{w(X_i^n(0)-A)}\\
0 &\mbox{ with probability } 1-\frac{w(\tilde X_i^n(t-)-\tilde m_n(t-))}{w(X_i^n(0)-A)}
\end{cases}
$$
where $\clf^{n,A}_{s} \doteq \sigma \{\bfX^{n,A}(u), u \le s\}$, $s\ge 0$.
For $t\ge \tilde \tau^n_A$, $\tilde\bfX^n (t) \doteq \{\tilde X^n_i(t), i \in [n]\}$ evolves independently of
$\bfX^{n,A}$ according to a Markov process with generator $\cll_n$ defined in \eqref{eq:gene}.
It then follows that $\tilde\bfX^n$ has the same distribution as $\bfX^n$.
By construction it also follows that 
$$\tilde X_i^n(t)\leq X_i^{n,A}(t),\; \mbox{ and, } \tilde X_i^n(t)-\tilde X_i^n(s) \leq X_i^{n,A}(t)-X_i^{n,A}(s), \; 0\leq s\leq t \leq \tilde \tau_n^A. $$
We thus have that
\begin{equation}\label{eq:502}
    \begin{split}
        \mathbb{P}\left( \sup_{\substack{t,s \in [0,T]: \\ 0\leq t-s \leq \delta}} (m_n(t)-m_n(s))> \epsilon \right) & = \mathbb{P}\left( \sup_{\substack{t,s \in [0,T]: \\ 0\leq t-s \leq \delta}} (\tilde m_n(t)-\tilde m_n(s))> \epsilon \right)\\
        &\leq \mathbb{P}(m_n(T)\geq A) + \mathbb{P}\left( \sup_{\substack{t,s \in [0,T]: \\ 0\leq t-s \leq \delta}} (m_n^A(t)-m_n^A(s))> \epsilon \right).
    \end{split}
\end{equation}
From the Markov property of $\bfX^{n,A}$ it follows that
 $$m_n^A(t)=m_n^A(0)+t\mn\int w(x-A)\mu_n(0,dx) +M_n^A(t),$$ 
 where $M_n^A(\cdot)$is a $\{\clf^{n,A}_t\}$-local martingale.
Thus, \begin{equation}\label{eq:501}
\sup_{\substack{t,s \in [0,T]: \\ 0\leq t-s \leq \delta}} (m_n^A(t)-m_n^A(s)) \leq \delta \mn\int w(x-A)\mu_n(0,dx)+ 2 \bar{M}_n^A,
\end{equation}
where $\bar{M}_n^A\doteq \sup_{t\in [0,T]} |M_n^A(t)|$.

Recalling that $\vt = \EE Z^2$, we  can bound the quadratic variation of $M_n^A(\cdot)$ as,
\begin{align*}
\langle M_n^A \rangle_t &\le  \frac{\vt \cdot t}{n} \int w(x-A) d\mu_n(0, dx).
\end{align*} 
Using Doob’s maximal inequality,  for any $\eta > 0$, 
\begin{align*}
\mathbb{P} (\Bar {M}_n^A  \geq \eta) & = \mathbb{P} \left(\Bar {M}_n^A  \geq \eta, \int w(x-A)\mu_n(0,dx) \leq B \right) + \mathbb{P} \left(\int w(x-A)\mu_n(0,dx) > B \right)\\
&\le \frac{4B\vt T}{n\eta^2} + \mathbb{P} \left(\int w(x-A)\mu_n(0,dx) > B \right).
\end{align*}
Sending $n\to \infty$, we obtain 
$$\limsup_{n\to \infty} \mathbb{P} (\Bar {M}_n^A  \geq \eta) \leq \limsup_{n\to \infty} \mathbb{P}\left(\int w(x-A)\mu_n(0,dx) \geq B \right).$$
Finally taking $B\to \infty$ and using Assumption \ref{ass.t2} we get, for any $\eta>0$,
\begin{equation}\label{mglebd}
    \limsup_{n\to \infty} \mathbb{P} (\Bar {M}_n^A  \geq \eta) =0.
\end{equation}
From \eqref{eq:501} and \eqref{eq:502}, we have
\begin{align*}
    \mathbb{P}\left( \sup_{\substack{t,s \in [0,T]: \\ 0\leq t-s \leq \delta}} (m_n(t)-m_n(s))> \epsilon \right)
    \quad\leq \mathbb{P}(m_n(T)\geq A) +\mathbb{P}\left( \int w(x-A) \mu_n(0,dx)> \frac{\epsilon}{2\delta\mn } \right) + \mathbb{P}\left( \bar{M}_n^A > \frac{\epsilon}{4} \right).
\end{align*}
Hence, using Assumption \ref{ass.t2}, and \eqref{mglebd}, for any $\epsilon>0$, $A>0$, 
$$\lim_{\delta\to 0} \limsup_{n\to \infty} \mathbb{P} \left( \sup_{\substack{t,s \in [0,T]: \\ 0\leq t-s \leq \delta}} (m_n(t)-m_n(s))> \epsilon\right) \leq \limsup_{n\to \infty} \mathbb{P} (m_n(T)\geq A).$$
The statement in \eqref{eq:406} now follows 
on sending $A\to \infty$ and using  Theorem  \ref{thm.mbd}.
\end{proof}
\begin{theorem}\label{thm.tght}
   Suppose Assumptions \ref{Z}, \ref{ass.t1}, and \ref{ass.t2} hold. Then $\mu_n$ is $\mathcal{C}$-tight in $\mathcal{D}([0,\infty):\mathcal{P}_1(\mathbb{R}))$.
\end{theorem}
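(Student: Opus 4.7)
The plan is to combine the pointwise tightness result of Lemma \ref{lem.tght} with the uniform modulus of continuity estimate of Lemma \ref{lem.tght.b} through a standard criterion for tightness in Skorokhod path space (e.g., \cite[Ch. 3, Theorem 7.2 and Remark 7.3]{ethier-kurtz}-style, or Jakubowski's criterion adapted to Polish-valued processes).

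First, I would verify \emph{compact containment}: for any $\eta > 0$, $T > 0$, I must produce a compact set $K_\eta \subset \mathcal{P}_1(\RR)$ such that
$$\limsup_{n\to\infty} \PP\bigl( \mu_n(t) \notin K_\eta \text{ for some } t \in [0,T] \bigr) < \eta.$$
For pointwise tightness, Lemma \ref{lem.tght} already gives that $\{\mu_n(t)\}$ is tight in $\mathcal{P}_1(\RR)$ for each fixed $t$. To upgrade this to uniformity over $[0,T]$, I exploit the monotonicity $X^n_i(s) \le X^n_i(t)$ for $s \le t$: the right tail of $\mu_n(t)$ dominates that of $\mu_n(s)$ for all $s \le t$, while the left tail of $\mu_n(s)$ dominates that of $\mu_n(t)$. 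Hence controlling tails of $\mu_n(T)$ (via Lemma \ref{lem.tght}) and of $\mu_n(0)$ (via Assumption \ref{ass.t1}) simultaneously controls tails of $\mu_n(t)$ uniformly in $t \in [0,T]$. Combined with the characterization of relatively compact subsets of $\mathcal{P}_1(\RR)$ (tightness plus uniform integrability of $|x|$), this produces the desired $K_\eta$.

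Next, Lemma \ref{lem.tght.b} provides precisely the \emph{uniform} (not merely modified Skorokhod) modulus of continuity condition in the $\mathcal{W}_1$ metric:
$$\lim_{\delta \to 0} \limsup_{n\to\infty} \PP\!\left( \sup_{\substack{s,t\in[0,T]\\ 0\le t-s \le \delta}} \mathcal{W}_1(\mu_n(s), \mu_n(t)) > \eps \right) = 0.$$
The standard tightness criterion then asserts that (i) compact containment together with (ii) this uniform modulus of continuity estimate implies that $\{\mu_n\}$ is tight in $\mathcal{D}([0,\infty): \mathcal{P}_1(\RR))$ and, crucially, that every subsequential limit concentrates on the subspace $\mathcal{C}([0,\infty): \mathcal{P}_1(\RR))$. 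This is exactly the definition of $\mathcal{C}$-tightness.

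I do not expect any real obstacle in this step: all the heavy lifting has been done by Theorem \ref{thm.mbd} (mean bound), Lemma \ref{lem.tght} (pointwise tightness in $\mathcal{P}_1$), and Lemma \ref{lem.tght.b} (oscillation). The only minor subtlety is ensuring that one works in the $\mathcal{P}_1$ topology rather than just weak topology, so that first moments are controlled---but this is automatic from Lemma \ref{lem.tght}, which is phrased in terms of uniform integrability of $|x|$ under $\mu_n(t)$, exactly the condition required for precompactness in $\mathcal{P}_1(\RR)$.
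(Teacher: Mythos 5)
Your argument is correct and follows essentially the same route as the paper: invoke a standard tightness criterion for $\cld([0,\infty):\clp_1(\RR))$ (the paper cites Ethier--Kurtz Theorem~3.7.2) with compact containment supplied by Lemma~\ref{lem.tght} and the modulus-of-continuity bound supplied by Lemma~\ref{lem.tght.b}. Two small remarks. First, the Ethier--Kurtz criterion only requires pointwise tightness of $\mu_n(t)$ at each (rational) $t$, so your upgrade to uniform-in-$t$ compact containment via monotonicity is sound but more than is actually needed. Second, for upgrading to $\clc$-tightness, you invoke the fact that Lemma~\ref{lem.tght.b} controls the \emph{ordinary} (not Skorokhod-modified) modulus, which forces the maximal jump size to vanish in probability and hence all limit points to be continuous; the paper instead observes directly that a single jump of the particle system changes $\mu_n$ by at most $\mn/n$ in $\clw_1$ (i.e.\ $\sup_{t\ge 0}\EE\,\clw_1(\mu_n(t),\mu_n(t-))\le \mn/n$), which also kills the jumps in the limit. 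Both observations are valid and standard; the paper's is a touch more direct.
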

\begin{proof}
    Using \cite[Theorem 3.7.2, Page 128]{Ethier2005MarkovPC}, tightness of $\mu_n(.)$ in $\mathcal{D}([0,\infty):\mathcal{P}_1(\mathbb{R}))$ is immediate from Lemma \ref{lem.tght} and \ref{lem.tght.b}, which respectively prove parts (a) and (b) of that theorem. 
    The $\clc$-tightness is a consequence of the observation that, for  $n \in \NN$,
    $\sup_{t\ge 0} \EE \mathcal{W}_1(\mu_n(t), \mu_n(t-)) \le \frac{\mn}{n}$.
\end{proof}

\subsection{Characterization of Limit Points}
\label{sec:char}
In this section we give a characterization of the weak limit points of the sequence $\{\mu_n\}_{n\in \NN}$ for which the tightness was shown in Section \ref{sec:tight}.
\begin{theorem}\label{thm.ch}
    Suppose Assumptions \ref{Z}, \ref{ass.w}, \ref{ass.t1}, and  \ref{ass.char} are satisfied. Then $\{\mu_n\}_{n \in \NN}$ is $\mathcal{C}$-tight in $\mathcal{D}([0,\infty):\mathcal{P}_1(\mathbb{R}))$ and
    subsequential distributional limit points  $\mu$ of $\mu_n$ satisfy the McKean-Vlasov equation \eqref{eq:mfeq1}.
\end{theorem}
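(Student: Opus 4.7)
The $\mathcal{C}$-tightness of $\{\mu_n\}$ in $\mathcal{D}([0,\infty):\mathcal{P}_1(\RR))$ is immediate from Theorem \ref{thm.tght}, since Assumption \ref{ass.char} implies Assumption \ref{ass.t2} and Assumption \ref{ass.t1} implies Assumption \ref{mun0}, as noted in the excerpt. For the characterization, I fix a subsequence along which $\mu_n \Rightarrow \mu$ (where $\mu$ necessarily has continuous paths in $\mathcal{P}_1(\RR)$) and, by Skorokhod's representation, realize the convergence as a.s.\ on a common probability space; in particular, $m_n(s) = \langle x, \mu_n(s)\rangle \to m(s) = \langle x, \mu(s)\rangle$ uniformly on compact intervals. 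The crucial technical input is a uniform-in-$n$, in-probability upper bound on the rate integral
\begin{equation*}
I_n(t) \doteq \int_0^t \langle w(\cdot - m_n(s)), \mu_n(s)\rangle \, ds.
\end{equation*}
Since each $X^n_i(\cdot)$ is non-decreasing and $w$ is non-increasing, on the event $\{m_n(t) \le M\}$ one has for every $s \in [0,t]$
\begin{equation*}
\langle w(\cdot - m_n(s)), \mu_n(s)\rangle \le \frac{1}{n}\sum_i w(X^n_i(0) - M) = \int w(x - M) \, \mu_n(0, dx).
\end{equation*}
Combined with Theorem \ref{thm.mbd} (making $\{m_n(t) \le M\}$ high-probability) and Assumption \ref{ass.char} with $c = M$ (bounding $\int_{-\infty}^{-A} w(x-M)\mu_n(0,dx)$), this yields $I_n(t) \le t(w(-A-M) + \epsilon)$ with probability $\ge 1 - \eta$ for $M, A$ large. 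Truncating $w$ below by $w^K \doteq w \wedge K$ and using Fatou/monotone convergence, one then also deduces the integrability requirement $\int_0^t \langle w(\cdot - m(s)), \mu(s)\rangle\,ds < \infty$ a.s.\ for the limit.

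With this estimate in hand, I next show that $A_{t,f}(\mu) = 0$ a.s.\ for each fixed $t \ge 0$ and $f \in \text{Lip}_1$ by passing to the limit in the local martingale $M^n_f(t) \doteq A_{t,f}(\mu_n(\cdot))$. Its predictable quadratic variation satisfies $\langle M^n_f\rangle_t \le (\vt/n)\, I_n(t)$ (using $f \in \text{Lip}_1$ to bound the squared jumps by $Z^2$). Localizing at $\tau_n \doteq \inf\{s: m_n(s) \ge M\}$ and restricting to the $\clf_0^n$-event $\{\int w(x-M)\mu_n(0,dx) \le B\}$, Doob's maximal inequality gives $\sup_{s \le t \wedge \tau_n}|M^n_f(s)| \to 0$ in probability; combined with the high probability of $\{\tau_n \ge t\}$ and $\{\int w(x-M)\mu_n(0,dx) \le B\}$, this yields $M^n_f(t) \to 0$ in probability. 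The boundary terms $\langle f, \mu_n(\cdot)\rangle$ converge to $\langle f, \mu(\cdot)\rangle$ directly from $\mathcal{W}_1$-convergence. For the drift $\int_0^t \langle g_f(\cdot) w(\cdot - m_n(s)), \mu_n(s)\rangle\,ds$, I truncate $w$ by $w^K$, which by Assumption \ref{ass.w} is bounded continuous and globally Lipschitz; for fixed $K$ the truncated integral passes to the limit by bounded-continuity arguments, using $m_n \to m$ uniformly on $[0,t]$ and $\mathcal{W}_1$-convergence of $\mu_n(s) \to \mu(s)$. The truncation error is again controlled by the monotonicity trick: on $\{m_n(t) \le M\}$, the set $\{w(\cdot - m_n(s)) > K\}$ is contained in $\{x < M + y_K\}$ with $y_K \doteq \sup\{y: w(y) \ge K\}$, so the error is dominated by $\mn t \int_{x < M + y_K} w(x - M)\,\mu_n(0, dx)$, which is small uniformly in $n$ for $K$ large by Assumption \ref{ass.char}. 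A parallel Fatou estimate handles the analogous truncation error in the limit.

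Assembling these pieces gives $A_{t,f}(\mu(\cdot)) = 0$ a.s.\ for each fixed $(t, f)$. To upgrade to all $t \ge 0$ and $f \in \text{Lip}_1$ simultaneously, I note that $t \mapsto A_{t,f}(\mu)$ is continuous (since $\mu$ has continuous paths and the integrand is locally integrable by the first step) and $f \mapsto A_{t,f}(\mu)$ is linear with a continuous dependence on $f$, so selecting a countable dense subset of $(t, f)$ and taking a common null set suffices. The main obstacle throughout is the unboundedness of $w$, which destroys the standard bounded-rate arguments of \cite{Balzs2011ModelingFA}; I overcome it via the trio of (i) path monotonicity, which transfers time-$s$ rate bounds to time-$0$ bounds on $\mu_n(0)$, (ii) the a priori mean bound of Theorem \ref{thm.mbd}, and (iii) the left-tail control of Assumption \ref{ass.char}.
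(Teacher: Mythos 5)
Your proof is correct and follows essentially the same strategy as the paper: Skorokhod representation, the a priori mean bound from Theorem \ref{thm.mbd}, monotonicity of $t\mapsto X^n_i(t)$ (and hence of $t\mapsto m_n(t)$) to push rate bounds at time $s$ back to rate bounds at time $0$, and Assumption \ref{ass.char} to control the left tail of the initial empirical measure, with the martingale $M^f_n$ killed by its $O(1/n)$ quadratic variation on a high-probability event. The only genuine difference is cosmetic: you truncate the rate function $w$ from above by $w^K = w\wedge K$ (equivalently cutting the domain at $y_K = \sup\{y: w(y)\ge K\}$), whereas the paper truncates the spatial argument from below via $x\mapsto w((x\vee(-A))-m_n(s))$; both produce a bounded globally Lipschitz surrogate (via Assumption \ref{ass.w}) whose truncation error is handled by the same monotonicity-plus-left-tail bound, so the two routes are interchangeable. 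One small caveat worth noting: your stated quadratic-variation bound $\langle M^f_n\rangle_t \le (\vt/n)I_n(t)$ is actually the correct one (the increment is $\EE_Z[(f(x+Z)-f(x))^2]\le\vt$), which slightly fixes the paper's displayed expression $g_f^2$ — but since both $g_f^2\le\mn^2$ and $\EE_Z[(f(x+Z)-f(x))^2]\le\vt$ are finite bounds independent of $n$, either suffices for the Doob inequality step.
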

\begin{proof}
Recall from discussion below Assumption \ref{ass.t2} that this assumption is implied by Assumption 
\ref{ass.char}. Thus, under the assumptions of the current theorem, we have from Theorem \ref{thm.tght}
that $\mu_n$ is $\mathcal{C}$-tight in $\mathcal{D}([0,\infty):\mathcal{P}_1(\mathbb{R}))$.
Now let $\mu$ be a subsequential distributional limit point   of $\mu_n$.
 Without loss of generality, by appealing to Skorohod representation theorem, we can assume that
 $\mu_n \to \mu$ a.s. in $\mathcal{D}([0,\infty):\mathcal{P}_1(\mathbb{R}))$ and that $\mu \in \mathcal{C}([0,\infty):\mathcal{P}_1(\mathbb{R}))$ a.s..
 This, in particular, says that for every $f \in \mbox{Lip}_1$ and $T>0$, as $n\to \infty$,
 \begin{equation}\label{eq:737}
\sup_{0\le t \le T} |\langle f, \mu_n(t)\rangle - \langle f, \mu(t)\rangle| \to 0, \mbox{ a.s.. }
 \end{equation}
Fix $\epsilon, \eta, T \in (0, \infty)$. Then,  using Theorem \ref{thm.mbd}, we can choose $R>0$ such that \begin{equation}\label{eq:450}
\limsup_{n\to \infty} \mathbb{P}[m_n(T)> R] \leq \frac{\eta}{2}.\end{equation}
Furthermore, since $m_n(T) \to m(T) \doteq \langle x, \mu(T)\rangle$, a.s., we have by Fatou's lemma that
\begin{equation}\label{eq:451}
\mathbb{P}[m(T)> R] \leq \frac{\eta}{2}.\end{equation}
Next, using Assumption \ref{ass.char},  choose $A>0$ such that 
\begin{equation}\label{eq:445}
\limsup_{n\to \infty} \mathbb{P} \left[ \int_{(-\infty,-A)} w(x-R) \mu_n(0,dx) > \frac{\epsilon}{2T}\right] \leq \frac{\eta}{2}.\end{equation}
Using the fact that $\mu_n(0) \to \mu(0)$ a.s., and lower semicontinuity of the map
$$\gamma \mapsto \one_{(\epsilon/2T, \infty)} \left\{\int_{(-\infty,-A)} w(x-R) \gamma(dx)\right\}$$
from $\clp_1(\RR) \to \RR_+$, we have from Fatou's lemma that
\begin{equation}\label{eq:448}
\mathbb{P} \left[ \int_{(-\infty,-A)} w(x-R) \mu(0,dx) > \frac{\epsilon}{2T}\right] \leq \frac{\eta}{2}.
\end{equation}
Using \eqref{eq:445} and \eqref{eq:450}, and monotonicity of $t\mapsto \mu_n(t)$ (under the stochastic ordering $\le_d$) we have that
\begin{align}
    &\limsup_{n\to \infty} \mathbb{P} \left[ \int_0^T \int_{(-\infty,-A)} w(x-m_n(s)) \mu_n(s,dx) ds > \frac{\epsilon}{2}\right] \\
    &\leq \limsup_{n\to \infty} \mathbb{P}[m_n(T)> R] + \limsup_{n\to \infty} \mathbb{P} \left[ T
    \int_{(-\infty,-A)} w(x-R)\; \mu_n(0,dx)> \frac{\epsilon}{2}\right] \leq \eta.\label{ch2}
\end{align}
Using,  in addition, \eqref{eq:448} and \eqref{eq:451} we see that the same estimate holds if either of $m_n$ or $\mu_n$ are replaced by $m$ or $\mu$, respectively.
In particular using this estimate with $(m, \mu)$ instead of $(m_n, \mu_n)$ and recalling that $\eta, \epsilon$ are arbitrary, we see that
$$\int_0^T \langle w(\cdot-m(s)),  \mu(s)\rangle ds < \infty \mbox{ a.s. },
$$
proving the first property in \eqref{eq:mfeq1}.

The  estimate in \eqref{ch2} shows that for any $f\in \mbox{Lip}_1$, and $g_f$ defined as in 
\eqref{eq:gf},
\begin{equation}\label{ch3}
    \limsup_{n\to \infty} \mathbb{P} \left[ \int_0^T \int_{(-\infty,-A)} |g_f(x)| w(x-m_n(s))\, \mu_n(s,dx)\, ds> \frac{\mn\epsilon}{2} \right] \leq \eta,
\end{equation}
where we have used the fact that, since $f \in \mbox{Lip}_1$, $g_f(x) \le \mn$ for all $x$. Furthermore,  the same estimate as in the last display holds if $\mu_n$ and $m_n$ are replaced by $\mu$ and $m$ in the above display.
Thus noting that
\begin{align*}
&\left|\int_{-\infty}^{\infty} g_f(x) (w(x-m_n(s)) - w((x\vee (-A))-m_n(s))) \mu_n(s,dx)\right| \\
&\quad \le 2 \int_{(-\infty,-A)} |g_f(x)| w(x-m_n(s))\, \mu_n(s,dx),
\end{align*}
we have
\be\label{eq:804a}
 \limsup_{n\to \infty} \mathbb{P}\left[
 \sup_{0\le t \le T} \left|\int_0^t \int_{-\infty}^{\infty} g_f(x) (w(x-m_n(s)) - w((x\vee (-A))-m_n(s))) \mu_n(s,dx) ds \right| > \mn\epsilon
 \right] \le \eta.
\ee
Similar calculation shows that
\be\label{eq:804b}
 \limsup_{n\to \infty} \mathbb{P}\left[
 \sup_{0\le t \le T} \left|\int_0^t \int_{-\infty}^{\infty} g_f(x) (w(x-m(s)) - w((x\vee (-A))-m(s))) \mu(s,dx) ds \right| > \mn\epsilon
 \right] \le \eta.
\ee

Next, using  Assumption \ref{ass.w} we see that, on the set $\{ m_n(T)\leq R, \; m(T)\leq R\}$,   as $n\to \infty$,
\begin{align*}
    \theta_1^{n,A} &\doteq \sup_{t\in [0,T]} \left|\int_0^t \int_{-\infty}^\infty \left[g_f(x)w((x\vee (-A))-m_n(s))  -  g_f(x)w((x\vee (-A))-m(s))\right] \mu_n(s,dx)ds\right| \\
    &\leq C_{R,A}\cdot T \sup_{t\in [0,T]} |m_n(t)-m(t)|  \to 0, \mbox{ a.s.},
\end{align*}
where $C_{R,A} \doteq \mn L(-A-R)$,  $L(\cdot)$ is as introduced in Assumption \ref{ass.w}, and last convergence is from \eqref{eq:737}.
Hence, 
\be\label{eq:804c}
\limsup_{n\to \infty} \mathbb{P} [\theta_1^{n,A} > \mn\epsilon ] \leq \limsup_{n\to \infty} \mathbb{P} [m_n(T)> R] +  \mathbb{P} [m(T)> R] \le \eta,
\ee
where we have used \eqref{eq:450} and \eqref{eq:451}.
Next, from a.s. convergence of  $\mu_n\to \mu$, we have, as $n\to \infty$,
\begin{equation}\label{ch5}
   \sup_{t \in [0,T]}\left|\int_0^t \int_{-\infty}^\infty g_f(x) w((x\vee (-A))-m(s))(\mu_n(s,dx)-\mu(s,dx))ds\right| \to 0, \; \mbox{ a.s. }
\end{equation}
Here we have used the fact that $x \mapsto g_f(x) w((x\vee (-A))-m(s))$ is a Lipschitz function for every $s \in [0,T]$ and the Lipschitz constant is uniformly bounded for $s\in [0,T]$.

Combining the above with \eqref{eq:804a}, \eqref{eq:804b}, and \eqref{eq:804c}, we have
\begin{align*}
    &\limsup_{n\to \infty} \mathbb{P} \Big[ \sup_{t\in [0,T]} \Big| \int_0^t \int_{-\infty}^{\infty} g_f(x)w(x-m_n(s))\mu_n(s,dx)ds \\
    &\quad\quad\quad - \int_0^t\int_{-\infty}^{\infty} g_f(x)w(x-m(s))\mu(s,dx)ds\Big| > 4\mn\epsilon\Big]
    \leq 3\eta.
\end{align*}
Since $\epsilon, \eta >0$ are arbitrary, combining the above with \eqref{eq:737}, and recalling the definition of $A_{t,f}$, we get, as $n\to \infty$,
\begin{equation}\label{ch7}
    \sup_{t\in [0,T]} |A_{t,f}(\mu_n)-A_{t,f}(\mu)| \to 0, \quad \text{in probability, for every } f\in \mbox{Lip}_1.
\end{equation}
Now we argue that $A_{t,f}(\mu)=0$ for every $t \in [0,T]$ and $f \in \mbox{Lip}_1$.
From the Markov property of $\bfX^n$ it follows that $A_{t,f}(\mu_n) \doteq M_n^f(t)$ is a $\{\clf^n_t\}$-local martingale. Also, the  quadratic variation of this martingale is given by 
\begin{equation*}
    \langle M_n^f \rangle (t)=\frac{1}{n} \int_0^t \int_{-\infty}^{\infty} g_f^2(x) w(x-m_n(s)) \mu_n(s,dx)\; ds, \; t\ge 0.
\end{equation*}
Fix $\epsilon, \eta>0$ and choose $R>0$ such that \eqref{eq:450} holds.
Define $\tau_R^n \doteq \inf \{ t\geq 0: m_n(t)> R\}$.
Then from Doob's maximal inequality, and the monotonicity of $t\mapsto \mu_n(t)$, it follows that
\begin{align*}
    \mathbb{P}\left[ \sup_{t\in [0,T]} |M_n^f(t)|>\epsilon | \mathcal{F}_0^n \right] &\leq \mathbb{P} \left[ \sup_{t\in [0,T]} |M_n^f(t\wedge \tau_R^n)| \geq \epsilon|\mathcal{F}_0^n\right] + \mathbb{P}[m_n(T)> R| \mathcal{F}_0^n] \\
    &\leq \frac{4T\mn^2}{n\epsilon^2} \int_{-\infty}^{\infty} w(x-R) \mu_n(0,dx) + \mathbb{P}[m_n(T)> R|\mathcal{F}_0^n].
\end{align*}
Thus, 
\begin{align*}
    \mathbb{P}\left[ \sup_{t\in [0,T]} |M_n^f(t)|>\epsilon\right]
    &\le  \eta + \mathbb{P}[m_n(T)> R] + \mathbb{P}\left[\frac{4T\mn^2}{n\epsilon^2} \int_{-\infty}^{\infty} w(x-R) \mu_n(0,dx) \ge \eta\right].
\end{align*}
Taking limit as $n \to \infty$ we now see that
\begin{align*}
   &\limsup_{n\to \infty} \mathbb{P}\left[ \sup_{t\in [0,T]} |M_n^f(t)|>\epsilon\right]\\
    &\le \eta + \limsup_{n\to \infty}\mathbb{P}[m_n(T)> R] + \limsup_{n\to \infty}\mathbb{P}\left[\frac{4T\mn^2}{n\epsilon^2} \int_{-\infty}^{-A} w(x-R) \mu_n(0,dx) \ge \eta\right] \\
    &\le \frac{3\eta}{2} + \limsup_{n\to \infty}\mathbb{P}\left[\frac{4T\mn^2}{n\epsilon^2} \int_{-\infty}^{-A} w(x-R) \mu_n(0,dx) \ge \eta\right].
\end{align*}
Since $\epsilon, \eta>0$ are arbitrary, sending $A\to \infty$ in the above display  and recalling Assumption \ref{ass.char}, we now see that
$$
\sup_{t\in [0,T]} |A_{t,f}(\mu_n)|= \sup_{t\in [0,T]} |M_n^f(t)| \to 0 \mbox{ in probability, as } n \to \infty.
$$
The second property in \eqref{eq:mfeq1} now follows on combining this fact with \eqref{ch7}, which completes the proof.
\end{proof}

\subsection{Proof of Theorem \ref{mv}.}
\label{sec:mv}
To address the uniqueness of solutions to the McKean-Vlasov equation \eqref{eq:mfeq1}, we will find it convenient to work with the associated nonlinear Markov processes. Thus, we first prove Theorem \ref{mv}.
The proof is inspired by  \cite[Lemma 10]{Oelschlager1984AMA}.

Recall $\cll_{\mu}$ defined after \eqref{eq:338}. Consider the equation
\begin{equation}\label{un1}
\langle f, \upsilon(t)\rangle = \langle f, \upsilon(0)\rangle + \int_0^t \int_{-\infty}^{\infty}\cll_{\mu} f(s,x) \upsilon(s, dx)ds, \mbox{ for all } t\ge 0 \mbox{ and } f \in \text{Lip}_1,
\end{equation}
for $\upsilon \in \clc([0,\infty) : \clp_1(\RR))$ satisfying
\begin{equation}\label{un2}
\int_0^t\int_{-\infty}^{\infty}w(x-m(s))\upsilon(s,dx)ds< \infty.
\end{equation}
By assumption, $\upsilon =\mu$  solves \eqref{un1} and \eqref{un2}. Write $\upsilon^Y(t) \doteq \cll(Y(t))$. Since $\upsilon^Y(0) = \mu(0)$, we have using the monotonicity of $Y(\cdot)$, $m$ and $w$, for any $s \in [0,t]$,
\begin{multline}
\int_{-\infty}^{\infty}w(x-m(s)) \upsilon^Y(s, dx) \le \int_{-\infty}^{\infty}w(x-m(t)) \mu(0, dx)\\
\le \int_{-\infty}^{\infty}w(x-m(0)) \mu(0, dx) + |m(t)-m(0)| \int_{-\infty}^{\infty}L(x-m(t))\mu(0,dx)  < \infty,
\end{multline}
where the last inequality is from our assumption on $\mu(0, \cdot)$. Hence, for every $f \in \text{Lip}_1$, we see that the quantity in \eqref{eq:338} is in fact a martingale and thus $\upsilon^Y$ also solves \eqref{un1} and \eqref{un2}.
Consequently, to prove the lemma, it suffices to show that there is a unique solution in $\clc([0,\infty) : \clp_1(\RR))$ to \eqref{un1} and \eqref{un2}.

Fix $T >0$. Let $\clc_{\mu}$ be the class of continuous functions $f: [0,T]\times \RR \rightarrow \RR$ such that (i) For any $t \in [0,T]$, $f(t, \cdot)$ is Lipschitz on $[-M, \infty)$ for any $M>0$, (ii) the partial derivative of $f$ with respect to the first variable exists and, for any $t \in [0,T)$, $\frac{\partial f}{\partial t}(t, \cdot)$ is Lipschitz on $[-M, \infty)$ for any $M>0$, and (iii) the following holds:
\begin{equation}\label{fclass}
\sup_{(t,x) \in [0,T) \times \RR} \; \left[|f(t,x)| + \frac{1}{w(x-m(t))}\left|\frac{\partial f}{\partial t}(t,x)\right|\right] < \infty.
\end{equation}
Fix $M>0$. Let $\psi_M$ be a bounded Lipschitz function such that $\psi_M(x) = 1$ for $x \in [-M, \infty)$ and $\psi_M(x)=0$ for $x \in [-\infty, -M-1]$. Define the measure $\upsilon_M$ by $\upsilon_M(t,A) \doteq \int_A\psi_M(x)\upsilon(t,dx), \, A \in \clb(\RR),\, t \ge 0$. For $f \in \clc_{\mu}$ and $t \in [0,T]$, we have for any $l \in \NN$, setting $\delta = \delta(l) \doteq t/l$,
\begin{align}
    \langle f(t, \cdot) , \upsilon_M(t) \rangle - \langle f(0, \cdot), \upsilon_M(0) \rangle &= \sum_{k=0}^{l-1} [ \langle f((k+1)\delta, \cdot) , \upsilon_M((k+1)\delta)) \rangle - \langle f(k\delta, \cdot) , \upsilon_M(k\delta)) \rangle ]\\
    &= \sum_{k=0}^{l-1} [ \langle f((k+1)\delta, \cdot) , \upsilon_M((k+1)\delta) \rangle - \langle f((k+1)\delta, \cdot) , \upsilon_M(k\delta) \rangle ] \\ 
    &\quad + \sum_{k=0}^{l-1} [ \langle f((k+1)\delta, \cdot) , \upsilon_M(k\delta) \rangle - \langle f(k\delta, \cdot) , \upsilon_M(k\delta) \rangle ]. \label{disc}
\end{align}
We now handle the first term in \eqref{disc}. Write $f_M(t,x) \doteq f(t,x) \psi_M(x),\, (t,x) \in [0,T] \times \RR$. Recalling $f \in \clc_{\mu}$, $f_M((k+1)\delta, \cdot)$ is Lipschitz and one deduces from \eqref{un1} that
\begin{align*}
    &\langle f((k+1)\delta, \cdot) , \upsilon_M((k+1)\delta) \rangle - \langle f((k+1)\delta, \cdot) , \upsilon_M(k\delta) \rangle \\ 
    &= \langle f_M((k+1)\delta, \cdot) , \upsilon((k+1)\delta) \rangle - \langle f_M((k+1)\delta, \cdot) , \upsilon(k\delta) \rangle \\ 
    &= \int_{k\delta}^{(k+1)\delta}\int_{-\infty}^{\infty} \mathbb{E} [f_M((k+1)\delta,x+Z) - f_M((k+1)\delta,x)] w(x-m(s))\upsilon(s,dx) ds \\
    &= \int_{k\delta}^{(k+1)\delta}\int_{-\infty}^{\infty} \mathbb{E}_Z [f_M(s,x+Z) - f_M(s,x)] w(x-m(s))\upsilon(s,dx) ds\\  
    & + \int_{k\delta}^{(k+1)\delta}\int_{-\infty}^{\infty} r_M(k,s,x,\delta) w(x-m(s))\upsilon(s,dx) ds,
\end{align*}
where $r_M(k,s,x,\delta) \doteq \mathbb{E}_Z [f_M((k+1)\delta,x+Z) - f_M((k+1)\delta,x) - f_M(s,x+Z) + f_M(s,x) ]$.
Using  \eqref{fclass}, 
and $\sup_{x \in [-M-1, \infty), t \in [0,T]} w(x-m(t)) <\infty$,
it follows that
\begin{equation}
\lim_{l \rightarrow \infty}\sum_{k=0}^{l-1}\int_{k\delta}^{(k+1)\delta}\int_{-\infty}^{\infty} r_M(k,s,x,\delta) w(x-m(s))\upsilon(s,dx) ds = 0,
\end{equation}
and hence,
\begin{multline}\label{ii1}
    \lim_{l \rightarrow \infty}\sum_{k=0}^{l-1} \left[ \langle f((k+1)\delta, \cdot) , \upsilon_M((k+1)\delta) \rangle - \langle f((k+1)\delta, \cdot) , \upsilon_M(k\delta) \rangle\right]\\
    = \int_0^t\int_{-\infty}^{\infty}\mathbb{E} [f_M(s,x+Z) - f_M(s,x)] w(x-m(s))\upsilon(s,dx) ds.
\end{multline}
Now, we handle the second term in \eqref{disc}. Observe that
\begin{align*}
    &\langle f((k+1)\delta, \cdot) , \upsilon_M(k\delta) \rangle - \langle f(k\delta, \cdot) , \upsilon_M(k\delta) \rangle 
    = \left\langle \int_{k\delta}^{(k+1)\delta} \frac{\partial f}{\partial s}(s,\cdot) ds , \upsilon_M(k\delta) \right\rangle\\
    &= \int_{k\delta}^{(k+1)\delta}\left\langle \frac{\partial f}{\partial s}(s,\cdot) , \upsilon_M(s) \right\rangle ds 
    + \int_{k\delta}^{(k+1)\delta} \left\langle  \frac{\partial f}{\partial s}(s,\cdot), \upsilon_M(k\delta) - \upsilon_M(s) \right\rangle ds \\
    &= \int_{k\delta}^{(k+1)\delta}\left\langle \frac{\partial f}{\partial s}(s,\cdot) , \upsilon_M(s) \right\rangle ds 
    - \int_{k\delta}^{(k+1)\delta} \left\langle  \frac{\partial f}{\partial s}(s,\cdot)\psi_M(\cdot), \upsilon(s) - \upsilon(k\delta) \right\rangle ds.
\end{align*}
As $f \in \clc_{\mu}$, $\frac{\partial f}{\partial s}(s,\cdot)\psi_M(\cdot)$ is bounded and Lipschitz. Thus, writing $g_M(s,\cdot) \doteq \frac{\partial f}{\partial s}(s,\cdot)\psi_M(\cdot)$,
\begin{align*}
&\lim_{l \rightarrow \infty}\sum_{k=0}^{l-1}\int_{k\delta}^{(k+1)\delta} \left\langle  \frac{\partial f}{\partial s}(s,\cdot)\psi_M(\cdot), \upsilon(s) - \upsilon(k\delta) \right\rangle ds\\
&= \lim_{l \rightarrow \infty}\sum_{k=0}^{l-1}\int_{k\delta}^{(k+1)\delta}\int_{k\delta}^s \int_{-\infty}^{\infty} \mathbb{E}_Z\left[g_M(s,x + Z) - g_M(s,x)\right]w(x-m(u))\upsilon(u,dx) du ds\\
&= \lim_{l \rightarrow \infty}\sum_{k=0}^{l-1}\int_{k\delta}^{(k+1)\delta} \int_{-\infty}^{\infty} \int_{u}^{(k+1)\delta} \mathbb{E}_Z\left[g_M(s,x + Z) - g_M(s,x)\right]ds \, w(x-m(u))\upsilon(u,dx) du = 0,
\end{align*}
where, in the last step, we used the fact that $\sup_{(t,x) \in [0,T) \times \RR}|g_M(t,x)| < \infty$ along with \eqref{un2}.
Hence,
\begin{equation}\label{ii2}
\lim_{l \rightarrow \infty}\sum_{k=0}^{l-1} [\langle f((k+1)\delta, \cdot) , \upsilon_M(k\delta) \rangle - \langle f(k\delta, \cdot) , \upsilon_M(k\delta) \rangle] = \int_0^t \left\langle \frac{\partial f}{\partial s}(s,\cdot) , \upsilon_M(s) \right\rangle ds.
\end{equation}
Using \eqref{ii1} and \eqref{ii2} in \eqref{disc}, we get
\begin{multline}\label{nu.mu}
\langle f(t, \cdot) , \upsilon_M(t) \rangle - \langle f(0, \cdot), \upsilon_M(0) \rangle\\
= \int_0^t\int_{-\infty}^{\infty} \left(\mathbb{E}_Z [f_M(s,x+Z) - f_M(s,x)] w(x-m(s)) + \frac{\partial f}{\partial s}(s,x)\psi_M(x)\right)\upsilon(s,dx) ds.
\end{multline}
Now, from \eqref{fclass}, there exists a finite constant $C>0$ such that for any $(s,x) \in [0,T) \times \RR$, $$\left|\mathbb{E} [f_M(s,x+Z) - f_M(s,x)] w(x-m(s)) + \frac{\partial f}{\partial s}(s,x)\psi_M(x)\right| \le C w(x - m(s))$$ for all $M>0$. Thus, using \eqref{un2} and the dominated convergence theorem, taking $M \rightarrow \infty$ in \eqref{nu.mu} gives
\begin{multline}\label{steq}
\langle f(t, \cdot) , \upsilon(t) \rangle - \langle f(0, \cdot), \upsilon(0) \rangle\\
= \int_0^t\int_{-\infty}^{\infty} \left(\mathbb{E}_Z [f(s,x+Z) - f(s,x)] w(x-m(s)) + \frac{\partial f}{\partial s}(s,x)\right)\upsilon(s,dx) ds
\end{multline}
for any $t \in [0,T]$ and $f \in \clc_{\mu}$.

Let $h: \RR \rightarrow \RR$ be a bounded Lipschitz function. For $x \in \RR$, $t \in [0,T]$, consider a $\mathcal{D}([t,T]:\RR)$ valued process $\{Y^{t,x}(s) : s \in [t,T]\}$ constructed as $Y$, but with time started from $t$ and $Y^{t,x}(t) = x$. Define the function
\begin{equation}\label{funrep}
f^{T}(t,x) \doteq \EE\left[h\left(Y^{t,x}(T)\right)\right], \ (t,x) \in [0,T] \times \RR.
\end{equation}
We now verify that $f^T \in \clc_{\mu}$. Fix $M>0$ and $t \in [0,T]$. For $x_1, x_2 \in [-M, \infty)$, consider the coupling of $Y^{t,x_1}$ and $Y^{t,x_2}$ described by the following (time-inhomogeneous) generator: for $s \in [t,T]$ and bounded measurable $\phi: [-M, \infty)^2 \rightarrow \RR$,
\begin{align*}
\cll^{x_1,x_2} \phi(s, x,x') & = \EE_Z\left[\phi(x + Z, x' + Z) - \phi(x,x')\right]w(x-m(s)) \wedge w(x'-m(s))\\
& \quad + \EE_Z\left[\phi(x + Z, x') - \phi(x,x')\right](w(x-m(s)) - w(x'-m(s)))^+\\
&\quad + \EE_Z\left[\phi(x, x'+Z) - \phi(x,x')\right](w(x-m(s)) - w(x'-m(s)))^-.
\end{align*}
From the above generator, we have that for the coupled process $(Y^{t,x_1}(\cdot), Y^{t,x_2}(\cdot))$,
\begin{align*}
\EE |Y^{t,x_1}(T) - Y^{t,x_2}(T)| 
& \le |x_1 - x_2| + \mn\int_t^T\EE \left|w(Y^{t,x_1}(s) - m(s)) - w(Y^{t,x_2}(s)-m(s))\right|ds\\
& \le |x_1 - x_2| + L(-M-m(T))\mn\int_t^T\EE|Y^{t,x_1}(s) - Y^{t,x_2}(s)|ds
\end{align*}
where we have used Assumption \ref{ass.w} in the last step, namely, $w(\cdot)$ is  Lipschitz on $[-M-m(T), \infty)$ with Lipschitz constant $L(-M-m(T))$. Thus, by Gr\"onwall's lemma, denoting the Lipschitz constant of $h$ by $C_h$,
\begin{equation}\label{eq:516n}
|f^{T}(t,x_1) - f^{T}(t,x_2)| \le C_h\EE|Y^{t,x_1}(T) - Y^{t,x_2}(T)| \le C_he^{L(-M-m(T))\mn(T-t)}|x_1 - x_2|
\end{equation}
showing that property (i) of a function in $\clc_{\mu}$ holds for $f^T$.

It is straightforward to check using Assumption \ref{ass.w} and the continuity of $m(\cdot)$ that $\frac{\partial f^T(t,x)}{\partial t}$ exists for each $(t,x) \in [0,T) \times \RR$ and $f^T$ satisfies the Kolmogorov equation
\begin{equation}\label{parrep}
\begin{aligned}
\mathbb{E}_Z [f^T(t,x+Z) - f^T(t,x)] w(x-m(t)) + \frac{\partial f^T}{\partial t}(t,x) &= 0, \ (t,x) \in [0,T) \times \RR.\\
f^T(T,x) &= h(x), \, x \in \RR.
\end{aligned}
\end{equation}
{\di For completeness we give a proof in the Appendix (See Section \ref{sec:appa}).}

The Lipschitz property of $\frac{\partial f^T(t,\cdot)}{\partial t}$ on $[-M, \infty)$ for fixed $t \in [0,T), M>0$ follows from \eqref{parrep} and the fact that both $f^T(t, \cdot)$ and $w(\cdot - m(t))$ are bounded and Lipschitz on $[-M, \infty)$. This proves property (ii) of a function in $\clc_{\mu}$ holds for $f^T$.

Finally property (iii) in $\clc_{\mu}$ for $f^T$   follows from the boundedness of $h$, and the explicit representations of $f^T$ and $\frac{\partial f^T}{\partial t}$ respectively displayed in \eqref{funrep} and \eqref{parrep}. This completes the proof that $f^T \in \clc_{\mu}$.

Thus, plugging in $f^T$ for $f$ in \eqref{steq} and using \eqref{parrep}, we conclude that
$$
\langle h(\cdot), \upsilon(T)\rangle = \langle f^T(T,\cdot), \upsilon(T)\rangle = \langle f^T(0,\cdot), \upsilon(0)\rangle = \langle h(\cdot), \upsilon^Y(T)\rangle
$$
for any bounded Lipschitz $h$. As $T>0$ is arbitrary, this establishes the uniqueness of the solution to \eqref{un1} and \eqref{un2} and thereby completes the proof of the lemma.
\hfill \qed

\subsection{Uniqueness of McKean-Vlasov Equation Solutions} 
Theorem \ref{thm.ch}  in Section \ref{sec:char} established existence of solutions to the McKean-Vlasov equation \eqref{eq:mfeq1}, as any distributional limit point of $\{\mu_n\}$ (which always exists in view of this theorem) is a solution of \eqref{eq:mfeq1}. 
Note also that if Assumption \ref{ass.init} holds then this solution is in the class $\clm$
introduced in Section \ref{sec:modres}.
In the current section, using Theorem \ref{mv} (which was proved in Section \ref{sec:mv}) we show that, under conditions, for a suitable given initial condition, \eqref{eq:mfeq1} has a unique solution in  $\clm$. This, in particular, will show that the whole sequence $\{\mu_n\}_{n \in \NN}$ converges under the assumptions of Theorem \ref{thm.mainFL} (c) and thereby complete the proof of Theorem \ref{thm.mainFL}.
\begin{lemma}\label{lem.uniq}
Suppose Assumption \ref{ass.w} holds.
Let $\mu \in \clm$ and let 
     $\{Y(t), t \ge 0\}$ be the associated nonlinear Markov  process. Then, with $\beta$ as in \eqref{cM2}, for every $T \in (0,\infty)$,
    $$\sup_{t \in [0,T]} \mathbb{E}|Y(t)|^{1+\beta} < \infty.$$
\end{lemma}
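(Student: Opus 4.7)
I will run a Lyapunov drift argument with test function $V(y) = |y|^{1+\beta}$, exploiting the fact that both $Y(\cdot)$ and $m(\cdot)$ are non-decreasing. Monotonicity of $m$ follows by substituting $f = \text{Id} \in \text{Lip}_1$ in \eqref{eq:mfeq1}: since $g_f(x) = \mn$ is constant, one obtains $m(t) = m(0) + \mn \int_0^t \langle w(\cdot - m(s)), \mu(s)\rangle\, ds$, whose integrand is non-negative. Combined with $Y(s) \ge Y(0)$ (only positive jumps) and the monotonicity of $w$, this yields the key pathwise bound $w(Y(s) - m(s)) \le w(Y(0) - m(T))$ for every $s \in [0, T]$, which controls the jump intensity of $Y(\cdot)$ by a quantity depending only on the initial position and on $m$ restricted to $[0,T]$.

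A short case analysis on the signs of $y$ and $y+Z$, combined with $(a+b)^\beta \le a^\beta + b^\beta$ for $a, b \ge 0$, $\beta \in (0,1]$, gives $\mathbb{E}_Z[|y+Z|^{1+\beta} - |y|^{1+\beta}] \le C((y)^{+^\beta} + 1)$, with $C$ depending only on $\mn$ and $\mathbb{E}Z^{1+\beta} \le \mathbb{E}Z^2 + 1 < \infty$ (Assumption \ref{Z}). For $y \ge 0$, continuity of $m$ on $[0,T]$ yields $w(y - m(t)) \le C_w \doteq w(-\sup_{t \in [0,T]} |m(t)|) < \infty$, and $(y)^\beta \le 1 + V(y)$, whence $\mathcal{A}V(y) \le CC_w(2 + V(y))$. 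For $y < 0$, $(y)^{+^\beta} = 0$ and one simply keeps $\mathcal{A}V(y) \le C\, w(y - m(t))$. In either case,
\[
\mathcal{A}V(y) \le CC_w(2 + V(y)) + C\, w(y - m(t)).
\]
Taking expectations and invoking the pathwise bound above, $\mathbb{E}[\mathcal{A}V(Y(s))] \le CC_w(2 + \mathbb{E}V(Y(s))) + C c_0$, where $c_0 \doteq \int w(y - m(T))\, \mu(0, dy)$.

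The main obstacle is showing $c_0 < \infty$, and this is precisely where both conditions defining $\mathcal{M}$ are needed. For $y \le m(T)$, Assumption \ref{ass.w} applied to the pair $y - m(T), 0 \in [y - m(T), \infty)$ gives $w(y - m(T)) \le w(0) + L(y - m(T))(m(T) - y)$; for $y > m(T)$, $w(y - m(T)) \le w(0)$. I will then apply Young's inequality to the product $L(y - m(T)) \cdot (m(T) - y)_+$ with conjugate exponents $p = (1+\beta)/\beta$ and $q = 1+\beta$; combined with the elementary bound $L^k \le C_{k,\delta}\, e^{\delta L}$ valid for any $k \ge 0$, $\delta > 0$, this reduces $c_0$ to a sum of $\int e^{\delta L(y - m(T))}\, \mu(0, dy)$ (finite by \eqref{cM1} with $a = m(T)$) and $\int (m(T) - y)_+^{1+\beta}\, \mu(0, dy)$ (finite by \eqref{cM2}). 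To close the argument, introduce $\tau_K \doteq \inf\{s \ge 0 : Y(s) \ge K\}$ and the stopped process $Y^K(t) \doteq Y(t \wedge \tau_K)$. On $\{s < \tau_K\}$, $Y(s)$ is bounded, so $\mathcal{A}V(Y(s))$ is bounded and Dynkin's formula for $Y^K$ gives
\[
h_K(t) \doteq \mathbb{E}V(Y^K(t)) \le h_K(0) + \int_0^t \bigl[CC_w(2 + h_K(s)) + C c_0\bigr]\, ds,
\]
with $h_K(0) = \mathbb{E}|Y(0)|^{1+\beta} < \infty$ by \eqref{cM2} and all constants independent of $K$. Gr\"onwall's inequality then yields a uniform-in-$K$ bound on $[0,T]$; since $Y(t)$ has a.s. finitely many jumps in $[0,T]$, $Y(t) < \infty$ a.s., so $Y^K(t) \to Y(t)$ a.s. as $K \to \infty$, and Fatou's lemma delivers the claim.
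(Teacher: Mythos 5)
Your proof is correct, but it takes a genuinely different route from the paper's. The paper's argument is a direct stochastic domination: writing $\tau = \inf\{t : Y(t) \ge 0\}$, it bounds $|Y(t)| \le_d |Y(0)| + Y(\tau) + V(t\,w(-m(T)))$ for $t \le T$, where $V$ is a $\mathcal{J}(\theta)$-jump process run at the \emph{constant} rate $w(-m(T))$ — this is the same monotonicity observation you use, applied pathwise rather than through the generator. The key term $Y(\tau)$ is the overshoot of the renewal process past level $|Y(0)|$, whose $(1+\beta)$-th moment is controlled uniformly in $|Y(0)|$ by Lemma \ref{R}; the remaining two terms are handled by \eqref{cM2} and the second moment of $V$. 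That proof is four lines, never invokes the generator or a Gr\"onwall loop, and — notably — does not need \eqref{cM1} at all. Your Lyapunov argument is more systematic and self-contained (no appeal to the overshoot lemma; only $\mathbb{E}Z^{1+\beta} \le \mathbb{E}Z^2 + 1$ is used rather than the third moment), but it pays for this by requiring $c_0 = \int w(y-m(T))\,\mu(0,dy) < \infty$, which in turn requires \eqref{cM1} through the Young/exponential-bound argument you sketch. Both routes work under the lemma's hypotheses since $\mu \in \mathcal{M}$ provides \eqref{cM1}; your proof is simply more dependent on it.

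One phrase in the localization step is imprecise and should be repaired for rigor: ``On $\{s < \tau_K\}$, $Y(s)$ is bounded, so $\mathcal{A}V(Y(s))$ is bounded.'' In fact $Y(s) \in [Y(0), K]$ on that event, so the process is a.s.\ bounded but not uniformly bounded (the lower endpoint $Y(0)$ is random with possibly unbounded support), and consequently neither $V(Y^K(s))$ nor the generator term $w(Y(s)-m(s)) \le w(Y(0)-m(T))$ is uniformly bounded. The Dynkin identity for the unbounded function $V$ therefore requires an extra layer of justification: either condition first on $\mathcal{F}_0 = \sigma(Y(0))$, for which the stopped process lives in a compact interval and the formula is immediate, then integrate against $\mu(0)$ using $\mathbb{E}|Y(0)|^{1+\beta}<\infty$ and $c_0 < \infty$; or truncate $V$ by $V \wedge N$ and pass $N \to \infty$ by dominated convergence, with dominating functions $|Y(0)|^{1+\beta} + K^{1+\beta}$ and $T\bigl(CC_w(2 + V(Y^K)) + Cw(Y(0)-m(T))\bigr)$, both integrable. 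This is a routine fix, not a conceptual gap, but as written the claim does not quite justify the inequality for $h_K$.
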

\begin{proof}
    Define $\tau \doteq \inf \{ t\geq 0: Y(t)\geq 0\}.$ Fix $T \in (0,\infty)$ and
     let $V$ be a $\mathcal{J}(\theta)$-jump process which is independent of the particle system.
     
    
    Using the monotonicity of $w$, we see that, for every $t\in [0,T]$, $$Y(t)\leq_d |Y(0)| + Y(\tau) + V(t w(-m(T))).$$ 
    Hence, using Lemma \ref{R} and the fact that since $\mu \in \clm$,  $\mu(0)$ satisfies conditions \eqref{cM1} and \eqref{cM2}, for some $C_{\beta} \in (0,\infty)$,
    \begin{align*}
    \sup_{t\in [0,T]}\mathbb{E} |Y(t)|^{1+\beta}
    \leq C_\beta \left[ \mathbb{E}|Y(0)|^{1+\beta} + \sup_{l\geq 0} ( \mathbb{E}\cO_l^2 )^\frac{1+\beta}{2} +  \left(\vt (w(-m(T))^2 T^2 +w(-m(T))T) \right)^{\frac{1+\beta}{2}}\right] < \infty.
    \end{align*}
The result follows.
\end{proof}
The following is the key uniqueness result for solutions of the McKean-Vlasov equation \eqref{eq:mfeq1} which gives part (c) of Theorem \ref{thm.mainFL}.
\begin{theorem}\label{thm.un}
    Suppose  Assumptions \ref{Z}, \ref{ass.w}, \ref{ass.t1},  \ref{ass.char} hold. 
    Suppose $\mu_1, \mu_2 \in \clm$ solve the McKean-Vlasov equation \eqref{eq:mfeq1} and satisfy $\mu_1(0) = \mu_2(0) = \gamma$. Then $\mu_1(t) = \mu_2(t)$ for all $t\ge 0$.
\end{theorem}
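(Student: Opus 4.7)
The plan is to use Theorem \ref{mv} to recast uniqueness of the McKean--Vlasov equation as uniqueness of the coupled nonlinear Markov processes $Y_1, Y_2$ associated with $\mu_1, \mu_2$, and then to close a Gr\"onwall argument on $\EE|Y_1(t)-Y_2(t)|$ under a carefully chosen coupling started from the common law $\gamma$. The hard part will be that $w$ is not globally Lipschitz: the natural Gr\"onwall coefficient will come out as an expectation against $\gamma$ of a double exponential in $L(\cdot - M)$, and condition \eqref{cM1} is exactly what lets us absorb this cost.

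First, I would invoke Theorem \ref{mv}. Since $\gamma$ satisfies \eqref{cM1}, in particular $\int L(x-a)\gamma(dx) < \infty$ for every $a$, so each $\mu_i$ is the one-dimensional marginal law of a nonlinear Markov process $Y_i$ with time-dependent jump rate $w(\cdot - m_i(t))$ and jump distribution $\theta$, where $m_i(t) = \langle x,\mu_i(t)\rangle$ is deterministic. I would then couple $(Y_1,Y_2)$ by the synchronous-jump construction used inside the proof of Theorem \ref{mv}: common jumps occur at rate $w(Y_1-m_1) \wedge w(Y_2-m_2)$ with a shared $Z$, together with two asymmetric single-jump components of rates $(w(Y_1-m_1)-w(Y_2-m_2))^+$ and $(w(Y_2-m_2)-w(Y_1-m_1))^+$. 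Starting both processes at a common $Y_1(0) = Y_2(0) \sim \gamma$, Dynkin's formula applied to $\phi(x,y) = |x-y|$ makes the common jumps cancel and bounds each asymmetric contribution by $\mn = \EE Z$, yielding
\begin{equation}
  \EE|Y_1(t)-Y_2(t)| \le \mn\int_0^t \EE\bigl|w(Y_1(s)-m_1(s)) - w(Y_2(s)-m_2(s))\bigr|\,ds.
\end{equation}

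The crucial step replaces the running Lipschitz constant of $w$ by one that depends only on the initial value of $Y$. Fix $T^* > 0$ and set $M \doteq m_1(T^*) \vee m_2(T^*)$; this is finite because each $\mu_i \in \clm$ is continuous into $\clp_1(\RR)$. By pathwise monotonicity of $Y_i$, $Y_i(s)-m_i(s) \ge Y_i(0) - M$ for every $s \le T^*$, so Assumption \ref{ass.w} gives
\[
 \bigl|w(Y_1(s)-m_1(s)) - w(Y_2(s)-m_2(s))\bigr| \le L(Y_1(0)-M)\bigl[|Y_1(s)-Y_2(s)| + |m_1(s)-m_2(s)|\bigr].
\]
I would then freeze $Y_1(0)=y_0$ and apply Gr\"onwall to the conditional expectation $h_{y_0}(t) \doteq \EE[|Y_1(t)-Y_2(t)| \mid Y_1(0)=y_0]$, obtaining, for $t \le T \le T^*$,
\[
 h_{y_0}(t) \le \mn L(y_0-M)\,e^{\mn T L(y_0-M)}\int_0^t |m_1(s)-m_2(s)|\,ds.
\]
Integrating against $\gamma$, using $|m_1(s)-m_2(s)| \le \EE|Y_1(s)-Y_2(s)|$, and choosing $T$ so small that $\mn T < \delta$ for the $\delta = \delta(M)$ furnished by \eqref{cM1}, the prefactor $K_T \doteq \mn\int L(y_0-M)\,e^{\mn T L(y_0-M)}\gamma(dy_0)$ is finite. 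A second Gr\"onwall then forces $\EE|Y_1(t)-Y_2(t)| \equiv 0$, hence $\mu_1 \equiv \mu_2$, on $[0,T]$.

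To pass from $[0,T]$ to $[0,T^*]$ I would iterate. At each restart time $kT$, $\mu_1(kT) = \mu_2(kT)$ by what has just been established, and the restarted common initial distribution still satisfies \eqref{cM1} with the same constants: monotonicity of $Y_i$ yields $\mu_i(0) \le_d \mu_i(kT)$, and since $L(\cdot)$ is non-increasing the map $x \mapsto e^{\delta L(x-a)}$ is non-increasing, so the required integrability is preserved under stochastic domination; Lemma \ref{lem.uniq} takes care of \eqref{cM2}. The argument therefore applies verbatim on each $[kT,(k+1)T]$, and finitely many steps cover $[0,T^*]$. Since $T^* > 0$ is arbitrary, $\mu_1 \equiv \mu_2$ on $[0,\infty)$. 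The only real obstacle is the appearance of $L(y_0-M)$ inside a double exponential in the Gr\"onwall constant, and condition \eqref{cM1} is precisely tailored to tame this, which is also why only one-sided Lipschitzness of $w$ on half-lines (Assumption \ref{ass.w}) is required.
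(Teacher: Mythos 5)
Your argument is correct, and while it follows the paper's overall blueprint (construct the coupled nonlinear Markov processes $Y_1, Y_2$ via Theorem~\ref{mv}, use the synchronous-jump coupling to write $\EE_0|Y_1(t)-Y_2(t)|$ as $\mn\EE_0\int_0^t |w(Y_1(s)-m_1(s))-w(Y_2(s)-m_2(s))|\,ds$, bound by $L(\xi - m_T)$, and close with Gr\"onwall plus an iteration step), the execution of the Gr\"onwall absorption is genuinely different from what the paper does. The paper takes full expectations over $\xi$ \emph{first}, then splits $1 = \one\{\xi > -K\} + \one\{\xi \le -K\}$, uses H\"older's inequality against the $L^{1+\beta}$ moment furnished by Lemma~\ref{lem.uniq} (hence relying on~\eqref{cM2}) to control the bad tail, runs Gr\"onwall with the $K$-dependent constant $C_K = 2\mn(C_T + L(-K-m_T))$, and finally lets $K \to \infty$ after absorbing $e^{C_K t}$ into the tail expectation for $t$ small. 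You instead run Gr\"onwall \emph{conditionally} on $\xi = y_0$, producing a bound $h_{y_0}(t) \le \mn L(y_0-M)e^{\mn T L(y_0-M)}\int_0^t |m_1(s)-m_2(s)|\,ds$, then integrate against $\gamma$ and absorb the double-exponential factor directly via~\eqref{cM1}. This removes the need for the truncation level $K$, the H\"older split, and condition~\eqref{cM2} in this step entirely (you only need $\mn T < \delta(M)$, plus the elementary bound $L e^{\mn T L} \le C_\varepsilon e^{(\mn T + \varepsilon)L}$). What the paper's route buys is robustness if one does not want to condition on $\clf_0$ (e.g.\ if the initial condition entered the dynamics nontrivially), but in this setting conditioning is clean because $m_1, m_2$ are deterministic and the jump rate after conditioning is bounded by $w(y_0-M)$. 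Your iteration step is also correct: $\gamma \le_d \mu_i(kT)$ by monotonicity and $y_0 \mapsto L(y_0-M)e^{\mn T L(y_0-M)}$ is non-increasing, so the restart constant does not grow; and~\eqref{cM2} for the restarted law, though not needed for your Gr\"onwall, is readily verified via Lemma~\ref{lem.uniq} to keep $\mu_i(kT+\cdot) \in \clm$ for the reapplication of Theorem~\ref{mv}. The one thing worth making explicit is the a priori finiteness used in the conditional Gr\"onwall, namely that $h_{y_0}(\cdot)$ is bounded on $[0,T]$ for $\gamma$-a.e.\ $y_0$, which follows because, given $\xi = y_0$, each $Y_i(\cdot) - y_0$ is stochastically dominated by a compound Poisson process with rate $w(y_0 - M)$.
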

\begin{proof}
 Consider nonlinear Markov processes $Y_1(\cdot)$ and $Y_2(\cdot)$, associated with $\mu_1$ and $\mu_2$ respectively (existence of which is guaranteed by Theorem \ref{mv}). In particular, $\mathcal{L}(Y_i(t))=\mu_i(t)$, for $i=1,2$ and $t\ge 0$, and the jump rate of $Y_i(\cdot)$ at time $t$ is given by $w(Y_i(t)-m_i(t))$, where $m_i(t)=\mathbb{E}Y_i(t)$ and the jump lengths are distributed as $\theta$. Note that, the law of the initial conditions are given by $\mathcal{L}(Y_1(0))=\mathcal{L}(Y_2(0))=\gamma$.

Thus, to prove the result, it is enough to construct processes $\tilde Y_1$ and $\tilde Y_2$ on a common probability space, such that, $\cll(\tilde Y_i(t)) = \cll(Y_i(t))$,  for $i=1,2$ and $t\ge 0$, and $$\mathbb{P}[\tilde Y_1(t)= \tilde Y_2(t)]=1.$$

For this we proceed as follows. Let 
$(\tilde Y_1(t), \tilde Y_2(t))_{t \ge 0}$ be given on a common probability space with 
$\tilde Y_1(0) = \tilde Y_2(0) = \xi$, where $\xi$ is distributed as $\gamma$, and (time inhomogeneous) generator defined as
\begin{align*}
\tilde \cll \phi(s, x,x') & = \EE_Z\left[\phi(x + Z, x' + Z) - \phi(x,x')\right]w(x-m_1(s)) \wedge w(x'-m_2(s))\\
& \quad + \EE_Z\left[\phi(x + Z, x') - \phi(x,x')\right](w(x-m_1(s)) - w(x'-m_2(s)))^+\\
&\quad + \EE_Z\left[\phi(x, x'+Z) - \phi(x,x')\right](w(x-m_1(s)) - w(x'-m_2(s)))^-
\end{align*}
for $\phi \in \MM_b(\RR^2)$ and $(x,x') \in \RR^2$.

It is easy to verify that, as probability measures on $\cld([0,\infty): \RR)$, $\cll(\tilde Y_i) = \cll(Y_i)$ for $i=1,2$. Henceforward, for notational simplicity, we write $\tilde Y_i$ as $Y_i$.
Define the $\sigma$-field, $\mathcal{F}_t\doteq \sigma\{Y_1(s),Y_2(s):\; s\leq t\}$.
From the above construction it follows that, the rate of jump of the process, $| Y_1(\cdot)-Y_2(\cdot)|$  at time $t$ is given by $$|w( Y_1(t-)-m_1(t))-w(Y_2(t-)-m_2(t))|.$$ 
Also, for fixed $T \in (0,\infty)$, letting $m_T\doteq m_1(T)\vee m_2(T)$,
we have that $Y_i(s)- m_i(s) \in (\xi - m_T, \infty)$ for $s \in [0,T]$.
{\di Since $w$ is bounded on this interval, we see, by an application of Dynkin's formula,
\begin{equation}\label{eq:1039n}
\begin{aligned}
    \mathbb{E}_0|Y_1(t)-Y_2(t)| &\le \mn\mathbb{E}_0\left[ \int_0^t |w(Y_1(s)-m_1(s))-w(Y_2(s)-m_2(s))|ds\right]\\
        &\leq 2\mn\int_0^t\mathbb{E}_0(|Y_1(s)-Y_2(s)|) L(\xi-m_T))ds + 2\mn\int_0^t|m_1(s)-m_2(s)| L(\xi-m_T)ds,
\end{aligned}
\end{equation}
where $\EE_0$ denotes the expectation conditioned on $\clf_0$ and
$L(\cdot)$ is as introduced in Assumption \ref{ass.w}{\di .}
The proof of \eqref{eq:1039n} is given in the Appendix (Section \ref{sec:appb}).}
Taking expectations, and writing, for $K\ge 0$, $1 = \one\{\xi > -K\} + \one\{\xi \le -K\}$, we have
\begin{align*}
\mathbb{E}|Y_1(t)-Y_2(t)|
        &\leq 2\mn\int_0^t L(-K-m_T)\mathbb{E}|Y_1(s)-Y_2(s)|ds\\
        &\quad + 2\mn\int_0^t \mathbb{E}[(|Y_1(s)|+|Y_2(s)|)L(\xi-m_T)\mathbf{1}[\xi\leq -K]]ds \\ 
        &\quad + 2\mn C_T\int_0^t |m_1(s)-m_2( s)|ds, 
\end{align*}
where $C_T\doteq \mathbb{E}[L(\xi-m_T)]< \infty$, since $\gamma$ satisfies \eqref{cM1} (with $\nu$ replaced by $\gamma$). Hence, using H\"older's inequality, we obtain, with $C_K \doteq 2\mn (C_T+L(-K-m_T))$, and $\beta$ as in \eqref{cM2},
\begin{multline*}
        \mathbb{E}|Y_1(t)-Y_2(t)| \leq C_K\int_0^t\mathbb{E}|Y_1(s)-Y_2(s)|ds \\ 
        + 2\mn \int_0^t \left[\left(\mathbb{E}|Y_1(s)|^{1+\beta}\right)^\frac{1}{1+\beta}+\left(\mathbb{E}|Y_2(s)|^{1+\beta}\right)^\frac{1}{1+\beta} \right] \left[\mathbb{E}\left(L^{1+\frac{1}{\beta}}(\xi-m_T)\mathbf{1}\{\xi\leq -K\} \right) \right]^\frac{\beta}{1+\beta}\; ds.
\end{multline*}
Next, using Lemma \ref{lem.uniq}, we have that  
$$\sup_{t \in [0,T]} \left[\left(\mathbb{E}|Y_1(s)|^{1+\beta}\right)^\frac{1}{1+\beta}+\left(\mathbb{E}|Y_2(s)|^{1+\beta}\right)^\frac{1}{1+\beta} \right] \doteq C_T'< \infty.$$ 
Hence, it follows that, with $\delta$ as in \eqref{cM1}, with 
$(\nu, a)$ there replaced by $(\gamma, m_T)$, and  $c_* \doteq 2\mn\sup_{\kappa \in \RR_+} \kappa e^{-\delta \frac{\kappa \beta}{2(1+\beta)}}$,
\begin{equation*}
    \mathbb{E}|Y_1(t)-Y_2(t)|\leq C_K\int_0^t \mathbb{E}|Y_1(s)-Y_2(s)|ds 
    + \; C_T' c_*\left[\mathbb{E}(e^{\delta L(\xi-m_T)/2}
    \mathbf{1}[\xi\leq -K])\right]^{\frac{\beta}{1+\beta}}.
\end{equation*}
Using Gr\"onwall's lemma, for any $t \in [0,T]$,
\begin{align*}
\mathbb{E}|Y_1(t)-Y_2(t)| &\leq
c_*C_T' \left[\mathbb{E}(e^{\delta L(\xi-m_T)/2}
    \mathbf{1}[\xi\leq -K])\right]^{\frac{\beta}{1+\beta}} e^{C_K t}.
\end{align*}
Thus, recalling that $C_K \doteq 2\mn (C_T+L(-K-m_T))$ and using the monotonicity property of $L$, for any $t \in (0, \frac{\delta \beta}{4\mn (1+\beta)} \wedge T]$,
\begin{align*}
\mathbb{E}|Y_1(t)-Y_2(t)| &\leq
c_*C_T' e^{2\mn C_T} \left[\mathbb{E}(e^{\delta L(\xi-m_T)}
    \mathbf{1}[\xi\leq -K])\right]^{\frac{\beta}{1+\beta}}.
\end{align*}
By our choice of $\delta$, $\mathbb{E}(e^{\delta L(\xi-m_T)})<\infty$.
Hence, taking the limit as $K\to \infty$ in the above display, we have that, with 
$t_0 = \frac{\delta \beta}{4\mn (1+\beta)} \wedge T$, for every 
$t \in [0, t_0]$,
$\mathbb{E}|Y_1(t)-Y_2(t)|=0$.

Finally,  from the monotonicity properties of $L(\cdot)$ and  $\mu_i(\cdot)$,  $i=1,\,2$, for every $ a \in \mathbb{R}$ and  $\delta>0$ 
$$\int e^{\delta L(x-a)}\mu_i(t,dx)\leq \int e^{\delta L(x-a)}\mu_i(0,dx),\;
\mbox{for every  } t\geq 0.$$ 
Hence, we can bootstrap the above argument to conclude that for every $t\in [0,T]$, $\mathbb{E}|Y_1(t)-Y_2(t)|=0$. Since $T>0$ is arbitrary, the result follows.
\end{proof}
%
%
\subsection{\textbf{Proof of Theorem \ref{thm.mainFL}}}
\label{sec:pf2.8}
Part (a) of Theorem \ref{thm.mainFL} follows from Theorem \ref{thm.tght} on recalling that Assumption \ref{ass.t2} is implied by Assumption \ref{ass.char}. Theorem \ref{thm.ch} proves part (b) of the theorem.

For part (c), due to Assumption \ref{ass.init}, any such weak limit point $\mu$
satisfies $\mu(0)=\gamma$, where $\gamma$ is the non-random probability measure introduced in Assumption \ref{ass.init}. It then follows that, a.s. $\mu$ takes values in $\clm$, 
it solves the McKean-Vlasov equation, and $\mu(0)=\gamma$. Finally, the uniqueness result shown in Theorem \ref{thm.un} now completes the proof of part (c) of the theorem, namely  that there is exactly one element  $\mu^*$ of $\clm$ that solves the McKean-Vlasov equation and satisfies $\mu^*(0) = \gamma$, and furthermore, since all weak limit points are the same as $\mu^*$,  $\mu_n \to \mu^*$ in probability, in $\mathcal{D}([0,\infty):\mathcal{P}_1(\mathbb{R}))$. 

For part (d), observe that if $(X^n_1(0), \dots, X^n_n(0))$ are iid with distribution $\gamma$ satisfying \eqref{cM1} and \eqref{cM2}, then $\mu_n(0) \to \gamma$ in probability which says that Assumption \ref{ass.init} holds. Furthermore Assumptions \ref{ass.t1} and \ref{ass.char} clearly hold in this case. The first assertion in (d) now follows from part (c). The second assertion in part (d) follows from the first by a standard argument, cf. \cite[Lemma 3.19]{chaintron2022propagation}.
\hfill \qed


\section{Stationary Distributions of the Particle System}\label{s.LT}
In this section we prove Theorems \ref{stat.ex} and \ref{stat.pr}. The first theorem gives, under conditions, the existence and uniqueness of the stationary distributions of the  centered $n$-particle system described by the process $\bfY^n$ introduced in Section \ref{sec:modres}. It also gives  uniform in $n$  bounds on certain moments computed under the stationary distributions.  The second theorem gives  a key uniform integrability estimate, under the stationary distributions. In addition to giving information about the stationary distribution (for which, typically, there is no closed form expression), the estimates from the two theorems  are crucial for studying convergence properties of these stationary distributions as $n \rightarrow \infty$.

We will need the following observations about the particle system in the proof of Theorem \ref{stat.ex}. Recall that a Markov process $\{\bfZ(t): t \ge 0\}$ on a Polish space $S$ is said to satisfy the weak Feller property if, for any $f \in C_b(S)$ and any $t \ge 0$, the function $\bfz \mapsto \EE_{\bfz}\left(f(\bfZ(t))\right)$ (with $\EE_{\bfz}$ denoting the expectation under which  $\bfZ(0) = \bfz$ a.s.) is in $C_b(S)$. Recall that by the centered $n$-particle system we mean the Markov process with state space $\cls_0$ and with generator given by \eqref{eq:geny}.

\begin{lemma}\label{lem:wF}
Suppose that Assumptions \ref{Z} and \ref{ass.w} hold. Fix $n \in \NN$. For any $\bfy, \tilde \bfy \in \cls_0$, there exists a coupling $(\bfY^n, \tilde \bfY^n)$ of the centered $n$-particle systems with $\bfY^n(0) = \bfy$ and $\tilde \bfY^n(0) = \tilde \bfy$, with joint law denoted by $\PP_{\bfy, \tilde \bfy}$, such that for any $t \ge 0$, $\delta>0$ and $\bfy \in \cls_0$,
$$
\lim_{\tilde \bfy \rightarrow \bfy}\PP_{\bfy,\tilde \bfy}\left(\sum_{i=1}^n |Y^n_i(t) - \tilde Y^n_i(t)| \ge \delta\right)=0.
$$
In particular, the centered $n$-particle system has the weak Feller property.
\end{lemma}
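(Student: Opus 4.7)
The plan is to build a synchronous coupling via Poisson random measures, bound the $\ell^1$ distance $D(t) \doteq \sum_{i=1}^n |Y_i^n(t) - \tilde Y_i^n(t)|$ on a localized interval via a Dynkin--Gr\"onwall argument, and use Lemma \ref{1} to argue that the localization succeeds with high probability. For the coupling: let $\{N_j\}_{j=1}^n$ be independent Poisson random measures on $[0,\infty)\times\RR_+\times\RR_+$ with intensity $ds\,du\,\theta(dz)$, and driven by these, let particle $j$ of $\bfY^n$ perform the jump $\bfY^n \mapsto \bfY^n + ze_j - n^{-1}z\one$ from \eqref{eq:geny} at each atom $(s,u,z)$ of $N_j$ with $u \le w(Y_j^n(s-))$, and analogously for $\tilde\bfY^n$ with threshold $w(\tilde Y_j^n(s-))$. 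By Proposition \ref{prop:exis}, both marginals have the correct law. Atoms with $u \le w(Y_j^n(s-)) \wedge w(\tilde Y_j^n(s-))$ produce a ``synchronous'' jump that leaves every coordinate of $\bfY^n - \tilde\bfY^n$ unchanged; atoms in the remaining window $(w\wedge\tilde w,\; w\vee\tilde w]$ produce an ``asynchronous'' jump, which shifts the $j$-th coordinate of the difference by $\pm(n-1)z/n$ and each of the remaining $n-1$ coordinates by $\mp z/n$, hence changes $D$ by at most $2(n-1)z/n \le 2z$ in absolute value.

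Writing Dynkin's formula for $D$ and taking expectations, for any $\clf_t$-stopping time $\tau$,
$$\EE D(t\wedge\tau) \le D(0) + \frac{2(n-1)\mn}{n}\,\EE\!\int_0^{t\wedge\tau} \sum_{j=1}^n \bigl|w(Y_j^n(s)) - w(\tilde Y_j^n(s))\bigr|\,ds.$$
To use this I would localize where Assumption \ref{ass.w} furnishes a Lipschitz constant for $w$: for $K>0$, set $\tau_K \doteq \inf\{s\ge 0 : \min_j Y_j^n(s) \wedge \min_j \tilde Y_j^n(s) \le -K\}$. On $\{s<\tau_K\}$, Assumption \ref{ass.w} gives $|w(Y_j^n(s))-w(\tilde Y_j^n(s))| \le L(-K)|Y_j^n(s)-\tilde Y_j^n(s)|$, so the display above with $\tau=\tau_K$ together with Gr\"onwall yields
$$\EE D(t\wedge\tau_K) \le D(0)\, e^{C_n(K)\,t}, \qquad C_n(K) \doteq 2(n-1)\mn L(-K)/n.$$

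Control of $\PP_{\bfy,\tilde\bfy}(\tau_K \le t)$ comes from Lemma \ref{1}. Since $\sum_j Y_j^n(s) \equiv 0$, one has $-\min_j Y_j^n(s) \le (n-1)\max_j Y_j^n(s) \le n(n-1)\hat m_n(s)$ with $\hat m_n(s) = n^{-1}\sum_j (Y_j^n(s))^+$, and similarly for $\tilde\bfY^n$; the probability estimate in Lemma \ref{1} depends on the initial state only through $\hat m_n(0)$, which is continuous in $\bfy$, so $\PP_{\bfy,\tilde\bfy}(\tau_K\le t)$ is bounded uniformly for $\tilde\bfy$ in any bounded neighborhood of $\bfy$ and tends to $0$ as $K\to\infty$. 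Combining the two bounds: given $\varepsilon,\delta>0$, first fix $K$ so large that $\PP_{\bfy,\tilde\bfy}(\tau_K\le t) < \varepsilon/2$ on a bounded neighborhood of $\bfy$, then shrink the neighborhood further so that $D(0)\, e^{C_n(K)t}/\delta < \varepsilon/2$ (possible because $D(0) = \sum_i |y_i - \tilde y_i| \to 0$). Markov's inequality then yields $\PP_{\bfy,\tilde\bfy}(D(t)\ge\delta) < \varepsilon$, which is the stated in-probability convergence. The weak Feller assertion follows immediately: for $f \in C_b(\cls_0)$, continuity and boundedness of $f$ convert the in-probability convergence of $\tilde\bfY^n(t)$ to $\bfY^n(t)$ into convergence of $\EE f(\tilde\bfY^n(t))$ to $\EE f(\bfY^n(t))$ by bounded convergence.

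The principal obstacle is the unboundedness of $w$, which rules out a direct global Gr\"onwall argument on $D$; the localization via $\tau_K$ is indispensable, and it must be driven by a position bound that is continuous in the initial data. Lemma \ref{1} supplies precisely this, once the centering identity $\sum_j Y_j^n = 0$ is used to translate its bound on $\hat m_n$ into the bound on $\min_j Y_j^n$ that the Lipschitz estimate for $w$ actually needs.
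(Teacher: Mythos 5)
Your proof is correct, and the overall architecture matches the paper's: construct a synchronous (rate-difference) coupling under which the $\ell^1$ distance $D(t)$ is unchanged by common jumps and moves by at most $2z(n-1)/n$ on an asynchronous jump, localize so that Assumption~\ref{ass.w} furnishes a finite Lipschitz constant for $w$, run Gr\"onwall on the localized process, and close with Markov's inequality and a bound on the probability that the localization fails. Your Poisson-random-measure realization of the coupling is equivalent to the paper's coupled generator $\cll_n^{(\bfX,\tilde\bfX)}$, and the constant $2(n-1)\mn/n$ in your Dynkin bound agrees with the paper's.

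The genuine point of departure is the localization step. You stop when $\min_j Y^n_j(s)\wedge\min_j \tilde Y^n_j(s)$ dips below $-K$ and control this via the centering identity $-\min_j Y^n_j(s) \le n(n-1)\,\hat m_n(s)$ together with the maximal estimate on $\hat m_n$ from Lemma~\ref{1}; the continuity of the bound in the initial datum comes through $\hat m_n(0)$. The paper instead stops when the \emph{uncentered} maxima $X^n_{\max}(s)\vee\tilde X^n_{\max}(s)$ exceed $R$, bounds $\PP(\tau_R \le t)$ by the elementary martingale estimate \eqref{eq:maxbd} already obtained inside the proof of Proposition~\ref{prop:exis}, and then uses monotonicity of the uncentered particle paths to get $Y^n_i(s) = X^n_i(s)-m_n(s) \ge y_i - R$ on $[0,\tau_R)$, which yields a lower cutoff for free. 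Both routes are valid; the paper's is lighter, since it avoids invoking the considerably heavier Lemma~\ref{1} (and its constants $c_1,c_2$), relying instead on a one-line moment bound for the running maximum. Your version buys nothing extra here, but it is self-consistent and the appeal to continuity of $\hat m_n(0)$ in $\bfy$ does close the argument.
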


\begin{proof}
Consider the continuous time Markov process $(\bfX^n, \tilde \bfX^n)$, with $(\bfX^n(0), \tilde \bfX^n(0)) = (\bf y, \tilde \bfy)$, described by the following generator: for bounded measurable $f: \RR^{2n} \rightarrow \RR$,
\begin{align*}
\cll_n^{(\bfX,\tilde \bfX)}f(\bfx,\tilde \bfx) &= \sum_{i=1}^n\EE_Z\left[f(\bfx + Ze_i, \tilde \bfx + Ze_i) - f(\bfx, \tilde \bfx)\right]w(x_i - \bar \bfx) \wedge w(\tilde x_i - \bar {\tilde \bfx})\\
&\quad + \sum_{i=1}^n\EE_Z\left[f(\bfx + Ze_i , \tilde \bfx) - f(\bfx, \tilde \bfx)\right] (w(x_i - \bar \bfx) - w(\tilde x_i - \bar {\tilde \bfx}))^+\\
&\quad + \sum_{i=1}^n\EE_Z\left[f(\bfx, \tilde \bfx + Ze_i) - f(\bfx, \tilde \bfx)\right] (w(x_i - \bar \bfx) - w(\tilde x_i - \bar {\tilde \bfx}))^-, \ (\bfx, \tilde \bfx) \in \RR^{2n},
\end{align*}
where $\bar \bfx \doteq n^{-1} \sum_{i=1}^n x_i$ and $\bar{\tilde \bfx} \doteq n^{-1} \sum_{i=1}^n \tilde x_i$ and $Z \sim \theta$. Observe that $(\bfX^n, \tilde \bfX^n)$ gives a coupling of the (uncentered) $n$-particle systems with generator given by \eqref{eq:gene} started from $(\bfy, \tilde \bfy)$. In particular, writing $m_n(t) \doteq n^{-1}\sum_{j=1}^n X^n_j(t)$ and $\tilde m_n(t) \doteq n^{-1}\sum_{j=1}^n \tilde X^n_j(t)$, it follows that the $\cls_0^2$ valued Markov process $(\bfY^n, \tilde \bfY^n)$ given by $Y^n_i(t) = X^n_i(t) - m_n(t),\, \tilde Y^n_i(t) = \tilde X^n_i(t) - \tilde m_n(t)$, $t \ge 0$, $1 \le i \le n$, gives a coupling of the centered $n$-particle systems started from $(\bfy, \tilde \bfy)$. Denote the probability measure on the space these processes are defined by  $\PP_{\bfy,\tilde \bfy}$ and corresponding expectation by $\EE_{\bfy,\tilde \bfy}$.

Define $X^n_{\max}(t) \doteq \max\{X^n_i(t) : 1 \le i \le n\}$ and similarly define $\tilde X^n_{\max}(t)$. For $R>0$, define the stopping time $\tau_R \doteq \inf\{t \ge 0: X^n_{\max}(t) \vee \tilde X^n_{\max}(t) \ge R\}$. Then, using \eqref{eq:maxbd} in the proof of Proposition \ref{prop:exis}, we obtain a constant $C>0$, depending on $n,t, \bfy, \tilde \bfy$ such that $\EE_{\bfy,\tilde \bfy}\left(X^n_{\max}(t \wedge \tau_R) \vee \tilde X^n_{\max}(t \wedge \tau_R)\right) \le C$ for all $R>0$. Moreover, using the generator of the coupled Markov process, we conclude that
\begin{align*}
&\EE_{\bfy,\tilde \bfy}\left(\sum_{i=1}^n |Y^n_i(t \wedge \tau_R) - \tilde Y^n_i(t\wedge \tau_R)|\right) \\
&\le 
\sum_{i=1}^n |y_i - \tilde y_i|+
\frac{(2n-2)\mn}{n}\EE_{\bfy,\tilde \bfy}\left(\int_0^{t\wedge \tau_R} \sum_{i=1}^n|w(Y^n_i(s)) - w(\tilde Y^n_i(s))|ds\right), \ t \ge 0.
\end{align*}
Further, $Y^n_i(s) \ge y_i - R$ and $\tilde Y^n_i(s) \ge \tilde y_i - R$ for $s \in [0, t \wedge \tau_R)$. Recalling from Assumption \ref{ass.w} that $w$ is Lipschitz on $[(\min_i y_i) \wedge (\min_i \tilde y_i-R), \infty)$, we deduce from the above that there exists $C'>0$ depending on $n,t, \bfy, \tilde \bfy,R$ such that
$$
\EE_{\bfy,\tilde \bfy}\left(\sum_{i=1}^n |Y^n_i(t \wedge \tau_R) - \tilde Y^n_i(t\wedge \tau_R)|\right) \le \sum_{i=1}^n |y_i - \tilde y_i|+ C'\int_0^t \EE_{\bfy,\tilde \bfy}\left(\sum_{i=1}^n |Y^n_i(s \wedge \tau_R) - \tilde Y^n_i(s\wedge \tau_R)| \right)ds
$$
and thus, by Gr\"onwall's inequality,
$$
\EE_{\bfy,\tilde \bfy}\left(\sum_{i=1}^n |Y^n_i(t \wedge \tau_R) - \tilde Y^n_i(t\wedge \tau_R)|\right) \le e^{C't} \sum_{i=1}^n |y_i - \tilde y_i|.
$$
Using the above observations along with Markov's inequality, we obtain for $R>0,\delta >0$ and $t \ge 0$,
\begin{align*}
&\PP_{\bfy,\tilde \bfy}\left(\sum_{i=1}^n |Y^n_i(t) - \tilde Y^n_i(t)| \ge \delta\right) \le \PP_{\bfy,\tilde \bfy}(\tau_R \le t) + \PP_{\bfy,\tilde \bfy}\left(\sum_{i=1}^n |Y^n_i(t \wedge \tau_R) - \tilde Y^n_i(t\wedge \tau_R)| \ge \delta\right)\\
&\quad = \PP_{\bfy,\tilde \bfy}(X^n_{\max}(t \wedge \tau_R) \vee \tilde X^n_{\max}(t \wedge \tau_R)\ge R) + \PP_{\bfy,\tilde \bfy}\left(\sum_{i=1}^n |Y^n_i(t \wedge \tau_R) - \tilde Y^n_i(t\wedge \tau_R)| \ge \delta\right)\\
&\quad \le \frac{C}{R} + \frac{1}{\delta}e^{C't} \sum_{i=1}^n |y_i - \tilde y_i|.
\end{align*}
Thus, for any fixed $t \ge 0$, $\delta>0$ and $\bfy \in \cls_0$,
$$
\lim_{\tilde \bfy \rightarrow \bfy}\PP_{\bfy,\tilde \bfy}\left(\sum_{i=1}^n |Y^n_i(t) - \tilde Y^n_i(t)| \ge \delta\right) \le \frac{C}{R}.
$$
As $R>0$ is arbitrary, this concludes the proof of the lemma.
\end{proof}

We will need the following elementary but useful general fact.

\begin{lemma}\label{lem:unique}
    Suppose $\bfZ(\cdot)$ is a c\`adl\`ag Markov process taking values in a Polish space $\cls$. Suppose that, for any $\bfx, \bfy \in \cls$, there exists a coupling $(\bfZ, \tilde \bfZ)$ of this Markov process with $(\bfZ(0), \tilde \bfZ(0)) = (\bfx,\bfy)$, with law  on $\cld([0,\infty) \, : \, \cls^2)$ denoted by $\PP_{\bfx, \bfy}$, such that the associated coupling time $\tau \doteq \inf\{t \ge 0: \bfZ(s) = \tilde \bfZ(s) \text{ for all } s \ge t\}$ satisfies $\PP_{\bfx,\bfy}(\tau < \infty)>0$. Moreover, suppose for any $B \in \cld([0,\infty) \, : \, \cls^2)$, the map $(\bfx, \bfy) \mapsto \PP_{\bfx,\bfy}(B)$ is measurable. Then there exists at most one stationary distribution for this process.
\end{lemma}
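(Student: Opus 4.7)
The plan is to argue by contradiction, combining a coupling inequality that bounds the total variation distance between any two stationary distributions with the classical fact that distinct ergodic invariant measures are mutually singular.

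The first step is the coupling bound. Given two stationary distributions $\pi_1,\pi_2$, I would construct the coupled process on $\cld([0,\infty):\cls^2)$ by first sampling $(\bfZ(0),\tilde\bfZ(0))\sim\pi_1\otimes\pi_2$ and then, conditionally on $(\bfZ(0),\tilde\bfZ(0))=(\bfx,\bfy)$, running $\PP_{\bfx,\bfy}$; denote the resulting joint law on path space by $\PP$. Stationarity of $\pi_1$ and $\pi_2$ under the marginal dynamics gives $\cll(\bfZ(t))=\pi_1$ and $\cll(\tilde\bfZ(t))=\pi_2$ for every $t\ge 0$. Since $\bfZ(s)=\tilde\bfZ(s)$ for all $s\ge\tau$, for any Borel $A\subset\cls$ the contributions from $\{\tau\le t\}$ cancel, so $|\pi_1(A)-\pi_2(A)|\le\PP(\tau>t)$. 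Letting $t\to\infty$ and taking the supremum over $A$ yields
\begin{equation*}
\|\pi_1-\pi_2\|_{TV}\le\PP(\tau=\infty)=\int_{\cls\times\cls}\bigl(1-\PP_{\bfx,\bfy}(\tau<\infty)\bigr)\,(\pi_1\otimes\pi_2)(d\bfx\,d\bfy),
\end{equation*}
where the integral representation uses the measurability assumption on $(\bfx,\bfy)\mapsto\PP_{\bfx,\bfy}$. Since the integrand is strictly less than $1$ everywhere and $\pi_1\otimes\pi_2$ is a probability measure, the right-hand side is strictly less than $1$. Thus any two stationary distributions satisfy $\|\pi_1-\pi_2\|_{TV}<1$.

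The second step converts this strict inequality into equality. Assuming for contradiction that two distinct stationary distributions exist, the ergodic decomposition of invariant probability measures for Markov processes on Polish spaces produces two distinct ergodic invariant measures. I would then invoke the classical fact that distinct ergodic invariant measures are mutually singular: choosing a bounded measurable $f$ with $\int f\,d\pi_1\ne\int f\,d\pi_2$, a continuous-time Birkhoff ergodic theorem shows that the sets $B_i\doteq\bigl\{\bfx\in\cls:T^{-1}\int_0^T f(\bfZ(s))\,ds\to\int f\,d\pi_i\text{ as }T\to\infty,\ \PP_{\bfx}\text{-a.s.}\bigr\}$ are disjoint and satisfy $\pi_i(B_i)=1$, so $\pi_1\perp\pi_2$ and $\|\pi_1-\pi_2\|_{TV}=1$, contradicting the first step.

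The main obstacle is rigorously invoking the ergodic decomposition and mutual singularity of distinct ergodic invariant measures for a c\`adl\`ag Markov process on a general Polish space; both are classical but depend on standard results from ergodic theory of Markov processes that I would cite rather than reprove. The coupling inequality itself is elementary and relies only on stationarity together with the measurability of $(\bfx,\bfy)\mapsto\PP_{\bfx,\bfy}$.
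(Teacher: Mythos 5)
Your proof is correct and takes essentially the same approach as the paper: both argue by contradiction, reduce via ergodic decomposition to the case of two mutually singular stationary distributions, and then use the coupling (with $\PP_{\bfx,\bfy}(\tau<\infty)>0$ for all $(\bfx,\bfy)$) to show that mutual singularity is impossible. Your route of bounding $\|\pi_1-\pi_2\|_{TV}$ strictly below $1$ is just a repackaging of the paper's direct calculation, which picks a specific $t$ with $\PP_{\pi_1,\pi_2}(\tau\le t)>0$ and shows $\pi_1(A)>0$ for a set $A$ carrying $\pi_1$-mass zero.
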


\begin{proof}
We argue by contradiction. Suppose there exist more than one stationary distribution. Then there exist two stationary distributions $\pi_1$ and $\pi_2$ which are mutually singular. Hence, there exists $A \in \clb(\cls)$ such that $\pi_1(A) = \pi_2(A^c)=0$. We can define the coupling $\PP_{\pi_1, \pi_2}$ of the associated stationary processes as $\PP_{\pi_1, \pi_2}(B) \doteq \int_{\cls^2}\PP_{\bfx,\bfy}(B)\pi_1(d\bfx) \pi_2(d\bfy), \ B \in \cld([0,\infty) \, : \, \cls^2)$, which is well defined by the measurability assumption on the kernel. Then $\PP_{\pi_1, \pi_2}(\tau<\infty) = \int_{\cls^2}\PP_{\bfx,\bfy}(\tau<\infty)\pi_1(d\bfx) \pi_2(d\bfy)>0$ by hypothesis. Take $t>0$ large enough such that $\PP_{\bfx,\bfy}(\tau \le t)>0$. Then,
denoting coordinate maps on $\cld([0,\infty) \, : \, \cls^2)$ once more as $\bfZ, \tilde \bfZ$,
\begin{align*}
\pi_1(A) &= \PP_{\pi_1,\pi_2}(\bfZ(t) \in A) \ge  \PP_{\pi_1,\pi_2}(\bfZ(t) \in A, \tau \le t) = \PP_{\pi_1,\pi_2}(\tilde \bfZ(t) \in A, \tau \le t)\\
&\ge \PP_{\pi_1,\pi_2}(\tau \le t) - \PP_{\pi_1,\pi_2}(\tilde \bfZ(t) \in A^c) >0,
\end{align*}
where, in the last step, we used the fact that $\PP_{\pi_1,\pi_2}(\tilde \bfZ(t) \in A^c) = \pi_2(A^c)=0$. The above contradicts $\pi_1(A)=0$, completing the proof.
\end{proof}

 \subsection{Proof of Theorem \ref{stat.ex}.} 
 \label{subsec6.1}
 The proof is based on constructing a suitable Lyapunov function.
Define $V:\cls_0 \to \RR_+$ as
$$V(\textbf{y}) \doteq \frac{1}{n}\sum_{i=1}^n y_i^2, \; \bfy = (y_1, \ldots, y_n)\in \RR^n.$$
Then, with $q_i(\bfy) \doteq y_i^2$, $i \in [n]$, $\bfy \in \cls_0$, and the generator, $\cll_n^{\bfY}$ defined in \eqref{eq:geny},
\begin{align*}
     \cll_n^{\bfY} q_i(\bfy)&= \left[\mathbb{E}_Z\left(y_i + Z\left(\frac{n-1}{n}\right)\right)^2 - y_i^2\right]w(y_i) + \sum_{j:j\neq i}\left[\mathbb{E}_Z\left(y_i - \frac{Z}{n}\right)^2 - y_i^2\right]w(y_j).
\end{align*}
Thus, for $\bfy \in \cls_0$,
\begin{align*}
    \cll_n^{\bfY}V(\bfy) &= \frac{1}{n}\sum_{i=1}^n\cll_n^{\bfY} q_i(\bfy) \\
    &= \frac{1}{n}\sum_{i=1}^n  \left[\mathbb{E}_Z\left(y_i + Z\left(\frac{n-1}{n}\right)\right)^2 - y_i^2\right]w(y_i) + \frac{1}{n}\sum_{i=1}^n \sum_{j:j\neq i}\left[\mathbb{E}_Z\left(y_i - \frac{Z}{n}\right)^2 - y_i^2\right]w(y_j).
\end{align*}
Rearranging terms and recalling that $\vt = \EE Z^2$ and $\mn = \EE Z$, we have
\begin{align}
   \cll_n^{\bfY}V(\bfy) &= \frac{1}{n}\sum_{i=1}^n  \left[\mathbb{E}_Z\left(y_i + Z\left(\frac{n-1}{n}\right)\right)^2 - y_i^2\right]w(y_i)  + \frac{1}{n}\sum_{i=1}^n \sum_{j=1}^n
   \left[\mathbb{E}_Z\left(y_i - \frac{Z}{n}\right)^2 - y_i^2\right]w(y_j)\\
   &\quad- \frac{1}{n}\sum_{i=1}^n \left[\mathbb{E}_Z\left(y_i - \frac{Z}{n}\right)^2 - y_i^2\right]w(y_i) \\
    &= \frac{1}{n}\sum_{i=1}^n \left[ \left(\frac{n-1}{n}\right)^2\vt + 2\frac{n-1}{n}\mn y_i\right]w(y_i) - \frac{1}{n} \sum_{i=1}^n \left[ \frac{\vt}{n^2} - \frac{2\mn y_i}{n} \right]w(y_i) \\ &\quad + \frac{1}{n}\sum_{i=1}^n \left[ \frac{\vt}{n^2} - \frac{2\mn y_i}{n} \right] \sum_{j=1}^n w(y_j) \\
    &= \frac{(n-1)}{n^2} \vt\sum_{i=1}^n w(y_i) + \frac{2}{n} \mn \sum_{i=1}^n y_i w(y_i)
    \le \frac{2}{n}  \sum_{i=1}^n  \left(\frac{\vt}{2} w(y_i) +  \mn  y_i w(y_i)\right),
\end{align}
where in obtaining the last equality, we have used $\sum_{i=1}^n y_i=0$.
Recall that by assumption in the theorem, $w$ is a non-constant function that is   non-increasing.
Thus we can obtain $B\in (0,\infty)$ and $\delta \in (0,1)$ such that 
$$w(-B)(1-\delta)^2 > w(B)+\delta,\; \mbox{ and } \frac{\vt}{2(1-\delta)\mn B}<\delta.$$
Using the second inequality above in the first inequality below, and the first inequality above in the third inequality below, we have, for  $y \in (-\infty, -B]$,
\begin{multline*}
    \mn yw(y) + \frac{\vt}{2}w(y) = \delta \mn yw(y) + (1-\delta) \mn y w(y) + \frac{\vt}{2}w(y) \\
    = \delta \mn y w(y) +(1-\delta) \mn y w(y) \left(1- \frac{\vt}{2\mn (1-\delta)(-y)}\right) 
    \leq \delta \mn y w(y) + (1-\delta)^2 \mn y w(y) \\
    \leq \delta \mn y w(y) + (1-\delta)^2 \mn w(-B)y 
    < \delta \mn y w(y) + (w(B)+\delta)\mn y.
\end{multline*}
Writing the sum: $\sum_{i=1}^n =  \sum_{i: y_i \le -B} + \sum_{i: y_i \ge B} + \sum_{i: |y_i| < B}$, we now have, from the previous inequality,
\begin{align}\label{st2}
\cll_n^{\bfY}V(\bfy) & \le \frac{2}{n}\sum_{i: y_i \le -B}\Big(\delta \mn y_i w(y_i) + (w(B)+\delta)\mn y_i \Big) + \frac{2}{n}\sum_{i: y_i \ge B}\left(\frac{\vt}{2} w(y_i) +  \mn  y_i w(y_i)\right)\notag\\
&\quad + \frac{2}{n}\sum_{i: |y_i|<B} (\mn y_iw(y_i) + \frac{\vt}{2}w(y_i))\notag\\
&\le -\frac{2\delta\mn}{n}\sum_{i=1}^n y_i^-w(y_i) + \frac{2\delta\mn}{n}\sum_{i:y_i\in (-B,0]}  y_i^-w(y_i)
-\frac{2\mn}{n}(w(B)+\delta)\sum_{i=1}^n y_i^-\notag\\
&\quad + \frac{2\mn}{n}(w(B)+\delta)\sum_{i:y_i\in (-B,0]}  y_i^- + \frac{2\mn}{n}w(B)\sum_{i=1}^n y_i^+ - \frac{2\mn}{n}w(B)\sum_{i:y_i\in [0, B)}  y_i^+ + \frac{\vt}{n} 
\sum_{i: y_i\ge B} w(y_i)\notag\\
&\quad + \frac{2}{n}\sum_{i: |y_i|<B} (\mn y_iw(y_i) + \frac{\vt}{2}w(y_i))\notag\\
&\le C_1 - \delta \mn \left(\frac{1}{n}\sum_{i=1}^n |y_i|\right) -2\delta \mn\left(\frac{1}{n}\sum_{i=1}^n y_i^- w(y_i) \right),
\end{align}
where we have used $\frac{2}{n}\sum_{i=1}^n y_i^- = \frac{1}{n}\sum_{i=1}^n |y_i| \text{ for } \bfy \in \cls_0$ in the last line, and $C_1$ is a finite positive constant defined as
\begin{align*} 
C_1 & \doteq \sup_{n\ge 1, y \in \cls_0} \Big[\frac{2\mn}{n}\sum_{i:y_i\in(-B,0]}  y_i^-w(y_i)
+\frac{2\mn}{n}(w(B)+1)\sum_{i:y_i\in(-B,0]}  y_i^-+ \frac{\vt}{n} 
\sum_{i: y_i\ge B} w(y_i)\\
&\quad  +  \frac{2}{n}\sum_{i: |y_i|<B} (\mn y_iw(y_i) + \frac{\vt}{2}w(y_i)) \Big].
\end{align*}
Defining the compact set $K \doteq \left\{ \bfy\in \mathbb{R}^n: \frac{1}{n}\sum_{i=1}^n|y_i| \leq  \frac{2C_1}{\mn\delta}\right\}$, we have
\begin{equation}\label{eq:904}
\cll_n^{\bfY}V(\bfy) \leq -C_1 +2C_1 \mathbf{1}_K(\bfy), \; \bfy \in \RR^n.\end{equation}
This, along with the weak Feller property of the Markov process $\bfY^n(\cdot)$ deduced in Lemma \ref{lem:wF}, says that there must be at least one  stationary distribution $\pi_n$ for the process (cf.  \cite[Corollary 1.18]{eberle2015markov}). 

To show the moment bound in \eqref{st.thm1}, consider for $K>0$,  $V_K(\textbf{y})\doteq V(\textbf{y}) \wedge K$, $\textbf{y}\in \RR^n$. Also, for $M \in (0, \infty)$, let 
$$\tau_M \doteq \inf\{t \ge 0: Y_i(t) \le -M \mbox{ for some } i \in [n]\}.$$
Denoting the probability (resp. expectation) operator under which $\cll(\bfY^n(0)) = \pi_n$ as $\PP_{\pi_n}$
(resp. $\EE_{\pi_n}$), we have 
 from the Markov property of $\bfY^n$  that
$$
V_K(\bfY^n(t \wedge \tau_M)) - V_K(\bfY^n(0))- \int_0^{t \wedge \tau_M} \cll_n^{\bfY}V_K(\bfY^n(s)) ds
$$
is a $\{\clf_t^n\}$-martingale under $\PP_{\pi_n}$, where $\clf_t^n \doteq 
\sigma\{\bfY^n(s): s\le t\}$, for $t\ge 0$.
Thus
$$
\EE_{\pi_n}[V_K(\bfY^n(t \wedge \tau_M))] - \EE_{\pi_n}[V_K(\bfY^n(0))]- \int_0^{t} \EE_{\pi_n}[\one\{s\le  \tau_M\}\cll_n^{\bfY}V_K(\bfY^n(s))] ds =0.
$$
Sending $M \to \infty$, and using dominated convergence theorem,
\begin{align}
\limsup_{M\to \infty} \int_0^{t} \EE_{\pi_n}[\one\{s\le  \tau_M\}\cll_n^{\bfY}V_K(\bfY^n(s))] ds &= \limsup_{M\to \infty}\left(\EE_{\pi_n}[V_K(\bfY^n(t \wedge \tau_M))] - \EE_{\pi_n}[V_K(\bfY^n(0))]\right)\nonumber\\
&= 
\EE_{\pi_n}[V_K(\bfY^n(t))] - \EE_{\pi_n}[V_K(\bfY^n(0))]=0.\label{eq:531}
\end{align}
We will now like to apply Fatou's lemma to the term on  the left side above. For this we give an upper bound on $\cll_n^{\bfY}V_K(\bfy)$.
Consider first the case $V(\bfy)\leq K$. Then,
\begin{align*}
   \cll^{\bfY}_n V_K(\bfy) &= \sum_{i=1}^n w(y_i) \mathbb{E}_Z \left[ V_K \left(y+ Z e_j - n^{-1} Z\one\right) -V_K(\bfy)\right] \\
    &= \sum_{i=1}^n w(y_i) \mathbb{E}_Z \left[ V_K \left(y+ Z e_j - n^{-1} Z\one\right) -V(\bfy)\right] \\
    &\leq \sum_{i=1}^n w(y_i) \mathbb{E}_Z \left[ V \left(y+ Z e_j - n^{-1} Z\one\right) -V(\bfy)\right] 
    = \cll^{\bfY}_n V(\bfy)
    \leq C_1,
\end{align*}
where $C_1$ is as introduced earlier in the proof.
Also, if $V(\bfy)>K$, 
$$\cll^{\bfY}_n V_K(\bfy) = \sum_{i=1}^n w(y_i) \mathbb{E}_Z \left[ V_K \left(y+ Z e_j - n^{-1} Z\one\right) -K\right] \leq 0.$$
Hence, we have that for every $K>0$, $\cll^{\bfY}_n V_K(\cdot)\leq C_1$.
Thus, using Fatou's lemma in \eqref{eq:531}, 
\begin{align*}
 t\EE_{\pi_n}[\cll_n^{\bfY}V_K(\bfY^n(0))] = \int_0^{t} \EE_{\pi_n}[\cll_n^{\bfY}V_K(\bfY^n(s))] ds \ge 0.
\end{align*}
Using the fact that $\cll_n^{\bfY}V_K(\bfY^n(0)) \to \cll_n^{\bfY}V(\bfY^n(0))$ a.s., as $K\to \infty$, and using Fatou's lemma together with the inequality $\cll_n^{\bfY}V_K(\cdot)\leq C_1$ again, we now see that
$\EE_{\pi_n}[\cll_n^{\bfY}V(\bfY^n(0))] \ge 0$.
Using this inequality together with \eqref{st2}, we now obtain $$\mathbb{E}_{\pi_n} \left[ \frac{\delta\mn}{n} \sum_{i=1}^n |Y_i^n(0)| + \frac{2\delta\mn}{n}\sum_{i=1}^n (Y_i^n(0))^- w(Y_i^n(0)) - C_1\right] \leq 0,$$
which together with the inequality $y^+w(y) \le |y|w(0)$
completes the proof of \eqref{st.thm1}. 

Next, we argue uniqueness of the stationary distribution under the spread-out condition. In view of Lemma \ref{lem:unique}, it suffices to construct a coupling $(\bfY, \tilde \bfY)$ of the Markov process under consideration, with $\PP_{\bfy,\tilde \bfy}$ denoting its law when started from $(\bfy, \tilde \bfy)$, such that the associated coupling time $\tau$ satisfies $\PP_{\bfy,\tilde \bfy}(\tau < \infty)>0$ for all $(\bfy, \tilde \bfy) \in \cls_0^2$. The measurability condition required in Lemma \ref{lem:unique} will be immediate from the construction.

We will couple the uncentered $n$-particle systems $(\bfX, \tilde \bfX)$, started from $(\bfy, \tilde \bfy)$, which will produce the coupling for the centered systems as in Lemma \ref{lem:wF}. The \emph{jump rates} will be coupled in the same way as in Lemma \ref{lem:wF}, namely, for $t \ge 0$, if $(\bfX(t-), \tilde \bfX(t-)) = (\bfx, \tilde \bfx)$, then at time $t$ the $i$-th particles in both systems jump together (synchronous jump) at rate $w(x_i - \bar \bfx) \wedge w(\tilde x_i - \bar{\tilde{\bfx}})$, the $i$-th particle jumps in $\bfX$ but not in $\tilde \bfX$ at rate $(w(x_i - \bar \bfx) - w(\tilde x_i - \bar{\tilde{\bfx}}))^+$, and the $i$-th particle jumps in $\tilde \bfX$ but not in $\bfX$ at rate $(w(x_i - \bar \bfx) - w(\tilde x_i - \bar{\tilde{\bfx}}))^-$. The synchronous \emph{jump sizes}, however, will not be the same as in Lemma \ref{lem:wF}. This is where we will use the spread-out property to construct a coupling $(Z_{i,k}, \tilde Z_{i,k})_{k \ge 1}$ of the successive jump sizes of the $i$-th particle in the coupled systems to ensure a non-zero chance of them meeting in a finite number of steps.

Observe that, ignoring the jump rates, the process $S_{i,k} \doteq y_i+ \sum_{j=1}^k Z_{i,j}$
(resp. $\tilde S_{i,k}  \doteq \tilde y_i+ \sum_{j=1}^k \tilde Z_{i,j}$),
$k \in \NN$,  with $S_{i,0} = y_i$ (resp.  $\tilde S_{i,0} = \tilde y_i$), gives epochs of a renewal process starting from $y_i$ (resp. $\tilde y_i$). By \cite[Chapter III, Section 5]{lindvall2002lectures}, there exists $m \in \NN$ and a coupling $(Z_{i,k}, \tilde Z_{i,k})_{1 \le k \le m}$ of the jump sizes such that $\PP(S_{i,m}= \tilde S_{i,m})>0$ for all $1 \le i \le n$. For each $i$, we use this coupling $(Z_{i,k}, \tilde Z_{i,k})$ for $k \le m$. The coupling is extended by requiring $Z_{i,k}=\tilde Z_{i,k}$ for $k>m$.
{\di A brief sketch of the coupling from \cite{lindvall2002lectures}, which relies on the key `spread-out' property, is as follows. For a fixed $i$, one first tracks alternating overshoots between independently run copies of these processes to bring the discrepancy between the two renewal clocks into an arbitrarily small interval with positive probability. This uses the fact that, as \(\theta\) is spread-out, there exists \(m\in\mathbb N\) such that \(\theta^{*m}\) has a nontrivial absolutely continuous component. Now, by employing the maximal coupling between the (overlapping laws of the) \(m\)-step future increments, this yields a strictly positive probability of exact coincidence after finitely many further steps.}

With the above coupling, we now consider the coupling of $(\bfX, \tilde \bfX)$ in which the first $m$ jumps of the $2n$-dimensional process are synchronous jumps for the first particle given by 
$(S_{1,j}, \tilde S_{1,j})_{1\le j \le m}$ followed by $m$ synchronous jumps for the second particle and so on.
The positivity of $w$ ensures that with this coupling, after the first $mn$ total jumps, the states of $\bfX$ and $\tilde \bfX$ are the same with positive probability proving that $\PP_{\bfy,\tilde \bfy}(\tau < \infty)>0$.
By Lemma \ref{lem:unique}, the uniqueness of stationary distribution follows.

Finally, we show \eqref{velm}. Observe that
\begin{equation}\label{mn1}
m_n(t) = \mn\int_0^t \frac{1}{n}\sum_{i=1}^n w(Y^n_i(s))ds + M_n(t), \ t \ge 0,
\end{equation}
where $M_n$ is a martingale under $\PP_{\pi_n}$ with quadratic variation
$$
\langle M_n \rangle (t) = \frac{\vt}{n^2}\int_0^t\sum_{i=1}^n w(Y^n_i(s))ds.
$$
As $\pi_n$ is the unique stationary distribution for the process $\bfY^n$, it is ergodic and hence, as the function $\bfy \mapsto \frac{1}{n}\sum_{i=1}^n w(y_i)$ is in $L^1(\pi_n)$ by \eqref{st.thm1}, using Birkhoff's ergodic theorem, we have for $\bfX^n(0) = \bfx$,
\begin{equation}\label{mn2}
\frac{1}{t}\int_0^t \frac{1}{n}\sum_{i=1}^n w(Y^n_i(s))ds \to \EE_{\pi_n}\left[\frac{1}{n}\sum_{i=1}^n w(Y^n_i(0))\right]
\end{equation}
almost surely for $\pi_n$-a.e. $\bfx$. Recalling the Lyapunov function $V$ and the stopping time $\tau_M$, note that for any $\bfx \in \cls_0$ and $M > 0$,
$
V(\bfY^n(t \wedge \tau_M)) - V(\bfY^n(0))- \int_0^{t \wedge \tau_M} \cll_n^{\bfY}V(\bfY^n(s)) ds
$
is a $\{\clf^n_t\}$-martingale under $\PP_{\bfx}$, where the latter is the probability measure under which $\bfX^n(0) = \bfx$. Thus, from \eqref{st2}, letting $M \to \infty$,
$$
2\delta \mn\EE_{\bfx}\left[\int_0^{t}\frac{1}{n}\sum_{i=1}^n Y^n_i(s)^-w(Y^n_i(s))ds\right] \le C_1 t + V(\bfx).
$$
In particular, since $w(y) \le y^{-} w(y) + w(-1)$, for any $\bfx \in \cls_0$,
$$
\sup_{t > 0}t^{-1}\EE_{\bfx}\langle M_n \rangle (t) = \frac{\vt}{n}\sup_{t > 0}\EE_{\bfx}\left[\frac{1}{t}\int_0^t\frac{1}{n}\sum_{i=1}^n w(Y^n_i(s))ds\right] < \infty.
$$
Thus, by Doob's $L^2$-maximal inequality, for any $\bfx \in \cls_0$,  $\frac{M_n(t)}{t} \to 0$ in probability
under $\PP_x$, as $t \to \infty$. 
Using this along with \eqref{mn1} and \eqref{mn2}, we obtain
$$
\frac{m_n(t)}{t} \to \mn\EE_{\pi_n}\left[\frac{1}{n}\sum_{i=1}^n w(Y^n_i(0))\right]
$$
in probability under $\PP_x$, as $t \to \infty$ for $\pi_n$-a.e. $\bfx$. The last equality in \eqref{velm} is a consequence of the exchangeability of $\pi_n$. This can be deduced, for example, by observing that $t^{-1} \int_0^t\PP_{\bfU^n}\left(\bfY^n(s) \in \cdot\right)ds$ converges to $\pi_n$ when $\bfU^n = (U_1,\dots, U_n)'$ is distributed as an iid product measure $\nu^{\otimes n}$ satisfying $\EE[U_1^2]<\infty$, using the uniqueness of the stationary distribution, the weak Feller property and the tightness obtained from \eqref{eq:904} (cf.  \cite[Corollary 1.18]{eberle2015markov}).
\hfill
\qedsymbol

\subsection{Proof of Theorem \ref{stat.pr}.}
\label{subsec6.2}
   \noindent From the moment bound in Theorem \ref{stat.ex}, and using Assumption \ref{ass.w.LT}, 
   $$\lim_{A\to \infty} \limsup_{n\to \infty} \mathbb{E}_{\pi_n} \left[ \frac{1}{n}\sum_{i=1}^n |Y^n_i(t)|\mathbf{1} [Y^n_i(t)\leq -A]\right] \leq \lim_{A\to \infty} \limsup_{n\to \infty} \frac{1}{w(-A)}\mathbb{E}_{\pi_n} \left[ \frac{1}{n}\sum_{i=1}^n |Y^n_i(t)|w(Y^n_i(t)) \right] =0.$$
   Combining this fact with the observation that, for $y \in \RR$, $|y|\mathbf{1}[|y|\ge A] \le 2(y-A/2)^+$,
  we see that, in order to prove the theorem, it suffices to show that for any $\eta>0$, 
    \begin{equation}\label{st.suff}
        \lim_{A\to \infty}\limsup_{n\to \infty} \mathbb{P}_{\pi_n} \left[ \frac{1}{n}\sum_{i=1}^n(Y^n_i(t)-A)^+ \geq \eta \right]=0.
    \end{equation}
    For this, we consider, for fixed $A>0$ the function, $V_A(\bfy) \doteq \frac{1}{n}\sum_{i=1}^n (y_i-A)^{+^2}$, $\bfy = (y_1, \ldots , y_n) \in \cls^0$.
    Note that, for $\bfy \in \cls^0$,
    \begin{multline}
        \cll^{\bfY}_n V_A(\bfy)=\frac{1}{n}\sum_{i=1}^n w(y_i)\mathbb{E}_Z \left[ \left(y_i+\frac{n-1}{n}Z-A \right)^{+^2} - \left(y_i-\frac{Z}{n}-A\right)^{+^2}\right]\\ + \left[ \frac{1}{n}\sum_{j=1}^n w(y_j)\right]\sum_{i=1}^n \mathbb{E}_Z \left[ \left(y_i-\frac{Z}{n}-A\right)^{+^2} - (y_i-A)^{+^2}\right].\label{eq:851}
    \end{multline}
    Consider the first term in the above sum. For $y_i\geq A$, 
    \begin{multline*}
        \mathbb{E}_Z\left[ \left(y_i+\frac{n-1}{n}Z-A \right)^{+^2} - \left(y_i-\frac{Z}{n}-A\right)^{+^2}\right] \\ \leq \mathbb{E}_Z \left[ \left\{ \left(y_i-\frac{Z}{n}-A\right)^+ +Z\right\}^2 - \left(y_i-\frac{Z}{n}-A\right)^{+^2}\right] 
        = \mathbb{E}_Z \left[ 2\left(y_i-\frac{Z}{n} -A\right)^+ Z + Z^2\right] 
        \leq 2\mn (y_i -A)^+ + \vt.
    \end{multline*}
    Also, if $y_i<A$,
    \begin{align*}
        \mathbb{E}_Z \left[ \left(y_i+\frac{n-1}{n}Z-A \right)^{+^2} - \left(y_i - \frac{Z}{n}-A \right)^{+^2}\right] = \mathbb{E}_Z \left[ y_i+ \left(1-\frac{1}{n}\right)Z-A \right]^{+^2}
        \leq \mathbb{E}_Z [Z-(A-y_i)]^{+^2}.
    \end{align*}
    Combining the above estimates,
    \begin{align}
        &\frac{1}{n}\sum_{i=1}^n w(y_i) \mathbb{E}_Z \left[ \left(y_i+\frac{n-1}{n}Z-A \right)^{+^2} - \left(y_i - \frac{Z}{n} -A\right)^{+^2} \right] \\
        &\leq \frac{2}{n}\mn \sum_{i: y_i\geq A} (y_i-A)^+ w(y_i) + w(A) \vt + \frac{1}{n}\sum_{i:y_i<A} w(y_i) \mathbb{E}_Z [Z-(A-y_i)]^{+^2} \\
        &\leq \epsilon_A + \frac{2w(A)\mn}{n}\sum_{i=1}^n (y_i-A)^+,\label{st3}
    \end{align}
    where  $\epsilon_A\doteq \sup_{x\geq 0} w(A-x) \mathbb{E} (Z-x)^{+^2} + w(A) \vt$. Using Assumptions \ref{Z2} and \ref{ass.w.LT}, we see that $\lim_{A\to \infty} \epsilon_A =0$.

  Next, we consider the second term in the sum in \eqref{eq:851}. Note that,
    \begin{multline}
        \sum_{i=1}^n \mathbb{E}_Z \left[ \left(y_i-\frac{Z}{n}-A\right)^{+^2} - (y_i-A)^{+^2}\right] \leq \mathbb{E}_Z\left[ \sum_{i: y_i\geq A+\frac{Z}{n}} \left\{ -\frac{2Z}{n}(y_i-A) + \frac{Z^2}{n^2}\right\}\right] \\
        \leq \frac{\vt}{n} + \mathbb{E}_Z\left[ -\frac{2Z}{n}\sum_{i: y_i\geq A}  (y_i-A)\right] + \mathbb{E}_Z\left[ \frac{2Z}{n}\sum_{i: A+\frac{Z}{n}>y_i\geq A}  (y_i-A)\right]
        \le 
        \frac{3\vt}{n} - \frac{2\mn}{n}\sum_i (y_i-A)^+  . \label{st4}     
    \end{multline}
    Thus, using \eqref{st3} and \eqref{st4}, 
    \begin{equation}\label{st.a}
        \cll^{\bfY}_nV_A(\textbf{y}) \leq \epsilon_A + \frac{2w(A)\mn}{n} \sum_{i=1}^n (y_i-A)^+ + \alpha(\bfy) \left[ \frac{3\vt}{n} - \frac{2\mn}{n}\sum_i (y_i-A)^+\right],
    \end{equation}
    where $\alpha(\textbf{y})\doteq \frac{1}{n}\sum_i w(y_i)$, $\bfy \in \RR^n$.
    Also, using a similar truncation and localization idea as in the proof of Theorem \ref{stat.ex} (see below \eqref{eq:904}), we  have that
    $\mathbb{E}_{\pi_n}[\cll^{\bfY}_nV_A(\textbf{Y}^n(0))]\geq 0$.
    Also, using Theorem \ref{stat.ex}, we have 
    $$\sup_n \mathbb{E}_{\pi_n} [\alpha(\textbf{Y}^n(0))]
    \le w(-1) + \sup_n \mathbb{E}_{\pi_n} \frac{1}{n}\sum_{i: Y^n_i(0) \le -1} w(Y^n_i(0))
    \le w(-1) + \sup_n \mathbb{E}_{\pi_n} \frac{1}{n}\sum_{i=1}^n |Y^n_i(0)| w(Y^n_i(0)) <\infty.$$
   Combining these observations, we now have that
    \begin{equation}\label{st.b}
        2\mn\mathbb{E}_{\pi_n} \left[ (\alpha(\textbf{Y}^n(0))-w(A))\left\{ \frac{1}{n}\sum_i (Y^n_i(0)-A)^+\right\}\right]\leq \frac{B_1}{n} + \epsilon_A,
    \end{equation}
    where
    $B_1 \doteq 3\vt \sup_n \mathbb{E}_{\pi_n} [\alpha(\textbf{Y}^n(0))]
    $.
    Using Theorem \ref{stat.ex} again, we see that 
    $$2\mn\mathbb{E}_{\pi_n} \left[ (\alpha(\textbf{Y}^n(0))-w(A))^- \left\{ \frac{1}{n}\sum_i (Y^n_i(0)-A)^+\right\} \right] \leq 2\mn w(A) \mathbb{E}_{\pi_n} \left[\frac{1}{n}\sum_i |Y^n_i(0)|\right] \leq w(A)B_2,$$
    where
    $B_2 \doteq 
    2\mn\sup_n \mathbb{E}_{\pi_n} \left[ \frac{1}{n}\sum_i |Y^n_i(0)| \right]<\infty$.
    Using this in \eqref{st.b}, we obtain 
    \begin{equation}\label{st.c}
        \mathbb{E}_{\pi_n} \left[ (\alpha(\textbf{Y}^n(0))-w(A))^+ \left\{ \frac{1}{n}\sum_i (Y^n_i(0)-A)^+\right\} \right] \leq \frac{1}{2\mn} \left[\frac{B_1}{n} + \epsilon_A + w(A)B_2 \right].
    \end{equation}
    Fix $\delta_0>0$ such that $B(\delta_0) \doteq \sup \{x: w(x)\geq 2\delta_0 \} >0$.
    Let $\delta \in (0, \delta_0)$ and choose $A_0>0$ such that $w(A_0) <\delta$, which can be done as Assumption \ref{ass.w.LT} holds. Then, for all $A\ge A_0$, and for fixed $\eta>0$,
    \begin{multline*}
        \mathbb{P}_{\pi_n} \left[\frac{1}{n}\sum_i (Y^n_i(0)-A)^+ \geq \eta \right] \leq \mathbb{P}_{\pi_n} [\alpha(\textbf{Y}^n(0))<\delta] + \mathbb{P}_{\pi_n} \left[ \frac{1}{n}\sum_i(Y^n_i(0)-A)^+ \geq \eta, \; \alpha(\textbf{Y}^n(0)) \geq \delta \right] \\
        \leq \mathbb{P}_{\pi_n}[\alpha(\textbf{Y}^n(0)) < \delta] + \mathbb{P}_{\pi_n} \left[ (\alpha(\textbf{Y}^n(0))-w(A))^+ \left\{ \frac{1}{n}\sum_i (Y^n_i(0)-A)^+\right\} \geq \eta(\delta-w(A))\right] \\
        \leq \mathbb{P}_{\pi_n}[\alpha(\textbf{Y}^n(0)) < \delta] + \frac{1}{2\mn\eta(\delta-w(A))}
         \left[\frac{B_1}{n} + \epsilon_A + w(A)B_2 \right].
    \end{multline*}
   Sending $n\to \infty$ and then $A\to \infty$, and recalling that $w(A)$ and $\epsilon_A$ converge to  $0$ as $A\to \infty$, we see
    \begin{equation}\label{st.d}
        \lim_{A\to \infty} \limsup_{n\to \infty} \mathbb{P}_{\pi_n} \left[ \frac{1}{n}\sum_i (Y^n_i(0)-A)^+ \geq \eta \right] \leq \limsup_{n\to \infty} \mathbb{P}_{\pi_n} [\alpha(\textbf{Y}^n(0)) < \delta].
    \end{equation}
    Finally we argue that the right-hand side of \eqref{st.d} converges to $0$ as $\delta\to 0$. 
    Observe that if for some $\bfy \in \cls^0$,
    $\alpha(\bfy) = n^{-1}\sum_i w(y_i)< \delta$, then 
    $$\frac{1}{n}\# \{ i: y_i\leq B(\delta)\} \leq \frac{1}{w(B(\delta))}\frac{1}{n}\sum_{i: y_i\leq B(\delta)} w(y_i) < \frac{\delta}{w(B(\delta))}\leq \frac{1}{2}.$$
    Consequently, $\frac{1}{n}\# \{ i: y_i\ge B(\delta)\} \ge 1/2$, and therefore, for such $\bfy$,
   $$\frac{1}{2n}\sum_i |y_i| = \frac{1}{n}\sum_i y_i^+ \geq \frac{1}{n}\sum_{i:y_i\geq B(\delta)} y_i \geq \frac{B(\delta)}{2}.$$
    From this it follows that, $$\limsup_{n\to \infty} \mathbb{P}_{\pi_n} [\alpha(\textbf{Y}^n(0))< \delta] \leq \limsup_{n\to \infty} \mathbb{P}_{\pi_n} \left[ \frac{1}{n}\sum_i |Y^n_i(0)|\geq B(\delta)\right]\leq \frac{\sup_{n} \EE_{\pi_n} Y_1(0)}{B(\delta)}.$$
    Since $B(\delta) \to \infty$ as $\delta \to 0$, we now have  from Theorem \ref{stat.ex} that
    $\lim_{\delta \to 0} \limsup_{n\to \infty} \mathbb{P}_{\pi_n} [\alpha(\textbf{Y}^n(0){\di)}<\delta] =0$.
    The statement in \eqref{st.suff} now follows on sending $\delta \to 0$ in  \eqref{st.d}, and thus, as discussed previously, the result follows.
\hfill
\qedsymbol
\subsection{Proof of Theorem \ref{thm.stat.sol}.}
\label{subsec6.3}
Let $\bfX^n(0) = (X^n_1(0), \ldots , X^n_n(0))$ be such that $\cll(\bfX^n(0)) = \pi_n$. Note that, a.s., we must have $m_n(0) = \frac{1}{n} \sum_{i=1}^n X^n_i(0) = 0$, and thus $\bfY^n(0) = \bfX^n(0) - m_n(0)\one = \bfX^n(0)$. Now define $\{\bfX^n(t), t \ge 0\}$ as in Section \ref{sec:modres} with the above initial condition, and let $\bfY^n(t) = \bfX^n(t) - m_n(t) \one$, where
$m_n(t) = \frac{1}{n} \sum_{i=1}^n X^n_i(t)$. By construction $\{\bfY^n(t), t \ge 0\}$ is a stationary process with the law at each time instant given as $\pi_n$.

To emphasize dependence on $\pi_n$, we will denote the probability space on which $\bfX^n, \bfY^n$ are defined as $(\Om^n, \clf^n, \PP_{\pi_n})$.
From Theorem \ref{stat.pr} we see that Assumption \ref{ass.t1} holds with $\PP$ replaced by
$\PP_{\pi_n}$. Also, using Assumption \ref{eq:wtails} and Theorem \ref{stat.ex} we see that
Assumption \ref{ass.char} holds with $\PP$ replaced by
$\PP_{\pi_n}$; see Remark \ref{rem:conds}.

Thus we have that all assumptions in Theorem \ref{thm.mainFL} parts (a) and (b) hold with
$\PP$ replaced by $\PP_{\pi_n}$. From part (a) of this theorem we now have that, with 
$\mu_n(t) \doteq \frac{1}{n} \sum_{i=1}^n \delta_{X^n_i(t)}
$, $\{ \mu_n\}_{n\in \NN}$ is $C$-tight in $\mathcal{D}([0,\infty):\mathcal{P}_1(\mathbb{R}))$. This also shows that $\{m_n\}_{n\in \NN}$ is $C$-tight in $\mathcal{D}([0,\infty):\RR)$. It then follows that, with $\nu_n(t) \doteq
\frac{1}{n} \sum_{i=1}^n \delta_{Y^n_i(t)}$,  $\{ \nu_n\}_{n\in \NN}$ is $C$-tight in $\mathcal{D}([0,\infty):\mathcal{P}_1(\mathbb{R}))$. 
Next, using part (b) of Theorem \ref{thm.mainFL} (with
$\PP$ replaced by $\PP_{\pi_n}$) we see that any distributional limit point $\mu$ of $\mu_n$
in $\mathcal{D}([0,\infty):\mathcal{P}_1(\mathbb{R}))$ solves the McKean-Vlasov equation \eqref{eq:mfeq1}. It then follows that, along the same subsequence $\mu_n$ converges, the sequence $\nu_n$ also converges in distribution in $\mathcal{D}([0,\infty):\mathcal{P}_1(\mathbb{R}))$ and   the corresponding weak limit point $\nu$ solves the equation \eqref{FL}. Furthermore, for each $n$, from the stationarity of $\{\bfY^n\}$ we have that $\nu_n$ is stationary as well, and then it follows that $\nu$ is stationary too. Next, using Fatou's lemma and Theorem \ref{stat.ex} it follows that $\EE\langle w, \nu(0)\rangle <\infty$.
Since $\nu$ is a stationary solution and $\EE\langle w, \nu(0)\rangle <\infty$,
 by assumption in the theorem, we must have that, for each $t\ge 0$, $\nu(t)$ is non-random a.s., and in fact $\nu(t)= \nu(0) = \nu^*$ a.s. Since the weakly convergent subsequence was arbitrary, this shows that $\nu_n(0) = \mu_n(0)$ in fact converges in probability to $\nu^*$ as $n\to \infty$. The first statement in the theorem now follows on noting that $\cll(\nu_n(0)) = \pi_n \circ (\Theta^n)^{-1} = \Pi_n$. The second statement
now follows by standard method, cf. \cite[Lemma 3.19]{chaintron2022propagation}.

To obtain the final statement of the theorem, note that the first equality in \eqref{meanlt} follows by Theorem \ref{stat.ex} (see \eqref{velm}). The second equality is verified from the convergence $\Pi_n \to \delta_{\nu^*}$ in probability and a uniform integrability estimate that follows from \eqref{st.thm1}.
\hfill
\qedsymbol
\section{Proof of Theorem \ref{thm.ss.exp}}\label{s.Exp}
Throughout this section we consider the exponential-exponential case, namely we assume that $w$ and $\theta$ are as in the statement of Theorem \ref{thm.ss.exp}.

Let $\{\bar\nu(t), t \ge 0\}$  be a stationary solution of \eqref{FL}.
Then a.s., 
$$\theta(t) \doteq \langle w, \bar \nu(t) \rangle <\infty, \mbox{ for a.e. } t>0,  \quad \Theta(t) \doteq \int_0^t \theta(s) ds <\infty, \mbox{ for every } t>0.$$
Let $\Om_0 \subset \Om$ be such that $\PP(\Om_0)=1$ and for all $\om \in \Om_0$,
 $\Theta(t)(\om) <\infty$ for every $t>0$, and $\{\bar \nu(t)(\om), t\ge 0\}$ is a solution of \eqref{FL}. In what follows, we will occasionally suppress $\om$ from the notation.

The next two lemmas play a key role in the proof of Theorem \ref{thm.ss.exp}. 

Lemma \ref{lem_cdf} constructs, for a given $\om \in \Om_0$, an explicit solution $\nu^*$ to equation \eqref{pde_m} below, which has a similar form as \eqref{FL}, except that the McKean-Vlasov interaction term $\langle w, \nu(s)\rangle$ appearing as the `downward drift' in \eqref{FL} (which depends on the evolving distribution) is replaced by $\theta(t)(\om)$ obtained from the stationary solution. This makes the downward drift function a fixed input to the equation and makes the resulting equation easier to solve.
\begin{lemma}\label{lem_cdf}
Fix $\om \in \Om_0$, and define, for $(t,x) \in [0,\infty)\times \RR$,
$$F^*(t,x)=\frac{\gamma}{\Gamma(1+\gamma\beta^{-1})} \int_{-\infty}^{\zeta(x, t)} \exp\left( -\gamma y-e^{-\beta y}\right)\, dy,$$
where 
$$\zeta(x, t)\doteq {x+\gamma^{-1}\Theta(t) - \beta^{-1}\log\left( 1+\int_0^t e^{\beta \gamma^{-1}\Theta(s)}\,ds\right)}.$$
Then the following hold.
\begin{enumerate}[(a)]
\item For each $t\ge 0$, $F^*(t,\cdot)$ is a cumulative distribution function.

\item For $t\ge 0$, let $\nu^*(t)$ be the probability measure associated with the cumulative distribution function $F^*(t,\cdot)$. Then $\{\nu^*(t), t \ge 0\}$ satisfies
\begin{equation}\label{pde_m}
        \langle f, \nu^*(t) \rangle = \langle f, \nu^*(0) \rangle + \int_0^t \int g_f(y)w(y)\nu^*(s,dy)-\int_0^t \gamma^{-1} \theta(s) \langle f',\nu^*(s) \rangle ds,
        \; f \in C^1_c(\RR), \; t \ge 0.
    \end{equation}
\end{enumerate}

    
\end{lemma}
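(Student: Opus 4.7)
My plan for part (a) is direct. The map $x \mapsto \zeta(x,t) = x + c(t)$, with
$$c(t) \doteq \gamma^{-1}\Theta(t) - \beta^{-1}\log\Bigl(1 + \int_0^t e^{\beta\gamma^{-1}\Theta(s)}\,ds\Bigr),$$
is for each fixed $t$ a continuous, strictly increasing bijection of $\RR$ onto itself, so $F^*(t,\cdot)$ is continuous and non-decreasing with $\lim_{x\to -\infty} F^*(t,x) = 0$. The only nontrivial statement is $\lim_{x\to\infty} F^*(t,x) = 1$, which amounts to the standard generalized Gumbel normalization
$$\int_{-\infty}^{\infty} e^{-\gamma y - e^{-\beta y}}\, dy = \frac{\Gamma(1+\gamma\beta^{-1})}{\gamma}.$$
This is verified by the substitution $u = e^{-\beta y}$, which reduces the integral to $\beta^{-1}\int_0^\infty u^{\gamma\beta^{-1}-1}e^{-u}\,du = \beta^{-1}\Gamma(\gamma\beta^{-1}) = \gamma^{-1}\Gamma(1+\gamma\beta^{-1})$ using $\Gamma(1+a) = a\Gamma(a)$.

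For part (b), I would work at the level of densities. Differentiating $F^*$ in $x$ gives $\nu^*(t,dy) = \rho(t,y)\,dy$ with
$$\rho(t,y) = \frac{\gamma}{\Gamma(1+\gamma\beta^{-1})}\exp\bigl(-\gamma(y+c(t)) - e^{-\beta(y+c(t))}\bigr) = h(y+c(t)),$$
so $\nu^*(t)$ is the fixed Gumbel-type profile $h$ translated by $-c(t)$. For $f\in C_c^1(\RR)$, equation \eqref{pde_m} is equivalent (after an integration by parts on the drift term and a change of variable in the convolution representation of $\langle g_f\cdot w,\nu^*(t)\rangle$) to the Kolmogorov forward equation
$$\partial_t \rho(t,y) = \gamma^{-1}\theta(t)\,\partial_y \rho(t,y) + \int_0^\infty w(y-z)\rho(t,y-z)\,\gamma e^{-\gamma z}\,dz - w(y)\rho(t,y)$$
holding for a.e.\ $t$ and every $y$. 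From the explicit Gumbel form I compute $\partial_y\rho(t,y) = \bigl[-\gamma + \beta e^{-\beta(y+c(t))}\bigr]\rho(t,y)$, while the substitution $u = e^{-\beta(y-z+c(t))}$ in the convolution collapses that term to $(\gamma/\beta)e^{\beta c(t)}\rho(t,y)$, and $\partial_t\rho(t,y) = c'(t)\,\partial_y\rho(t,y)$. Matching the coefficients of $1$ and of $e^{-\beta(y+c(t))}$ in the forward equation produces the same scalar identity
$$c'(t) = \gamma^{-1}\theta(t) - \beta^{-1}e^{\beta c(t)} \quad \text{for a.e.\ } t>0,$$
which is exactly what one obtains by differentiating the closed form of $c(t)$: the first term yields $\gamma^{-1}\theta(t)$ and the second yields $-\beta^{-1} e^{\beta\gamma^{-1}\Theta(t)}/(1+\int_0^t e^{\beta\gamma^{-1}\Theta(s)}ds) = -\beta^{-1}e^{\beta c(t)}$ after cancellation. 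Integrating this identity in $t$ recovers \eqref{pde_m}.

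The main obstacle is technical rather than conceptual: since $\theta(t) = \langle w,\bar\nu(t)\rangle$ is only (locally) Lebesgue-integrable in $t$, the ODE for $c(t)$ holds only in the a.e.\ sense and $c(\cdot)$ is merely absolutely continuous, so every interchange of differentiation and integration, each Fubini step, and the passage between the forward equation and \eqref{pde_m} must be justified in this reduced regularity. This is handled by observing that on any compact $[0,T]$, $\Theta(T)<\infty$ for $\om \in \Om_0$ forces $c(\cdot)$ to be bounded on $[0,T]$, which in turn makes $\rho(t,y)$ and $w(y)\rho(t,y)$ bounded in $t \in [0,T]$ and integrable in $y$ (the double-exponential factor $\exp(-e^{-\beta(y+c(t))})$ dominates the growth of $w(y) = e^{-\beta y}$ as $y\to -\infty$). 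With these uniform dominations and $f\in C_c^1$, Fubini and the a.e.\ calculus are routine, and the desired integral identity \eqref{pde_m} follows.
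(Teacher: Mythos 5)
Your proof is correct, and it takes essentially the same conceptual path as the paper — everything reduces to verifying the scalar identity $c'(t) = \gamma^{-1}\theta(t) - \beta^{-1}e^{\beta c(t)}$ for a.e.\ $t$ (the paper writes $\alpha(t)$ for your $c(t)$) — but you phrase the reduction differently. The paper works at the level of the cumulative distribution function: it first establishes an integro-differential PDE for $F^*(t,x)$, namely
\[
\partial_t F^*(t,x) = -e^{-\gamma x}\int_{-\infty}^x e^{\gamma y}w(y)\,F^*(t,dy) + \gamma^{-1}\theta(t)\,\partial_x F^*(t,x),
\]
and then multiplies by $f'(x)$ and integrates by parts to reach \eqref{pde_m}. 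You instead write $\nu^*(t,dy)=\rho(t,y)\,dy$ with $\rho(t,y)=h(y+c(t))$ and check the Kolmogorov forward equation at the density level, collapsing the convolution term to $(\gamma/\beta)e^{\beta c(t)}\rho(t,y)$ by the same substitution $u=e^{-\beta(y-z+c(t))}$, and then test against $f\in C^1_c$ and integrate in $t$. Both routes require the same explicit Gumbel-shape computations and hinge on the same cancellation; yours has the advantage that the intermediate equation is the familiar forward equation rather than the paper's ad hoc cumulative reformulation, while the paper's formulation sidesteps some Fubini manipulations you need to justify in the final integration-against-$f$ step. Your remarks on the a.e.\ regularity of $c(\cdot)$ coming from mere local integrability of $\theta(\cdot)$, and on the double-exponential decay dominating $w(y)=e^{-\beta y}$ as $y\to-\infty$, are exactly the points one needs to make these interchanges legitimate, and they are correctly identified.
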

\begin{proof}
Part (a) is immediate on observing that, for every $t\ge 0$, $x \mapsto F^*(t,x)$ is a continuous increasing function, with
$\lim_{x\to -\infty} F^*(t,x)=0$ and $\lim_{x\to \infty} F^*(t,x)=1$, where the last equality follows from the identity
$$\frac{\gamma}{\Gamma(1+\gamma\beta^{-1})} \int_{-\infty}^{\infty} \exp\left( -\gamma y-e^{-\beta y}\right)\, dy =1.$$
Consider now part (b).
We claim that, for a.e. $t\in (0,\infty)$, and $x \in \RR$,
 \begin{equation}\label{pde}
        \frac{\partial F^*(t,x)}{\partial t} = -e^{-\gamma x} \int_{-\infty}^x e^{\gamma y} w(y) F^*(t, dy) + \gamma^{-1}\theta(t) \frac{\partial F^*(t,x)}{\partial x}.
    \end{equation}
To verify the claim we compute each of the three terms in the above equation.
Let \begin{equation}
\label{eq:809}
\alpha(t) \doteq \gamma^{-1}\Theta(t) - \beta^{-1}\log\left( 1+\int_0^t e^{\beta \gamma^{-1}\Theta(s)}\,ds\right), \; K \doteq \frac{\gamma}{\Gamma(1+\gamma\beta^{-1})}.\end{equation}
Then, for a.e. $t$,
$$
 \frac{\partial F^*(t,x)}{\partial t} = K \alpha'(t) e^{-\gamma(x+\alpha(t))} \exp (-e^{-\beta (x+ \alpha(t))}).
$$
Next,
 \begin{align*}
            \gamma^{-1}\theta(t) \frac{\partial F^*(t,x)}{\partial x} = K\gamma^{-1}\theta(t) e^{-\gamma(x+\alpha(t))} \exp (-e^{-\beta (x+ \alpha(t))}).
        \end{align*}
Finally,
\begin{align*}
-e^{-\gamma x} \int_{-\infty}^x e^{\gamma y} w(y) F^*(t, dy)
&= -Ke^{-\gamma x}  \int_{-\infty}^x e^{(\gamma -\beta)y} \exp\left\{-\gamma(y+\alpha(t)) - e^{-\beta(y+\alpha(t))}\right\} dy\\
&=-Ke^{-\gamma x}  \int_{-\infty}^x e^{-\beta y} \exp\left\{-\gamma \alpha(t) - e^{-\beta(y+\alpha(t))}\right\} dy\\
&= -Ke^{-\gamma x} e^{(\beta-\gamma)\alpha(t)} \int_{-\infty}^x \exp\left\{-\beta( y+\alpha(t)) - e^{-\beta(y+\alpha(t))}\right\} dy\\
&= -K\beta^{-1} e^{\beta \alpha(t)} e^{-\gamma (x+\alpha(t))} \exp\{-\beta( x+\alpha(t))\}.
\end{align*}
Combining the above calculations, we now see that
\begin{align}
 &\frac{\partial F^*(t,x)}{\partial t} +e^{-\gamma x} \int_{-\infty}^x e^{\gamma y} w(y) F^*(t, dy) - \gamma^{-1}\theta(t) \frac{\partial F^*(t,x)}{\partial x}\\
 &= c(t,x)\left ( \alpha'(t) - \gamma^{-1}\theta(t) + \beta^{-1}e^{\beta\alpha(t)}\right)\label{eq:1110}
\end{align}
where
$c(t,x) = K  e^{-\gamma(x+\alpha(t))} \exp (-e^{-\beta (x+ \alpha(t))})$.
Furthermore, from \eqref{eq:809}, for a.e. $t$,
\begin{align*}
\alpha'(t) &= \gamma^{-1}\theta(t) - \frac{\beta^{-1} e^{\beta \gamma^{-1} \Theta(t)}}{1+ \int_0^t e^{\beta \gamma^{-1} \Theta(s)} ds}\\
&= \gamma^{-1}\theta(t) - \beta^{-1} e^{ \beta\left(\gamma^{-1}\Theta(t) - \beta^{-1} \log\left( 1+ \int_0^t e^{\beta\gamma^{-1} \Theta(s)} ds\right)\right)}
= \gamma^{-1}\theta(t)- \beta^{-1} e^{\beta \alpha(t)}.
\end{align*}
This shows that the term on the right side of \eqref{eq:1110} is $0$ for a.e. $t$ and proves the claim in \eqref{pde}.

We now prove the statement in part (b) of the lemma. Fix $f \in C^1_c(\RR)$. Then, multiplying \eqref{pde} throughout by $f'(x)$ and integrating over $x$ and $t$, we get
\begin{multline}
\langle f, \nu^*(t)\rangle - \langle f, \nu^*(0)\rangle
= - \int_{\RR} f'(x) F^*(t,x) dx + \int_{\RR} f'(x) F^*(0,x) dx\\
= - \gamma^{-1}\int_{\RR} \int_0^t f'(x) \theta(s) \frac{\partial F^*(s,x)}{\partial x} ds dx
+ \int_{\RR} f'(x) e^{-\gamma x} \int_0^t \int_{-\infty}^x e^{\gamma y} w(y) F^*(s,dy) ds dx\\
= - \gamma^{-1}\int_0^t \theta(s) \langle f', \nu^*(s)\rangle ds
- \int_0^t \int_{\RR} f(x) \frac{\partial}{\partial x}  \left(e^{-\gamma x} \int_{-\infty}^x e^{\gamma y} w(y) F^*(s,dy)\right) dx,
\label{eq:349}
\end{multline}
where to obtain the first equality and the last equality we have integrated by parts.
Next note that
\begin{align*}
&- \int_0^t \int_{\RR} f(x) \frac{\partial}{\partial x}  \left(e^{-\gamma x} \int_{-\infty}^x 
e^{\gamma y} w(y) F^*(s,dy)\right) dx \\
& = - \int_0^t \int_{\RR} f(x) \left( - \gamma e^{-\gamma x} \int_{-\infty}^x e^{\gamma y} w(y) F^*(s,dy)
+ w(x) \frac{\partial F^*(s,x)}{\partial x} \right) dx\\
&= \int_0^t \int_{\RR} f(x) \gamma e^{-\gamma x} \int_{-\infty}^x  e^{\gamma y} w(y) F^*(s,dy) dx
- \int_0^t \int_{\RR} f(x) w(x) F^*(s, dx)\\
&= \int_0^t \int_{\RR} g_f(y) w(y) \nu^*(s, dy) ds,
\end{align*}
where the last equality follows on recalling that $g_f(y) = \EE_Z(f(y+Z)) - f(y)$, and
\begin{align*}
\int_{\RR}  \EE_Z f(y+Z) w(y) F^*(s,dy)&= \gamma \int_{\RR} \int_0^{\infty} f(y+z) e^{-\gamma z} dz w(y) 
\frac{\partial F^*(s,y)}{\partial y} dy \\
&= \gamma \int_{\RR} e^{\gamma y} \int_y^{\infty} f(x) e^{-\gamma x}  w(y) \frac{\partial F^*(s,y)}{\partial y} dx \, dy\\
&= \gamma \int_{\RR} e^{-\gamma x} f(x) \int_{-\infty}^x e^{\gamma y} w(y)  F^*(s,dy) \, dx.
\end{align*}
Combining the above observation with \eqref{eq:349} we now see that $\{\nu^*(t), t\ge 0\}$ solves \eqref{pde_m} which completes
the proof of part (b) of the lemma.
\end{proof}
The next lemma shows that, for every $\om \in \Om_0$, the sup-norm distance between the cdf of $\nu^*$ introduced in the previous lemma and that of the stationary solution $\bar \nu$ approaches zero with time. Combining these results with the stationarity of $\bar \nu$ gives a tractable distributional representation for $\bar \nu$ which is exploited in the proof of Theorem \ref{thm.ss.exp}.
\begin{lemma}\label{lem_coup}
Let $\{\bar \nu(t), t \ge 0\}$ be as at the start of the section. Let  $\bar F(t,x) \doteq \bar \nu(t)(-\infty,x]$, $x\in \RR$,  be the random cumulative distribution function associated with $\bar \nu(t)$.
For a given $\om \in \Om_0$, let  $\{F^*(t,x), t \ge 0, x \in \RR\}$ be as defined in Lemma \ref{lem_cdf}. Then, for a.e. $\omega$,
    $$\lim_{t\to \infty}\sup_{x\in \mathbb{R}}|\Bar{F}(t,x)-F^*(t,x)| = 0.$$
\end{lemma}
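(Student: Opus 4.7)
The plan is a coupling argument. For $\omega \in \Omega_0$ fixed, I would first observe that both $\bar\nu(\cdot)(\omega)$ and $\nu^*(\cdot)$ solve the same linear forward equation driven by the deterministic input $\theta(\cdot)(\omega)$: in \eqref{FL}, with $\langle w, \bar\nu(s)\rangle = \theta(s)(\omega)$ viewed as a given function of $s$, the equation becomes precisely \eqref{pde_m} (using $\mn = \gamma^{-1}$). By a uniqueness argument for this linear equation (paralleling the nonlinear Markov construction in Theorem \ref{mv}), both measures can be realized as marginal laws of a pure-jump Markov process $Y$ with jump rate $w(\cdot) = e^{-\beta \cdot}$, jump law $\mathrm{Exp}(\gamma)$, and deterministic downward drift $-\mn\theta(\cdot)(\omega)$, with initial distributions $\bar\nu(0)(\omega)$ and $\nu^*(0)$.

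I would then construct both processes $Y, Y^*$ on a common probability space using a Lindvall-type coupling exploiting the memoryless property of $\mathrm{Exp}(\gamma)$: assuming without loss of generality $Y(0) \le Y^*(0)$, at each jump of $Y$ split off, with conditional probability $e^{-\gamma(Y^* - Y)}$, the event that the jump size exceeds the current gap $Y^* - Y$, and on this event synchronously trigger a $Y^*$-jump by the corresponding memoryless overshoot, so that $Y$ and $Y^*$ land at the same point. Additional independent $Y^*$-only jumps at rate $e^{-\beta Y^*} - e^{-\beta Y} e^{-\gamma(Y^* - Y)}$ (nonnegative thanks to $\gamma \ge \beta$) maintain the correct marginal rate for $Y^*$. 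The ordering $Y \le Y^*$ is preserved before coupling, the coupling-event rate is $r(t) \doteq e^{-\beta Y(t) - \gamma(Y^*(t) - Y(t))}$, and once coupled the two processes continue synchronously and remain equal. Letting $T_c$ denote the coupling time, the bound $\sup_x|\bar F(t,x)(\omega) - F^*(t,x)(\omega)| \le d_{TV}(\bar\nu(t)(\omega), \nu^*(t)) \le \mathbb{P}(T_c > t)$ reduces the lemma to showing $T_c < \infty$ almost surely in the coupling probability.

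The key remaining step is to show $\int_0^\infty r(s) ds = \infty$ a.s., which by a standard exponential-clock argument yields $T_c < \infty$ a.s. I would pass to the time change $\tau(t) \doteq \int_0^t e^{\beta\mn\Theta(s)(\omega)} ds$, under which the shifted processes $\hat Y(\tau) \doteq Y(t) + \mn\Theta(t)$ and $\hat Y^*(\tau) \doteq Y^*(t) + \mn\Theta(t)$ become driftless non-decreasing Markov processes with rate $e^{-\beta \cdot}$ and $\mathrm{Exp}(\gamma)$ jumps, and the coupling rate in $\tau$-time becomes $e^{-\beta \hat Y(\tau) - \gamma \hat D(\tau)}$ with $\hat D \doteq \hat Y^* - \hat Y$. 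In the balanced case $\beta = \gamma$ this reduces to $e^{-\beta \hat Y^*(\tau)}$, and the classical identity that at the successive positions $\hat Y^{*,k}$ of $\hat Y^*$ the scaled holding times $e^{-\beta \hat Y^{*,k}} T^{*,k}$ are i.i.d.\ $\mathrm{Exp}(1)$ gives $\int_0^\tau e^{-\beta \hat Y^*(u)} du = \sum_k E_k \to \infty$ a.s. For $\gamma > \beta$ the extra factor $e^{-(\gamma - \beta)\hat D(\tau)}$ is handled by first establishing tightness of $\hat D(\tau)$ in $\tau$---combining the explicit marginal $\cll(\hat Y^*(\tau)) = R + \beta^{-1}\log(1+\tau)$ with $R$ the Gumbel-type variable from Lemma \ref{lem_cdf} and the analogous logarithmic growth of $\hat Y(\tau)$ inherited from the stationarity of $\bar\nu$ and the divergence of $\mn\Theta(t)$---and then applying a conditional Borel--Cantelli argument over the $\hat Y^*$-jump epochs $\tau_k^*$ (at each of which the conditional coupling probability is $e^{-(\gamma-\beta)\hat D(\tau_k^*-)}$) to force a.s.\ coupling. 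The principal obstacle is this $\gamma > \beta$ regime, where the joint behavior of $\hat Y, \hat Y^*$ under the coupling requires careful analysis; the case $\gamma = \beta$ is substantially cleaner because the coupling rate then depends only on the single marginal $\hat Y^*$.
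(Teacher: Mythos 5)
Your overall strategy matches the paper's: represent $\bar\nu(\cdot)(\om)$ and $\nu^*(\cdot)$ as marginal laws of time-inhomogeneous pure-jump Markov processes driven by the common input $\theta(\cdot)(\om)$, construct a coupling on a common space, and reduce the sup-norm convergence to a.s.\ finiteness of a coupling time via the total variation bound \eqref{tveq}. The coupling \emph{construction}, however, is genuinely different. The paper uses a two-phase scheme: a ``synchronous'' phase in which the lower process catches up by extra jumps until the gap drops below a fixed $a>0$, followed by a single optimal (maximal) coupling of the next synchronous jump. The crucial point is that this resets the gap to $O(a)$ plus a memoryless $Exp(\gamma)$ overshoot at the start of every attempt, so each attempt succeeds with probability bounded below by a constant $p = \EE[e^{-(\beta+\gamma)(a+|E-a|)}]>0$ that is \emph{independent of the state}, and geometric decay of the coupling time follows from the strong Markov property (equations \eqref{cb1}--\eqref{lemcoup.c}), with finiteness of the $\sigma_{2k+1}$ established separately by a binomial counting argument. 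Your one-shot Lindvall coupling (piggyback by memoryless overshoot) is valid and elegant --- your rate bookkeeping, including the requirement $\beta\le\gamma$ for the $Y^*$-only rate to be nonnegative, is correct --- and in the boundary case $\gamma=\beta$ it is in fact simpler than either your sketch or the paper's: the $Y^*$-only rate is identically $0$, so \emph{every} $Y^*$-jump is a piggyback, and $T_c$ is just the first $Y^*$-jump time, which is a.s.\ finite because $w>0$; the ``classical identity for holding times'' machinery is unnecessary.

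The gap is in $\gamma>\beta$. You observe, correctly, that conditional on a $\hat Y^*$-jump the probability of it being a piggyback is $e^{-(\gamma-\beta)\hat D(\tau_k^*-)}$, and you propose a conditional Borel--Cantelli argument. For this to work you need the \emph{pathwise} statement $\sum_k e^{-(\gamma-\beta)\hat D(\tau_k^*-)}=\infty$ a.s.; tightness of $\hat D(\tau)$ in distribution does not deliver this (a sequence can be tight in law while a.s.\ drifting to $+\infty$ along a subsequence where the sum is evaluated). Moreover, your tightness claim itself is circular in the key place: you invoke ``analogous logarithmic growth of $\hat Y$ inherited from the stationarity of $\bar\nu$,'' but stationarity of $\bar\nu(\cdot)$ is only a distributional property of the random flow, and for a fixed $\om\in\Om_0$ the behaviour of $\bar\nu(t)(\om)$ (hence of $\cll(\hat Y(\tau))$) is exactly what the lemma is being used to control. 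To close your argument you would need an a priori pathwise estimate on the gap process $\hat D$ under the coupling --- for instance a Lyapunov/recurrence bound exploiting that the $\hat Y$-only rate $e^{-\beta\hat Y}(1-e^{-\gamma\hat D})$ dominates the $\hat Y^*$-only rate $e^{-\beta\hat Y^*}(1-e^{-(\gamma-\beta)\hat D})$ when $\hat D$ is large --- which is nontrivial and is precisely the kind of analysis the paper's reset structure is designed to avoid. So: same reduction, different coupling, a clean shortcut for $\gamma=\beta$, but an unfilled hole for $\gamma>\beta$ that the paper's two-phase argument bypasses.
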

\begin{proof}
Since for every $\om \in \Om_0$, $\{\bar \nu(t)(\om), t \ge 0\}$ is a solution of \eqref{FL},
\eqref{pde_m} holds with $\nu^*(\om)$ replaced by $\bar \nu(\om)$.
This shows that for every such $\om$, there exist (time inhomogeneous) Markov processes $\{\tilde Z^i(t), t \ge 0\}$, $i=1,2$, with infinitesimal generator
$$\tilde \cll_{Z} f(s, x) = g_f(x) w(x) - \gamma^{-1}\theta(s) f'(x), \; s \ge 0, x \in \RR, f \in C^1_c(\RR)$$
such that for every $t\ge 0$,
$\cll(\tilde Z^1(t)) = \bar \nu(t)$ and $\cll(\tilde Z^2(t)) =  \nu^*(t)$.
We will now construct a coupling $(Z^1, Z^2)$  of these Markov processes (namely, $Z^i$ and $\tilde Z^i$ have the same law, for $i=1,2$, and $Z^1, Z^2$ are defined on a common probability space), such that, 
\begin{equation}\label{eq:maintpt}
\mbox{ with } \tau \doteq \inf\{t \ge 0: Z^1(t) = Z^2(t)\},\;\; \lim_{t\to \infty} \PP(\tau> t) = 0.
\end{equation}
The result then follows on observing that
\begin{equation}\label{tveq}
\sup_{x \in \RR} |\bar F(t,x) - F^*(t,x)| \le \|\bar \nu(t) - \nu^*(t)\|_{\mbox{\tiny{TV}}}
\le 2 \PP(\tau >t),
\end{equation}
where $\|\cdot\|_{\mbox{\tiny{TV}}}$ is the total variation distance.

\noindent {\bf Construction of the Coupling.}
Recall the `optimal coupling' of two real valued random variables $U$ and $V$, with densities $f_U$ and $f_V$, that minimizes total variation distance. The associated joint law $\PP_{U,V}$ is given as follows. For $A \in \clb(\RR)$, $\PP(U \in A, U=V) \doteq \int_A f_U(x) \wedge f_V(x)dx$, $\PP(U \in A, U \neq V) \doteq \int_A (f_U(x) - f_V(x))^+dx$ and $\PP(V \in A, U \neq V) \doteq \int_A (f_U(x) - f_V(x))^-dx$. For $\gamma>0$ and $u,v \in \RR$, $u<v$, we will call the optimal coupling of $u + Exp(\gamma)$ and $v + Exp(\gamma)$ as a \emph{$(u,v)$-optimal coupling of $Exp(\gamma)$ random variables}. Note that with this coupling $\PP(U=V) = e^{-\gamma|u-v|}$.

Before we concretely describe the coupling of $(Z^1, Z^2)$, we give the intuitive content of the construction. The idea is to \emph{synchronously couple} the two processes till a stopping time $\sigma_1$ when the lower process comes within a prescribed distance of the upper one. During this coupled evolution, the lower process jumps whenever the upper one jumps, and they jump by the same amount. The lower process, in addition, can have individual jumps. This coupling gives the lower process an `additional push' to get closer to the upper one. 

Once the processes are close enough, the jumps immediately after time $\sigma_1$ in the two processes are optimally coupled to ensure a positive probability of them meeting in this jump. If they don't meet, the synchronous-optimal coupling strategy is repeated. This algorithm guarantees eventual coalescence of the two processes with probability one.

Note that, as we are dealing with jump processes, the lower process can actually overshoot the upper process at time $\sigma_1$. Thus, we describe the coupling in terms of the minimum and maximum processes respectively denoted by $\{Z^{i_{2k}}\}_{k \ge 0}$ and $\{Z^{j_{2k}}\}_{k \ge 0}$. 


Fix $\om \in \Om_0$ and $a>0$. Define $Z^1(0) \sim \bar \nu(0)$, $Z^2(0) \sim  \nu^*(0)$, given on a common probability space. We will recursively construct the coupling on random time intervals $\{[\sigma_l, \sigma_{l+1}]: l \ge 0\}$ given in terms of suitable stopping times $\{\sigma_l: l \ge 0\}$. Let $\sigma_0=0$. Suppose the stopping times $\{\sigma_l: l \le 2k\}$ and the processes $\{Z^1(t), Z^2(t), t \le \sigma_{2k}\}$ have already been constructed.

        \begin{enumerate}
            \item We work conditional on $\clf_{\sigma_{2k}}$, on the set $\{\sigma_{2k}<\infty\}$, where $\clf_t \doteq \sigma\{Z^1(s), Z^2(s), s \le t\}$, $t \ge 0$. Define coupled jump processes $\{Z^{i_{2k}}(t), Z^{j_{2k}}(t), 
            \sigma_{2k} \le t \le \sigma_{2k+1}\}$, where $$\sigma_{2k+1} \doteq  \inf\{t\ge \sigma_{2k}: Z^{i_{2k}}(t)\geq Z^{j_{2k}}(t)-a\},$$ recursively, as follows. 
            $$Z^{i_{2k}}(\sigma_{2k})\doteq Z^1(\sigma_{2k}) \wedge Z^2(\sigma_{2k}), \; Z^{j_{2k}}(\sigma_{2k})\doteq Z^1(\sigma_{2k}) \vee Z^2(\sigma_{2k}).$$
            On the interval $[\sigma_{2k},\sigma_{2k+1}]$,  processes $Z^{i_{2k}}, Z^{j_{2k}}$ are synchronously coupled in the following manner. They jump simultaneously at instant $t$, with rate $w(Z^{j_{2k}}(t))$, and common jump random variable distributed as $\theta = Exp(\gamma)$.
            In addition, on this interval, independently, $Z^{i_{2k}}$ jumps (and $Z^{j_{2k}}$ does not), with rate
            $w(Z^{i_{2k}}(t)) - w(Z^{j_{2k}}(t))$ (which is nonnegative on this interval), and jump distribution $\theta$. In between jumps, both processes drift downwards with speed $\gamma^{-1} \theta(\cdot)$. 
            Let, for $t \in [\sigma_{2k}, \sigma_{2k+1}]$,
            \begin{align*}
            Z^i(t) &\doteq Z^{i_{2k}}(t) \one\{Z^i(\sigma_{2k})\le Z^{i^*}(\sigma_{2k})\} + Z^{j_{2k}}(t) \one\{Z^i(\sigma_{2k})> Z^{i^*}(\sigma_{2k})\},\; i = 1,2,
            \end{align*}
            where $i^* = 3-i$. In the above and in the rest of the construction we follow the convention that the interval $[a, \infty]$ is taken to be $[a, \infty)$ for $a \in [0,\infty)$.

            \item\label{oc} Next, on $\{\sigma_{2k+1} <\infty\}$, we will define coupled jump processes $\{Z^{i_{2k+1}}(t), Z^{j_{2k+1}}(t), \sigma_{2k+1} \le t \le \sigma_{2k+2}\}$. The description below is conditional on $\clf_{\sigma_{2k+1}}$. Set
            $$Z^{i_{2k+1}}(\sigma_{2k+1}) \doteq Z^{1}(\sigma_{2k+1})\wedge Z^{2}(\sigma_{2k+1}), \; Z^{j_{2k+1}}(\sigma_{2k+1}) \doteq Z^{1}(\sigma_{2k+1})\vee Z^{2}(\sigma_{2k+1}).$$ 
            For $t \ge \sigma_{2k+1}$, $(Z^{i_{2k+1}}, Z^{j_{2k+1}})$ jumps at instant $t$, with rate $w(Z^{j_{2k+1}}(\sigma_{2k+1}) - \int_{\sigma_{2k+1}}^t\gamma^{-1}\theta(s)ds)$ and jump size given by $\left(E^1_k- \int_{\sigma_{2k+1}}^t\gamma^{-1}\theta(s)ds, E^2_k-\int_{\sigma_{2k+1}}^t\gamma^{-1}\theta(s)ds\right)$ where $(E^1_k,E^2_k)$ is a 
            $(Z^{i_{2k+1}}(\sigma_{2k+1}),$ $ Z^{j_{2k+1}}(\sigma_{2k+1}))$-optimal coupling of $Exp(\gamma)$ random variables.  
            
            In addition, at instant $t \ge \sigma_{2k+1}$, independently with rate $w(Z^{i_{2k+1}}(\sigma_{2k+1})- \int_{\sigma_{2k+1}}^t\gamma^{-1}\theta(s)ds) - w(Z^{j_{2k+1}}(\sigma_{2k+1})- \int_{\sigma_{2k+1}}^t\gamma^{-1}\theta(s)ds)$, $Z^{i_{2k+1}}$ jumps 
            (and $Z^{j_{2k+1}}$ does not) with jump distribution $\theta$. As before, up until the first time after $\sigma_{2k+1}$ when at least one of the two processes jumps, both processes drift downwards with speed $\gamma^{-1} \theta(\cdot)$.
            Let $\sigma_{2k+2}$ be this first jump time after $\sigma_{2k+1}$.
            For $t \in (\sigma_{2k+1}, \sigma_{2k+2}]$, let
           \begin{align*}
            Z^i(t) &\doteq Z^{i_{2k+1}}(t) \one\{Z^i(\sigma_{2k+1})\le Z^{i^*}(\sigma_{2k+1})\} + Z^{j_{2k+1}}(t) \one\{Z^i(\sigma_{2k+1})> Z^{i^*}(\sigma_{2k+1})\},\; i = 1,2.
            \end{align*}
        \end{enumerate}
        This completes the recursive construction of processes $\{Z^1(t), Z^2(t), t\ge 0\}$
        such that for $t\ge 0$, $\bar \nu(t) = \cll(Z^1(t))$, $\nu^*(t) = \cll( Z^2(t))$.
        Note that, if $Z^1(t)= Z^2(t) $ for some $ t \geq 0$, then $ Z^1(s)= Z^2(s)$ for every $ s\geq t$.
        
Now, we will show that the probability of $Z^1$ and $Z^2$ coalescing in any `excursion' $[\sigma_{2k}, \sigma_{2k+2}]$ is bounded below by a positive number independent of $k$.  Define, for $k \ge 0$, the event $$\cle_k \doteq \{ Z^1(\sigma_{2k+2})-  Z^1(\sigma_{2k+2}-)  \neq 0
\text{ and } Z^2(\sigma_{2k+2})-  Z^2(\sigma_{2k+2}-) \neq 0\} \cap \{\sigma_{2k+1} <\infty\}.$$
This corresponds to the event that at the time instant $\sigma_{2k+2}$, both $Z^1$ and $Z^2$ jump together. Let $\bar \clf_k$ denote the sigma field generated by $\cle_k$ and all events in $\clf_{\sigma_{2k+1}}$. Observe that
\begin{align}\label{cb1}
\PP(\tau \le \sigma_{2k+2} \, \vert \, \clf_{\sigma_{2k}}) &\ge \PP \left(\cle_k \cap \{ Z^1({\sigma_{2k+2}}) = Z^2({\sigma_{2k+2}})\} \, \vert \, \clf_{\sigma_{2k}}\right)\notag\\
&= \EE\left[\one_{\cle_k} \, \PP\left(Z^1({\sigma_{2k+2}}) = Z^2({\sigma_{2k+2}}) \, \vert \, \bar \clf_k\right) \, \vert \, \clf_{\sigma_{2k}}\right].
\end{align}
By the explicit construction of the optimal coupling, on the event $\cle_k$,
\begin{equation}
\PP\left( Z^1({\sigma_{2k+2}}) = Z^2({\sigma_{2k+2}}) \, \vert \, \bar \clf_k\right) = \exp\{ -\gamma(Z^{j_{2k+1}}({\sigma_{2k+1}})- Z^{i_{2k+1}}({\sigma_{2k+1}}))\}.
\end{equation}
Conditional on $\clf_{\sigma_{2k}}$ and $\{\sigma_{2k+1} <\infty\}$, if $Z^{i_{2k}}({\sigma_{2k}})<Z^{j_{2k}}({\sigma_{2k}})-a$, we have using the memoryless property of exponentials that $E_k \doteq Z^{i_{2k}}(\sigma_{2k+1})-Z^{j_{2k}}({\sigma_{2k+1}}) + a$ is an $Exp(\gamma)$ random variable. Also, note that $Z^{j_{2k+1}}({\sigma_{2k+1}})- Z^{i_{2k+1}}({\sigma_{2k+1}}) = |E_k-a|$ on this event. Otherwise, $Z^{i_{2k}}(\sigma_{2k})-Z^{j_{2k}}(\sigma_{2k})\in [-a,0]$ (as by definition, $Z^{i_{2k}}(\sigma_{2k})\le Z^{j_{2k}}(\sigma_{2k})$). In this case, $\sigma_{2k+1} = \sigma_{2k}$ and thus $Z^{j_{2k+1}}({\sigma_{2k+1}})- Z^{i_{2k+1}}({\sigma_{2k+1}}) \le a$. Combining these observations, we conclude
\begin{equation}\label{zeq}
Z^{j_{2k+1}}({\sigma_{2k+1}})- Z^{i_{2k+1}}({\sigma_{2k+1}}) \le a + |E_k-a|,
\end{equation}
and thus,
\begin{equation}\label{cb2}
    \PP\left( Z^1({\sigma_{2k+2}}) = Z^2({\sigma_{2k+2}}) \, \vert \, \bar \clf_k\right) \ge e^{-\gamma(a + |E_k-a|)}.
\end{equation}
Using this in \eqref{cb1}, we obtain
\begin{equation}\label{cb3}
\PP(\tau \le \sigma_{2k+2} \, \vert \, \clf_{\sigma_{2k}}) \ge \EE\left[\one_{\cle_k} \, e^{-\gamma(a + |E_k-a|)} \, \vert \, \clf_{\sigma_{2k}}\right] = \EE\left[e^{-\gamma(a + |E_k-a|)} \, \mathbb{P}(\cle_k |\mathcal{F}_{\sigma_{2k+1}}) \vert \, \clf_{\sigma_{2k}}\right].
\end{equation}
Next, recalling $w(x) = e^{-\beta x}$, observe that
\begin{align}
    \mathbb{P}(\cle_k |\mathcal{F}_{\sigma_{2k+1}}) &= \mathbb{E} \left[\left. \frac{w\left(Z^1({\sigma_{2k+1}})\vee Z^2({\sigma_{2k+1}})- \int_{\sigma_{2k+1}}^{\sigma_{2k+2}}\gamma^{-1}\theta(s)ds\right)}{w\left(Z^1({\sigma_{2k+1}})\wedge Z^2({\sigma_{2k+1}})- \int_{\sigma_{2k+1}}^{\sigma_{2k+2}}\gamma^{-1}\theta(s)ds\right)}\right|\mathcal{F}_{\sigma_{2k+1}} \right]
    \one_{\{\sigma_{2k+1} <\infty\}}\\
    &= \mathbb{E} \left[\left. \frac{w(Z^1({\sigma_{2k+1}})\vee Z^2({\sigma_{2k+1}}))}{w(Z^1({\sigma_{2k+1}})\wedge Z^2({\sigma_{2k+1}}))}\right|\mathcal{F}_{\sigma_{2k+1}} \right]\one_{\{\sigma_{2k+1} <\infty\}}\\
    &= \exp\{-\beta\left(Z^{j_{2k+1}}({\sigma_{2k+1}})- Z^{i_{2k+1}}({\sigma_{2k+1}})\right)\}\one_{\{\sigma_{2k+1} <\infty\}}\\
    &\geq e^{-\beta(a + |E_k-a|)}\one_{\{\sigma_{2k+1} <\infty\}}, \label{lemcoup.a}
\end{align}
where the last step uses \eqref{zeq}. 
We now claim that
\begin{equation}\label{eq:913}
\mathbb{P} (\sigma_{2k+1} < \infty) = 1, \quad \text{for every } k\geq 0. \end{equation}
We will first prove the result assuming the claim holds and then finally establish the claim.
Using  \eqref{eq:913} and \eqref{lemcoup.a} in \eqref{cb3}, we conclude that,
\begin{equation}
 \PP(\tau \le \sigma_{2k+2} \, \vert \, \clf_{\sigma_{2k}}) \ge p \doteq \EE\left[e^{-(\beta + \gamma)(a + |E_k-a|)}\right]>0,   
\end{equation}
where note that $p$ does not depend on $k$.
This, in conjunction with the strong Markov property, implies for every $k\geq 1$,
\begin{equation}\label{lemcoup.c}
\mathbb{P} (\tau > \sigma_{2k}) \leq (1-p)^{k}.
\end{equation}
This shows that $\mathbb{P} (\tau < \infty) =1$ which, as noted previously, completes the proof of the lemma.
We now return to proving the claim.

We proceed by induction.  Suppose that for some $k \ge 0$, $\sigma_{2k}<\infty$.
Conditionally on $\clf_{\sigma_{2k}}$, on the event $Z^{i_{2k}}({\sigma_{2k}})<Z^{j_{2k}}({\sigma_{2k}})-a$, the evolution of $(Z^1, Z^2)$ after time $\sigma_{2k}$ can be alternatively constructed as follows. At time $t > \sigma_{2k}$, the process $(Z^1, Z^2)$ jumps (at least one of its coordinates changes) at rate $w(Z^{i_{2k}}(t))$. At a jump epoch $t$, both co-ordinates move with probability $w(Z^{j_{2k}}(t))/w(Z^{i_{2k}}(t))$ and only $Z^{i_{2k}}$ moves with probability
$
(w(Z^{i_{2k}}(t))-w(Z^{j_{2k}}(t)))/w(Z^{i_{2k}}(t)).
$
This evolution is continued till time $\sigma_{2k+1}$.
Note that, in the time interval $(\sigma_{2k}, \sigma_{2k+1})$, the probability that a given jump epoch corresponds to a move of $Z^{i_{2k}}$ alone is at least $$\inf_{x\in \RR} \frac{w(x-a)-w(x)}{w(x-a)}=1-e^{-\beta a}. $$
Thus, denoting by $H_k$ the total number of jumps of the process $(Z^1, Z^2)$ (namely jumps for which at least one of the coordinates jumps) on the time interval $(\sigma_{2k}, \sigma_{2k+1})$, the number of jumps where only $Z^{i_{2k}}$ moves, given $H_k=l$, is stochastically lower bounded by $\text{Bin}(l,1-e^{-\beta a})$.

Hence, on $ \{Z^{i_{2k}}({\sigma_{2k}})<Z^{j_{2k}}({\sigma_{2k}})-a\}$, for $l\geq 2$, 
$$
\mathbb{P}(H_k\geq l|\mathcal{F}_{\sigma_{2k}})\leq \mathbb{P}\left(\left. \sum_{m=1}^{\text{Bin}(l,1-e^{-\beta a})} E_m\leq Z^{j_{2k}}(\sigma_{2k})-Z^{i_{2k}}(\sigma_{2k})\right|\mathcal{F}_{\sigma_{2k}}\right)\overset{\text{a.s.}}{\underset{l\to \infty}{\longrightarrow}}0,
$$ 
where $E_m$'s are independent Exp$(\gamma)$ random variables independent of $\mathcal{F}_{\sigma_{2k}}$. On $ \{Z^{i_{2k}}({\sigma_{2k}}) \ge Z^{j_{2k}}({\sigma_{2k}})-a\}$, $\sigma_{2k+1} = \sigma_{2k}$ and hence $\mathbb{P}(H_k\geq 1|\mathcal{F}_{\sigma_{2k}})=0$. Moreover,  for any $l \ge 1$, $\PP(\sigma_{2k+1} = \infty, \, H_k \le l |\mathcal{F}_{\sigma_{2k}})=0$ as $w$ is strictly positive and each jump is almost surely finite. Combining these observations, we have $\PP(\sigma_{2k+1} < \infty |\mathcal{F}_{\sigma_{2k}}) = 1$. Since, for any $k \ge 0$,  the first jump time after $\sigma_{2k+1}$ is $\sigma_{2k+2}$ and $w$ is strictly positive,
$\PP(\sigma_{2k+2} < \infty |\mathcal{F}_{\sigma_{2k+1}}) = 1$ on $\{\sigma_{2k+1} < \infty\}$. Hence, since by the induction hypothesis $\sigma_{2k} <\infty$ we obtain that both $\sigma_{2k+1}$ and $\sigma_{2k+2}$ are finite a.s. as well. By induction the claim in \eqref{eq:913} follows and the proof is complete.
\end{proof}

We can now complete the proof of Theorem \ref{thm.ss.exp}.

\noindent \textbf{Proof of Theorem \ref{thm.ss.exp}.}
From stationarity, for all $k \in \NN$ and $x_1, \ldots, x_k \in \RR$, 
$$\cll(\Bar{F}(t,x_1), \ldots , \Bar{F}(t,x_k))=
\cll(\Bar{F}(0,x_1), \ldots ,\Bar{F}(0,x_k) ),$$
and therefore, by Lemma \ref{lem_coup}, as $t \to \infty$,
\begin{equation}\label{eq:819a}
(F^*(t,x_1), \ldots , F^*(t,x_k)) \Rightarrow (\Bar{F}(0,x_1), \ldots , \Bar{F}(0,x_k)).
\end{equation}
Define $\Psi: \RR \to [0,1]$ as
$$\Psi(z) \doteq \frac{\gamma}{\Gamma(1+\gamma\beta^{-1})} 
\int_{-\infty}^{z} \exp\left( -\gamma y-e^{-\beta y}\right)\, dy, \; z \in \RR.$$
Note that $\Psi$ is strictly increasing and continuous and so $\Psi^{-1}: [0,1] \to \RR$ is as well.
Recall the definition of $\alpha(t)$ from below \eqref{pde}. Then for any $x\in \RR$, 
$\alpha(t) = \Psi^{-1}(F^*(t,x)) - x$.
Thus, by the continuous mapping theorem, for any $x \in \RR$,
\begin{equation}
  \alpha(t) =  \Psi^{-1}(F^*(t,x)) - x \Rightarrow \Psi^{-1}(\bar F(0,x)) - x.
\end{equation}
In particular, taking $x=0$, we have $\alpha(t) \doteq \Psi^{-1}(F^*(t,0)) \Rightarrow
\Psi^{-1}(\bar F(0,0)) \doteq W$.
Writing, $\Psi(x+W) = F^W(x)$,
another application of continuous mapping theorem then says that, for all $k \in \NN$ and $x_1, \ldots, x_k \in \RR$,
\begin{multline}\label{eq:819b}
(F^*(t,x_1), \ldots , F^*(t,x_k))
= (\Psi(x_1+\alpha(t)), \ldots, \Psi(x_k+\alpha(t)))\\
\Rightarrow (\Psi(x_1+W), \ldots, \Psi(x_k+W)) =
(F^W(x_1), \ldots , F^W(x_k)).
\end{multline}
Combining \eqref{eq:819a} with the above,
$F^W(\cdot)$ and $\bar F(0, \cdot)$ have the same distribution on
$\cld(\RR, [0,1])$ (the space of c\`adl\`ag functions from $\RR$ to $[0,1]$, equipped with local uniform topology).
Since $\int_{\RR} x \bar F(0, dx) =0$, we now have that
$\int_{\RR} x F^W(dx) = 0$ as well.
Also, a direct calculation, using the fact that
$$
F^W(x) = \frac{\gamma}{\Gamma(1+\gamma\beta^{-1})} 
\int_{-\infty}^{x+W} \exp\left( -\gamma y-e^{-\beta y}\right)\, dy, 
$$
shows that
$$\int_{\RR} x F^W(dx) = -
\left(W + \beta^{-1} \frac{\Gamma'(\gamma \beta^{-1})}{\Gamma(\gamma \beta^{-1})}\right).
$$
Setting it to $0$, we get $W = - \beta^{-1} \frac{\Gamma'(\gamma \beta^{-1})}{\Gamma(\gamma \beta^{-1})}$.
Substituting the above value of $W$ in the definition of $F^W$, we see that $F^W$ (and therefore $\bar F(0, \cdot)$) is the (non-random) cumulative distribution function associated with the probability measure $\nu^*$ defined in \eqref{eq:838}. Since the stationary solution $\{\bar \nu(t), t \ge 0\}$ was arbitrary, this completes the proof of part (a) of Theorem
\ref{thm.ss.exp}.

Part (b) is now immediate from Theorem \ref{thm.stat.sol}.

Now we consider parts (c) and (d). With $w$ and $\theta$  as in the statement of the theorem, we have that Assumptions \ref{Z}, \ref{ass.w},  \ref{Z2},  \ref{ass.w.LT}, and \ref{eq:wtails} are satisfied. Thus, in particular, Theorems \ref{stat.ex} and \ref{stat.pr} hold in this case. Next, for this setting, Theorem \ref{thm.mainFL} parts (a) and (b) hold if the initial distributions of the particles satisfy Assumptions \ref{ass.t1}, and \ref{ass.char}. From Theorems \ref{stat.ex} and \ref{stat.pr} (see Remark \ref{rem:conds}) it follows that initial distributions given by the stationary distribution sequence $\{\pi_n\}_{n\in \NN}$  satisfy these conditions. Next, from part (b) of the theorem we see that, with this initial distribution, $\mu_n(0)$ converges to $\nu^*$ in probability. Also,  it is easily verified that with this $\nu^*$, Assumption \ref{ass.init} holds. {\di For completeness we give a proof in the Appendix (Section \ref{sec:nustarprops}).} So when the initial distribution is $\pi_n$, parts (a), (b), and (c) of Theorem  \ref{thm.mainFL} hold. Thus, we conclude that there is a unique  $\mu \in \clm$ that solves  the McKean-Vlasov equation \eqref{eq:mfeq1} and satisfies
    $\mu(0) = \nu^*$. This proves the first statement in part (c) of the current theorem. Furthermore, since part (c) of Theorem \ref{thm.mainFL} holds, when the initial distribution of the particle system is $\pi_n$, we have that  $\mu_n$ converges in probability to $\mu$ in $\mathcal{D}([0,\infty):\mathcal{P}_1(\mathbb{R}))$, giving the first statement in part (d) of the current theorem. In particular, this says that $m_n \to m$
    in probability, in $\mathcal{D}([0,\infty):\mathbb{R})$.
    Combining this with the observation that, for $f \in \mbox{Lip}_1$, the empirical measure $\nu_n$ of the centered $n$-particle system satisfies 
    $\langle f, \nu_n(t)\rangle = \langle f(\cdot - m_n(t)), \mu_n(t)\rangle$, and using the fact (from part (b)) that $\nu_n(t)$ converges in probability to $\nu^*$ (as $\mathcal{P}_1(\mathbb{R})$ valued random variables) for every $t \ge 0$, we see that, the limit
    $\mu(t)$ of $\mu_n(t)$ can be explicitly given as 
    $$\mu(t,dx)=\frac{\gamma}{\Gamma(1+\gamma\beta^{-1})}\text{exp}\left[ -\gamma\left(x-\bar m(t)\right) - e^{-\beta\left(x-\bar m(t)\right)}\right]\;dx,$$
    where $\bar m(t) = m(t) + \beta^{-1}\Psi(\gamma \beta^{-1})$.
    Finally, taking $f(x) = x$ in the McKean-Vlasov equation \eqref{eq:mfeq1} we see that
    \begin{align*}
m(t) = \gamma^{-1}\int_0^t \langle w(\cdot - m(s)), \mu(s)\rangle
ds = \gamma^{-1}\int_0^t \langle w, \nu^*\rangle ds = t \gamma^{-1}\langle w, \nu^*\rangle.
    \end{align*}
    A direct calculation, using the form of $\nu^*$ in \eqref{eq:838}, shows that $\gamma^{-1}\langle w, \nu^*\rangle = \beta^{-1} e^{- \Psi(\gamma \beta^{-1})}$. Thus
    $$\bar m(t) = t \beta^{-1} e^{- \Psi(\gamma \beta^{-1})} + \beta^{-1}\Psi(\gamma \beta^{-1}).$$
    This proves the last two statements in part (c) and the last statement in part (d). The result follows.
\hfill
\qedsymbol

\paragraph{Acknowledgements.}
SB was supported in part by the NSF-CAREER award (DMS-2141621).
AB and DI were supported in part by the NSF (DMS-2506010, DMS-2152577). SB and AB were supported by the RTG award (DMS-2134107) from the NSF.

{\di We thank two anonymous referees and an associate editor whose careful reading and detailed suggestions significantly improved the paper.}

\appendix
\renewcommand{\thesection}{Appendix~\Alph{section}.}
\renewcommand{\thesubsection}{\Alph{section}.\arabic{subsection}}

\section{Time derivative of $f^T$.}
\label{sec:appa}
For notational simplicity, we write $f^T \equiv f$ and $w(x-m(t)) = \lambda(t,x)$. Fix $(t,x) \in [0,T) \times \RR$
and $\delta >0$ such that $t+\delta:= T_1 <T$.
We first argue that $s\mapsto f(s,x)$ is continuous on $[0,T_1)$.
Since $m$ is continuous,
$
\bar\lambda_x \doteq \sup_{s\in [0,T]} \lambda(s,x)
 < \infty$.
Also, since $Y^{t,x}(s)\ge x$ for all $s\ge t$, 
\[
\lambda(s, Y^{t,x}(s)) \le \bar\lambda_x \quad \text{for } s\in [t,T].
\]
Thus, for $r \in [0,\delta]$, the number of jumps $N_r$ of $Y^{t,x}$ on $[t,t+r]$ is stochastically dominated by a Poisson random variable with mean $\bar\lambda_x r$. In particular, for some $C>0$ (depending on $t,x$),
\begin{equation*}
\PP(N_r \ge 2) \le Cr^2, \qquad \EE N_r \le Cr.
\end{equation*}
Next, using the Markov property, for any $r\in (0, T-t]$,
\begin{equation}\label{eq:819n}
f(t,x) = \EE\big[f(t+r, Y^{t,x}(t+r))\big].
\end{equation}
Since $Y^{t,x}(t+r)\ge x$, and from \eqref{eq:516n} $f(s,\cdot)$ is Lipschitz on $[x,\infty)$ uniformly in $s\in[0,T_1]$, there exists $C_x<\infty$ such that
\[
|f(s,y_1) - f(s,y_2)| \le C_x |y_1-y_2|, \quad y_1,y_2 \ge x, \; s \in [0,T_1].
\]
Hence, using \eqref{eq:819n} and the stochastic dominance noted above, for small $r$,
\[
|f(t,x) - f(t+r,x)|
\le C_x\, \EE|Y^{t,x}(t+r) - x|
\le C_x\,\EE Z \cdot \EE N_r \le C_x\,\varsigma \bar\lambda_x r,
\]
which shows continuity of $t\mapsto f(t,x)$.

Now we complete the proof of differentiability.
 Since $f$ is  bounded,
\begin{multline*}
f(t,x)
=
\EE\big[f(t+r, Y^{t,x}(t+r))\big]\\
=
\EE\big[f(t+r, Y^{t,x}(t+r));\, N_r=0\big] + \EE\big[f(t+r, Y^{t,x}(t+r));\, N_r=1\big] + O(r^2)\\
= f(t+r, x) \left(1 - \int_t^{t+r} \lambda(s,x) ds\right) +
 \EE\big[f(t+r, x+Z)] \left(\int_t^{t+r} \lambda(s,x) ds\right) + o(r).
\end{multline*}

Thus,
\[
\frac{f(t+r,x) - f(t,x)}{r}
=
- \left(\frac{1}{r} \int_t^{t+r} \lambda(s,x)\, ds\right)\;
\EE\big[f(t+r,x+Z) - f(t+r,x)\big]
+ o(1).
\]
By the continuity of $m(\cdot)$,
\[
\lim_{r\to 0}\frac{1}{r} \int_t^{t+r} \lambda(s,x)\, ds = \lambda(t,x).
\]
Also, for each $z\ge 0$, continuity of $f$ gives
\[
\lim_{r\to 0} (f(t+r,x+z) - f(t+r,x))
=
f(t,x+z) - f(t,x).
\]
Dominated convergence yields
\[
\EE\big[f(t+r,x+Z) - f(t+r,x)\big]
\to
\EE\big[f(t,x+Z) - f(t,x)\big].
\]
Therefore, the right derivative has the form
\[
\partial_t^+ f^T(t,x)
=
- \lambda(t,x)\, E\big[f(t,x+Z) - f(t,x)\big].
\]

A symmetric argument on $[t-r,t]$ gives the left derivative for $t>0$, with the same limit. Hence $\partial_t f(t,x)$ exists and  \eqref{parrep} holds.

{\di
\section{Estimating $\mathbb{E}_0|Y_1(t)-Y_2(t)|$.}
\label{sec:appb}
In this section we prove \eqref{eq:1039n}.
We will use the notation introduced in the proof of Theorem 5.10.
Assume without loss of generality that $\xi$ is non-random.  The general case follows by a simple conditioning argument.
Let $(Y_1,Y_2)$ be as in the proof of Theorem 5.10. 
Define $
h(x,x') \doteq |x-x'|$, for $x, x' \in \RR$ and fix $T>0$.
Then by Dynkin's formula, and boundedness of $w$ on $[\xi-m_T, \infty)$ we have that, for $0\le t \le T$,
\begin{align*}
M_t
\doteq\;
& h(Y_1(t),Y_2(t))
- h(Y_1(0),Y_2(0)) 
 - \int_0^{t}
\tilde{\mathcal{L}}h(s, Y_1(s),Y_2(s))\,ds
\end{align*}
is a $\mathcal{F}_t$-local martingale.

For $x, x' \in \RR$ and $t \in [0,T]$, let $d = x-x'$, $r_1(t,x) = w(x-m_1(t))$, $r_2(t,x') = w(x'-m_2(t))$. Then
\begin{align*}
\tilde{\mathcal{L}}h(s,x,x')
&= (r_1(s,x)-r_2(s,x'))^+\,\EE_Z\big[|d+Z|-|d|\big]
+ (r_1(s,x)-r_2(s,x'))^-\,\EE_Z\big[|d-Z|-|d|\big],
\end{align*}
and therefore
\begin{equation}\label{eq:lhbd}
|\tilde{\mathcal{L}}h(s,x,x')|
\le \varsigma\, |r_1(s,x)-r_2(s,x')|.
\end{equation}
For $t \in [0,T]$, the carr\'e-du-champ operator is given by
\begin{align*}
\Gamma_s h(x,x')
&=
(r_1(s,x)-r_2(s,x'))^+\,\EE_Z\big[(|d+Z|-|d|)^2\big]
+
(r_1(s,x)-r_2(s,x'))^-\,\EE_Z\big[(|d-Z|-|d|)^2\big] \\
&\le \vartheta\, |r_1(s,x)-r_2(s,x')|.
\end{align*}
Thus  noting that, for $t \in [0,T]$,
$
Y_i(s)\ge \xi$,
$m_i(s)\le m_T
$, and $w(Y_i(t)-m_i(t)) \le w(\xi-m_T)$, $i=1,2$,
\[
\EE\!\left[\int_0^{T}
\Gamma_s h(Y_1(s),Y_2(s))\,ds \right]
\le 2\vartheta T\, w(\xi-m_T) < \infty.
\]
Consequently 
\[
M_t
\doteq
|Y_1(t)-Y_2(t)|
-
\int_0^t \tilde{\mathcal{L}}h(s, Y_1(s),Y_2(s))\,ds
\]
is  a martingale.
Taking expectations and using \eqref{eq:lhbd},
\[
\EE |Y_1(t)-Y_2(t)|
\le
\varsigma \int_0^t
\EE\Big[
\big|w(Y_1(s)-m_1(s)) - w(Y_2(s)-m_2(s))\big|
\Big] ds,
\]
which completes the proof of \eqref{eq:1039n}.}
{\di \section{Verification of Properties of $\nu^*$.}
\label{sec:nustarprops}
In this section we verify that $\nu^*$ introduced in \eqref{eq:838} satisfies the properties \eqref{cM1} and \eqref{cM2}.
Let
\[
m_* \doteq \beta^{-1}\Psi(\gamma\beta^{-1}),
\;\;
\alpha \doteq \gamma\beta^{-1}.
\]
Then, using $\Gamma(1+\alpha)=\alpha \Gamma(\alpha)$, the probability measure $\nu^*$ has density
\[
\nu^*(dx)
=
\frac{\beta}{\Gamma(\alpha)}
\exp\!\left(
-\gamma(x-m_*)-e^{-\beta(x-m_*)}
\right)\,dx.
\]

We now argue that $\nu^*$ satisfies \eqref{cM1}.

Fix $a>0$. Consider the change of variables
\[
u=e^{-\beta(x-m_*)}, \;\;
 dx=-\frac{1}{\beta}\frac{du}{u}.
\]
Then
$
e^{-\gamma(x-m_*)}
=
u^\alpha$.
Hence
\begin{align*}
\nu^*(dx)
&=
\frac{\beta}{\Gamma(\alpha)}
u^\alpha e^{-u}\,\frac{1}{\beta}\frac{du}{u}
=
\frac{1}{\Gamma(\alpha)}u^{\alpha-1}e^{-u}\,du.
\end{align*}
So, if $X \sim \nu^*$ then,  the random variable
$
U \doteq e^{-\beta(X-m_*)}$
has the Gamma$(\alpha,1)$ distribution.

Now observe that, with such an $X$,
$
e^{-\beta(X-a)}
=
e^{-\beta(m_*-a)}U$.
Therefore
\begin{align*}
\int_{\mathbb R}\exp\{\delta \beta e^{-\beta(x-a)}\}\,\nu^*(dx)
&=
\EE\Bigl[\exp\{\delta \beta e^{-\beta(X-a)}\}\Bigr]
=
\EE\Bigl[\exp\{\delta \beta e^{-\beta(m_*-a)}U\}\Bigr].
\end{align*}
Since $U\sim\mathrm{Gamma}(\alpha,1)$,  the above integral is finite provided
\[
\delta \beta e^{-\beta(m_*-a)}<1.
\]
This verifies  \eqref{cM1} for $\nu^*$, for the given $a$, for any
$\delta \in (0,\beta^{-1}e^{\beta(m_*-a)})$.

We now show that $\nu^*$ satisfies \eqref{cM2} with $\beta =1$. With $X$ and $U$ as before, we have
\[
|X|^2
\le
2|m_*|^2+\frac{2}{\beta^2}|\log U|^2.
\]
Thus it suffices to show that
$
E|\log U|^2<\infty$.
However, this is clearly true, since
\[
E|\log U|^2
=
\frac{1}{\Gamma(\alpha)}
\int_0^\infty |\log u|^2 u^{\alpha-1}e^{-u}\,du.
\]

This proves that $\nu^*$ satisfies \eqref{cM2} with $\beta =1$.

}
{\footnotesize 
\bibliographystyle{is-abbrv}
\bibliography{references}

@article{atar2020hydrodynamics,
  title={Hydrodynamics of particle systems with selection via uniqueness for free boundary problems},
  author={Atar, Rami},
  journal={arXiv preprint arXiv:2011.07535},
  year={2020}
}

@article{banerjee2022domains,
  title={Domains of attraction of invariant distributions of the infinite {A}tlas model},
  author={Banerjee, Sayan and Budhiraja, Amarjit},
  journal={The Annals of Probability},
  volume={50},
  number={4},
  pages={1610--1646},
  year={2022},
  publisher={Institute of Mathematical Statistics}
}

@article{bebrnope,
    title={A free boundary problem arising from branching {B}rownian motion with selection},
    author={Berestycki, J. and Brunet, E. and Nolen, J. and Penington, S.},
    journal={Transactions of the American Mathematical Society},
    year={2021}
  }

@article{bebrnope2,
    title={Brownian bees in the infinite swarm limit},
    author={Berestycki, J. and Brunet, E. and Nolen, J. and Penington, S.},
    journal={arXiv preprint arXiv:2006.06486},
    year={2020}
  }

@article{berzha,
    title={The shape of multidimensional {B}runet--{D}errida particle systems},
    author={Berestycki, N. and Zhao, L.},
    journal={arXiv preprint arXiv:1305.0254},
    year={2013}
  }

@article{budhiraja2024simple,
  title={Simple form control policies for resource sharing networks with {HGI} performance},
  author={Budhiraja, Amarjit and Johnson, Dane},
  journal={The Annals of Applied Probability},
  volume={34},
  number={1B},
  pages={851--916},
  year={2024},
  publisher={Institute of Mathematical Statistics}
}

@article{bruder,
  title={Shift in the velocity of a front due to a cutoff},
  author={Brunet, E. and Derrida, B.},
  journal={Physical Review E},
  volume={56},
  number={3},
  pages={2597},
  year={1997},
  publisher={APS}
}

@book{CDGP,
    title={Free boundary problems in PDEs and particle systems},
    author={Carinci, G. and De Masi, A. and Giardina, C. and Presutti, E. },
    year={2016},
    publisher={Springer Briefs in Mathematical Physics, 16}
  }

@article{durrem,
    title={Brunet--{D}errida particle systems, free boundary problems and {W}iener--{H}opf equations},
    author={Durrett, R. and Remenik, D.},
    journal={Annals of Probability},
    volume={39},
    number={6},
    pages={2043--2078},
    year={2011},
    publisher={Institute of Mathematical Statistics}
  }

@inproceedings{DJO,
    title={The infinite {A}tlas process: {C}onvergence to equilibrium},
    author={Dembo, A. and Jara, M. and Olla, S.},
    booktitle={Annales de l'Institut Henri Poincar{\'e}, Probabilit{\'e}s et Statistiques},
    volume={55},
    number={2},
    pages={607--619},
    year={2019},
    organization={Institut Henri Poincar{\'e}}
  }

@article{demtsa,
    title={Equilibrium fluctuation of the {A}tlas model},
    author={Dembo, A. and Tsai, L-C.},
    journal={Annals of Probability},
    volume={45},
    number={6B},
    pages={4529--4560},
    year={2017},
    publisher={Institute of Mathematical Statistics}
  }

@article{eberle2015markov,
  title={Markov processes},
  author={Eberle, Andreas},
journal={Lecture Notes at University of Bonn},
  year={2020}
}

@book{Ethier2005MarkovPC,
  title={Markov Processes: Characterization and Convergence},
  author={Ethier, Stewart N and Kurtz, Thomas G},
  year={2009},
  publisher={John Wiley \& Sons}
}

@book{feng2006large,
  title={Large {D}eviations for {S}tochastic {P}rocesses},
  author={Feng, Jin and Kurtz, Thomas G},
  number={131},
  year={2006},
  publisher={American Mathematical Soc.}
}

@article{ferkar,
    title={Stochastic portfolio theory: An overview},
    author={Fernholz, R. and Karatzas, I.},
    journal={Handbook of numerical analysis},
    volume={15},
    pages={89--167},
    year={2009},
    publisher={Mathematical Modeling and Numerical Methods in Finance XV}
  }

@article{IPS,
  title={Convergence rates for rank-based models with applications to portfolio theory},
  author={Ichiba, T. and Pal, S. and Shkolnikov, M.},
  journal={Probability Theory and Related Fields},
  volume={156},
  number={1-2},
  pages={415--448},
  year={2013},
  publisher={Springer}
}

@article{Oelschlager1984AMA,
  title={A martingale approach to the law of large numbers for weakly interacting stochastic processes},
  author={Oelschlager, Karl},
  journal={The Annals of Probability},
  pages={458--479},
  year={1984},
  publisher={JSTOR}
}

@article{palpit,
  title={One-dimensional {B}rownian particle systems with rank-dependent drifts},
  author={Pal, S. and Pitman, J.},
  journal={The Annals of Applied Probability},
  volume={18},
  number={6},
  pages={2179--2207},
  year={2008},
  publisher={Institute of Mathematical Statistics}
}

@article{sar,
  title={Infinite systems of competing {B}rownian particles},
  author={Sarantsev, A.},
 journal={Annales de l'Institut Henri Poincar{\'e}, Probabilit{\'e}s et Statistiques},
  volume={53},
  number={4},
  pages={2279--2315},
  year={2017},
 publisher={Institut Henri Poincar{\'e}}
}

@article{sartsa,
  title={Stationary gap distributions for infinite systems of competing {B}rownian particles},
  author={Sarantsev, A.  and Tsai, L-C.},
  journal={Electronic Journal of Probability},
  volume={22},
  year={2017},
  publisher={The Institute of Mathematical Statistics and the Bernoulli Society}
}

@article{sznit,
  title={Topics in propagation of chaos},
  author={Sznitman, Alain-Sol},
  journal={Lecture notes in mathematics},
  pages={165--251},
  year={1991},
  publisher={Springer Berlin Heidelberg}
}

@inproceedings{Balzs2011ModelingFA,
  title={Modeling flocks and prices: Jumping particles with an attractive interaction},
  author={Bal{\'a}zs, M{\'a}rton and R{\'a}cz, Mikl{\'o}s Z and T{\'o}th, B{\'a}lint},
  booktitle={Annales de l'IHP Probabilit{\'e}s et statistiques},
  volume={50},
  number={2},
  pages={425--454},
  year={2014}
}

@book{lindvall2002lectures,
  title={Lectures on the coupling method},
  author={Lindvall, Torgny},
  year={2002},
  publisher={Courier Corporation}
}

@article{durmus2020elementary,
  title={An elementary approach to uniform in time propagation of chaos},
  author={Durmus, Alain and Eberle, Andreas and Guillin, Arnaud and Zimmer, Raphael},
  journal={Proceedings of the American Mathematical Society},
  volume={148},
  number={12},
  pages={5387--5398},
  year={2020}
}

@article{lacker2023sharp,
  title={Sharp uniform-in-time propagation of chaos},
  author={Lacker, Daniel and Le Flem, Luc},
  journal={Probability Theory and Related Fields},
  volume={187},
  number={1-2},
  pages={443--480},
  year={2023},
  publisher={Springer}
}

@article{chaintron2022propagation,
  title={Propagation of chaos: {A} review of models, methods and applications. {I}. {M}odels and methods},
  author={Chaintron, Louis-Pierre and Diez, Antoine},
  journal={arXiv preprint arXiv:2203.00446},
  year={2022}
}

@article{ben2007toy,
  title={A toy model of the rat race},
  author={Ben-Avraham, D and Majumdar, Satya N and Redner, S},
  journal={Journal of Statistical Mechanics: Theory and Experiment},
  volume={2007},
  number={04},
  pages={L04002},
  year={2007},
  publisher={IOP Publishing}
}

@article{hongler2013local,
  title={Local versus nonlocal barycentric interactions in 1D agent dynamics},
  author={Hongler, Max-Olivier and Filliger, Roger and Gallay, Olivier},
  journal={Mathematical Biosciences \& Engineering},
  volume={11},
  number={2},
  pages={303--315},
  year={2013},
  publisher={Mathematical Biosciences \& Engineering}
}

@article{andreis2019ergodicity,
author = {Luisa Andreis and Amine Asselah and Paolo Dai Pra},
title = {{Ergodicity of a system of interacting random walks with asymmetric interaction}},
volume = {55},
journal = {Annales de l'Institut Henri Poincaré, Probabilités et Statistiques},
number = {1},
publisher = {Institut Henri Poincaré},
pages = {590 -- 606},
keywords = {interacting particle systems, Mean-field interaction, Non-reversibility},
year = {2019},
doi = {10.1214/18-AIHP893},
URL = {https://doi.org/10.1214/18-AIHP893}
}

@article{BuFan,
author = {Amarjit Budhiraja and Wai-Tong Louis Fan},
title = {{Uniform in time interacting particle approximations for nonlinear equations of Patlak-Keller-Segel type}},
volume = {22},
journal = {Electronic Journal of Probability},
number = {},
publisher = {Institute of Mathematical Statistics and Bernoulli Society},
pages = {1 -- 37},
keywords = {chemotaxis, granular media equations, kinetic equations, long time behavior, McKean-Vlasov equations, Patlak-Keller-Segel equations, reinforced diffusions, uniform exponential concentration bounds, uniform in time Euler approximations, uniform propagation of chaos, weakly interacting particle systems},
year = {2017},
doi = {10.1214/17-EJP25},
URL = {https://doi.org/10.1214/17-EJP25}
}

@article{greenberg1996asynchronous,
  title={Asynchronous updates in large parallel systems},
  author={Greenberg, Albert G and Shenker, Scott and Stolyar, Alexander L},
  journal={ACM SIGMETRICS Performance Evaluation Review},
  volume={24},
  number={1},
  pages={91--103},
  year={1996},
  publisher={ACM New York, NY, USA}
}

@article{stolyar2023particle,
  title={A particle system with mean-field interaction: Large-scale limit of stationary distributions},
  author={Stolyar, Alexander L},
  journal={Stochastic Systems},
  volume={13},
  number={3},
  pages={343--359},
  year={2023},
  publisher={INFORMS}
}

@article{stolyar2023large,
  title={Large-scale behavior of a particle system with mean-field interaction: Traveling wave solutions},
  author={Stolyar, Alexander},
  journal={Advances in Applied Probability},
  volume={55},
  number={1},
  pages={245--274},
  year={2023},
  publisher={Cambridge University Press}
}

@article{jourdain2008,
author = {Benjamin Jourdain and Florent Malrieu},
title = {{Propagation of chaos and Poincaré inequalities for a system of particles interacting through their CDF}},
volume = {18},
journal = {The Annals of Applied Probability},
number = {5},
publisher = {Institute of Mathematical Statistics},
pages = {1706 -- 1736},
keywords = {long time behavior, nonlinear process, Particle system, Poincaré inequality, propagation of chaos, Viscous scalar conservation law},
year = {2008},
doi = {10.1214/07-AAP513},
URL = {https://doi.org/10.1214/07-AAP513}
}

@article{jourdain2013propagation,
  title={Propagation of chaos for rank-based interacting diffusions and long time behaviour of a scalar quasilinear parabolic equation},
  author={Jourdain, Benjamin and Reygner, Julien},
  journal={Stochastic Partial Differential Equations: Analysis and Computations},
  volume={1},
  number={3},
  pages={455--506},
  year={2013},
  publisher={Springer}
}

@article{shkolnikov2012large,
  title={Large systems of diffusions interacting through their ranks},
  author={Shkolnikov, Mykhaylo},
  journal={Stochastic Processes and their Applications},
  volume={122},
  number={4},
  pages={1730--1747},
  year={2012},
  publisher={Elsevier}
}

@article{mitzenmacher2001power,
  title={The power of two choices in randomized load balancing},
  author={Mitzenmacher, Michael},
  journal={IEEE Transactions on Parallel and Distributed Systems},
  volume={12},
  number={10},
  pages={1094--1104},
  year={2001},
  publisher={IEEE}
}

@article{der2022scalable,
  title={Scalable load balancing in networked systems: A survey of recent advances},
  author={der Boor, Mark Van and Borst, Sem C and Van Leeuwaarden, Johan SH and Mukherjee, Debankur},
  journal={SIAM Review},
  volume={64},
  number={3},
  pages={554--622},
  year={2022},
  publisher={SIAM}
}

@article{lu2011join,
  title={Join-idle-queue: A novel load balancing algorithm for dynamically scalable web services},
  author={Lu, Yi and Xie, Qiaomin and Kliot, Gabriel and Geller, Alan and Larus, James R and Greenberg, Albert},
  journal={Performance Evaluation},
  volume={68},
  number={11},
  pages={1056--1071},
  year={2011},
  publisher={Elsevier}
}

@article{banerjee2023load,
  title={Load balancing in parallel queues and rank-based diffusions},
  author={Banerjee, Sayan and Budhiraja, Amarjit and Estevez, Benjamin},
  journal={Mathematics of Operations Research \emph{(to appear)}},
  year={2024}
}

@book{kipnis2013scaling,
  title={Scaling Limits of Interacting Particle Systems},
  author={Kipnis, Claude and Landim, Claudio},
  volume={320},
  year={2013},
  publisher={Springer Science \& Business Media}
}

@article{banerjee2016brownian,
  title={The {B}rownian conga line},
  author={Banerjee, Sayan},
  journal={Probability Theory and Related Fields},
  volume={165},
  pages={901--961},
  year={2016},
  publisher={Springer}
}
}

{\footnotesize 
%
%
%
}

\vspace{\baselineskip}

\noindent{\scriptsize {\textsc{\noindent S. Banerjee, A. Budhiraja, and D. Imon,\newline
Department of Statistics and Operations Research\newline
University of North Carolina\newline
Chapel Hill, NC 27599, USA\newline
email: sayan@email.unc.edu
\newline
email: budhiraj@email.unc.edu
\newline
email: idilshad@unc.edu
\vspace{\baselineskip} } }}

\end{document}